\newtheorem{thm}{Theorem}[section]
\newtheorem{lem}[thm]{Lemma}
\newtheorem{cor}[thm]{Corollary}
\newtheorem{prop}[thm]{Proposition}
\theoremstyle{definition}
\newtheorem{defn}[thm]{Definition}
\newtheorem{rem}[thm]{Remark}
\newtheorem{ex}[thm]{Example}
\def\gl{{\rm GL}}
\def\M{{\rm M}}
\def\Ext{{\rm Ext}}
\def\rep{{\rm Rep}}
\def\rank{{\rm rank}}
\def\crys{{\rm crys}}
\def\fil{{\rm Fil}}
\def\gal{{\rm Gal}}
\def\val{{\rm val}_\pi}
\def\cyc{{\rm cyc}}
\def\ur{{\rm nr}}
\def\tr{{\rm tr}}
\def\bdd{{\rm bdd}}
\def\A{{\bf A}}
\def\B{{\bf B}}
\def\C{{\bf C}}
\def\D{{\bf D}}
\def\E{{\bf E}}
\def\F{{\bf F}}
\def\N{{\bf N}}
\def\Q{{\bf Q}}
\def\T{{\bf T}}
\def\V{{\bf V}}
\def\Z{{\bf Z}}
\def\CO{\mathcal{O}}
\newcommand{\gn}{\mathfrak{m}}
\begin{document}

\title
  {Extensions of rank one $(\varphi, \Gamma)$-modules
   and crystalline representations}

\author{Seunghwan Chang}
\address{Institute of Mathematical Sciences\\Ewha Womans University\\Seoul 120-750\\Republic of Korea}
\email{schang@ewha.ac.kr}      

\author{Fred Diamond}
\address{Department of Mathematics, King's College London, Strand, London WC2R 2LS, UK}
\email{Fred.Diamond@kcl.ac.uk}

\begin{abstract}  Let $K$ be a finite unramified extension of $\Q_p$.
We parametrize the $(\varphi, \Gamma)$-modules corresponding to 
reducible two-dimensional $\overline{\F}_p$-representations of $G_K$
and characterize those which have reducible crystalline lifts with
certain Hodge-Tate weights.
\end{abstract}

\subjclass{11S25}
\keywords{$p$-adic representations, crystalline representations, $(\varphi, \Gamma)$-modules, Wach modules}
\thanks{}

\maketitle

\tableofcontents

\section{Introduction}
Buzzard, Jarvis and one of the authors \cite{BDJ05} have
formulated a generalization of Serre's conjecture for mod $p$
Galois representations over totally real fields unramified at $p$. To give a recipe
for weights, certain distinguished subspaces of local
Galois cohomology  groups in characteristic $p$ are defined in terms of 
the existence of ``crystalline lifts'' to characteristic zero.  
More precisely, let $K$ be a finite unramified extension of $\Q_p$ with residue
field $k$, $\F$ a finite extension of $\F_p$ containing $k$,
$\psi:G_K \to \F^\times$ a character, and denote by $S$ the set of embeddings
of $k$ in $\F$.  For each $J\subset S$, they define a subspace (or in certain
cases two subspaces) of $H^1(G_K,\F(\psi))$ which we denote $L_J$ (or $L_J^\pm$); 
with certain exceptions these subspaces have dimension $|J|$ (see
Remark~\ref{rmk:compare} below for the relation between our notation
and that of \cite{BDJ05}).  The definition of these subspaces in terms of crystalline
lifts is somewhat indirect, making it hard for example to compare the
spaces $L_J$ for different $J$.  Viewing the specification of the weights
in terms of a conjectural mod $p$ Langlands correspondence as in 
\cite[\S4]{BDJ05}, such a comparison provides
information about possible local factors at primes over $p$ of mod $p$
automorphic representations (see \cite{Bre09}).

The aim of this paper is to describe
them more explicitly using Fontaine's theory of $(\varphi, \Gamma)$-modules.
In particular, we prove that if $\psi$ is {\em generic}, as
defined in \S\ref{sec:generic}, then the subspaces are well-behaved
with respect to $J$ in the following sense:
\begin{thm} \label{thm:intro1} If $\psi$ is generic and $\psi|_{I_K} \neq 
\chi^{\pm1}$ where $\chi$ is the mod $p$ cyclotomic character, then 
$L_J = \oplus_{\tau\in J} L_{\{\tau\}}$.
\end{thm}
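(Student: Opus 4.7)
The plan is to translate the statement into the language of $(\varphi,\Gamma)$-modules and Wach modules, and then exploit the fact that for unramified $K$ every rank-one object naturally decomposes along the embeddings $\tau\in S$. Concretely, I would identify $H^1(G_K,\F(\psi))$ with the $\F$-space of equivalence classes of extensions of the étale $(\varphi,\Gamma)$-module $D(\F(\psi))$ by $D(\F)$, and use the paper's parametrization to write each class in explicit coordinates. Membership in $L_J$ then becomes the solvability of a system of linear conditions: the existence of a Wach-module lift whose Hodge--Tate weights (read off embedding by embedding) are concentrated in the subset $J\subset S$.

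First I would prove the inclusion $\bigoplus_{\tau\in J}L_{\{\tau\}}\subset L_J$. Given classes $c_\tau\in L_{\{\tau\}}$ for $\tau\in J$, each is witnessed by a crystalline lift whose Wach module has a distinguished matrix entry supported at $\tau$ and trivial contribution at embeddings $\tau'\neq\tau$. Because the rings of Wach-module coefficients decompose as products indexed by $S$, one can combine the various $\tau$-components into a single Wach module, verify that the resulting lattice is $\Gamma$-stable and of the correct Hodge--Tate type, and check that its mod $p$ reduction represents $\sum_{\tau\in J}c_\tau$. This is the step I expect to be most technical; the obstacle is in the bookkeeping for the $\Gamma$-action on the combined entries, and in ensuring that the Wach-module axioms are preserved when one superimposes the components.

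Next I would show the sum is direct. The hypothesis $\psi|_{I_K}\neq\chi^{\pm1}$ rules out the degenerate cases in which $L_J$ could exceed the expected dimension $|J|$, so one already knows $\dim L_J=|J|$ from the dimension computation alluded to in the introduction. For the directness of $\sum_{\tau\in J}L_{\{\tau\}}$ one reads off, from the explicit coordinates on extensions, that a class in $L_{\{\tau\}}$ has a single nonzero $\tau$-entry; genericity of $\psi$ ensures that no unwanted cancellations among these entries can occur. Linear independence of the lines $L_{\{\tau\}}$ inside $H^1$ follows by projecting onto the corresponding $\tau$-components.

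Finally, having $\bigoplus_{\tau\in J}L_{\{\tau\}}\subset L_J$ with both sides of dimension $|J|$, the theorem follows by dimension count. In summary, after the translation into $(\varphi,\Gamma)$-modules the proof reduces to (i) an explicit superposition construction for Wach modules, which is the main technical point, and (ii) a comparison of dimensions that is forced once the genericity hypotheses rule out the exceptional behaviour at $\psi|_{I_K}=\chi^{\pm1}$.
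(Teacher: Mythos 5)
The step you flag as ``most technical'' is not a bookkeeping problem but a genuine gap, and it is precisely what the paper's machinery is built to circumvent. A class $c_\tau\in L_{\{\tau\}}$ is by definition the reduction of a lattice in a crystalline extension whose labelled Hodge--Tate weights are dictated by the singleton $\{\tau\}$; as $\tau$ varies over $J$ these weight vectors genuinely change, so the Wach modules you propose to ``superimpose'' are lattices inside $p$-adic representations with \emph{different} Hodge--Tate types. There is no common ambient object in characteristic zero in which a Baer sum makes sense, and the Wach-module axioms ($\Gamma$ trivial modulo $\pi$, divisibility by $q^k$) are tied to one fixed Hodge type. It is also not the case that the Wach module for $c_\tau$ has a ``distinguished matrix entry supported at $\tau$'' with vanishing entries elsewhere --- the explicit cocycles of \S4 typically have nonzero entries at every embedding. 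The inclusion $\bigoplus_{\tau\in J}L_{\{\tau\}}\subset L_J$ is not the easy direction here and is not proved directly in the paper at all.

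What the paper does instead is work with an intermediate space in characteristic $p$: the space $V_J\subset\Ext^1(M_{\vec{0}},M_{C\vec{c}})$ of bounded $(\varphi,\Gamma)$-module extensions (Definition~\ref{bounded}), which absorbs the $J$-dependence of $(\vec a,\vec b)$ into a twist $\iota$ so that all of these spaces sit inside the same $\Ext^1$. Proposition~\ref{gen} computes $V_J=\bigoplus_{i\in J}V_{\{i\}}$ by a valuation analysis in $\F((\pi))$ using the explicit cocycle basis $[B_0],\ldots,[B_{f-1}]$ of \S4; this is where a ``project to the $\tau$-component'' intuition has real content, but it lives entirely in characteristic $p$ and never combines crystalline lifts. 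The genuinely hard containment is $L_J\subset V_J$, and the obstruction is exactly the one your proposal silently assumes away: the Wach module functor $T\mapsto\N(T)$ on $\CO_F$-lattices is not right exact (Example~\ref{nonext}). Lemma~\ref{lem:ext}, Proposition~\ref{prop:rk2} and Lemma~\ref{lem:genexact} establish exactness under the genericity hypothesis, so the reduction of a crystalline extension does yield a bounded $(\varphi,\Gamma)$-module extension; combined with $\dim_\F V_J=|J|$ (Proposition~\ref{gen}) and $\dim_\F L_J=|J|$ (quoted from \cite{BDJ05}), one gets $L_J=V_J=\bigoplus_{i\in J}V_{\{i\}}=\bigoplus_{i\in J}L_{\{i\}}$ once $\psi|_{I_K}\neq\chi^{\pm1}$ rules out the exceptional $\vec{c}$. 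Your proposal would need to replace the superposition construction with some such boundedness--plus--exactness argument; as written it does not get started.
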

We remark that Theorem~\ref{thm:intro1} has been proved independently
by Breuil \cite[Prop.~A.3]{Bre09} using different methods.  We also
treat the case where $\psi|_{I_K} =  \chi^{\pm1}$; see 
Theorem~\ref{thm:generic} below for the statement.
 
We also give a complete description of the spaces $L_J$ (and $L_J^\pm$)
in terms of $(\varphi,\Gamma)$-modules when $K$ is quadratic,
without the assumption that $\psi$ is generic.  In particular,
we prove the following theorem which exhibits cases where the
spaces $L_J$ are not well-behaved as in Theorem~\ref{thm:intro1}:
\begin{thm}  \label{thm:intro2} Suppose that $[K:\Q_p] = 2$ and that $\psi$
is ramified. Writing $S = \{\tau,\tau'\}$, we have $L_{\{\tau\}}
 = L_{\{\tau'\}}$ if and only if $\psi|_{I_K} = \omega_2^i$
for some fundamental character $\omega_2$ of niveau $2$ and
some integer $i \in \{1,\ldots,p-1\}$.
\end{thm}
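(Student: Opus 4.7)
The plan is to use the explicit description of each $L_{\{\tau\}}$ as a one-dimensional subspace of $H^1(G_K, \F(\psi))$ developed in the preceding sections, and then to compare the two resulting lines directly. Since $[K:\Q_p] = 2$ and $\psi$ is ramified, local Tate duality yields $\dim_\F H^1(G_K, \F(\psi)) = 2$, so the equality $L_{\{\tau\}} = L_{\{\tau'\}}$ is equivalent to the linear dependence of the two generators in this two-dimensional space.

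First I would split into cases according to the niveau of $\psi|_{I_K}$: either niveau $1$, so $\psi|_{I_K} = \omega_\tau^a \omega_{\tau'}^b$ for some $(a,b) \neq (0,0)$ with $a, b \in \{0, \ldots, p-1\}$, or niveau $2$, so $\psi|_{I_K} = \omega_2^i$ for some $i \in \{1, \ldots, p^2-2\}$ not divisible by $p+1$. Using the Wach module analysis from earlier sections, I would then produce explicit cocycle generators $c_\tau$ and $c_{\tau'}$ for $L_{\{\tau\}}$ and $L_{\{\tau'\}}$, each arising as the mod $p$ reduction of an extension of rank-one Wach modules with labelled Hodge--Tate weight $1$ at the chosen embedding and weight $0$ at the other. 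The coefficients of these cocycles in a fixed basis of the cohomology depend explicitly on $i$ (or on $(a,b)$) and on how $\tau$ distinguishes between the two fundamental characters of the relevant niveau.

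For the ``if'' direction, I would exploit a Frobenius symmetry: the nontrivial element of $\gal(k/\F_p)$ swaps $\tau \leftrightarrow \tau'$ and sends $\omega_2 \mapsto \omega_2^p$, so when $\psi|_{I_K} = \omega_2^i$ with $i \in \{1, \ldots, p-1\}$ the Wach module data underlying $c_\tau$ and $c_{\tau'}$ are interchanged in a controlled way that forces the two cocycles to be scalar multiples of one another. For the ``only if'' direction, I would verify that in every other case --- any niveau 1 character, or niveau 2 with $i$ outside the stated range --- the cocycles $c_\tau$ and $c_{\tau'}$ are linearly independent in $H^1(G_K, \F(\psi))$. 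In niveau $1$ this should fall out of a direct comparison of the distinct exponents appearing at each embedding; in niveau $2$ the symmetry is broken once the ``digits'' of $i$ in base $p$ both contribute.

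The main obstacle will be the mod $p$ bookkeeping of the Wach module coefficients in the niveau $2$ case, where one must pin down precisely the range of $i$ for which the Frobenius symmetry is sufficient to force $c_\tau \propto c_{\tau'}$ and exactly where it fails, in order to recover the clean statement $i \in \{1, \ldots, p-1\}$ rather than a larger or smaller subset of the niveau $2$ exponents.
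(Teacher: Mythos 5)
Your plan has a genuine gap at its foundation, and a secondary idea that does not work as stated.

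The central gap is the claim that each $L_{\{\tau\}}$ has a generator ``arising as the mod $p$ reduction of an extension of rank-one Wach modules.'' This is precisely the point that cannot be taken for granted. The integral Wach module functor $\N$ fails to be right exact: a $G_K$-stable lattice $T$ in a crystalline extension $0\to V_1\to V\to V_2\to 0$ need not give an exact sequence $0\to\N(T_1)\to\N(T)\to\N(T_2)\to 0$ (see Example~\ref{nonext} and the surrounding discussion). The whole content of \S\ref{sec:crys} is to find a workable criterion for exactness (Lemma~\ref{lem:ext}, Proposition~\ref{prop:rk2}) and to verify it in the needed cases, and Lemma~\ref{lem:f2exact} --- the technical heart of the quadratic case --- is by far the longest argument in that section. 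Its hardest subcase, requiring the analysis of $\N(T)$ modulo $p^2$, is \emph{exactly} the one that produces $L_{\{0\}}=L_{\{1\}}$ (see Remark~\ref{rmk:indepf2}). Your proposal silently assumes away the problem that is actually being solved.

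The ``Frobenius symmetry'' idea for the ``if'' direction does not work. Conjugating by a lift of the nontrivial element $\sigma\in\gal(k/\F_p)$ swaps the embeddings but also replaces $\psi$ by $\psi^\sigma$; it transports $L_{\{\tau\}}\subset H^1(G_K,\F(\psi))$ to a subspace of $H^1(G_K,\F(\psi^\sigma))$, which is a \emph{different} cohomology group unless $\psi^\sigma=\psi$, i.e.\ unless $\psi|_{I_K}$ has niveau $1$. But the niveau $1$ case is precisely where the theorem asserts $L_{\{\tau\}}\neq L_{\{\tau'\}}$, so the symmetry you are invoking is present exactly where it is useless and absent exactly where you need it. In the paper the coincidence $L_{\{\tau\}}=L_{\{\tau'\}}$ comes not from a Galois symmetry of the input data but from the combinatorics of the Hodge--Tate weight recipe: writing $\D(\F(\psi))\cong M_{C\vec c}$, one finds (Propositions~\ref{dim1},~\ref{dim1'} and the Corollary following them) that the two bounded subspaces $V_{\{0\}}$ and $V_{\{1\}}$ are spanned by the \emph{same} cocycle $B_i$ precisely when $c_0=p-1$ or $c_1=p-1$, which under Corollary~\ref{cor:fun} translates to $\psi|_{I_K}=\omega_2^i$, $i\in\{1,\dots,p-1\}$.

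Finally, a small but real error: $\psi$ ramified does not force $\dim_\F H^1(G_K,\F(\psi))=2$; for $\psi=\overline\chi$ the dimension is $f+1=3$, and $\overline\chi$ is ramified. This case must still be handled (the paper does so via \S\ref{sec:cyclo} and Proposition~\ref{prop:cyclo}), and your appeal to a two-dimensional target space breaks down there.
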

This is part of Theorem~\ref{thm:f2} below; see also Theorem~\ref{thm:unramified}
for the case when $\psi$ is unramified.

The paper is organized as follows: In \S 2 we review
preliminary facts on $p$-adic representations and $(\varphi,
\Gamma)$-modules, and set up the category of \'etale $(\varphi,
\Gamma)$-modules (corresponding to $\F[G_K]$-modules) in which we will
be working.  In \S 3 we give a parametrization of rank one objects in
the category, and identify them as reductions of crystalline characters
of $G_K$ using results of Dousmanis \cite{Dou07}.
In \S 4 we construct bases for the space of extensions of
rank ones.  (In a different but related direction, see
\cite{Her98, Her01, Liu07} for computation of $p$-adic
Galois cohomology  via $(\varphi, \Gamma)$-modules.)
In \S 5 we introduce the notion of bounded extensions, motivated
by the theory of Wach modules which characterizes those
$(\varphi,\Gamma)$-modules corresponding to crystalline representations
(see \cite{Wac96, Wac97, Ber03, Ber04}), and use this to define subspaces
$V_J^{(\pm)}$ which we compute in the generic and quadratic cases.
In \S 6 we treat certain exceptional cases
excluded from \S\S 4,5.  In \S 7 we relate the spaces $L_J^{(\pm)}$
and $V_J^{(\pm)}$ in the generic and quadratic cases and prove 
our main results.  We remark that a difficulty arises from the
fact that the integral Wach module functor is not right exact;
to overcome this we derive sufficient conditions for exactness
which may be of independent interest.

This paper grew out of the first author's Brandeis Ph.D.~thesis
\cite{thesis} written under the supervision of the second author.  The thesis
already contains most of the key technical results in \S\S 4,5.
The authors are grateful to Laurent Berger for helpful conversations
and correspondence, and to the referee for useful comments suggesting
corrections and improvements to the exposition.
The first author expresses his sincere gratitude to the second author
for suggesting the project and for the guidance and encouragement during doctoral study.
He would like to thank POSTECH, in particular Professor YoungJu Choie,
for the hospitality in the final stages of writing the paper.
He was supported by RP-Grant 2009 of Ewha Womans University.
The second author is grateful to the Isaac Newton
Institute for its hospitality in the final stages of writing this paper.
The research was supported by NSF grant \#0300434.

\section{Generalities on $p$-adic representations}
In this section we summarize (and expand a bit upon) basic facts on
$p$-adic representations, crystalline representations,
$(\varphi,\Gamma)$-modules and Wach modules.
We will give references for details and proofs along the way.
For excellent general introductions to the theory, see \cite{Ber04a}
and \cite{FO07}.

Let $p$ be a rational prime and fix an algebraic closure
$\overline{\Q}_p$ of $\Q_p$. If $K$ is a finite extension of $\Q_p$
contained in $\overline{\Q}_p$, $G_K$ denotes the Galois group ${\rm
Gal}(\overline{\Q}_p/K)$ and $K_0$ denotes the absolutely unramified
subfield of $K$. Let $\chi : G_K \rightarrow \Z_p^\times$ be the
cyclotomic character and let $\bar{\cdot}:\Z_p \to \F_p$ be the
reduction modulo $p$, so that $\overline{\chi}=\bar{\cdot}\circ\chi
: G_K \rightarrow \F_p^\times$ is the mod $p$ cyclotomic character.
We set $K_n = K(\mu_{p^n}) \subset \overline{\Q}_p$ for $n \ge 1$, 
and get a tower of fields
$$K=K_0 \subset K_1 \subset \cdots \subset K_n \subset \cdots
\subset K_\infty \subset \overline{\Q}_p$$ where $K_\infty =
\cup_{n\ge 1} K_n$. We define $H_K$ to be the kernel of $\chi$, i.e.,
$H_K=$ Gal$(\overline{\Q}_p/K_\infty)$ and set $\Gamma_K = G_K/H_K
=$ Gal$(K_\infty/K)$. In many cases where there is no confusion,
we will simply write $\Gamma$
for $\Gamma_K$ suppressing $K$. We set $\Gamma_n =
\Gamma_{K,n} =$ Gal$(K_\infty/K_n)$ for $n\ge 1$.

\subsection{Fontaine's rings}
Here we give a summary of the constructions of some of the rings
introduced by Fontaine that we will be using. See \cite{Col99, CC98, Fon94a}
for more details. Let $\C_p$ denote the $p$-adic completion of
$\overline{\Q}_p$ and $v_p$ the $p$-adic valuation normalized by
$v_p(p)=1$. The set $$\widetilde{\E}=\varprojlim_{x\mapsto x^p} \C_p
= \{x=(x^{(0)}, x^{(1)},\ldots) |\, x^{(i)}\in \C_p,
(x^{(i+1)})^p=x^{(i)}\}$$ together with the addition and the
multiplication defined by
$$(x+y)^{(i)}=\lim_{j\to \infty} (x^{i+j}+y^{i+j})^{p^j} \,\, {\rm
and} \,\, (xy)^{(i)}=x^{(i)}y^{(i)}$$ is an algebraically closed
field of characteristic $p$, complete for the valuation $v_\E$
defined by $v_\E(x)=v_p(x^{(0)})$. We endow $\widetilde{\E}$ a
Frobenius $\varphi$ and the action of $G_{\Q_p}$ by
$$\varphi((x^{(i)}))=((x^{(i)})^p) \,\, {\rm and} \,\, g((x^{(i)}))= (g(x^{(i)}))$$
if $g \in G_{\Q_p}$
We denote the ring of integers of $\widetilde{\E}$ by $\widetilde{\E}^+$;
it is stable under the actions of $\varphi$ and $G_{\Q_p}$.
Let $\varepsilon = (1, \varepsilon^{(1)}, \ldots, \varepsilon^{(i)},
\ldots)$ be an element of $\widetilde{\E}$ such that
$\varepsilon^{(1)}\neq 1$, so that $\varepsilon^{(i)}$ is a
primitive $p^i$-th root of unity for all $i\ge 1$. Then
$v_{\E}(\varepsilon-1)=p/(p-1)$ and $\E_{\Q_p}$ is defined to be the
subfield $\F_p((\varepsilon-1))$ of $\widetilde{\E}$. We define
$\E$ to be the separable closure of $\E_{\Q_p}$ in $\widetilde{\E}$ and
$\widetilde{\E}^+$ (resp. $\gn_\E$) to be the ring of integers (resp. the
maximal ideal) of $\E^+$. The field $\E$ is stable under the action of
$G_{\Q_p}$ and we have $\E^{H_{\Q_p}}=\E_{\Q_p}$. The theory of the field of
norms shows that $\E_K :=\E^{H_K}$ is a finite separable
extension of $\E_{\Q_p}$ of degree
$|H_{\Q_p}/H_K|=[K_\infty:\Q_p(\mu_{p^\infty})]$ and allows
one to identify Gal$(\E/\E_K)$ with $H_K$.
The ring of integers of $\E_K$ is denoted by $\E_K^+$.

Let $\widetilde{\A}=W(\widetilde{\E})$ be the ring of Witt vectors
with coefficients in $\widetilde{\E}$ and let $\widetilde{\B} =
\widetilde{\A}[1/p]= {\rm Fr}(\widetilde{\A})$. Then
$\widetilde{\B}$ is a complete discrete valuation field with ring of
valuation $\widetilde{\A}$ and residue field $\widetilde{\E}$. If $x
\in \widetilde{\E}$, $[x]$ denotes Teich\"muller representative of $x$
in $\widetilde{\A}$. Then every element of $\widetilde{\A}$ can be
written uniquely in the form $\sum_{i\ge 0}p^i[x_i]$ and that of
$\widetilde{\B}$ in the form $\sum_{i\gg\infty}p^i[x_i]$. We endow
$\widetilde{\A}$ with the topology which makes the map $x \mapsto
(x_i)_{i\in \N}$ a homeomorphism $\widetilde{\A}\to \widetilde{\E}^\N$ where
$\widetilde{\E}^\N$ is endowed with the product topology ($\widetilde{\E}$ is
endowed with the topology defined by the valuation $v_{\E}$). We
endow $\widetilde{\B}= \cup_{i\in\N}p^{-i}\widetilde{\A}$ with the topology of
inductive limit. The action of $G_{\Q_p}$ on $\widetilde{\E}$ induces
continuous actions on $\widetilde{\A}$ and $\widetilde{\B}$ which
commute with the Frobenius $\varphi$. Let $\pi=[\varepsilon]-1$. Define
$\A_{\Q_p}$ to be the closure of $\Z_p[\pi,\pi^{-1}]$ in
$\widetilde{\A}$. Then $$\A_{\Q_p}=\{\sum_{i\in \Z} a_n\pi^i|\, a_i \in
\Z_p, a_i \to 0 \,\,{\rm as}\,\, i\to -\infty\}$$ and $\A_{\Q_p}$ is
a complete discrete valuation ring with residue field $\E_{\Q_p}$.
As $$\varphi(\pi)=(1+\pi)^p-1 \,\,{\rm and}\,\,
\gamma(\pi)=(1+\pi)^{\chi(g)}-1 \,\,{\rm if}\,\, g \in G_{\Q_p},$$
the ring $\A_{\Q_p}$ and its field of fractions
$\B_{\Q_p}=\A_{\Q_p}[1/p]$ are stable under $\varphi$ and the action
of $G_{\Q_p}$. Let $\B$ be the closure of the maximal unramified
extension of $\B_{\Q_p}$ contained in $\widetilde{\B}$, and set
$\A=\B\cap \widetilde{\A}$, so that we have $\B=\A[1/p]$. Then $\A$ is a
complete discrete valuation ring with field of fractions $\B$ and
residue field $\E$. The ring $\A$ and the field $\B$ are stable under
$\varphi$ and $G_{\Q_p}$. If $K$ is a finite extension of $\Q_p$, we
define $\A_K=\A^{H_K}$ and $\B_K=\B^{H_K}$, which makes $\A_K$ a
complete discrete valuation ring with residue field $\E_K$ and the
field of fractions $\B_K=\A_K[1/p]$. When $K=\Q_p$, the two
definition of $\A_K$ and $\B_K$ coincide. If $F$ is a finite
extension of $K,$ then $\B_F$ is an unramified extension of $\B_K$
of degree $[F_\infty:K_\infty]$. If the extension $F/K$ Galois, then
the extensions $\widetilde{\B}_F/\widetilde{\B}_K$ and $\B_F/\B_K$
are also Galois with Galois group
$${\rm Gal}(\widetilde{\B}_F/\widetilde{\B}_K) = {\rm Gal}(\B_F/\B_K)
= {\rm Gal}(\E_F/\E_K) = {\rm Gal}(F_\infty/K_\infty) = H_K/H_F.$$
In particular, if $K$ is a finite unramified extension of $\Q_p$, we have
$$\A_K = \{\sum_{n\in \Z} a_n\pi^n|\, a_n \in \CO_K, a_n \to 0 \,\,{\rm as}\,\, n\to -\infty\}$$
with $\varphi$ acting as the Frobenius and $\Gamma$ acting trivially
on $\CO_K$.

The homomorphism $\theta : \widetilde{\A}^+ \to \CO_{\C_p}$,
$\sum_{n \ge 0}p^n[x_n] \mapsto \sum_{n \ge 0}p^nx_n^{(0)}$ is
surjective and its kernel is a principal ideal generated by $\omega =
\pi/\varphi^{-1}(\pi)$. We extend $\theta$ to a homomorphism
$\widetilde{\B}^+=\widetilde{\A}^+[1/p]\to \C_p$ and we set
$\B_{\rm dR}^+$ to be the ring
$\varprojlim\widetilde{\B}^+/(\ker\theta)^n$. Then $\theta$ extends
by continuity to a homomorphism $\B_{\rm dR}^+ \to \C_p$.
This makes $\B_{\rm dR}^+$ a discrete valuation ring with maximal
ideal $\ker\theta$ and residue field $\C_p$. The action of
$G_{\Q_p}$ on $\widetilde{\B}^+$ extends by continuity to a
continuous action of $G_{\Q_p}$ on $\B_{\rm dR}^+$. The series $\log
[\varepsilon]= \sum_{n\ge 1}(-1)^{n-1}\pi^n/n$ converges in $\B_{\rm
dR}^+$ to an element $t$, which is a generator of $\ker\theta$ on
which $\sigma \in G_{\Q_p}$ act via the formula
$\sigma(t)=\chi(\sigma)t$. We set $\B_{\rm dR} = \B_{\rm dR}^+[t^{-1}]
=$ Fr$\B_{\rm dR}^+$, and $\B_{\rm dR}$ comes with a decreasing, separated and exhaustive filtration
$\fil^i \B_{\rm dR}:= t^i\B_{\rm dR}^+$ for $i \in \Z$.
Let $\A_{\rm cris}=\{x=\sum_{n\ge 0}a_n \frac{\omega^n}{n!} \in
\B_{\rm dR}^+|\, a_n \in \widetilde{\A}^+, a_n \to 0 \}$. Then
$\B_{\rm cris}^+ = \A_{\rm cris}[1/p]$ is a subring of $\B_{\rm
dR}^+$ stable by $G_{\Q_p}$ and contains $t$, and the action of
$\varphi$ on $\widetilde{\B}^+$ extends by continuity to an action
of $\B_{\rm cris}^+$. We have $\varphi(t)=pt$ and we define $\B_{\rm
cris}$ to be the subring $\B_{\rm cris}^+[1/t]$ of $\B_{\rm dR}$, and define
the filtration $\fil^i \B_{\rm cris}:= \fil^i \B_{\rm dR} \cap \B_{\rm cris}$.

\subsection{Crystalline representations}
Let $K$ be a finite extension of $\Q_p$, and let $K_0$ denote its
maximal absolutely unramified subfield.

\begin{defn} A {\it $p$-adic representation of $G_K$} is a finite dimensional
$\Q_p$-vector space together with a linear and continuous action of $G_K$.
A {\it $\Z_p$-representation of $G_K$} is a $\Z_p$-module of finite type with a
$\Z_p$-linear and continuous action of $G_K$. A {\it mod $p$ representation of $G_K$}
is a finite dimensional $\F_p$-vector space with a linear and continuous action of $G_K$.
\end{defn}

\begin{rem} A $\Z_p$-representation $T$ of $G_K$ which is torsion-free over $\Z_p$ is naturally identified with a
({\it $G_K$-stable}) {\it lattice} of the $p$-adic representation $V:=\Q_p\otimes_{\Z_p} T$ of $G_K$.
\end{rem}

If $B$ is a topological $\Q_p$-algebra endowed with a continuous
action of $G_K$ and if $V$ is a $p$-adic representation of $G_K$, we
define $\D_B(V):=(B\otimes_{\Q_p}V)^{G_K}$, which is naturally a
module over $B^{G_K}$. If, in addition, $B$ is $G_K$-regular (i.e., $B$ is
a domain, $({\rm Fr}B)^{G_K}=B^{G_K}$, and every $b \in B-\{0\}$ such
that $\Q_pb$ is stable under $G_K$-action is a unit), then the map
$$\alpha_V: B\otimes_{B^{G_K}}\D_B(V)\to B\otimes_{\Q_p}V$$
defined by $b\otimes v \mapsto 1\otimes bv$ is an injection (see \S 1.3 of \cite{Fon94b}). 
In particular, we have $$\dim_{B^{G_K}}\D_B(V) \le \dim_{\Q_p}V.$$
(If $B$ is $G_K$-regular, $B^{G_K}$ is forced to be a field.)

\begin{defn} If $B$ is $G_K$-regular, we say that a $p$-adic representation $V$ of $G_K$ is
{\it $B$-admissible} if $\dim_{B^{G_K}}\D_B(V) = \dim_{\Q_p}V$. We say that $V$ is {\it crystalline}
if it is $\B_{\rm cris}$-admissible and that $V$ is {\it de Rham} if $\B_{\rm dR}$-admissible.
\end{defn}

\begin{rem} We have the following equivalent conditions:
\begin{enumerate}
\item $\dim_{B^{G_K}}\D_B(V) = \dim_{\Q_p}V$;
\item $\alpha_V$ is an isomorphism;
\item $B\otimes_{\Q_p}V \simeq B^{\dim_{\Q_p}V}$ as $B[G_K]$-modules.
\end{enumerate}
(cf. \S 1.4 of \cite{Fon94b})
\end{rem}

If $V$ is a $p$-adic representation of $G_K$, $\D_{\rm
dR}(V):=(\B_{\rm dR}\otimes_{\Q_p}V)^{G_K}$ is naturally a filtered $K$-vector space.  More precisely,
it is a finite dimensional $K$-vector space with a decreasing, separated and exhaustive filtration
$\fil^i \D_{\rm dR}(V):=(\fil^i\B_{\rm dR}\otimes_{\Q_p}V)^{G_K}$ of
$K$-subspaces for $i \in \Z$. If $V$ is de Rham, a {\it Hodge-Tate
weight} of $V$ is defined to be an integer $h \in \Z$ such that
$\fil^{h}\D_{\rm dR}(V)\neq \fil^{h+1}\D_{\rm dR}(V)$ with
multiplicity $\dim_K \fil^{h}\D_{\rm dR}(V)/\fil^{h+1}\D_{\rm
dR}(V)$. So there are $\dim_{\Q_p}V$ Hodge-Tate weights of $V$
counting multiplicities.  We remark that this differs from the
more standard convention (e.g., \cite{Ber04a}) of
defining $-h$ to be a Hodge-Tate weight of $V$ for $h$ as above.

\begin{defn} A {\it filtered $\varphi$-module over $K$} is a finite dimensional $K_0$-vector space $D$ together with
a $\sigma$-semilinear bijection $\varphi : D \to D$ and a
$\Z$-indexed filtration on $D_K:=D\otimes_{K_0}K$ of $K$-subspaces
which is decreasing, separated and exhaustive.
\end{defn}

If $V$ is a $p$-adic
representation of $G_K$, then $\D_{\rm cris}(V):= (\B_{\rm
cris}\otimes_{\Q_p} V)^{G_K}$ is a filtered $\varphi$-module over
$K$. More precisely, the Frobenius on $\B_{\rm cris}$ induces a
Frobenius map $\varphi:\D_{\rm cris}(V) \to \D_{\rm cris}(V)$ and
the filtration on $\B_{\rm dR}$ induces a filtration $\fil^i\D_{\rm
cris}(V):=D_K \cap (\fil^i\B_{\rm dR}\otimes_{\Q_p}V)^{G_K}$ on
$\D_{\rm cris}(V)$. Moreover, $\D_{\rm cris}(V)$ has finite
dimension over $K_0$ and $\varphi$ is bijective on $\D_{\rm
cris}(V)$. We get a functor $$\D_{\rm cris}: {\rm
Rep}_{\Q_p}G_K \to {\rm MF}_K^\varphi$$ from the category of
$p$-adic representations of $G_K$ to the category of filtered
$\varphi$-modules over $K$.

If $D$ is a filtered $\varphi$-module over $K$ of finite dimension
$d\ge 1$, then $\wedge^d D$ is a filtered $\varphi$-module of
dimension $1$. If $e \in \wedge^d_{K_0} D -\{0\}$ and
$\varphi(e)=\lambda e$ then ${\rm val}(\lambda)$ is independent of
choice of $e$ and we define $t_N(D):=v_p(\lambda)$. Also, we define
$t_H(D)=t_H(D_K)$ to be the largest integer such that
$\fil^{t_H(D)}(\wedge^d_K D_K)$ is nonzero, i.e. $\fil^i(\wedge^d_K
D_K)=\wedge^d_K D_K$ for $i\le t_H(D)$ and $\fil^i(\wedge^d_K
D_K)=0$ for $i> t_H(D)$.

\begin{defn} Let $D$ be a filtered $\varphi$-module over $K$. We say that $D$
is {\it weakly admissible} if $t_H(D)=t_N(D)$ and $t_H(D')\le t_N(D')$ for every
subobject $D'$ of $D$. We say that $D$ is {\it admissible} if $D
\simeq \D_{\rm cris}(V)$ for some $p$-adic representation $V$ of
dimension $\dim_{K_0}D$.
\end{defn}

One can show that if $V$ is a crystalline representation of $G_K$, then $\D_{\rm cris}(V)$ is weakly admissible.
The converse was conjectured by Fontaine, and proved by Colmez and Fontaine.

\begin{thm}[\cite{CF00}] Every weakly admissible filtered $\varphi$-module over $K$ is admissible.
\end{thm}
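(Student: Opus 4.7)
The plan is to construct, for a weakly admissible filtered $\varphi$-module $D$ of $K_0$-dimension $d$, a crystalline $p$-adic representation $V$ of $G_K$ with $\D_{\rm cris}(V) \simeq D$. The natural candidate is
$$V(D) := (\B_{\rm cris} \otimes_{K_0} D)^{\varphi=1} \cap \fil^0(\B_{\rm dR} \otimes_K D_K),$$
with the intersection taken inside $\B_{\rm dR} \otimes_K D_K$. Two things must then be checked: that $\dim_{\Q_p} V(D) = d$, and that the comparison map $\B_{\rm cris} \otimes_{\Q_p} V(D) \to \B_{\rm cris} \otimes_{K_0} D$ is an isomorphism of filtered $\varphi$-modules.

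First I would twist $D$ by a suitable rank-one filtered $\varphi$-module to reduce to the case $t_H(D) = t_N(D) = 0$; this twist is harmless because every rank-one weakly admissible module is itself admissible, its associated character being an explicit crystalline character of the sort constructed in \S 3 of this paper. Second, I would tensor the fundamental exact sequence
$$0 \to \Q_p \to \B_{\rm cris}^{\varphi=1} \to \B_{\rm dR}/\B_{\rm dR}^+ \to 0$$
with $D$ over $K_0$, pass to $G_K$-invariants, and extract $V(D)$ from the resulting long exact sequence; weak admissibility is precisely what forces the correct count of $\Q_p$-lines. Once the dimension equality is in hand, the comparison map is an injection between free $\B_{\rm cris}$-modules of the same rank, and compatibility with $\varphi$ and with $\fil^\bullet$ after base change to $\B_{\rm dR}$ upgrades it to an isomorphism, yielding $D \simeq \D_{\rm cris}(V(D))$.

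The main obstacle is the lower bound $\dim_{\Q_p} V(D) \geq d$. The standard Colmez-Fontaine strategy is a d\'evissage on $d$: if $D$ admits a proper weakly admissible subobject $D' \subset D$, one applies the inductive hypothesis to $D'$ and $D/D'$ and glues using an $\Ext^1$-vanishing for the corresponding crystalline representations. The genuinely hard case is when $D$ is \emph{simple} as a weakly admissible module, and here the proof exploits the Dieudonn\'e-Manin classification of $\varphi$-isocrystals over the completion of the maximal unramified extension of $K_0$ to reduce to a careful comparison of the Newton and Hodge polygons of $D$ and of its subobjects. Producing the needed invariant vectors then amounts to showing that the slope-zero part of $\B_{\rm cris} \otimes_{K_0} D$ meets $\fil^0$ in the correct dimension, which requires controlling the ``parasitic'' periods coming from positive-slope summands. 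This positivity argument is the delicate heart of the Colmez-Fontaine proof and is where a naive attempt would most likely fail.
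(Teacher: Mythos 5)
The paper does not prove this statement; it is quoted directly from Colmez--Fontaine \cite{CF00} as an external black-box input (``admissible'' is defined precisely so that the theorem matches the cited result, and the equivalence of categories is then recorded without argument). There is therefore no proof in the paper to compare against.

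As a free-standing sketch, your proposal correctly identifies the functor $\V_{\rm cris}$ and correctly isolates the genuine difficulty, namely the lower bound $\dim_{\Q_p}\V_{\rm cris}(D)\ge d$, with the twist to $t_H=t_N=0$ and the use of the fundamental exact sequence both being standard reductions. However, the d\'evissage step, as you have described it, does not accurately reflect the structure of the Colmez--Fontaine argument or of any of the known alternatives. In the original \cite{CF00}, the ``simple'' case is not disposed of by Dieudonn\'e--Manin together with a Newton/Hodge polygon comparison; rather, the essential input is an intricate analysis of almost-$\C_p$-representations (and in particular existence and uniqueness statements for certain periods), and producing the required invariant vectors is considerably subtler than ``controlling parasitic periods coming from positive-slope summands.'' Later proofs (Berger, Kisin) route through Kedlaya's slope filtration theorem on the Robba ring, and Fargues--Fontaine through vector bundles on the fundamental curve, which again do not look like the polygon comparison you describe. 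Your gluing step also needs more than an ``$\Ext^1$-vanishing'': what one actually requires is that the natural map from extensions of crystalline representations to weakly admissible extensions of filtered $\varphi$-modules is surjective, which is a delicate comparison of two $\Ext^1$ groups, not a vanishing. So: the scaffolding of your outline is plausible and points at the right hard step, but the content you supply for that hard step is neither the original argument nor a correct replacement for it, and a proof could not be completed along the exact lines you propose without importing one of the genuinely difficult inputs (almost-$\C_p$-representations, slope theory, or the Fargues--Fontaine curve).
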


In sum, we have an equivalence of categories
$$\D_{\rm cris}: {\rm Rep}_{\Q_p}^{cris}G_K \to {\rm MF}_K^{\varphi, {w.a.}}$$
between crystalline representations of $G_K$ and weakly admissible filtered $\varphi$-modules over $K$
with a quasi-inverse given by $\V_{\rm cris}(\cdot):=(\fil^0(\cdot))^{\varphi=1}$.

\subsection{$(\varphi, \Gamma)$-modules}

\begin{defn} A $(\varphi, \Gamma)$-{\it module over $\A_K$}
(resp. $\B_K$, $\E_K$) is an $\A_K$-module of finite type (resp.
finite dimensional vector space over $\B_K$, $\E_K$) endowed with a
semilinear and continuous action of $\Gamma_K$ and with a semilinear map
$\varphi$ which commutes with the action of $\Gamma_K$. We say that
a $(\varphi, \Gamma)$-module $M$ over $\A_K$ (resp. $\E_K$) is {\it \'etale} if
$\varphi(M)$ generates $M$ over $\A_K$ (resp. $\E_K$). A $(\varphi, \Gamma)$-module
$M$ over $\B_K$ is {\it \'etale} if $M$ contains an $\A_K$-lattice
which is stable under $\varphi$ and is \'etale.
\end{defn}

\begin{rem} We identify a
(\'etale) $(\varphi,\Gamma)$-module over $\A_K$ killed by $p$ with 
the corresponing (\'etale) $(\varphi, \Gamma)$-modules over 
$\E_K$.
\end{rem}

If $T$ is a $\Z_p$-representation of $G_K$, we define
$\D(T)=(\A\otimes_{\Z_p}T)^{H_K}$. Then $\D(T)$ is naturally a
module over $\A_K$ of finite type. The Frobenius $\varphi$ on $\A$
induces a Frobenius map $\varphi : \D(T)\to \D(T)$ and the residual
action of $\Gamma_K$ on $\D(T)$ commutes with $\varphi$. One
can also check that $\D(T)$ is \'etale over $\A_K$. Conversely, if $M$ is an
\'etale $(\varphi, \Gamma)$-module over $\A_K$ we define $\T(M)=
(\A\otimes_{\A_K}M)^{\varphi=1}$, which is a $\Z_p$-representation
of $G_K$.

\begin{thm}[\cite{Fon91}]\label{equiv-fon91}
The functor $T \mapsto \D(T)$ defines an equivalence of categories
$$\D: {\rm Rep}_{\Z_p}G_K \to {\rm M}_{\A_K}^{\varphi, \Gamma, et}$$
between $\Z_p$-representations and \'etale $(\varphi, \Gamma)$-modules over $\A_K$
with $\T$ as a quasi-inverse. It induces, by inverting $p$, an equivalence of categories
$$\D: {\rm Rep}_{\Q_p}G_K \to {\rm M}_{\B_K}^{\varphi, \Gamma, et}$$
between $p$-adic representations and \'etale $(\varphi, \Gamma)$-modules over $\B_K$ with
$M \mapsto \V(M):=(\B\otimes_{\B_K}D)^{\varphi=1}$ as a quasi-inverse.
Moreover, if $T$ is a $\Z_p$-representation and $V$ a $p$-adic representation of $G_K$,
then
$$\begin{aligned}
{\rm rank}_{\Z_p}T & = {\rm rank}_{\A_K}\D(T), \\
\dim_{\Q_p}V & = \dim_{\B_K}\D(V).
\end{aligned}$$

\end{thm}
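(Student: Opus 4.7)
The plan is to follow Fontaine's original strategy, reducing first to the mod $p$ case (where the argument is essentially classical Artin--Schreier / Katz theory for étale $\varphi$-modules over the separably closed-in-$\widetilde{\E}$ field $\E$), and then ascending to $\Z_p$- and $\Q_p$-coefficients via $p$-adic limits and inversion of $p$. First I would check well-definedness: for a $\Z_p$-representation $T$ of $G_K$, the $\A_K$-module $\D(T)=(\A\otimes_{\Z_p}T)^{H_K}$ inherits a semilinear $\Gamma_K$-action from $G_K/H_K$, together with a Frobenius induced from $\A$, and these commute; similarly $\T(M)$ is a $\Z_p$-module with residual $G_K$-action since $\varphi$ and $G_K$ commute on $\A\otimes_{\A_K} M$.

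The heart of the proof is the mod $p$ analogue. For a finite $\F_p[H_K]$-module $\bar T$, I would show that the natural map
\[
\E\otimes_{\E_K}(\E\otimes_{\F_p}\bar T)^{H_K}\longrightarrow \E\otimes_{\F_p}\bar T
\]
is an isomorphism; this is a Hilbert~90-type statement for the Galois extension $\E/\E_K$ with $\operatorname{Gal}(\E/\E_K)=H_K$, combined with the Artin--Schreier surjectivity of $\varphi-1$ on $\E$ (which uses that $\E$ is the separable closure of $\E_{\Q_p}$ inside $\widetilde\E$). Conversely, for an étale $\varphi$-module $N$ over $\E_K$, étaleness ensures that $N^{\varphi=1}$ after extension of scalars to $\E$ recovers $N$; this yields the mod~$p$ equivalence, with rank preservation coming from counting $\F_p$-dimensions.

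Next I would ascend to torsion coefficients by induction on the length using the short exact sequences $0\to p^{n-1}T/p^nT\to T/p^nT\to T/p^{n-1}T\to 0$, and then pass to $p$-adic limits, exploiting the $p$-adic completeness of $\A$ and $\A_K$ together with the fact that $\A$ is faithfully flat over $\A_K$ (which follows from the unramified nature of the extension and the residue field identification $\E^{H_K}=\E_K$). This gives the rank formula $\operatorname{rank}_{\Z_p}T=\operatorname{rank}_{\A_K}\D(T)$ and the key isomorphism
\[
\A\otimes_{\A_K}\D(T)\xrightarrow{\sim}\A\otimes_{\Z_p}T
\]
of $\A[\varphi,G_K]$-modules, from which the natural transformations $\D\circ\T\to\operatorname{id}$ and $\T\circ\D\to\operatorname{id}$ are constructed and shown to be isomorphisms. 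Inverting $p$ yields the $\Q_p$ version formally.

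The main obstacle is the mod $p$ step: all subsequent ascent is essentially formal provided one has the equivalence for finite $\F_p$-coefficients and the requisite faithful flatness/completeness properties of $\A$ over $\A_K$. Within that step, the most delicate point is the surjectivity/vanishing statement that makes the Artin--Schreier descent work, which ultimately rests on the structural properties of $\E$ established via the field of norms. Everything else — compatibility with $\varphi$ and $\Gamma_K$, transfer from $\E_K$-modules back to $\A_K$-modules via $p$-adic deformation, and inversion of $p$ — is bookkeeping around this central input.
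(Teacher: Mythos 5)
The paper does not give a proof of this theorem: it is stated as a citation to Fontaine's original article \cite{Fon91} and used as a black box throughout. So there is no ``paper's own proof'' to compare against.

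That said, your sketch is a faithful outline of Fontaine's argument. The steps you identify --- well-definedness and functoriality of $\D$ and $\T$; the mod~$p$ case via unramified Galois descent along $\E/\E_K$ (trivializing the $H_K$-action, a Hilbert~90-type input) combined with trivializing the $\varphi$-action over $\E$ (the Artin--Schreier / Lang step, resting on $\E$ being separably closed and $\widetilde{\E}^{\varphi=1}=\F_p$); dévissage to $\Z/p^n\Z$-coefficients; passage to the $p$-adic limit using the completeness and faithful flatness of $\A$ over $\A_K$; and finally inverting $p$ --- are exactly the structure of the proof in \cite{Fon91}. One small point worth flagging: for the mod~$p$ étale $\varphi$-module side, trivializing a general $n$-dimensional étale $\varphi$-module over $\E$ is not quite just scalar Artin--Schreier surjectivity; one needs a version of Lang's theorem for $\mathrm{GL}_n$ over the separably closed field $\E$ (or a dévissage reducing to the scalar case), and étaleness is what makes this applicable. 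Your phrase ``étaleness ensures that $N^{\varphi=1}$ after extension of scalars to $\E$ recovers $N$'' gestures at this but should be expanded if this were to be a full proof. Otherwise the route is correct and is the standard one.
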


When we restrict the equivalence to the $p$-torsion objects we get
the following.

\begin{cor}\label{equiv-tor} The functor $T \mapsto \D(T)$ defines
an equivalence of categories between mod $p$ representations of
$G_K$ and \'etale $(\varphi, \Gamma_K)$-modules over $\E_K$.
\end{cor}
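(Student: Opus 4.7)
The plan is to deduce this directly from Theorem~\ref{equiv-fon91} by restricting Fontaine's equivalence to the full subcategories of $p$-torsion objects on each side. First I would observe that a mod $p$ representation of $G_K$ is the same thing as a $\Z_p$-representation killed by $p$: the continuity and $\F_p$-linearity conditions on the one side match the continuity and $\Z_p$-linearity conditions on the other, and the $\F_p$-vector space structure is the same as a $\Z_p$-module structure annihilated by $p$.

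Next I would check that the functor $\D$ and its quasi-inverse $\T$ preserve the property of being killed by $p$. For $\D$, this is immediate from the definition: if $T$ is killed by $p$ then so is $\A\otimes_{\Z_p}T$, hence so is the submodule $\D(T)=(\A\otimes_{\Z_p}T)^{H_K}$. For $\T$, similarly, if $M$ is killed by $p$ then $\A\otimes_{\A_K}M$ is killed by $p$, hence so is $\T(M)=(\A\otimes_{\A_K}M)^{\varphi=1}$. Together with Theorem~\ref{equiv-fon91}, this already gives an equivalence between mod $p$ representations of $G_K$ and $p$-torsion \'etale $(\varphi,\Gamma)$-modules over $\A_K$.

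Finally, I would identify $p$-torsion \'etale $(\varphi,\Gamma)$-modules over $\A_K$ with \'etale $(\varphi,\Gamma)$-modules over $\E_K$. Since $\A_K$ is a complete discrete valuation ring of mixed characteristic with uniformizer $p$ and residue field $\E_K$, there is a canonical isomorphism $\A_K/p\A_K\cong \E_K$, and scalar restriction along this isomorphism gives the identification referred to in the preceding remark. One just needs to note that this identification respects $\varphi$ and $\Gamma_K$ (which act on $\A_K$ stabilizing $p\A_K$ and thus descend to $\E_K$ as the structure maps described in \S 2.1), and that the \'etaleness condition $\varphi(M)$ generating $M$ over $\A_K$ is equivalent, for $p$-torsion $M$, to the corresponding condition over $\E_K$.

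I do not expect any real obstacle here; the only thing to be careful about is matching the \'etale condition across the two formulations, but since reduction mod $p$ preserves generation by $\varphi$-images, this is straightforward. The corollary is essentially a formal consequence of Theorem~\ref{equiv-fon91} together with the identification $\A_K/p\A_K=\E_K$.
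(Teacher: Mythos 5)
Your proposal is correct and takes essentially the same route as the paper, which simply says ``When we restrict the equivalence to the $p$-torsion objects we get the following'' and relies on the earlier remark identifying \'etale $(\varphi,\Gamma)$-modules over $\A_K$ killed by $p$ with those over $\E_K$. You have merely spelled out the details of that restriction.
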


Now we introduce coefficients to representations of $G_K$ and
$(\varphi, \Gamma)$-modules to extend Theorem~\ref{equiv-fon91} and 
Corollary~\ref{equiv-tor}. We assume $K$ is absolutely unramified (of degree
$f$ over $\Q_p$) and let $F$ be a finite extension of $\Q_p$ with ring of integers $\CO_F$,
uniformizer $\varpi_F$ and residue field $\F$.   
Consider the ring $\A_{K,F} := \CO_F \otimes_{\Z_p} \A_K$
with the actions of 
$\varphi$ and $\Gamma_K$ extended to $\A_{K,F}$ by linearity,
i.e. $\varphi$ acts as $1\otimes \varphi$ and $\gamma \in \Gamma_K$ as $1\otimes \gamma$.
We assume there is an embedding $\tau_0: K \hookrightarrow F$, which we fix once and for all,
and put $\tau_i=\tau_0\circ {\rm \varphi}^i$ where $\varphi$ is the
Frobenius on $K$. We denote by $S$ the set of all
embeddings $K \hookrightarrow F$ and fix the identification
$S=\Z/f\Z$ via the map $\tau_i \mapsto i$.  We can then
identify $\A_{K,F}$ with $\A_{\Q_p,F}^S$
via the isomorphism defined by $a\otimes b\pi^n
\mapsto (a\tau(b)\otimes\pi^n)_{\tau}$.  Note that
$$\A_{\Q_p,F} = \{\sum_{n\in \Z} a_n\pi^n|\, a_n \in \CO_F, a_n \to 0 \,\,{\rm as}\,\, n\to -\infty\},$$
and the actions of $\varphi$ and
$\gamma \in \Gamma_K$ on $\A_{\Q_p,F}^S$ become
$$
\begin{aligned}
\varphi(g_0(\pi), g_1(\pi), \ldots, g_{f-1}(\pi))
&=(g_1(\varphi(\pi)), \ldots, g_{f-1}(\varphi(\pi)), g_0(\varphi(\pi^))),\\
\gamma(g_0(\pi), g_1(\pi), \ldots, g_{f-1}(\pi))
&=(g_0(\gamma(\pi)), g_1(\gamma(\pi)), \ldots,
g_{f-1}(\gamma(\pi))).
\end{aligned}$$
We similarly define $\B_{K,F}=F\otimes_{\Q_p}\B_K$ and 
$\E_{K,F} = \F\otimes_{\F_p}\E_K$ and endow them with
actions of $\varphi$ and $\Gamma$.  Note that $\B_{K,F}=\A_{K,F}[1/p]$
and $\E_{K,F} = \A_{K,F}/\varpi_F\A_{K,F}$.  Again
identifying $S$ with the set of embeddings $k \to \F$,
we have the isomorphism $\E_{K,F} = \F((\pi))^S$
with the actions of $\varphi$ and $\Gamma_K$ given by
the same formulas as above.

\begin{defn} An {\it $\CO_F$-representation} of $G_K$ is a finitely generated
$\CO_F$-module with a continuous $\CO_F$-linear action of $G_K$. A {\it $(\varphi,
\Gamma_K)$-module over $\A_{K,F}$} is a finitely generated $\A_{K,F}$-module $M$
endowed with commuting semilinear actions of $\Gamma_K$ and $\varphi$. A
$(\varphi, \Gamma_K)$-module $M$ over $\A_{K,F}$ is {\it \'etale} if $\varphi(M)$
generates $M$ over $\A_K$, or equivalently over $\A_{K,F}$.
\end{defn}

We write $\rep_{\CO_F} G_K$ for the category of $\CO_F$-representations
of $G_K$, and ${\rm M}_{\A_{K,F}}^{\varphi, \Gamma, et}$ for that of
 \'etale $(\varphi, \Gamma_K)$-modules over $\A_{K,F}$.
We use analogous definitions and notation for
representations of $G_K$ over $F$ and $\F$,
and $(\varphi,\Gamma_K)$-modules over $\B_{K,F}$ and $\E_{K,F}$.
The category of \'etale $(\varphi, \Gamma_K)$-modules over
$\E_{K,F}$ is the main category we will be working in. 
Theorem~\ref{equiv-fon91}
and Corollary~\ref{equiv-tor} immediately yield the following:

\begin{cor}\label{equiv-labeled} 
The functor $\D$ induces equivalences of categories
$\rep_{\CO_F} G_K  \to {\rm M}_{\A_{K,F}}^{\varphi, \Gamma, et}$,
$\rep_{F} G_K  \to {\rm M}_{\B_{K,F}}^{\varphi, \Gamma, et}$ and
$\rep_{\F} G_K  \to {\rm M}_{\E_{K,F}}^{\varphi, \Gamma, et}$.
\end{cor}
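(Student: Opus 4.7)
The plan is to deduce each of the three equivalences from Theorem~\ref{equiv-fon91} and Corollary~\ref{equiv-tor} by tracking the additional $\CO_F$-linear structure through the functor $\D$. The argument is essentially formal, so I will sketch it in the first case and indicate how the other two follow.

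First I would observe that an $\CO_F$-representation $T$ of $G_K$ is in particular a $\Z_p$-representation (since $\CO_F$ is finite free over $\Z_p$), so Theorem~\ref{equiv-fon91} produces an étale $(\varphi,\Gamma_K)$-module $\D(T)=(\A\otimes_{\Z_p}T)^{H_K}$ over $\A_K$. The $\CO_F$-action on $T$ acts on $\A\otimes_{\Z_p}T$ through the second tensor factor; this action commutes with $H_K$, $\varphi$, and $\Gamma_K$, so taking $H_K$-invariants endows $\D(T)$ with an $\CO_F$-module structure commuting with the $\A_K$-action, $\varphi$, and $\Gamma_K$. Combining the commuting $\A_K$- and $\CO_F$-actions yields an $\A_{K,F}=\CO_F\otimes_{\Z_p}\A_K$-module structure, and by the definition given in the paper the étale condition on $\A_{K,F}$-modules is equivalent to étaleness as an $\A_K$-module, which already holds. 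Thus $\D$ lifts to a functor $\rep_{\CO_F}G_K\to\mathrm{M}_{\A_{K,F}}^{\varphi,\Gamma,et}$.

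Conversely, for $M$ in the target category I would forget down to an étale $(\varphi,\Gamma_K)$-module over $\A_K$, form $\T(M)=(\A\otimes_{\A_K}M)^{\varphi=1}$, and note that the $\CO_F$-action on $M$ induces one on $\A\otimes_{\A_K}M$ (through the second factor), preserved by $\varphi$ and commuting with the continuous $G_K$-action; so $\T(M)$ inherits the structure of an $\CO_F$-representation. The natural isomorphisms $T\cong\T(\D(T))$ and $M\cong\D(\T(M))$ provided by Theorem~\ref{equiv-fon91} are built from the identity on the second tensor factor, hence are $\CO_F$-linear. This gives the equivalence $\rep_{\CO_F}G_K\to\mathrm{M}_{\A_{K,F}}^{\varphi,\Gamma,et}$.

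For the $F$-coefficient case I would invert $p$, using $\B_{K,F}=\A_{K,F}[1/p]$ and the second half of Theorem~\ref{equiv-fon91}; an $F$-representation is the same as an $\CO_F$-representation that is finite free over $\CO_F$ with $p$ inverted, so the argument above specializes. For the $\F$-coefficient case, a continuous finite-dimensional $\F$-representation of $G_K$ is killed by $\varpi_F$, hence by $p$, so it lives in the torsion part of $\rep_{\Z_p}G_K$; applying Corollary~\ref{equiv-tor} with the same $\CO_F$-enrichment argument and using $\E_{K,F}=\A_{K,F}/\varpi_F\A_{K,F}$ yields the third equivalence. The only point that might require care is the compatibility of the $\CO_F$-module structure on $\D(T)$ arising from $T$ with the $\A_{K,F}=\CO_F\otimes_{\Z_p}\A_K$-module structure used to define the target category, but this is forced by the very definition of $\A_{K,F}$ as the tensor product. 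I do not foresee any substantive obstacle.
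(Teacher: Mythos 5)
Your proposal is correct and takes essentially the same approach as the paper, which states that the equivalences ``immediately yield'' from Theorem~\ref{equiv-fon91} and Corollary~\ref{equiv-tor}; you are simply spelling out the routine bookkeeping of the commuting $\CO_F$-action through $\D$ and $\T$, which is precisely what is implicit there.
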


For each
embedding $\tau : K \hookrightarrow \F$, let 
$e_\tau : \A_{K,F}\rightarrow \A_{\Q_p,F}$ denote the projection 
to the $\tau$-component, defined by $a\otimes b\pi^i \mapsto a\tau(b)\pi^i$.
If $M$ is a $(\varphi, \Gamma)$-module over $\A_{K,F}$, then
$M = \prod_{\tau\in S} e_\tau M$, each $e_\tau M$ inherits an action
of $\Gamma$, and $\varphi$ induces semilinear morphisms
$e_{\tau\circ\varphi}M \to e_\tau M$
compatible with the action of $\Gamma$.
We use the same notation for $(\varphi,\Gamma)$-modules
over $\B_{K,F}$ and $\E_{K,F}$.

\begin{lem} \label{lem:free} If $M$ is an \'etale $(\varphi,\Gamma)$-module over $\A_{K,F}$, 
then the following are equivalent:
\begin{enumerate}
\item $\T(M)$ is free over $\CO_F$ of rank $d$;
\item $M$ is free over $\A_K$ of rank $d[F:\Q_p]$;
\item $M$ is free over $\A_{K,F}$ of rank $d$. 
\end{enumerate}
If $M$ is an \'etale $(\varphi, \Gamma)$-module over $\B_{K,F}$ 
(resp.~$\E_{K,F}$), then $M$ is free over $\B_{K,F}$ (resp.~$\E_{K,F}$)
of rank $\dim_F \T(M)$ (resp.~$\dim_{\F}\T(M)$).  
\end{lem}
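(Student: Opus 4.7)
The plan is to handle (1)$\Leftrightarrow$(2) and (3)$\Rightarrow$(2) as immediate consequences of Theorem~\ref{equiv-fon91} and the freeness of $\A_{K,F}$ over $\A_K$, and to prove the substantive implication (1)$\Rightarrow$(3) by tracking ranks through the product decomposition of $\A_{K,F}$. For (1)$\Leftrightarrow$(2): Theorem~\ref{equiv-fon91} gives $\rank_{\A_K} M = \rank_{\Z_p}\T(M)$. The finitely generated $\CO_F$-module $\T(M)$ is free iff it is $\Z_p$-torsion-free (since $\CO_F$ is a PID and $p\CO_F = \varpi_F^e\CO_F$, so $\Z_p$- and $\varpi_F$-torsion coincide), in which case its $\Z_p$-rank is $[F:\Q_p]$ times its $\CO_F$-rank; this translates directly between the two conditions. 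For (3)$\Rightarrow$(2): since $\CO_F$ is $\Z_p$-free of rank $[F:\Q_p]$, $\A_{K,F} = \CO_F\otimes_{\Z_p}\A_K$ is $\A_K$-free of rank $[F:\Q_p]$, and $M \cong \A_{K,F}^d$ is $\A_K$-free of rank $d[F:\Q_p]$.

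For (1)$\Rightarrow$(3), I would decompose $M = \prod_{\tau\in S} e_\tau M$ according to $\A_{K,F} \cong \prod_\tau \A_{\Q_p,F}$. Each $\A_{\Q_p,F}$ is a complete discrete valuation ring with uniformizer $\varpi_F$ (a local, one-dimensional Noetherian domain with residue field $\F((\pi))$ and principal maximal ideal). By (1)$\Leftrightarrow$(2), $M$ is $\Z_p$-torsion-free, hence $\varpi_F$-torsion-free, so each $e_\tau M$ is a finitely generated torsion-free $\A_{\Q_p,F}$-module, and therefore free of some rank $d_\tau$. The étale condition gives $\A_{\Q_p,F}$-linear isomorphisms
\[
\A_{\Q_p,F}\otimes_{\varphi,\A_{\Q_p,F}} e_{\tau\circ\varphi}M \xrightarrow{\sim} e_\tau M,
\]
and base change along the injective Frobenius preserves the rank of free modules, so $d_{\tau\circ\varphi} = d_\tau$; since $\varphi$ cycles $S$ transitively, all $d_\tau$ are equal to a common value $d'$. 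Finally, $\CO_F$ is finitely generated and torsion-free over the PID $\CO_K$, so it is $\CO_K$-free of rank $[F:K] = [F:\Q_p]/f$, whence each $\A_{\Q_p,F} \cong \CO_F\otimes_{\CO_K}\A_K$ is $\A_K$-free of rank $[F:\Q_p]/f$. Summing, $\rank_{\A_K} M = f\cdot d'\cdot [F:\Q_p]/f = d'[F:\Q_p]$, which by (2) equals $d[F:\Q_p]$, forcing $d' = d$.

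For the $\B_{K,F}$ case, pick a $G_K$-stable $\CO_F$-lattice $T$ inside $\T(M)$ (this exists since $G_K$ is compact); then $\D(T)$ is an $\A_{K,F}$-sublattice of $M$, free of rank $d = \dim_F\T(M)$ by the preceding, and inverting $p$ yields the claim. For the $\E_{K,F}$ case, observe that $\E_{K,F} \cong \F((\pi))^S$ is a product of fields, so each $e_\tau M$ is automatically a finite-dimensional $\F((\pi))$-vector space; the same étale-plus-Frobenius-transitivity argument, together with the dimension count $\dim_{\E_K} M = \dim_{\F_p}\T(M) = d[\F:\F_p]$ coming from Corollary~\ref{equiv-tor}, pins down the common dimension to $d$. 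The principal obstacle is the rank-matching step in (1)$\Rightarrow$(3): knowing only the total $\A_K$-rank of $M$ leaves room in principle for unequal component ranks $d_\tau$, and it is the étale structure---via the Frobenius cycling the components of $\A_{K,F}$---that enforces uniformity.
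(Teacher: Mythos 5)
Your proof is correct and follows essentially the same route as the paper: decompose $M$ along $\A_{K,F}\cong\prod_\tau\A_{\Q_p,F}$, observe each component is free over the DVR $\A_{\Q_p,F}$, and use the \'etale structure to force equal component ranks. One small point of care: with the paper's stated definition of \'etale (that $\varphi(M)$ \emph{generates} $M$), the maps $\A_{\Q_p,F}\otimes_{\varphi,\A_{\Q_p,F}}e_{\tau\circ\varphi}M\to e_\tau M$ are a priori only surjective, giving $\rank(e_{\tau_i}M)\le\rank(e_{\tau_{i+1}}M)$, and one cycles through $\Z/f\Z$ to get equality (after which they are indeed isomorphisms); your direct assertion of isomorphism implicitly carries this cycling, and is fine given that for finitely generated modules over the DVR $\A_K$ surjectivity of the linearized Frobenius already implies bijectivity, but it is worth making the surjection-then-cycle step explicit to avoid appearing to presuppose equal ranks.
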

\begin{proof} 
Suppose that $M$ is \'etale over $\A_{K,F}$.  Then multiplication by
$p$ is injective on $M$ if and only if it is injective on $\T(M)$.
Thus $M$ is torsion-free, and hence free, over $\A_K$ if and only if
$\T(M)$ is free over $\CO_F$.  Since the $\A_K$-rank of $M$ coincides
with the $\Z_p$ rank of $\T(M)$, the first two conditions
are equivalent.

If $M$ is free of rank $d$ over $\A_{K,F}$, then it is clearly
free of rank $d[F:\Q_p]$ over $\A_K$.  Conversely suppose that
$M$ is free over $\A_K$.  Then each $e_\tau M$ is torsion-free,
hence free, over the discrete valuation ring $\A_{\Q_p,F}$.
We need only show that each $e_\tau M$ has the same rank.
Since $M$ is \'etale, the maps 
$$e_{\tau\circ\varphi}M\otimes_{\A_{\Q_p,F},\varphi} \A_{\Q_p,F} \to e_\tau M$$
are surjective, so we have $\rank (e_{\tau_i} M )\le \rank (e_{\tau_{i+1}}M)$
for all $i\in \Z/f\Z$.  The equivalence between the last two conditions
follows.

The assertions for \'etale $(\varphi,\Gamma)$-modules over
$\B_{K,F}$ and $\E_{K,F}$ are similar, but simpler since
$\B_K$ and $\E_K$ are fields.
\end{proof}

Finally, there are tensor products and exact sequences in the various categories of
\'etale $(\varphi, \Gamma)$-modules, compatible via $\D$ with tensor products and
exact sequences in the corresponding categories of representations of $G_K$.

\subsection{Wach modules}
It is very useful to be able to characterize whether a $p$-adic representation
is crystalline in terms of the corresponding $(\varphi, \Gamma)$-module.
This can be done via the theory of Wach modules if $K$ is unramified over $\Q_p$.

Let $\A^+ =\A \cap \widetilde{\A}^+=\B \cap \widetilde{\A}^+$ and
$\B^+=\A^+[1/p]$. If $K$ is a finite unramified extension of $\Q_p$, we set
$\A_K^+ = (\A^+)^{H_K}=\CO_K[[\pi]]\subset \A_K$ and $\B_K^+ = (\B^+)^{H_K}=\A_K^+[p^{-1}] \subset \B_K$.

\begin{defn} Let $K$ be a finite unramified extension of $\Q_p$.
We say that a $\Z_p$-representation $T$ (resp. $p$-adic representation $V$) of $G_K$, is
{\it of finite height} if there exists a basis of $\D(T)$ (resp. $\D(V)$) such that
the matrices describing the action of $\varphi$ and the action of
$\Gamma_K$ are defined over $\A_K^+$ (resp. $\B_K^+$).
\end{defn}

Colmez \cite{Col99} proved that every crystalline representation is
necessarily of finite height. The converse is not true in general
and there are representations of finite height which are not
crystalline. However, Wach \cite{Wac96, Wac97} proved that finiteness
of height together with a certain condition (existence of a certain
$\A_K^+$-submodule of the corresponding $(\varphi, \Gamma)$-module)
implies crystallinity. Berger \cite{Ber03, Ber04} then refined the
results of Wach and Colmez as summarized below.

\begin{defn} Suppose $a \le b \in \Z$. A {\em Wach module} over $\A_K^+$ (resp. $\B_K^+$)
with weights in $[a,b]$ is a free $\A_K^+$-module (resp.
$\B_K^+$-module) $N$ of finite rank, endowed with an action of
$\Gamma_K$ which becomes trivial modulo $\pi$, and also
with a Frobenius map $\varphi : N[1/\pi] \rightarrow N[1/\pi]$ which
commutes with the action of $\Gamma_K$ and such that
$\varphi(\pi^{-a}N)\subset \pi^{-a}N$ and $\pi^{-a}N/\varphi(\pi^{-a}N)$ is
killed by $q^{b-a}$ where we define $q:=\varphi(\pi)/\pi$.
\end{defn}

\begin{thm}[\cite{Ber04}]\label{berger} 
\begin{enumerate}
\item A $p$-adic representation $V$ is crystalline with Hodge-Tate weights in $[a,b]$
if and only if $\D(V)$ contains a Wach module $\N(V)$ of rank $\dim_{\Q_p} V$ with weights
in $[a,b]$.  The association $V \mapsto \N(V)$ induces an equivalence of
categories between crystalline representations of $G_K$ and Wach
modules over $\B_K^+$, compatible with tensor products, duality and
exact sequences.
\item For a given crystalline representation $V$, the map $T
\mapsto \N(T):=\N(V)\cap \D(T)$ induces a bijection between $G_K$-stable lattices of
$V$ and Wach modules over $\A_K^+$ which are $\A_K^+$-lattices
contained in $\N(V)$.  Moreover $\D(T) = \A_K\otimes_{\A_K^+} \N(T)$.
\item If $V$ is a crystalline representation of $G_K$, and if we
endow $\N(V)$ with the filtration ${\rm Fil}^i\N(V)=\{x \in \N(V)|
\varphi(x) \in q^i\N(V)\}$, then we have an isomorphism $\D_{\rm
cris}(V)\to \N(V)/\pi\N(V)$ of filtered $\varphi$-modules (with the induced filtration on $\N(V)/\pi\N(V)$).
\end{enumerate}
\end{thm}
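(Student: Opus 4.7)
The plan is to follow Berger's strategy, which augments Wach's original argument (handling the abelian case) by bringing in the Robba ring $\B_{{\rm rig},K}^+$ of rigid-analytic functions on the open unit disk and exploiting the canonical differential operator $\nabla$ coming from the action of $\Gamma_K$. Throughout I would work with the embeddings $\A_K^+ \subset \A_K \subset \B \subset \B_{\rm rig}$ and $\B_K^+ \subset \B_{{\rm rig},K}^+ \subset \widetilde{\B}_{{\rm rig},K}^+$, together with the inclusion $\widetilde{\B}_{{\rm rig},K}^+ \hookrightarrow \B_{\rm cris}^+$.

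For the forward direction of (1), given a crystalline $V$ with Hodge--Tate weights in $[a,b]$, I would construct $\N(V)$ as a canonical $\B_K^+$-lattice inside the overconvergent module $\D_{{\rm rig}}^\dagger(V) = \B_{{\rm rig},K}\otimes_{\B_K}\D(V)$. The key point is that crystallinity endows $\D_{{\rm rig}}^\dagger(V)$ with the structure of a $(\varphi,\nabla)$-module over $\B_{{\rm rig},K}$ having a regular singularity at $\pi=0$ with exponents in $[-b,-a]$; by the Turrittin/Kedlaya-style classification of such modules, there is a unique $\varphi$-stable, $\Gamma$-stable $\B_{{\rm rig},K}^+$-submodule $\N_{\rm rig}(V)$ of the correct rank on which the action is trivial modulo $\pi$. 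One then shows $\N(V) := \N_{\rm rig}(V)\cap \D(V)$ is a free $\B_K^+$-module of the right rank and verifies the $q$-denominator condition on $\varphi$ by comparing with $\D_{\rm cris}(V)$. For the reverse direction, given a Wach module $\N\subset\D(V)$, I would push $\N$ into $\B_{\rm cris}\otimes_{\Q_p}V$ via $\B_K^+ \subset \widetilde{\B}^+ \subset \B_{\rm cris}^+$; the triviality of the $\Gamma$-action on $\N/\pi\N$ and the $q^{b-a}$-bounded cokernel of $\varphi$ on $\pi^{-a}\N$ together force the resulting $K_0$-span of $G_K$-invariants in $\B_{\rm cris}\otimes V$ to have dimension $\dim_{\Q_p}V$, giving crystallinity with weights in $[a,b]$. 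Compatibility with tensor products, duality, and short exact sequences is automatic from the canonical nature of the construction.

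For (2), I would proceed integrally: given a $G_K$-stable lattice $T\subset V$, define $\N(T):=\N(V)\cap \D(T)$ and show it is a free $\A_K^+$-module using the completeness of $\A_K^+$ and Lemma~\ref{lem:free} (the torsion-freeness of $\N(T)$ over $\A_K^+$ follows from that of $\D(T)$ over $\A_K$). The identity $\D(T)=\A_K\otimes_{\A_K^+}\N(T)$ reduces to showing that the natural map is surjective, which can be checked after reducing modulo $p$ and invoking that $\varphi$ on $\N(T)/p$ makes it \'etale over $\A_K^+/p$ up to bounded $q$-denominators. The inverse map sends a Wach $\A_K^+$-lattice $N\subset \N(V)$ to $\T(\A_K\otimes_{\A_K^+}N)$.

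For (3), the natural map $\N(V)/\pi\N(V)\to \D_{\rm cris}(V)$ is induced by the embedding $\B_K^+/\pi\cong K_0 \hookrightarrow \B_{\rm cris}$; it is $\varphi$-equivariant, and both sides have the same $K_0$-dimension, so an injectivity check via the embedding into $\B_{\rm cris}\otimes V$ gives an isomorphism of $\varphi$-modules. The filtration comparison then follows from the definition ${\rm Fil}^i\N(V) = \{x\,|\,\varphi(x)\in q^i\N(V)\}$ together with $\varphi(\pi) = q\pi$ and $q\equiv p\pmod{\pi}$, which matches the crystalline filtration on $\D_{\rm cris}(V)_K$ via Berger's formula for the Hodge filtration in terms of $\varphi$ and $t$. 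The main obstacle in the whole argument is the forward direction of (1): producing $\N(V)$ for an arbitrary (not necessarily abelian) crystalline representation. Wach's original argument used explicit Lubin--Tate formal groups, and Berger's decisive contribution is replacing this with the regular-singularities analysis of $\nabla$ on the Robba ring, which is where essentially all the analytic content of the theorem lives.
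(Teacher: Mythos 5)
This theorem is imported wholesale into the paper as a citation to Berger~\cite{Ber04} (building on~\cite{Ber03, Col99, Wac96, Wac97}); the paper gives no proof of its own, so there is nothing internal to compare your sketch against.

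As a reconstruction of Berger's published argument, your outline is broadly faithful: the crucial input is indeed the theory of $(\varphi,\nabla)$-modules over the Robba ring, the regularity of the connection at $\pi=0$ for crystalline $V$, and the reduction of part~(3) to a comparison inside $\B_{\rm cris}\otimes_{\Q_p}V$. Two small inaccuracies are worth flagging. First, Berger does not invoke a ``Turrittin/Kedlaya-style classification'' at the step you indicate; the existence and uniqueness of $\N_{\rm rig}(V)$ comes from a more elementary normal-form argument for modules with regular singularities (solving for a $\varphi$-equivariant $\B_{{\rm rig},K}^+$-lattice on which $\Gamma$ acts trivially mod $\pi$), not from slope filtrations. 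Second, in part~(2) the assertion $\D(T)=\A_K\otimes_{\A_K^+}\N(T)$ is not checked ``after reducing modulo $p$'': Berger's argument uses the structure of the lattice $\N(V)\cap\D(T)$ and an approximation argument over $\A_K^+$, and one must be careful because (as this paper itself stresses in \S\ref{sec:crys}) the integral Wach functor is \emph{not} exact, so mod-$p$ reductions do not automatically behave well. Neither of these affects the broad shape of the argument, but they are where the actual work in Berger's proof is concentrated.
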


\begin{rem} If $0\to V_1 \to V \to V_2 \to 0$ is an exact sequence
of crystalline representations of $G_K$, then
$$0 \to \N(V_1) \to \N(V) \to \N(V_2) \to 0$$
is an exact sequence of $\B_K^+$-modules.  However
$\N$ does not define an exact functor from $G_K$-stable lattices
to $\A_K^+$-modules; indeed it fails to be right exact.  We return
to this point in more detail in \S\ref{sec:crys}.
\end{rem}

Again by introducing an action of $F$ to the categories, we get an analogous
equivalence of categories between crystalline $F$-representations and
Wach modules over $\B_{K,F}^+:=F \otimes_{\Q_p}\B_K^+$.
Here, by a crystalline $F$-representation we mean a finite
dimensional $F$-vector space with a continuous action of $G_K$ which
is crystalline considered as a $\Q_p$-linear representation (i.e.,
forgetting $F$-structure).  Similarly, for a fixed crystalline
$F$-representation of $G_K$, we have a corresponding equivalence of categories
between $G_K$-stable $\CO_F$-lattices and Wach modules over $\A_{K,F}^+:=\CO_F
\otimes_{\Z_p}\A_K^+$.

\begin{cor} Let $k \in \Z_{\ge 0}$.
An $F$-representation $V$ of $G_K$ is crystalline with Hodge-Tate weights in $[0,k]$ (i.e., positive crystalline)
if and only if there exists a $\B_{K,F}^+$-module $N$ free of rank $d:=\dim_F(V)$ contained in $\D(V)$ such that
\begin{enumerate}
\item the $\Gamma$-action preserves $N$ and is trivial on $N/\pi N$, and
\item $\varphi(N)\subset N$ and $N/\varphi^*(N)$ is killed by $q^k$.
\end{enumerate}
Moreover, if $N$ is given a filtration by
$$\fil^i(N):=\{x \in N | \varphi(x) \in q^iN\}$$ for $i \ge 0$,
then we have an isomorphism $$\D_{\rm cris}(V) \simeq N/\pi N$$
of filtered $\varphi$-modules over $F\otimes_{\Q_p}K$ where $N/\pi N$
is endowed with induced filtration.
\end{cor}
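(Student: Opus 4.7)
The plan is to deduce the corollary from the $F$-coefficient version of Theorem~\ref{berger} mentioned just before the statement, specialized to weights in $[0,k]$, i.e., to $a=0$ and $b=k$ in the Wach module definition. In this specialization $\pi^{-a}N = N$, so the condition $\varphi(\pi^{-a}N)\subset\pi^{-a}N$ is just $\varphi(N)\subset N$, and the quotient $\pi^{-a}N/\varphi(\pi^{-a}N)$ is the $\B_{K,F}^+$-module $N/\varphi^*(N)$, where $\varphi^*(N)$ denotes the submodule generated by $\varphi(N)$; this is to be killed by $q^{b-a}=q^k$. So conditions (1) and (2) of the corollary match the Wach-module axioms with weights in $[0,k]$ exactly.

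For the ``only if'' direction, suppose $V$ is crystalline with Hodge-Tate weights in $[0,k]$. The $F$-coefficient analogue of Theorem~\ref{berger}(1) produces a Wach module $N := \N(V)$ over $\B_{K,F}^+$ inside $\D(V)$, satisfying (1) and (2) by construction. The remaining point is that $N$ is free of rank $d = \dim_F V$ over $\B_{K,F}^+$: we know $N$ is free of rank $d[F:\Q_p] = \dim_{\Q_p} V$ over $\B_K^+$, and the argument of Lemma~\ref{lem:free} adapts directly. Namely, decompose $N = \prod_{\tau \in S} e_\tau N$ under the idempotents coming from $F \otimes_{\Q_p} K \simeq F^S$; each $e_\tau N$ is finitely generated and torsion-free over $\B_{\Q_p,F}^+$, hence free. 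The $\varphi$-semilinear maps $e_{\tau\circ\varphi} N \to e_\tau N$ have cokernels killed by $q^k$, so their linearizations $\varphi^*(e_{\tau\circ\varphi} N) \to e_\tau N$ have images containing $q^k e_\tau N$; since $q$ is a nonzerodivisor in $\B_{\Q_p,F}^+$, comparing ranks gives $\rank(e_\tau N) \le \rank(e_{\tau\circ\varphi}N)$, and iterating around the cycle of $f$ embeddings forces all $e_\tau N$ to have the same rank, which must therefore be $d$.

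For the ``if'' direction, given $N \subset \D(V)$ satisfying (1) and (2), forget the $F$-structure: $N$ is free over $\B_K^+$ of rank $d[F:\Q_p] = \dim_{\Q_p} V$, and (1), (2) are the axioms of a Wach module with weights in $[0,k]$. Theorem~\ref{berger}(1) then shows that $V$, viewed as a $\Q_p$-representation, is crystalline with Hodge-Tate weights in $[0,k]$; the $F$-action is automatically preserved since it commutes with $\varphi$ and $\Gamma_K$ on $\D(V)$. Finally, the isomorphism $\D_{\rm cris}(V) \simeq N/\pi N$ of filtered $\varphi$-modules over $F \otimes_{\Q_p} K$ follows directly from Theorem~\ref{berger}(3): the filtration defined in the statement agrees with the one in the theorem, and compatibility with the $F\otimes_{\Q_p}K$-action is automatic from its commutation with $\varphi$.

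The main obstacle I expect is the freeness check in the ``only if'' direction, which requires adapting the $\varphi$-rank argument of Lemma~\ref{lem:free} from the \'etale to the Wach-module setting, where $\varphi$-surjectivity holds only up to a factor of $q^k$; the rest is essentially a transcription of Berger's theorem to the $F$-coefficient setup in the positive-weight range.
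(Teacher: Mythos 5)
Your proof is correct, and it follows the same skeleton (read the statement as the $F$-coefficient Berger theorem with $a=0$, $b=k$), but you take a genuinely different route to the one nontrivial point, namely freeness of $\N(V)$ over $\B_{K,F}^+$. The paper handles this in Lemma~\ref{Wach-free} (stated and proved \emph{after} the corollary): it observes that $N/\pi N$ is an $\CO_F\otimes_{\Z_p}\CO_K$-lattice in $\D_{\rm cris}$, which is free over $F\otimes_{\Q_p}K$ by weak admissibility, then applies Nakayama and Lemma~\ref{lem:free} at the level of $\A_{K,F}$ to conclude. You instead port the $\varphi$-rank comparison of Lemma~\ref{lem:free} directly into the Wach setting: decompose $N$ along the idempotents of $F\otimes_{\Q_p}K$, note each $e_\tau N$ is finitely generated torsion-free over the principal ideal domain $\B_{\Q_p,F}^+=\CO_F[[\pi]][1/p]$, and use that the linearized $\varphi^*(e_{\tau\circ\varphi}N)\to e_\tau N$ has image containing $q^k e_\tau N$ — so, since $q$ is a nonzerodivisor, ranks are nonincreasing around the $f$-cycle, hence constant. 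Your argument has the virtue of being self-contained within the Wach framework (it observes that surjectivity up to a power of $q$ is just as good as surjectivity for the rank count), while the paper's leans on the known structure of $\D_{\rm cris}$ as a weakly admissible module over $F\otimes_{\Q_p}K$. Both are valid; the remaining steps (``only if'' from the $F$-version of Theorem~\ref{berger}(1), ``if'' by forgetting $F$-structure and applying the $\Q_p$-version, and the filtered-isomorphism claim from Theorem~\ref{berger}(3) with $F$-compatibility automatic) are exactly as the paper intends.
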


A standard argument (cf.~Lemma~\ref{lem:free})
shows that an $F$-representation $V$ of $G_K$ is crystalline if and only if the filtered
$\varphi$-module $\D_{\rm cris}(V)=(\B_{\rm cris}\otimes_{\Q_p}V)^{G_K}$
is free of rank $\dim_FV$ over $F\otimes_{\Q_p}K$. We have a
decomposition $\D_{\rm cris}(V)=\oplus_{\tau : K \hookrightarrow F}
e_\tau\D_{\rm cris}(V)$ where $e_\tau\D_{\rm cris}(V)$ is the
filtered $F$-vector space $\D_{\rm
cris}(V)\otimes_{K\otimes_{\Q_p}F, e_\tau}F$ with the filtration given
by $\fil^i e_\tau\D_{\rm cris}(V):=e_\tau\fil^i\D_{\rm cris}(V)$. A
\emph{labeled Hodge-Tate weight} with respect to the embedding
$\tau: K \hookrightarrow F$ is an integer $h \in \Z$ such that
$\fil^{h} e_\tau\D_{\rm cris}(V)\neq \fil^{h+1} e_\tau\D_{\rm
cris}(V)$, counted with multiplicity 
$$\dim_F \fil^{h} e_\tau\D_{\rm
cris}(V)/ \fil^{h+1} e_\tau\D_{\rm cris}(V).$$

\begin{lem} \label{Wach-free}  If $N$ is a Wach module over 
over $\A_{K,F}^+$ (resp.~$\B_{K,F}^+$), then $N$ is free
over $\A_{K,F}^+$ (resp.~$\B_{K,F}^+$). 
\end{lem}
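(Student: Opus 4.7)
The plan is to exploit the decomposition $\A_{K,F}^+ \cong \prod_{\tau \in S} \CO_F[[\pi]]$ coming from the unramifiedness of $K/\Q_p$ and the splitting $\CO_K \otimes_{\Z_p} \CO_F \cong \prod_\tau \CO_F$, and similarly $\B_{K,F}^+ \cong \prod_\tau \CO_F[[\pi]][1/p]$. With the corresponding orthogonal idempotents $e_\tau$, write $N = \bigoplus_\tau e_\tau N$: freeness of $N$ of rank $d$ over $\A_{K,F}^+$ (resp.~$\B_{K,F}^+$) is equivalent to each $e_\tau N$ being free of rank $d$ over $\CO_F[[\pi]]$ (resp.~$\CO_F[[\pi]][1/p]$). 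Since each $e_\tau$ is central in $\A_{K,F}^+$, this decomposition also holds as $\A_K^+$-modules, so each $e_\tau N$ is a direct summand of the free $\A_K^+$-module $N$ and is itself free over the local ring $\A_K^+$.

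For the $\B_{K,F}^+$ case I would first note that $\CO_F[[\pi]][1/p]$ is a PID: Weierstrass preparation exhibits every nonzero element of $\CO_F[[\pi]]$ as $\varpi_F^a u P(\pi)$ with $u$ a unit and $P$ distinguished, making the localization Noetherian and Bezout. It then suffices to verify each $e_\tau N$ is torsion-free over $\CO_F[[\pi]][1/p]$ (finite generation is automatic from $\B_K^+$-freeness). The key input is a norm argument: any nonzero $f \in \CO_F[[\pi]][1/p]$ satisfies a monic polynomial equation over $\tau(\B_K^+)$ whose constant term $\pm N_{F/\tau(K)}(f)$ is nonzero, so if $fx = 0$ for some $x \in e_\tau N$ then $N_{F/\tau(K)}(f) x = 0$, contradicting the freeness of $N$ over the PID $\B_K^+$. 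The common rank is $d = \dim_F V$, pinned down via Theorem~\ref{berger}(3): $N/\pi N \simeq \D_{\rm cris}(V)$ is free of rank $d$ over $F \otimes_{\Q_p} K = \prod_\tau F$, so $\dim_F e_\tau(N/\pi N) = d$ for each $\tau$.

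The $\A_{K,F}^+$ case is harder because $\CO_F[[\pi]]$ is a $2$-dimensional regular local ring rather than a PID. The same norm argument still shows $e_\tau N$ is torsion-free, and $\varpi_F$-torsion-freeness follows from multiplication by $\varpi_F$ having nonzero $\A_K^+$-determinant (coming from $N_{F/K}(\varpi_F)$). My plan is to reduce modulo $\varpi_F$: $e_\tau N / \varpi_F e_\tau N$ is finitely generated over the DVR $\F[[\pi]]$, and another round of the norm-type argument (applied after reducing the $\A_K^+$-structure mod $p$) shows it is torsion-free, hence free of some rank $d_\tau$. Then $\varpi_F$-adic completeness of $\CO_F[[\pi]]$, combined with $\varpi_F$-torsion-freeness of $e_\tau N$, allows lifting a free basis via a standard Nakayama/Tor-vanishing argument. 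Finally, the equality $d_\tau = d$ across $\tau$ follows either by inverting $p$ and appealing to the already-established $\B_{K,F}^+$ case, or directly via the Frobenius semi-linear maps $e_{\tau \circ \varphi} N \to e_\tau N$ whose cokernel is killed by $q^{b-a}$ and hence force rank equality after suitable reductions. The main obstacle will be this lifting step: the presence of two incomparable primes $\varpi_F$ and $\pi$ of $\CO_F[[\pi]]$ makes freeness considerably more delicate than the PID situation in the $\B_{K,F}^+$ case, and one must combine $\varpi_F$-adic and $\pi$-adic information carefully.
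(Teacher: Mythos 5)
Your proposal takes a genuinely different and substantially more laborious route than the paper, and there is a real gap in the $\A_{K,F}^+$ case.

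The paper's argument is short: by Theorem~\ref{berger}(3), $N/\pi N$ embeds in $\N(V)/\pi\N(V)\cong\D_{\rm cris}(V)$ (here one uses that $N=\N(V)\cap\D(T)$ and $\pi$ is a unit in $\A_K$, so $N/\pi N\hookrightarrow \N(V)/\pi\N(V)$), hence $N/\pi N$ is an $\CO_F\otimes_{\Z_p}\CO_K$-lattice in a free rank-$d$ module over $F\otimes_{\Q_p}K$ and is therefore itself free of rank $d$. Nakayama then gives $d$ generators, and since $\A_{K,F}\otimes_{\A_{K,F}^+}N$ is free of rank $d$ by Lemma~\ref{lem:free}, the surjection $(\A_{K,F}^+)^d\twoheadrightarrow N$ is an isomorphism. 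The decisive move is to reduce \emph{modulo $\pi$}, because that is exactly where the crystalline theory hands you freeness for free. Your componentwise PID/norm argument for $\B_{K,F}^+$ is correct, if much longer than necessary, but you are treating the $\A_{K,F}^+$ case as a purely commutative-algebra problem and that is where the argument breaks down.

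Concretely, the step asserting that $e_\tau N/\varpi_F e_\tau N$ is torsion-free over $\F[[\pi]]$ ``by another round of the norm-type argument'' does not follow. Since $p\in\varpi_F^e\CO_F^\times$, the module $e_\tau N/\varpi_F e_\tau N$ is a quotient of the free $k[[\pi]]$-module $e_\tau N/p\,e_\tau N$, and a quotient of a free module over a DVR by a submodule can certainly acquire $\pi$-torsion: think of the ideal $(\varpi_F,\pi)\subset\CO_F[[\pi]]$, which is torsion-free, yet $(\varpi_F,\pi)/\varpi_F(\varpi_F,\pi)$ has $\pi$-torsion. Nothing in the norm argument — which only uses that $N$ is $\A_K^+$-free and that $\CO_F[[\pi]]$ is finite over $\tau(\CO_K)[[\pi]]$ — rules this out after passing to the quotient. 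To close the gap you would need precisely the information the paper extracts from $\D_{\rm cris}$: that $N/\pi N$ is a lattice in a free $F\otimes_{\Q_p}K$-module. Once you know $N/\pi N$ is free over $\CO_F\otimes\CO_K$, a Nakayama argument finishes quickly, and there is no need for a delicate two-variable lifting over $\CO_F[[\pi]]$. In short, the obstacle you flag in your last sentence is real, and the crystalline input is the missing ingredient to overcome it.
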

\begin{proof} We just give the proof for Wach modules over $\A_{K,F}^+$;
the case of $\B_{K,F}^+$ can be deduced from this or proved
similarly.

Let $T$ denote the $\CO_F$-representation corresponding to $T$,
and let $d$ denote its rank.
The $\CO_F\otimes_{\Z_p}\CO_K$-module $N/\pi N$
is a lattice in $D_\crys(\Q_p\otimes_{\Z_p}T)$, which is
free of rank $d$ over $F\otimes_{\Q_p}K$.  It follows that
$N/\pi N$ is free of rank $d$ over $\CO_F\otimes_{\Z_p}\CO_K$.
Since $\pi$ is in the Jacobson radical of $\A_{K,F}^+$,
Nakayama's Lemma shows that $N$ is generated by $d$ 
elements over $\A_{K,F}^+$.
By Lemma~\ref{lem:free}, we know that 
$\A_{K,F}\otimes_{\A_{K,F}^+} N$ is free of rank $d$
over $\A_{K,F}$, so it follows that $N$ is free of rank
$d$ over $\A_{K,F}^+$.
\end{proof}

\section{Rank one modules} \label{sec:rk1}
In this section we give a parametrization of rank one \'etale $(\varphi,\Gamma)$-modules over $\E_{K,F}$
(with a view toward parametrizing their extensions) and then identify them with the
reduction modulo $p$ of Wach modules of rank one over $\A_{K,F}^+$.

\subsection{A parametrization} \label{subsec:param}
Denote by $\val : \F((\pi)) \to \Z$ the valuation normalized by $\val(\pi)=1$,
and let $\lambda_\gamma \in \F_p[[\pi]]$ be the unique $\frac{p^f-1}{p-1}$-th root of $\frac{\gamma(\pi)}{\overline{\chi}(\gamma)\pi}$ which is
$\equiv 1 \mod \pi$, if $\gamma \in \Gamma$.

\begin{prop}\label{rankone} For any $C \in \F^\times$ and any
$\vec{c}=(c_0,\ldots,c_{f-1}) \in \Z^S$, letting $M=\E_{K,f}e$
with
$$
\begin{aligned}
\varphi(e) &=Pe=(C\pi^{(p-1)c_0},\pi^{(p-1)c_1},\ldots,
\pi^{(p-1)c_{f-1}})\,e,\\
\gamma(e) &=G_\gamma e=(\lambda_\gamma^{\sum_0\vec{c}},
\lambda_\gamma^{\sum_1\vec{c}}, \ldots,
\lambda_\gamma^{\sum_{f-1}\vec{c}})\,e,
\end{aligned}
$$
where $\Sigma_l = \Sigma_l\vec{c} = \sum c_ip^j$ summing over $0\le
i,j\le f-1$, $i-j\equiv l \mod f$, defines an \'etale $(\varphi,
\Gamma)$-module of rank one over $\E_{K,F}$. Conversely, for any rank
one \'etale $(\varphi, \Gamma)$-module $M$ over $\E_{K,F}$ we can choose a basis $e$
so that $M=\E_{K,F}e$ with the action of $\varphi$ and $\Gamma$ given as above
for some $C$ and some $\vec{c}$.
Two such modules $M$ and $M'$ are isomorphic if and only if $C=C'$ and
$\Sigma_0\vec{c} \equiv \Sigma_0\vec{c'} \mod p^f-1$.
In particular, every rank one $(\varphi, \Gamma)$-module over $\E_{K,F}$ can
be written uniquely in this form with $0 \le c_i \le p-1$ and at least one $c_i <p-1$.
\end{prop}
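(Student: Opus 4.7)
The plan is to verify the forward direction by direct computation, then bring an arbitrary rank one module to this normal form by a change of basis, and finally read off the isomorphism classification.

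For the forward direction, $P \in \E_{K,F}^\times$ since each component is a unit in $\F((\pi))$, so the module is \'etale. The well-definedness of the $\Gamma$-action via $G_{\gamma\delta} = \gamma(G_\delta)G_\gamma$ reduces to the identity $\lambda_{\gamma\delta} = \gamma(\lambda_\delta)\lambda_\gamma$, which follows because $\gamma(\pi)/(\overline{\chi}(\gamma)\pi)$ is itself a $1$-cocycle in $1 + \pi\F_p[[\pi]]$ and extraction of $(p^f-1)/(p-1)$-th roots is bijective on that group (since $(p^f-1)/(p-1) \equiv 1 \pmod p$). The commutation $\varphi\gamma = \gamma\varphi$, equivalent to $\gamma(P)/P = \varphi(G_\gamma)/G_\gamma$, reduces componentwise to $\lambda_\gamma^{c_l(p^f-1)} = \lambda_\gamma^{p\Sigma_{l+1}-\Sigma_l}$, using $\overline{\chi}(\gamma)^{p-1}=1$ on the left and $\varphi(\lambda_\gamma)=\lambda_\gamma^p$ on the right (this latter identity follows by applying $\varphi$ to the defining equation of $\lambda_\gamma$ and uniqueness of the root in $1+\pi\F_p[[\pi]]$). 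A short combinatorial check using $\Sigma_l = \sum_j c_{(l+j) \bmod f}\, p^j$ gives the needed identity $p\Sigma_{l+1} - \Sigma_l = c_l(p^f-1)$.

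For the normal form, pick a basis $e$ by Lemma~\ref{lem:free} and write $\varphi(e)=Pe$, $\gamma(e)=G_\gamma e$. Decompose $P_l = \alpha_l\pi^{n_l}v_l$ with $\alpha_l \in \F^\times$, $n_l \in \Z$, $v_l \in 1+\pi\F[[\pi]]$, and seek $u = (\beta_l\pi^{m_l}w_l)_l \in \E_{K,F}^\times$ so that the new $P'_l = (\alpha_l\beta_{l+1}/\beta_l)\,\pi^{pm_{l+1}+n_l-m_l}\,(w_{l+1}(\pi^p)v_l/w_l)$ has the desired form. The unit-part system $w_l = w_{l+1}(\pi^p)v_l$ has the explicit solution $w_0 = \prod_{k\ge 0} V(\pi^{p^{fk}})$ with $V = \prod_l v_l(\pi^{p^l})$, converging in $1+\pi\F[[\pi]]$. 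The exponent system $(p-1)c_l = pm_{l+1}+n_l-m_l$ has integer solutions precisely when $\sum_l n_l \equiv 0 \pmod{p-1}$; this key constraint I would prove by comparing leading terms of $\gamma(P)G_\gamma = \varphi(G_\gamma)P$, which yields $\beta_{l+1}/\beta_l = \overline{\chi}(\gamma)^{n_l}$ for the leading coefficients of $G_\gamma$, so cyclic consistency forces $\overline{\chi}(\gamma)^{\sum n_l}=1$ for all $\gamma$, and surjectivity of $\overline{\chi}$ onto $\F_p^\times$ then gives $(p-1)\mid\sum n_l$. Equating leading constants around the cycle also identifies $C = \prod_l \alpha_l$ as a well-defined invariant.

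With $P$ in normal form, $G_\gamma$ and the candidate $(\lambda_\gamma^{\Sigma_l})_l$ satisfy the same $\varphi$-relation, so their quotient is a $\varphi$-invariant unit; a direct coefficient comparison shows $(\E_{K,F}^\times)^{\varphi=1}$ is the diagonal $\F^\times$, and the resulting $1$-cocycle is a continuous character $\eta:\Gamma \to \F^\times$, necessarily of the form $\overline{\chi}^s$ for some integer $s$. I would absorb $\eta$ by the further rescaling $e \mapsto (\pi^s,\dots,\pi^s)e$, which shifts $\vec c$ by $(s,\dots,s)$, adds $s(p^f-1)/(p-1)$ to each $\Sigma_l$, and multiplies $G_\gamma$ by exactly the compensating factor $\overline{\chi}(\gamma)^s \lambda_\gamma^{s(p^f-1)/(p-1)}$. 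For the isomorphism classification, solving $P' = \varphi(u)Pu^{-1}$ and $G'_\gamma = \gamma(u)G_\gamma u^{-1}$ for $u \in \E_{K,F}^\times$ using the same decomposition identifies $C$ and $\Sigma_0\vec c \pmod{p^f-1}$ as the complete set of invariants, and uniqueness of the base-$p$ expansion of integers in $\{0,\dots,p^f-2\}$ yields the final normalized form. The main obstacle in my view is the character-absorption step: correctly tracking the interaction of $\lambda_\gamma$, $\overline{\chi}(\gamma)$, and shifts of $\vec c$ requires repeated careful use of the defining equation of $\lambda_\gamma$ together with the leading-term analysis above.
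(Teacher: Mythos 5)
Your argument is correct, and it follows a genuinely different route from the paper's. Both proofs verify the forward direction by the same two identities ($p\Sigma_{l+1}-\Sigma_l=c_l(p^f-1)$, and $\lambda_{\gamma\gamma'}=\lambda_\gamma\gamma(\lambda_{\gamma'})$ via uniqueness of the $(p^f-1)/(p-1)$-th root in $1+\pi\F_p[[\pi]]$), so the real divergence is in the normal-form step. The paper proceeds asymmetrically: it first concentrates the $\varphi$-matrix entirely into the $0$-th component by a sequence of one-slot shifts, invokes surjectivity of $u\mapsto u(\pi^{p^f})/u(\pi)$ on $1+\pi\F[[\pi]]$ to normalize the unit part there, shows $(p-1)\mid v$ from the order of $\overline{\chi}(\delta)$ mod $\pi$, solves for $g_0$ directly and pins down the residual constant as $\overline{\chi}^{j_0}$, and finally absorbs it with the asymmetric element $(\pi,\pi^{p^{f-1}},\dots,\pi^p)$ before spreading the exponent back over the slots. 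You instead work with all $f$ components symmetrically from the outset: you factor each $P_l=\alpha_l\pi^{n_l}v_l$, solve the unit system $w_l=w_{l+1}(\pi^p)v_l$ by the explicit telescoping product $\prod_{k\ge0}V(\pi^{p^{fk}})$ (the same convergence mechanism the paper's surjectivity lemma relies on), solve the exponent congruences $m_{l+1}-m_l\equiv -n_l \pmod{p-1}$ (consistency giving exactly $(p-1)\mid\sum n_l$, which you extract from leading coefficients of the commutation relation rather than from a single slot), identify $C=\prod\alpha_l$, realize $G_\gamma$ up to the diagonal $(\E_{K,F}^\times)^{\varphi=1}=\F^\times$, recognize the residual cocycle as a finite-order character $\overline{\chi}^s$ by continuity and the prime-to-$p$ order of $\F^\times$, and absorb it with the symmetric element $(\pi^s,\dots,\pi^s)$, which cleanly shifts $\vec c$ by $s$ and each $\Sigma_l$ by $s(p^f-1)/(p-1)$. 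Your route is more equational and avoids the concentration step entirely, at the cost of having to set up and track a cyclic linear system; the paper's route is more hands-on and keeps the constant $C$ localized from the start, which slightly shortens the final identification of $C$. Both absorb the character in essentially the same place, and your diagnosis that this absorption is the delicate point (requiring simultaneous bookkeeping of $\lambda_\gamma$, $\overline{\chi}$, and the shift of $\vec c$) matches where the paper also expends the most care. The classification step in your sketch is correct in outline but compressed; it would be worth spelling out, as the paper does, that a general $u=(u_l)$ intertwining the two normal forms forces $(p-1)\mid\val(u_l)$ and hence that $C$ and $\Sigma_0\vec c\bmod(p^f-1)$ are genuine invariants.
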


\begin{proof} To show that the given formula actually defines an \'etale
$(\varphi, \Gamma)$-module we need to verify that
$P\varphi(G_\gamma)=G_\gamma \gamma(P)$ and
$G_{\gamma\gamma'}=G_\gamma\gamma(G_{\gamma'})$. The first identity
holds as $$\begin{aligned}\varphi(G_\gamma)/G_\gamma &=
(\lambda_\gamma^{p\Sigma_1-\Sigma_0},\ldots
,\lambda_\gamma^{p\Sigma_0-\Sigma_{f-1}})\\
& = (\lambda_\gamma^{c_0(p^f-1)},\ldots
,\lambda_\gamma^{c_{f-1}(p^f-1)})\\&=\left(\left(\frac{\gamma(\pi)}{\pi}\right)^{c_0(p-1)},\ldots,
\left(\frac{\gamma(\pi)}{\pi}\right)^{c_{f-1}(p-1)}\right)=\gamma(P)/P.\end{aligned}$$
To prove the second identity, as $\Gamma$ acting componentwise, we
need to show that $\lambda_{\gamma\gamma'}=\lambda\gamma(\lambda)$.
But note that
$$\left(\lambda_\gamma\gamma(\lambda_{\gamma'})\right)^{\frac{p^f-1}{f-1}}
={\frac{\gamma(\pi)}{\pi}\overline{\chi}(\gamma)}\,\gamma\left({\frac{\gamma'(\pi)}{\pi}\overline{\chi}(\gamma')}\right)
={\frac{\gamma\gamma'(\pi)}{\pi}\overline{\chi}(\gamma\gamma')}$$ and
$\lambda_\gamma\gamma(\lambda_{\gamma'})\equiv 1 \mod \pi$. The
claim follows from uniqueness of $\lambda_\gamma$'s. Note also that
the function $\gamma\mapsto \lambda_\gamma$ is continuous since it
is the composite of $\gamma(\pi)/\overline{\chi}(\gamma)\pi$ with
the inverse of the continuous bijective function $x\mapsto x^{(p^f-1)/(p-1)}$
on the compact Hausdorff space $1+\pi\F_p[[\pi]]$; it follows that the
$\Gamma$-action we have just defined is continuous.

We now prove that any rank one module can be written in this form.
Suppose we are given a rank one module $M=\E_{K,F}e$ such that
$\varphi(e)=(h_0(\pi),\ldots,h_{f-1}(\pi))e$ and $\gamma(e)=
(g_0(\pi),\ldots,g_{f-1}(\pi))e$. Note that if $u \in \E_{K,F}^\times$, by a
change of basis $e'=ue$ we get $P'=(\varphi(u)/u)P$ and $G_\gamma '=(\gamma(u)/u)G_\gamma$
where $\varphi(e')=P'e'$ and $\gamma(e')=G_\gamma ' e'$.
If $u = (\pi^j,\ldots,\pi^j)$,
then $\varphi(u)/u =(\pi^{(p-1)j}, \ldots,\pi^{(p-1)j})$. So we can
assume that $h_i(\pi) \in \F[[\pi]]$ by choosing a large enough $j > 0$.
We can ``shift'' between components by appropriate change of basis: if
$u=(1,\ldots,1,u_i(\pi),1,\ldots,1)$, then
$\varphi(u)/u=(1,\ldots,1,u_{i-1}(\pi^p),u_i(\pi)^{-1},1,\ldots,1)$.
By successive changes of basis we can make it into a form where
$\varphi(e)=(h(\pi),1,\ldots,1)e$ with $h(\pi) \in \F[[\pi]]$. Moreover
 for some choice of $e$, $\varphi(e)=(C\pi^v,1,\ldots, 1)e$ for
$C \in \F^\times$ and $v \ge 0$ as
$${\frac{\varphi(u(\pi),u(\pi^{p^{f-1}}),\ldots,u(\pi^{p^2}))}
{(u(\pi),u(\pi^{p^{f-1}}),\ldots,u(\pi^{p^2}))}}=(u(\pi^{p^f})/u(\pi),
1\ldots,1)$$ and the map $1+\pi\F[[\pi]]\rightarrow 1+\pi\F[[\pi]]$,
$u(\pi) \mapsto u(\pi^{p^f})/u(\pi)$, is surjective: as the map is
multiplicative and $1+\pi\F[[\pi]]$ is complete $\pi$-adically, it
suffices to prove that for any $s \ge 1$ and $\alpha \in \F^\times$,
$1+\alpha\pi^st(\pi)$ is in the image for some $t(\pi) \in
\F[[\pi]]^\times$, and indeed $1-\alpha\pi^s \mapsto
(1-\alpha\pi^{sp^f})/(1-\alpha\pi^s) \equiv 1+\alpha\pi^s \mod
\pi^{s+1}$.

To show that $(p-1) | v$, we note that $\varphi\gamma(e)=\gamma\varphi(e)$ if and only if
$${\frac{\varphi(g_0,\ldots,g_{f-1})}{(g_0,\ldots,g_{f-1})}}
={\frac{\gamma(C\pi^v,1,\ldots,1)}{(C\pi^v,1,\ldots,1)}}$$ where
$G_\gamma=(g_0,\ldots, g_{f-1})$. This is equivalent to
$$\left({\frac{g_1(\pi^p)}{g_0(\pi)}},\ldots,
{\frac{g_0(\pi^p)}{g_{f-1}(\pi)}}\right)
=\left(\left({\frac{\gamma(\pi)}{\pi}}\right)^v,1,\ldots,1\right),$$
which implies that $(\gamma(\pi)/\pi)^v=g_0(\pi^{p^f})/g_0(\pi)
\equiv 1 \mod \pi$. If $\delta \in \Gamma$ is such that
$\delta\Gamma_1$ generates $\Gamma/\Gamma_1 \simeq \mu_{p-1}$ then
$\delta(\pi)/\pi \equiv \chi(\delta) \mod \pi$. Thus
$\delta(\pi)/\pi$ has order $p-1$ modulo $\pi$ so that  $p-1 |v$ and
$\varphi(e)=(C\pi^{(p-1)w}, 1, \cdots ,1)$ where $(p-1)w=v$.

To determine the corresponding action of $\gamma \in \Gamma$, we note that
$\varphi\gamma(e)=\gamma\varphi(e)$ if and only if
$$\left({\frac{g_1(\pi^p)}{g_0(\pi)}},\ldots,{\frac{g_0(\pi^p)} {g_{f-1}(\pi)}}\right)
=\left(\left({\frac{\gamma(\pi)}{\pi}}\right)^{(p-1)w},1,\ldots,1\right)$$
if and only if $g_0(\pi^{p^f})/g_0(\pi)=(\gamma(\pi)/\pi)^{(p-1)w}
=(\gamma(\pi)/\pi\overline{\chi}(\gamma))^{(p-1)w}$ (the order of
$\overline{\chi}$ being $p-1$) and $g_1(\pi)=g_2(\pi^p), \ldots,
g_{f-2}(\pi)=g_{f-1}(\pi^p), g_{f-1}(\pi)=g_0(\pi^p)$. Thus, to get
$g_i$'s satisfying the above identity we just need to define
$g_0(\pi)$ such that
$g_0(\pi^{p^f})/g_0(\pi)=(\gamma(\pi)/\pi\overline{\chi}(\gamma))^{(p-1)w}$. If we set
$g_0(\pi)=\alpha_\gamma\lambda_\gamma(\pi)^w$ with $\alpha_\gamma
\in \F^\times$, we have
$g_0(\pi^{p^f})/g_0(\pi)
=\lambda_\gamma(\pi^{p^f})^w/\lambda_\gamma(\pi)^w
=\lambda_\gamma(\pi)^{w(p^f-1)}
=(\gamma(\pi)/\pi\overline{\chi}(\gamma))^{(p-1)w}$. Conversely
if $g_0'(\pi) \in \E_{K,F}^\times$ satisfies 
$$g_0'(\pi^{p^f})/g_0'(\pi) = (\gamma(\pi)/\pi\overline{\chi}(\gamma))^{(p-1)w}
 = g_0'(\pi^{p^f})/g_0'(\pi),$$
then $h(\pi) = g_0'(\pi)g_0^{-1}(\pi)$ satisfies $h(\pi^{p^f}) = h(\pi)$
and is therefore constant.  Thus we see that the identity implies
that $g_0(\pi)$ has the required form.

Since $G_{\gamma\gamma'}=G_\gamma\gamma(G_\gamma')$, the map $\gamma
\mapsto \alpha_\gamma$ must define a character $\Gamma \rightarrow
\F^\times$, from which we conclude that $\alpha_\gamma =
\overline{\chi}(\gamma)^{j_0}$ for some $0\le j_0 < p-1$.
Letting $u=(\pi,\pi^{p^{f-1}},\pi^{p^{f-2}},\ldots,\pi^p)$, we have
$$\begin{aligned}
{\frac{\varphi(u)}{u}}
&=(\pi^{p^f-1},1,\ldots,1),\\
{\frac{\gamma(u)}{u}} &=
\left({\frac{\gamma(\pi)}{\pi}},
\left({\frac{\gamma(\pi)}{\pi}}\right)^{p^{f-1}},
\left({\frac{\gamma(\pi)}{\pi}}\right)^{p^{f-2}},
\cdots,\left({\frac{\gamma(\pi)}{\pi}}\right)^p\right) \\
& \equiv (\overline{\chi}(\gamma),\ldots,\overline{\chi}(\gamma)) \mod \pi,
\end{aligned}$$
so replacing $e$ by $u^{-j}e$ for some $j\equiv j_0\bmod p-1$
gives $M=\E_{K,F}e$ with
$$\begin{aligned}
\varphi(e) &=(C\pi^{(p-1)w}, 1, \ldots, 1)e, \\
\gamma(e) &=(\lambda_\gamma(\pi)^w, \lambda_\gamma(\pi^p)^w, \ldots, \lambda_\gamma(\pi^{p^{f-1}})^w)e
\end{aligned}$$
where $0 \le w < p^f-1$.
Write $w=c_0+c_1p+\cdots+c_{f-1}p^{f-1}$ with $0 \le c_i \le p-1$.
Taking $e'=ue$ with $u=(1, \pi^{(p-1)({c_1+c_2p+\cdots+c_{f-1}p^{p-2}})}, 1, \ldots, 1)$ yields
$$\varphi(e')=(C\pi^{(p-1)c_0}, \pi^{(p-1)(c_1+c_2p+\cdots+c_{f-1}p^{p-2})}, 1, \ldots, 1)e.$$
Doing this successively gives 
$\varphi(e)=(C\pi^{(p-1)c_0},\pi^{(p-1)c_1},\ldots,\pi^{(p-1)c_{f-1}})e$ for some basis $e$.
It's easily checked that those changes of basis
that maintain $G_\gamma \equiv (1,\ldots,1) \mod \pi$ are $e'=u e$
such that $u = (u_0,\ldots,u_{f-1})$ with $(p-1)|\,\val(u_i)$ and that the corresponding
action of $\gamma \in \Gamma$ is given by $\gamma(e)=(\lambda_\gamma^{\sum_0\vec{c}},
\lambda_\gamma^{\sum_1\vec{c}}, \dots,
\lambda_\gamma^{\sum_{f-1}\vec{c}})e$.

Finally, we suppose that $M$ is isomorphic to $M'=\E_{K,F}e'$ with
$$\begin{aligned}\varphi(e') &=P'e'=(C'\pi^{(p-1)c_0'},\pi^{(p-1)c_1'},\ldots,
\pi^{(p-1)c_{f-1}'})\,e', \\
\gamma(e') &=G_\gamma' e'=(\lambda_\gamma^{\sum_0\vec{c'}},
\lambda_\gamma^{\sum_1\vec{c'}}, \ldots,
\lambda_\gamma^{\sum_{f-1}\vec{c'}})\,e'\end{aligned}$$
and determine when the two are isomorphic.  After appropriate changes of bases we can assume that
$$\begin{aligned}\varphi(e) &= Pe =(C\pi^{(p-1)w},1,\ldots,1)e,\\
\varphi(e') &= P'e'=
(C'\pi^{(p-1)w'},1,\ldots,1)e'\end{aligned}$$
where $w=\sum_0\vec{c}$ and $w' = \sum_0\vec{c'}$ satisfy $0\le w,w'\ < p^f-1$.

Suppose that $u =(u_0, \cdots, u_{f-1}) \in \E_{K,F}^\times$ is such that
$P'=(\varphi(u)/u) P$ and $G'_\gamma = (\gamma(u)/u) G_\gamma$ for
all $\gamma \in \Gamma$.   Then $\gamma(u_0)/u_0 \equiv 1 \bmod \pi\F[[\pi]]$,
so $(p-1)|{\rm val}_\pi(u_0)$.  It follows that
$u=(u_0(\pi),u_0(\pi^{p^{f-1}}), \ldots,
u_0(\pi^p))$ with $u_0(\pi)=u_0'(\pi)\pi^{(p-1)j}$ for some $u_0'(\pi) \in
\F[[\pi]]^\times$ and $j \in \Z$, in which case we have
$$\varphi(u)/u=(\pi^{(p-1)(p^f-1)j}u_0'(\pi)^{p^f-1},1,\ldots,1).$$ 
Thus, we conclude
that $M$ and $M'$ are isomorphic if only if $C=C'$ and $\sum c_ip^i
\equiv \sum c_i'p^i \mod p^f-1$. 

The last assertion is clear.
\end{proof}

We denote the module defined in the proposition by $M_{C\vec{c}}=M_{C(c_0,\ldots,c_{f-1})}$.
We simply write $M_{\vec{c}}$ for $M_{C\vec{c}}$ if $C=1$.
We also put
$$\begin{aligned}\kappa_\varphi(M_{C\vec{c}}) &=\kappa_\varphi(C,\vec{c})
=(C\pi^{(p-1)c_0},\pi^{(p-1)c_1},\ldots,\pi^{(p-1)c_{f-1}}), \\
\kappa_\gamma(M_{C\vec{c}})
&=\kappa_\gamma(C,\vec{c})=(\lambda_\gamma^{\sum_0\vec{c}},
\lambda_\gamma^{\sum_1\vec{c}}, \ldots,
\lambda_\gamma^{\sum_{f-1}\vec{c}}), \end{aligned}$$ and write
$\Sigma_l$ for $\Sigma_l \vec{c}$ where $c_i$'s are understood.

\subsection{Lifts in characteristic zero.}
We now construct rank one Wach modules over $\A_{K, F}^+$ 
following Dousmanis \cite[\S 2]{Dou07} and check that these
reduce modulo $\varpi_F$ to the $(\varphi,\Gamma)$-modules
$M_{C\vec{c}}$ over $\E_{K,F}$.

Let $q_1=q=\varphi(\pi)/\pi, q_n=\varphi^{n-1}(q) \in
\Z_p[[\pi]]$ and let $\Lambda_f=\prod_{j\ge 0} q_{1+jf}/p,
\Lambda_\gamma={\frac{\Lambda_f}{\gamma(\Lambda_f)}} \in \Q[[\pi]]$.
One then has that $\Lambda_f \in 1+\pi\Q_p[[\pi]]$ and
$\Lambda_\gamma \in 1+\pi\Z_p[[\pi]]$.

Suppose we want to construct a rank one Wach module $N=\A_{K,F}^+e$
such that
$$\begin{aligned}
\varphi(e) &=(\tilde{C}q^{c_0},q^{c_1},\ldots,q^{c_{f-1}})e,\\
\gamma(e) &=(g_0(\pi),\ldots,g_{f-1}(\pi))e
\end{aligned}$$ if $\gamma \in \Gamma$, where $\tilde{C} \in \CO_F^\times$
is any lift of $C\in\F^\times$ and each $g_i(\pi)=g_{\gamma,i}(\pi) \in \CO_F[[\pi]]$ depends on
$\gamma \in \Gamma$. Commutativity of the actions of $\varphi$ and
$\Gamma$ amounts to the following identities:
$$\begin{aligned}
\gamma(q)^{c_0}g_0(\pi) &= q^{c_0}\varphi(g_1(\pi)),\\
\gamma(q)^{c_1}g_1(\pi) &= q^{c_1}\varphi(g_2(\pi)),\\
&\vdots\\
\gamma(q)^{c_{f-2}}g_{f-2}(\pi) &= q^{c_{f-2}}\varphi(g_{f-1}(\pi)),\\
\gamma(q)^{c_{f-1}}g_{f-1}(\pi) &= q^{c_{f-1}}\varphi(g_0(\pi)).
\end{aligned}$$
Thus, we are looking for a solution $g_i(\pi)$ for each $\gamma$ of
the equation
$$g_0(\pi)=\left({\frac{q}{\gamma(q)}}\right)^{c_0}\varphi\left({\frac{q}{\gamma(q)}}\right)^{c_1}
\varphi^2\left({\frac{q}{\gamma(q)}}\right)^{c_2}\cdots
\varphi^{f-1}\left({\frac{q}{\gamma(q)}}\right)^{c_{f-1}}\varphi^f(g_0(\pi)).$$
It is straightforward to check that
$$g_0(\pi)= \Lambda_\gamma^{c_0}\varphi(\Lambda_\gamma)^{c_1}
\varphi^2(\Lambda_\gamma)^{c_2}\cdots
\varphi^{f-1}(\Lambda_\gamma)^{c_{f-1}}$$ gives the unique solution
which is $\equiv 1$ modulo $\pi$, and that the remaining
$g_i(\pi)$'s are uniquely determined by
$$\begin{aligned}
g_1(\pi) 
&=\left({\frac{q}{\gamma(q)}}\right)^{c_1}\varphi\left({\frac{q}{\gamma(q)}}\right)^{c_2} \cdots \varphi^{f-1}\left({\frac{q}{\gamma(q)}}\right)^{c_{f-1}}\varphi^{f-1}(g_0(\pi)), \\
& \vdots \\
g_{f-2}(\pi) 
&=\left({\frac{q}{\gamma(q)}}\right)^{c_{f-2}}\varphi\left({\frac{q}{\gamma(q)}}\right)^{c_{f-1}}\varphi^2(g_0(\pi)), \\
g_{f-1}(\pi) &=\left({\frac{q}{\gamma(q)}}\right)^{c_{f-1}}\varphi(g_0(\pi)).
\end{aligned}$$

Dousmanis \cite[\S 6]{Dou07} shows that $N=\A_{K,F}^+e$ endowed with
the actions of $\varphi$ and $\Gamma$ described above defines a Wach
module over $\A_{K,F}^+$ which we denote by $N_{\tilde{C}\vec{c}}$.
Furthermore, $(N_{\tilde{C}\vec{c}}/\pi N_{\tilde{C}\vec{c}}) \otimes_{\A_{K,F}^+}
\B_{K,F}^+$ is a filtered $\varphi$-module corresponding to a positive
character $G_K \to \F^\times$ with labeled Hodge-Tate weights
$(c_{f-1},c_0,c_1,\ldots,c_{f-2})$. One checks the
following by direct computation.

\begin{prop} We have an isomorphism
$M_{C\vec{c}}\simeq N_{\tilde{C}\vec{c}} \otimes_{\A_{K,F}^+} \E_{K,F}$ of
$(\varphi, \Gamma)$-modules over $\E_{K,F}$.
\end{prop}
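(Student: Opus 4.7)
The plan is to exhibit the isomorphism by sending the reduction of the distinguished basis vector $e$ of $N_{\tilde C \vec c}$ to the basis vector $e$ of $M_{C\vec c}$ and checking that the matrices $\kappa_\varphi$ and $\kappa_\gamma$ agree modulo $\varpi_F$. For the $\varphi$-action the key observation is that $q = \varphi(\pi)/\pi = ((1+\pi)^p-1)/\pi \equiv \pi^{p-1} \pmod{p}$, so $(\tilde C q^{c_0}, q^{c_1}, \ldots, q^{c_{f-1}})$ reduces to $(C\pi^{(p-1)c_0}, \pi^{(p-1)c_1},\ldots, \pi^{(p-1)c_{f-1}}) = \kappa_\varphi(M_{C\vec c})$, and any unit lift $\tilde C$ of $C$ works.

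For the $\Gamma$-action the central (and main) obstacle is to establish $\overline{\Lambda_\gamma} = \lambda_\gamma$ in $\F_p[[\pi]]$. From the product formula for $\Lambda_f$ one telescopes to get the functional equation $\Lambda_\gamma/\varphi^f(\Lambda_\gamma) = q_1/\gamma(q_1) = q/\gamma(q)$ in $\Q_p[[\pi]]$, which reduces mod $p$ to $\overline{\Lambda_\gamma}/\varphi^f(\overline{\Lambda_\gamma}) = (\pi/\gamma(\pi))^{p-1}$. On the other hand, raising the defining relation $\lambda_\gamma^{(p^f-1)/(p-1)} = \gamma(\pi)/(\overline{\chi}(\gamma)\pi)$ to the $(p-1)$-th power and using $\overline{\chi}(\gamma)^{p-1}=1$ in $\F_p^\times$ gives $\lambda_\gamma^{p^f-1} = (\gamma(\pi)/\pi)^{p-1}$; since $\varphi$ acts on $\F_p[[\pi]]$ as the $p$-th power, this is $\varphi^f(\lambda_\gamma)/\lambda_\gamma = (\gamma(\pi)/\pi)^{p-1}$, i.e.\ $\lambda_\gamma/\varphi^f(\lambda_\gamma) = (\pi/\gamma(\pi))^{p-1}$. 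Thus $\overline{\Lambda_\gamma}$ and $\lambda_\gamma$ both lie in $1+\pi\F_p[[\pi]]$ and satisfy the same equation $x/\varphi^f(x) = (\pi/\gamma(\pi))^{p-1}$. The map $x \mapsto x/\varphi^f(x)$ on $1+\pi\F_p[[\pi]]$ has trivial kernel — if $x = \varphi^f(x) = x^{p^f}$ then $x$ is a $(p^f-1)$-st root of unity in the pro-$p$ group $1+\pi\F_p[[\pi]]$, hence $x=1$ — so the equality follows.

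Granted $\overline{\Lambda_\gamma} = \lambda_\gamma$, the rest is mechanical. One verifies (by substituting into Dousmanis's recursion $g_i = (q/\gamma(q))^{c_i}\varphi(g_{i+1})$ and using $\Lambda_\gamma/\varphi^f(\Lambda_\gamma) = q/\gamma(q)$) that the solution takes the cyclic form $g_i = \Lambda_\gamma^{c_i}\varphi(\Lambda_\gamma)^{c_{i+1}}\cdots \varphi^{f-1}(\Lambda_\gamma)^{c_{i-1}}$ (indices mod $f$). Reducing and using $\varphi^j(\lambda_\gamma) = \lambda_\gamma^{p^j}$ in characteristic $p$ yields $\overline{g_i} = \lambda_\gamma^{\,c_i + pc_{i+1} + \cdots + p^{f-1}c_{i-1}}$, and the substitution $i' = i+j \pmod f$ identifies the exponent with $\Sigma_i = \sum_{i'-j \equiv i \,(\mathrm{mod}\, f)} c_{i'}p^j$. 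Hence the reduced $\gamma$-matrix equals $\kappa_\gamma(M_{C\vec c})$ and the assignment $\bar e \mapsto e$ is the required isomorphism of $(\varphi,\Gamma)$-modules over $\E_{K,F}$.
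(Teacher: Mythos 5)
Your proof is correct and supplies exactly the ``direct computation'' the paper alludes to but does not spell out. The two pivotal points you identify are the right ones: the telescoping identity $\Lambda_\gamma/\varphi^f(\Lambda_\gamma)=q/\gamma(q)$, followed by the characterization of $\lambda_\gamma$ as the unique element of $1+\pi\F_p[[\pi]]$ satisfying the reduction of that equation (via the fact that $1+\pi\F_p[[\pi]]$ is pro-$p$, so has no nontrivial $(p^f-1)$-torsion). The rest — $q\equiv\pi^{p-1}\bmod p$, the cyclic formula for $g_i$, and the identification of the exponent $c_i+pc_{i+1}+\cdots+p^{f-1}c_{i-1}$ with $\Sigma_i\vec c$ — is routine bookkeeping and you have verified it correctly.
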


Combined with Lemma~3.8 of \cite{BDJ05}, we obtain the following, where
$\omega_\tau$ denotes the fundamental character associated to $\tau$
(i.e., $\omega_\tau : I_K \to \F^\times$ is defined by composing $\tau$ with
the homomorphism $I_K \to k^\times$ obtained from local class field theory, with
the convention that uniformizers correspond to geometric Frobenius elements).
\begin{cor} \label{cor:fun} If $\psi: G_K \rightarrow \F^\times$ is the character 
defined by the action on $\V(M_{C\vec{c}})$, then $\psi|_{I_K} =
\prod_{\tau\in S} \omega_\tau^{-c_{\tau\circ\varphi^{-1}}}$.
\end{cor}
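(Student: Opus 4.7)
The plan is to combine Proposition~\ref{rankone} (more precisely, the identification $M_{C\vec{c}} \simeq N_{\tilde{C}\vec{c}} \otimes_{\A_{K,F}^+} \E_{K,F}$ recorded just above), the $F$-analogue of Berger's Theorem~\ref{berger} (equivalence of categories between crystalline $\CO_F$-representations and Wach modules over $\A_{K,F}^+$), and Lemma~3.8 of \cite{BDJ05}. Let $\tilde\psi : G_K \to \CO_F^\times$ be the positive crystalline character whose associated Wach module is $N_{\tilde{C}\vec{c}}$. Because the passage from a Wach module over $\A_{K,F}^+$ to the étale $(\varphi,\Gamma)$-module $\A_{K,F} \otimes_{\A_{K,F}^+} N_{\tilde{C}\vec{c}}$ is compatible with reduction modulo $\varpi_F$, and because the functor $\V$ commutes with this reduction via the equivalences of Corollary~\ref{equiv-labeled}, the mod $p$ character attached to $M_{C\vec{c}}$ coincides with the reduction $\overline{\tilde\psi} = \psi$.

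Next, I would exploit the explicit filtered $\varphi$-module description already recorded: $\tilde\psi$ has labeled Hodge-Tate weights $(c_{f-1}, c_0, c_1, \ldots, c_{f-2})$ with respect to the ordered tuple of embeddings $(\tau_0, \tau_1, \ldots, \tau_{f-1})$. Equivalently, the weight $a_{\tau_i}$ at $\tau_i$ equals $c_{i-1}$, with indices read modulo $f$. Since $\tau_i \circ \varphi^{-1} = \tau_{i-1}$, this can be repackaged as the uniform assignment $a_\tau = c_{\tau \circ \varphi^{-1}}$ for all $\tau \in S$.

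Finally, Lemma~3.8 of \cite{BDJ05} computes the inertia restriction of the reduction modulo $p$ of a crystalline character in terms of its labeled Hodge-Tate weights and the fundamental characters $\omega_\tau$; with the sign convention for Hodge-Tate weights fixed in the discussion following our definition in \S2.2, it gives $\overline{\tilde\psi}|_{I_K} = \prod_{\tau \in S} \omega_\tau^{-a_\tau}$. Substituting $a_\tau = c_{\tau \circ \varphi^{-1}}$ yields the claimed formula $\psi|_{I_K} = \prod_{\tau \in S}\omega_\tau^{-c_{\tau\circ\varphi^{-1}}}$. The only real subtlety is bookkeeping: the cyclic permutation of components induced by $\varphi$ on $\A_{\Q_p,F}^S$, the ordering convention $\tau_i = \tau_0 \circ \varphi^i$, and the non-standard sign convention for Hodge-Tate weights must all be kept consistent, but once they are aligned the corollary is immediate.
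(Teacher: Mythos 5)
Your proposal is correct and follows essentially the same route as the paper: reduce $M_{C\vec{c}}$ via Proposition~\ref{rankone} and the identification $M_{C\vec{c}}\simeq N_{\tilde{C}\vec{c}}\otimes_{\A_{K,F}^+}\E_{K,F}$ to the reduction of a crystalline character whose labeled Hodge--Tate weights $(c_{f-1},c_0,\ldots,c_{f-2})$ are supplied by Dousmanis, then invoke Lemma~3.8 of \cite{BDJ05}. The index bookkeeping $a_{\tau_i}=c_{i-1}=c_{\tau_i\circ\varphi^{-1}}$ is handled correctly, and this is exactly the (unwritten) argument the paper intends.
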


\section{Bases for the space of extensions}
We will assume $p > 2$ for the rest of the paper except in \S 6.3 and \S 7.
We fix a topological generator $\eta$ of the pro-cyclic group $\Gamma = \Gamma_K$,
and set $\xi=\eta^{p-1}$, so that $\xi$ topologically generates $\Gamma_1$.

Given $C \in \F^\times$ and $\vec{c}=(c_0,\ldots,c_{f-1}) \in \{0,1,\ldots, p-1\}^S$
with some $c_i < p-1$, we are going to parametrize the space of extension classes
$\Ext^1(M_{0}, M_{C\vec{c}})$ in the category of
\'etale $(\varphi, \Gamma)$-modules over $\E_{K,F}$. 
Here $M_0$ denotes the \'etale $(\varphi,\Gamma)$-module $\E_{K,F}$ with the
usual action of $\varphi$ and $\Gamma$, so $M_0 = M_{1,\vec{0}}$ corresponds
to the trivial character $G_K \to \F^\times$.
Recall that $M_{C\vec{c}}$, $\kappa_\varphi(M_{C\vec{c}})$ and 
$\kappa_{\gamma}(M_{C\vec{c}})$ were defined at the end of \S\ref{subsec:param};
since $M_{C\vec{c}}$ will be fixed in this section, we denote these simply
$\kappa_\varphi$ and $\kappa_\gamma$.

We start by noticing
that there is an $\F$-linear isomorphism
$$\beta: H/H_0 \rightarrow \Ext^1(M_{0}, M_{C\vec{c}}),$$ where $H$ is the subgroup of 
$\E_{K,F}\times \{\Gamma \to \E_{K,F} \}$
consisting of elements $(\mu_\varphi, (\mu_\gamma)_{\gamma
\in \Gamma})$  such that $\gamma \mapsto \mu_\gamma$ is continuous and
satisfies\footnote{We will frequently use the following notation:
for an element $\kappa$ and a ring endomorphism $\psi$ of $\E_{K,F}$,
we denote by $\kappa\psi - 1$ the $\F$-linear endomorphism of $\E_{K,F}$
defined by $(\kappa\psi-1)(x) = \kappa\psi(x) - x$.  We do the same for
$\F((\pi))$ in place of $\E_{K,F}$.  Thus for example, if $\Sigma,s\in\Z$,
then $(\lambda_\gamma^\Sigma \gamma - 1 ) (\pi^s)$ denotes
$\lambda_\gamma(\pi)^{\Sigma} \cdot \gamma(\pi^{s})-\pi^{s}$.}
$$\begin{aligned}
(\dagger)\qquad\  &\, (\kappa_\varphi\varphi-1)(\mu_\gamma)
=(\kappa_\gamma\gamma-1)(\mu_\varphi) \,\, \forall \gamma \in
\Gamma,\\
(\ddagger)\qquad\  &\, \mu_{\gamma\gamma'} =
\kappa_\gamma\gamma(\mu_{\gamma'})+\mu_\gamma \,\, \forall
\gamma,\gamma' \in \Gamma,
\end{aligned}
$$ and $H_0=\{
(\kappa_\varphi\varphi(b)-b,
(\kappa_\gamma\gamma(b)-b)_{\gamma\in\Gamma})
 | \, b\in \F((\pi))^S\} \subset H$.

We call elements of $H$ {\it cocyles} and those of $H_0$ {\it coboundaries}.
The map $\beta$ is defined as follows:
given a cocycle $\mu = (\mu_\varphi, (\mu_\gamma)_{\gamma
\in \Gamma}) \in H$, we define an extension
$$0 \to M_{C\vec{c}} \to E \to M_0 \to 0$$
basis $\{e,e'\}$ such that the action 
$\varphi$ and $\gamma \in \Gamma$ are given by the matrices $P= \left(
\begin{matrix}
\kappa_\varphi & \mu_\varphi \\
0  &   1
\end{matrix}
\right)$ and $G_\gamma= \left(
\begin{matrix}
\kappa_\gamma & \mu_\gamma \\
0  &   1
\end{matrix}
\right)$. It is straightforward to check that the
matrices $P$ and $G_\gamma$ define an extension if and only if
$\mu \in H$, that every extension arises this way,
and that a change of basis for an extension $E$
corresponds to adding an element of $H_0$ to $\mu$.
If $\mu \in H$, then we write $[\mu]$ for the corresponding
extension class $\beta(\mu)$.

By Corollary~\ref{equiv-labeled}, we get an isomorphism $\Ext^1(M_{\vec{0}},
M_{C\vec{c}}) \simeq H^1(K, \F(\psi))$ where $\psi:
G_K \rightarrow \F^\times$ is the character defined by the action
on $\V(M_{C\vec{c}})$.

\begin{lem}
Via Corollary~\ref{equiv-labeled}, $M_{\vec{0}}$ corresponds to the
trivial character and $M_{\overrightarrow{p-2}}$ to the mod $p$
cyclotomic character.
\end{lem}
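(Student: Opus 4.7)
The plan is to verify both statements directly via the equivalence $\D$ of Corollary~\ref{equiv-labeled}. For the first assertion, note that $M_{\vec{0}}$ has $\kappa_\varphi = \kappa_\gamma = (1,\ldots,1)$, so both $\varphi$ and $\Gamma$ act trivially on its basis vector. Since $H_K$ acts trivially on $\F$, the $(\varphi,\Gamma)$-module attached to the trivial character is $(\E \otimes_{\F_p} \F)^{H_K} = \E_{K,F}$ also with trivial actions, giving $M_{\vec{0}} \cong \D(\mathbf{1})$.

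For the second assertion, I will compute $\D(\overline{\chi})$ explicitly and make a change of basis to match the normalized form of $M_{\overrightarrow{p-2}}$ from Proposition~\ref{rankone}. Since $\overline{\chi}$ is trivial on $H_K$ (because $K_\infty \supset \mu_{p^\infty}$), one has $\D(\overline{\chi}) = \E_{K,F}\cdot e$ with $\varphi(e) = e$ and $\gamma(e) = \overline{\chi}(\gamma) e$. The change of basis $e' = u e$ with $u = (\pi^{p-2},\ldots,\pi^{p-2}) \in \E_{K,F}^\times$ transforms the matrices of $\varphi$ and $\gamma$ into $\varphi(u)/u$ and $(\gamma(u)/u)\overline{\chi}(\gamma)$ respectively; my aim is to show that these agree with $\kappa_\varphi(M_{\overrightarrow{p-2}})$ and $\kappa_\gamma(M_{\overrightarrow{p-2}})$.

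The $\varphi$-check is immediate using $\varphi(\pi) = \pi^p$ in characteristic $p$: one gets $\varphi(u)/u = (\pi^{(p-1)(p-2)},\ldots,\pi^{(p-1)(p-2)}) = \kappa_\varphi$ with $C = 1$. The only real computation is the $\Gamma$-check. Since $\vec{c} = \overrightarrow{p-2}$ is constant one has $\Sigma_l = (p-2)(p^f-1)/(p-1)$ for every $l$, so the defining property of $\lambda_\gamma$ yields
$$\lambda_\gamma^{\Sigma_l} = \left(\frac{\gamma(\pi)}{\overline{\chi}(\gamma)\pi}\right)^{p-2} = \left(\frac{\gamma(\pi)}{\pi}\right)^{p-2}\overline{\chi}(\gamma)^{-(p-2)},$$
while $(\gamma(u)/u)\overline{\chi}(\gamma) = (\gamma(\pi)/\pi)^{p-2}\overline{\chi}(\gamma)$ componentwise. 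These match precisely because $\overline{\chi}(\gamma)^{p-1} = 1$ in $\F_p^\times$, which identifies $\D(\overline{\chi})$ with $M_{\overrightarrow{p-2}}$.

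No serious obstacle is anticipated; the main creative step is guessing the right $u$, which is forced by the requirement to absorb $\pi^{(p-1)(p-2)}$ into $\varphi(e)$. As a sanity check one can instead use Corollary~\ref{cor:fun} together with $-(p-2) \equiv 1 \pmod{p-1}$ and $\prod_\tau \omega_\tau = \overline{\chi}|_{I_K}$ to match inertial restrictions, then invoke the Wach-module lift of Dousmanis with $\tilde C = 1$ to rule out an unramified twist; but the direct change-of-basis calculation is more elementary and settles both parts at once.
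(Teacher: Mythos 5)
Your proof is correct. The only real difference from the paper's argument is the starting point: the paper reduces the Wach module $\N(\Z_p(1)) = \A_K^+ e$ (with $\varphi(e) = (\pi/\varphi(\pi))e$, $\gamma(e) = (\chi(\gamma)\pi/\gamma(\pi))e$) modulo $p$ to get the unnormalized form $\varphi(e) = \pi^{1-p}e$, and then applies $u = (\pi^{p-1},\ldots,\pi^{p-1})$; you instead work entirely in characteristic $p$, exploiting $\overline{\chi}|_{H_K} = 1$ to read off $\D(\F(\overline{\chi}))$ as $\E_{K,F}$ with $\varphi(e)=e,\ \gamma(e)=\overline{\chi}(\gamma)e$ directly from the definition of $\D$, and then apply $u=(\pi^{p-2},\ldots,\pi^{p-2})$. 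Your route is slightly more elementary — it never introduces a characteristic-zero lift — and you also write out the $\Gamma$-compatibility calculation (reducing to $\overline{\chi}(\gamma)^{p-1}=1$), which the paper leaves implicit. The two proofs reach the same intermediate module up to the basis change $e\mapsto \pi e$, so the content is the same; your version is a bit more self-contained and makes the $\Gamma$-check visible.
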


\begin{proof} The assertion is clear for the trivial character.
The mod $p$ cyclotomic character factors as $G_K \rightarrow
\Z_p^\times \rightarrow \F_p^\times \hookrightarrow \F^\times$ where
the arrow in the middle is the reduction mod $p$. If $T=\Z_p(1)$,
its Wach module is given by $\N(\Z_p(1)) = \A_K^+e$ where
$\varphi(e)={\frac{\pi}{\varphi(\pi)}}e \,\,{\rm and}\,\,
\gamma(e)={\frac{\chi(\gamma)\pi}{\gamma(\pi)}}e \,\,{\rm if}\,\,
\gamma \in \Gamma$ (cf. \cite[Appendice A]{Ber04}). Working modulo $p$ and
extending scalars to $\F$ we see that the \'etale $(\varphi,
\Gamma)$-module over $\E_{K,F}$ corresponding to the mod $p$ cyclotomic
character is given by $M=\E_{K,F}e$ with
$\varphi(e)=\pi^{1-p}e=(\pi^{1-p},\ldots,\pi^{1-p})e$. By a change
of basis $e'=ue$ with $u=(\pi^{p-1},\ldots,\pi^{p-1})$, we get
$M\simeq M_{\overrightarrow{p-2}}$.
\end{proof}

Since $$\dim_\F H^1 (K,\F(\psi))
=\left\{\begin{array}{lll} f+1 & {\rm if} &  \psi=1 \,{\rm or}\, \overline{\chi} \\
                           f & {\rm if} & \psi \not\in \{1, \overline{\chi}\},
\end{array} \right.$$
we have
$$\dim_\F \Ext^1(M_{\vec{0}}, M_{C\vec{c}})
=\left\{\begin{array}{lll} f+1 & {\rm if} \,\, C=1, \,\,{\rm and}\,\,
\vec{c}=\vec{0} \,\,{\rm or}\,\, \vec{c}=\overrightarrow{p-2}
\\ f & {\rm otherwise}.
\end{array} \right.$$

We are about to define elements
$B_0,\ldots,B_{f-1} \in H $ such that the associated extension classes
form a basis for $\Ext^1(M_{\vec{0}}, M_{C\vec{c}})$ except for the
two cases where $C=1, \vec{c}=\vec{0}$ or $C=1,
\vec{c}=\overrightarrow{p-2}$, for which a separate treatment will
be given in \S 6. Thanks to the isomorphism $\beta$ we only need to define
$\mu_\varphi$ and $\mu_\gamma$'s satisfying the desired
properties $(\dagger)$ and $(\ddagger)$. According to whether the
parameter $c_i$ is equal to $p-1$ or not the extension $B_i$ is
constructed in a slightly different manner.

\subsection{Construction of $B_i$ when $c_i<p-1$}
Recall that we have fixed a topological generator $\eta$ for $\Gamma$,
and we let $\xi = \eta^{p-1}$, a topological generator for $\Gamma_1$.
\label{subsec:B_i1}
\begin{lem}\label{delta}
Suppose that $\Sigma, s \in \Z$, and $v = v_p(\Sigma+s(p^f-1)/(p-1)) < \infty$.
Then
$$(\lambda_\eta^\Sigma \eta - 1 ) (\pi^s) 
\in ({\overline{\chi}(\eta)}^s-1)\pi^s+
\overline{s_v}{\frac{{\overline{\chi}(\eta)}^s({\overline{\chi}(\eta)}-1)}{2}}\pi^{s+
p^v}+\pi^{s+2p^{v}}\F_p[[\pi^{p^v}]],$$ where $\Sigma+s(p^f-1)/(p-1)
= \sum_{j\ge v}s_jp^j$.
\end{lem}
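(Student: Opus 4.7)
The plan is to reduce the statement to the Taylor expansion of a single power $\lambda_\eta^N$, then extract that expansion using the base-$p$ digits of $N$ together with Frobenius triviality on $\F_p$.

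First I would substitute $\eta(\pi) = \overline{\chi}(\eta)\pi\lambda_\eta^L$, where $L = (p^f-1)/(p-1)$ is the exponent from the definition of $\lambda_\eta$, to obtain
$$\lambda_\eta^\Sigma \eta(\pi^s) = \overline{\chi}(\eta)^s \pi^s \lambda_\eta^{N}, \qquad N := \Sigma + sL,$$
so that $(\lambda_\eta^\Sigma\eta - 1)(\pi^s) = \pi^s(\overline{\chi}(\eta)^s \lambda_\eta^N - 1)$. The whole statement is then equivalent to the congruence
$$\lambda_\eta^N \equiv 1 + \overline{s_v}\,\frac{\overline{\chi}(\eta)-1}{2}\,\pi^{p^v} \pmod{\pi^{2p^v}\F_p[[\pi^{p^v}]]}.$$

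Second, I would pin down $\lambda_\eta$ modulo $\pi^2$. Reducing the identity $\eta(\pi) = (1+\pi)^{\chi(\eta)}-1$ modulo $p$ gives $\eta(\pi)/(\overline{\chi}(\eta)\pi) \equiv 1 + \frac{\overline{\chi}(\eta)-1}{2}\pi \pmod{\pi^2}$. Because $L = 1+p+\cdots+p^{f-1} \equiv 1 \pmod p$, the linear coefficient of $\lambda_\eta^L$ equals that of $\lambda_\eta$ in $\F_p$; comparing with the displayed congruence then forces $\lambda_\eta \equiv 1 + \frac{\overline{\chi}(\eta)-1}{2}\,\pi \pmod{\pi^2}$. (The assumption $p>2$ is used here to make $1/2$ meaningful.)

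Third, I would use the base-$p$ expansion $N = \sum_{j\ge v}s_j p^j$ together with the triviality of Frobenius on $\F_p$: since $\lambda_\eta$ has coefficients in $\F_p$, one has $\lambda_\eta^{p^j} = \lambda_\eta(\pi^{p^j}) \in \F_p[[\pi^{p^j}]]$ for every $j\ge 0$. Therefore
$$\lambda_\eta^N = \prod_{j\ge v}(\lambda_\eta^{p^j})^{s_j} \in \F_p[[\pi^{p^v}]].$$
The factors with $j>v$ lie in $1 + \pi^{p^{v+1}}\F_p[[\pi^{p^{v+1}}]] \subset 1 + \pi^{2p^v}\F_p[[\pi^{p^v}]]$ (using $p\ge 2$) and so contribute trivially modulo $\pi^{2p^v}\F_p[[\pi^{p^v}]]$, while $(\lambda_\eta^{p^v})^{s_v}$ contributes $1 + \overline{s_v}\cdot\frac{\overline{\chi}(\eta)-1}{2}\,\pi^{p^v}$ by the previous paragraph and a binomial expansion. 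Multiplying by $\overline{\chi}(\eta)^s\pi^s$ and subtracting $\pi^s$ then yields the claim.

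I expect the main subtlety to be justifying the refinement of the error term from $\pi^{s+2p^v}\F_p[[\pi]]$ down to the smaller $\pi^{s+2p^v}\F_p[[\pi^{p^v}]]$. This sharpening is exactly what the Frobenius observation in the third step delivers, because each factor in the product expansion of $\lambda_\eta^N$ lives in a subring $\F_p[[\pi^{p^j}]]$ for some $j\ge v$, and $\F_p[[\pi^{p^v}]]$ is closed under multiplication.
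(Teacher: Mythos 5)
Your proof is correct and follows essentially the same route as the paper's: both reduce the claim to expanding $\lambda_\eta^N$ ($N=\Sigma+s(p^f-1)/(p-1)$) as a product over base-$p$ digits via the Frobenius identity $\lambda_\eta^{p^j}=\lambda_\eta(\pi^{p^j})\in\F_p[[\pi^{p^j}]]$, and both extract the linear coefficient of $\lambda_\eta$ from the defining relation $\lambda_\eta^{(p^f-1)/(p-1)}=\eta(\pi)/\overline{\chi}(\eta)\pi$. Your observation that $(p^f-1)/(p-1)\equiv 1\bmod p$ already pins down the coefficient of $\pi$ in $\lambda_\eta$ is a marginally more economical version of the paper's step showing $\lambda_\eta\equiv\lambda_\eta^{(p^f-1)/(p-1)}\bmod\pi^{p-1}$, but the substance is identical.
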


\begin{proof}
It is easy to see that in
$\F_p[[\pi]]/\pi^{p-1}$, we have
$$\lambda_\eta = \lambda_\eta^{\frac{p^f-1}{p-1}} =
{\frac{\eta(\pi)}{\overline{\chi}(\eta)\pi}} =
\overline{\chi}(\eta)^{-1}\sum_{j=1}^{d_0-1}
\overline{\frac{d_0!}{j!(d_0-j)!}}\pi^{j-1} 
= 1+\sum_{j=2}^{d_0-1}\overline{\frac{d_0!}{d_0j!(d_0-j)!}}\pi^{j-1},$$ 
where $\chi(\eta)=\sum_{j\ge
0}d_jp^j \in \Z_p^\times$. Noting that, if $s \in \Z$, then
$$(\lambda_\eta ^{\Sigma}\eta-1)(\pi^s)
=\left(\overline{\chi}(\eta)^s\lambda_\eta^{\Sigma}\cdot
\left({\frac{\eta(\pi)}{\overline{\chi}(\eta)\pi}}\right)^s
-1\right)\pi^s,$$
and the result follows as
$$\begin{aligned} \overline{\chi}(\eta)^s\lambda_\eta^{\Sigma} \cdot \left({\frac{\eta(\pi)}{\overline{\chi}(\eta)\pi
}}\right)^s -1 &= \overline{\chi}(\eta)^s\lambda_\eta^{\Sigma+s(p^f-1)/(p-1)}-1 \\
&= \overline{\chi}(\eta)^s\lambda_\eta^{\sum_{j\ge v}s_jp^j}-1 \\
&=\overline{\chi}(\eta)^s\lambda_\eta(\pi^{p^v})^{s_v}\lambda_\eta(\pi^{p^{v+1}})^{s_{v+1}}\cdots
 -1 \\
& \equiv (\overline{\chi}(\eta)^s-1)+
\overline{\chi}(\eta)^s\left(
\left(1+{\frac{\overline{\chi}(\eta)-1}{2}}\pi^{p^v}\right)^{s_v}-1\right) \\
& \equiv (\overline{\chi}(\eta)^s-1)+
\overline{s_v}{\frac{\overline{\chi}(\eta)^s(\overline{\chi}(\eta)-1)}{2}}\pi^{p^v} \mod \pi^{2p^v}
\end{aligned}$$
and
$$\overline{\chi}(\eta)^s\lambda_\eta^\Sigma \cdot \left({\frac{\eta(\pi)}{\overline{\chi}(\eta)\pi}}\right)^s-1
-\left((\overline{\chi}(\eta)^s-1)+\overline{s_v}{\frac{\overline{\chi}(\eta)^s(\overline{\chi}(\eta)-1)}{2}}\pi^{p^v}
\right) \in \pi^{2p^v}\F_p[[\pi^{p^v}]].
$$
\end{proof}

We note the following lemma, whose straightforward proof we omit:
\begin{lem}\label{gamma n} If $n\ge 1$, $\gamma \in \Gamma_n$ and 
$\chi(\gamma) \equiv 1 + zp^n\bmod p^{n+1}$,
then $\lambda_{\gamma} \equiv 1 + z\pi^{p^n-1} + z\pi^{p^{n}} \bmod \pi^{2p^n-2}$.
\end{lem}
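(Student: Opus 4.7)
Plan: Compute $\gamma(\pi)/\pi$ directly modulo $(p,\pi^{2p^n-1})$, then extract the $N$-th root, where $N:=(p^f-1)/(p-1)$. Since $\gamma\in\Gamma_n$ with $n\ge 1$, $\overline{\chi}(\gamma)=1$, so $\lambda_\gamma$ is characterised as the unique element of $1+\pi\F_p[[\pi]]$ satisfying $\lambda_\gamma^N=\gamma(\pi)/\pi$. Crucially, $N=\sum_{j=0}^{f-1}p^j\equiv 1\bmod p$, so the $N$-th power map is a bijection on the multiplicative group $1+\pi\bigl(\F_p[[\pi]]/\pi^{2p^n-2}\bigr)$; this will let me recognise $\lambda_\gamma$ from any element whose $N$-th power is $\gamma(\pi)/\pi$ to the required precision.

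The first main step is to compute $\gamma(\pi)=(1+\pi)^{\chi(\gamma)}-1$ modulo $(p,\pi^{2p^n-1})$. Writing $\chi(\gamma)=1+zp^n+p^{n+1}w$ for some integer $w$ and using the standard identity $(1+\pi)^{p^n}\equiv 1+\pi^{p^n}\bmod p$, I would obtain
\[(1+\pi)^{\chi(\gamma)}\equiv (1+\pi)(1+\pi^{p^n})^z(1+\pi^{p^{n+1}})^w \pmod{p}.\]
Since $p\ge 3$ and $n\ge 1$ imply $p^{n+1}\ge 2p^n-1$, the rightmost factor is $\equiv 1$ modulo $\pi^{2p^n-1}$; and the binomial terms of $(1+\pi^{p^n})^z$ beyond $1+z\pi^{p^n}$ start at degree $2p^n$. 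Therefore
\[(1+\pi)^{\chi(\gamma)}\equiv 1+\pi+z\pi^{p^n}+z\pi^{p^n+1}\pmod{p,\pi^{2p^n-1}},\]
and dividing by $\pi$ yields $\gamma(\pi)/\pi\equiv 1+z\pi^{p^n-1}+z\pi^{p^n}\bmod \pi^{2p^n-2}$.

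For the second step, set $y:=1+z\pi^{p^n-1}+z\pi^{p^n}$ and verify that $y^N\equiv y\bmod \pi^{2p^n-2}$. Using $N=\sum_{j=0}^{f-1}p^j$ and the Frobenius action in characteristic $p$,
\[y^N=\prod_{j=0}^{f-1} y^{p^j}.\]
For $j\ge 1$, $y^{p^j}=1+z^{p^j}\pi^{p^j(p^n-1)}+z^{p^j}\pi^{p^{j+n}}$, and the smaller exponent satisfies $p^j(p^n-1)\ge p(p^n-1)=p^{n+1}-p>2p^n-2$ when $p\ge 3$. Hence $y^{p^j}\equiv 1\bmod \pi^{2p^n-2}$ for each $j\ge 1$, and so $y^N\equiv y$ modulo $\pi^{2p^n-2}$. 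By bijectivity of the $N$-th power map modulo $\pi^{2p^n-2}$, $\lambda_\gamma\equiv y\bmod\pi^{2p^n-2}$, which is the asserted congruence.

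The main obstacle is purely bookkeeping: keeping track of the various $\pi$-exponents and verifying the strict inequality $p^{n+1}-p>2p^n-2$, which is exactly where the assumption $p\ge 3$ (the standing hypothesis of \S 4) enters. No serious issue is expected otherwise.
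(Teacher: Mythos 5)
Your proof is correct and follows the same strategy the paper visibly uses in the (given) proof of Lemma~\ref{delta}: compute $\gamma(\pi)/\pi$ directly from $(1+\pi)^{\chi(\gamma)}$ and then identify $\lambda_\gamma$ by exploiting the fact that the $\frac{p^f-1}{p-1}$-th power map is a bijection on $1+\pi\F_p[[\pi]]$ modulo the relevant power of $\pi$, together with $\lambda_\gamma^{(p^f-1)/(p-1)}\equiv\lambda_\gamma$ in that range. The paper labels this lemma's proof ``straightforward'' and omits it; your argument is exactly the straightforward argument they had in mind.
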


\begin{lem}\label{gamma} Let $\chi(\xi) \equiv 1+z p
\mod p^2$ with $0 < z \le p-1$ and let $\Sigma, s\in \Z$.
If $v = v_p(\Sigma+s(p^f-1)/(p-1)) < \infty$,
then $$(\lambda_\xi ^{\Sigma}\xi - 1 ) (\pi^s) \in \overline{
s_vz}(\pi^{s+(p-1)p^v}+\pi^{s+p^{v+1}}) +
\pi^{s+2p^v(p-1)}\F_p[[\pi^{p^v}]],$$ where $\Sigma+s(p^f-1)/(p-1) =
\sum_{j\ge v}s_jp^j$.
\end{lem}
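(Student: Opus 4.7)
The plan is to mirror the approach of Lemma~\ref{delta}, exploiting the finer expansion of $\lambda_\xi$ that is available because $\xi\in\Gamma_1$. First observe that $\xi\in\Gamma_1$ forces $\overline{\chi}(\xi)=1$, so the defining identity $\lambda_\xi^{(p^f-1)/(p-1)}=\xi(\pi)/(\overline{\chi}(\xi)\pi)$ reduces to $\xi(\pi)/\pi=\lambda_\xi^{(p^f-1)/(p-1)}$. Hence
$$(\lambda_\xi^\Sigma\xi-1)(\pi^s)=\bigl(\lambda_\xi^{\Sigma+s(p^f-1)/(p-1)}-1\bigr)\pi^s=\bigl(\lambda_\xi^{\sum_{j\ge v}s_jp^j}-1\bigr)\pi^s,$$
and the task becomes an analysis of $\lambda_\xi^{\sum_{j\ge v}s_jp^j}$ modulo a sufficiently high power of $\pi$.

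Next I would apply Lemma~\ref{gamma n} with $n=1$ and the given $z$, obtaining the congruence $\lambda_\xi\equiv 1+z\pi^{p-1}+z\pi^p\pmod{\pi^{2(p-1)}}$. Since we are in characteristic $p$, the identity $\lambda_\xi^{p^j}=\lambda_\xi(\pi^{p^j})$ gives
$$\lambda_\xi^{\sum_{j\ge v}s_jp^j}=\prod_{j\ge v}\lambda_\xi(\pi^{p^j})^{s_j}.$$
For the $j=v$ factor, binomial expansion (legitimate because the $z\pi^{p-1}+z\pi^p$ tail is divisible by $\pi^{p-1}$) gives
$$\lambda_\xi(\pi^{p^v})^{s_v}\equiv 1+s_vz\bigl(\pi^{(p-1)p^v}+\pi^{p^{v+1}}\bigr)\pmod{\pi^{2(p-1)p^v}}.$$
For each $j\ge v+1$ one has $\lambda_\xi(\pi^{p^j})^{s_j}\equiv 1\pmod{\pi^{(p-1)p^{v+1}}}$, and the inequality $(p-1)p^{v+1}\ge 2(p-1)p^v$ (valid for $p\ge 2$) allows these higher-$j$ contributions to be absorbed into an error of size $\pi^{2(p-1)p^v}$. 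Multiplying out, subtracting $1$, and multiplying by $\pi^s$ then yields the stated congruence.

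The main thing to verify is that the remainder lies in $\pi^{s+2(p-1)p^v}\F_p[[\pi^{p^v}]]$ and not merely in the larger set $\pi^{s+2(p-1)p^v}\F_p[[\pi]]$. This is automatic from the fact that each $\lambda_\xi(\pi^{p^j})^{s_j}$ with $j\ge v$ already lies in $\F_p[[\pi^{p^j}]]\subset\F_p[[\pi^{p^v}]]$, so the product, and hence the remainder after pulling off the two explicit leading terms $\overline{s_vz}\pi^{s+(p-1)p^v}$ and $\overline{s_vz}\pi^{s+p^{v+1}}$, stays inside $\F_p[[\pi^{p^v}]]$. With this observation in hand, the argument reduces to the same kind of routine expansion carried out in Lemma~\ref{delta}.
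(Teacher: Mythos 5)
Your argument is correct and follows essentially the same route as the paper's proof: both reduce the claim to the identity $(\lambda_\xi^{\Sigma}\xi-1)(\pi^s)=(\lambda_\xi^{\Sigma+s(p^f-1)/(p-1)}-1)\pi^s$, invoke Lemma~\ref{gamma n} for the expansion $\lambda_\xi\equiv 1+z\pi^{p-1}+z\pi^p\bmod\pi^{2p-2}$, factor $\lambda_\xi^{\sum_{j\ge v}s_jp^j}=\prod_{j\ge v}\lambda_\xi(\pi^{p^j})^{s_j}$, and show that the $j=v$ factor produces the stated leading term while the $j\ge v+1$ factors contribute only beyond degree $(p-1)p^{v+1}\ge 2(p-1)p^v$. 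The additional observation you make at the end --- that the remainder lies in $\F_p[[\pi^{p^v}]]$ because each factor $\lambda_\xi(\pi^{p^j})^{s_j}$ lies in $\F_p[[\pi^{p^j}]]\subset\F_p[[\pi^{p^v}]]$ --- is exactly the point made at the close of the paper's proof.
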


\begin{proof}
By Lemma~\ref{gamma n}, we have
$$\lambda_\xi \equiv {\frac{\xi(\pi)}{\pi\overline{\chi}(\xi)}}
\equiv 1+z\pi^{p-1}+z\pi^p \mod \pi^{2p-2}.$$ Noting that if $s \in \Z$, then
$$(\lambda_\xi^{\Sigma}\xi-1)(\pi^s) =
\left(\lambda_\xi^{\Sigma} \cdot \left(
{\frac{\xi(\pi)}{\pi\overline{\chi}(\xi)}}\right)^s-1\right)\pi^s,$$
and the result follows as
$$\begin{aligned}
\lambda_\xi^{\Sigma} \cdot \left(
{\frac{\xi(\pi)}{\pi\overline{\chi}(\xi)}}\right)^s-1
&= \lambda_\xi^{\Sigma+s(p^f-1)/(p-1)}-1 \\
&= \lambda_\xi(\pi^{p^v})^{s_v}\lambda_\xi(\pi^{p^{v+1}})^{s_{v+1}}
\cdots-1 \\
&\equiv \lambda_\xi(\pi^{p^v})^{s_v}-1 \mod \pi^{(p-1)p^{v+1}}\\
&\equiv \overline{s_vz}(\pi^{(p-1)p^v}+ \pi^{p^{v+1}}) \mod
\pi^{2(p-1)p^v}
\end{aligned}$$
and
$$\lambda_\xi^{\Sigma} \cdot \left(
{\frac{\xi(\pi)}{\pi\overline{\chi}(\xi)}}\right)^s-1-\overline{s_vz}(\pi^{(p-1)p^v}+
\pi^{p^{v+1}}) \in \pi^{2(p-1)p^v}\F_p[[\pi^{p^v}]].$$
\end{proof}

We now assume $c_i < p-1$ and construct an element $B_i \in H$.
(For the case $c_i =p-1$, we will need to use a modified construction 
described in \S\ref{subsec:B_i2}.)

Suppose for the moment that we have
successfully defined $B_i$ with $\mu_\varphi(B_i)$ of the form
$(0,\ldots,0,H_i(\pi),0,\ldots,0)$, $H_i(\pi)$ being the $i$th
component. For each $\gamma \in \Gamma$, by the condition $(\dagger)$ there should exist
$\mu_\gamma(B_i)=(G_0(\pi),\ldots,G_{f-1}(\pi))$ such
that $$(\kappa_\varphi\varphi-1)(\mu_\gamma(B_i))
=(\kappa_\gamma\gamma-1)(\mu_\varphi(B_i)),$$ i.e.,
$$\begin{aligned} & (C\pi^{(p-1)c_0}G_1(\pi^p)-G_0(\pi),
\pi^{(p-1)c_1}G_2(\pi^p)-G_1(\pi),\ldots,\pi^{(p-1)c_{f-1}}G_0(\pi^p)-G_{f-1}(\pi))
\\ = \, & (0,\ldots,0, (\lambda_\gamma^{\Sigma_i}\gamma-1)
(H_i(\pi)),0,\ldots,0). \end{aligned}$$ This is true if and only if
$$\begin{aligned}(C\pi^{(p-1)\Sigma_i}\Phi-1)(G_i(\pi))
& =(\lambda_\gamma^{\Sigma_i}\gamma-1)(H_i(\pi)), \\
G_{i+1}(\pi) &=\pi^{(p-1)c_{i+1}}G_{i+2}(\pi^p), \\ & \vdots \\
G_{f-1}(\pi) &=\pi^{(p-1)c_{f-1}}G_0(\pi^p), \\
G_0(\pi) &=C\pi^{(p-1)c_0}G_1(\pi^p), \\
G_1(\pi) &=\pi^{(p-1)c_1}G_2(\pi^p), \\ & \vdots \\
G_{i-1}(\pi) &=\pi^{(p-1)c_{i-1}}G_i(\pi^p), \end{aligned}$$ where
$\Phi(G(\pi))=G(\pi^{p^f})$. Except for the case $C=1$,
$\vec{c}=\vec{0}$, the map $C\pi^{(p-1)\Sigma_i}\Phi-1$ defines a
bijection $\F[[\pi]]\rightarrow\F[[\pi]]$. So the trick is to find
$H_i(\pi)$ so that we have
$(\lambda_\gamma^{\Sigma_i}\gamma-1)(H_i(\pi)) \in \F[[\pi]]$. The
corresponding $G_i(\pi)$ and so $\mu_\gamma(B_i)$'s are automatically and
uniquely determined by the bijectivity.  Moreover since the bijection
$C\pi^{(p-1)\Sigma_i}\Phi-1$ on the compact Hausdorff
space $\F[[\pi]]$ is continuous, so is its inverse, and it follows
that $\gamma \mapsto \mu_\gamma(B_i)$ is continuous.

To find such $H_i(\pi)$, we observe via Lemma \ref{delta} 
that\footnote{Note that if $c_i = p-1$, then $\Sigma_i \equiv - 1 \bmod p$ and $v \ge 1$
in the notation of Lemma~\ref{delta}, which is why we need to modify the construction
in that case.}
$$\val(\lambda_\eta^{\Sigma_i}\eta-1)(\pi^{1-p})=2-p \,\,{\rm
and} \,\, \val(\lambda_\eta^{\Sigma_i}\eta-1)(\pi^s)=s \,\,{\rm
if}\,\, 2-p\le s\le-1.$$ 
  Then there exist unique
$\epsilon_{2-p},\ldots,\epsilon_{-1}\in \F_p$ such that
$$\left(\lambda_\eta^{\Sigma_i}\eta-1\right)(\pi^{1-p}+\epsilon_{2-p}\pi^{2-p}
+\ldots+\epsilon_{-1}\pi^{-1}) \in\F[[\pi]].$$
We set
$$H_i(\pi)= \pi^{1-p} +h_i(\pi)= \pi^{1-p}+\epsilon_{2-p}\pi^{2-p}
+\ldots+\epsilon_{-1}\pi^{-1}$$ and claim that
$$(\lambda_{\gamma}^{\Sigma_i}{\gamma}-1)(H_i(\pi)) \in\F[[\pi]]$$ for all $\gamma \in
\Gamma$. Note that by Corollary \ref{gamma n}
we have $\lambda_{\gamma_1} \equiv 1 \mod \pi^{p-1}$, so that
$(\lambda_{\gamma_1}\gamma_1-1)(\pi^s) \equiv 0 \mod \pi^{p-1}$  for all $1-p \le s
\le -1$ if $\gamma_1 \in \Gamma_1$.
Since any given $\gamma \in \Gamma$ can be written as $\gamma = \eta^m
\gamma_1$ where $m\in \N_{\ge 0}$ and $\gamma_1 \in \Gamma_1$, we have
$$(\lambda_{\gamma}^{\Sigma_i}{\gamma}-1)(H_i(\pi)) \in\F[[\pi]]$$ by the following.

\begin{lem}\label{val} Let $\Sigma$ and $v$ be integers and $H(\pi) \in
\F((\pi))$. For any $\gamma, \gamma' \in \Gamma$, if the valuations (in $\pi$)
of $(\lambda_\gamma^{\Sigma}\gamma-1)(H(\pi))$ and
$(\lambda_{\gamma'}^{\Sigma}\gamma'-1)(H(\pi))$ are $\ge v$, so
is that of $(\lambda_{\gamma\gamma'}^{\Sigma} \gamma\gamma'-1)(H(\pi))$.
\end{lem}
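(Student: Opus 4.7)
The plan is to reduce the two-variable claim to a cocycle-type identity: decompose $(\lambda_{\gamma\gamma'}^{\Sigma}\gamma\gamma'-1)(H)$ as a sum of two expressions each of which is controlled by one of the hypotheses.

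First I would recall, from the proof of Proposition~\ref{rankone}, the relation $\lambda_{\gamma\gamma'} = \lambda_\gamma \cdot \gamma(\lambda_{\gamma'})$, and raise it to the $\Sigma$-th power to get $\lambda_{\gamma\gamma'}^\Sigma = \lambda_\gamma^\Sigma \cdot \gamma(\lambda_{\gamma'}^\Sigma)$. Then a direct manipulation gives the identity
\[
(\lambda_{\gamma\gamma'}^{\Sigma}\gamma\gamma'-1)(H) \;=\; \lambda_\gamma^{\Sigma}\,\gamma\bigl((\lambda_{\gamma'}^{\Sigma}\gamma'-1)(H)\bigr) \;+\; (\lambda_\gamma^{\Sigma}\gamma-1)(H),
\]
simply by adding and subtracting $\lambda_\gamma^\Sigma \gamma(H)$ and using that $\gamma$ is a ring endomorphism.

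Next I would observe that both operations appearing on the right preserve $\pi$-adic valuation: multiplication by $\lambda_\gamma^\Sigma$ is multiplication by a unit (since $\lambda_\gamma \equiv 1 \bmod \pi$), and the action of $\gamma$ on $\F((\pi))$ sends $\pi$ to $\gamma(\pi) = \overline{\chi}(\gamma)\pi + O(\pi^2)$, which still has valuation $1$, so $v_\pi\circ \gamma = v_\pi$. Consequently, the first summand has valuation equal to that of $(\lambda_{\gamma'}^{\Sigma}\gamma'-1)(H)$, and hence $\ge v$ by hypothesis; the second summand is $\ge v$ by the other hypothesis; therefore the sum has valuation $\ge v$.

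There is no real obstacle here beyond bookkeeping: the entire argument hinges on the cocycle identity for $\lambda_\gamma$ already proved in \S3, together with the trivial observation that $\Gamma$ acts by valuation-preserving automorphisms of $\F((\pi))$.
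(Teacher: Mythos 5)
Your proof is correct and takes essentially the same approach as the paper: the same decomposition $(\lambda_{\gamma\gamma'}^{\Sigma}\gamma\gamma'-1)(H) = \lambda_\gamma^{\Sigma}\gamma\bigl((\lambda_{\gamma'}^{\Sigma}\gamma'-1)(H)\bigr) + (\lambda_\gamma^{\Sigma}\gamma-1)(H)$, derived from the cocycle relation $\lambda_{\gamma\gamma'} = \lambda_\gamma\gamma(\lambda_{\gamma'})$, together with the observation that multiplication by the unit $\lambda_\gamma^\Sigma$ and the action of $\gamma$ preserve $\pi$-adic valuation.
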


\begin{proof} If both
$\lambda_\gamma^{\Sigma}\gamma(H(\pi))-H(\pi)$ and
$\lambda_{\gamma'}^{\Sigma}\gamma'(H(\pi))-H(\pi)$ are in
$\pi^v\F[[\pi]]$, then
$$\begin{aligned}(\lambda_{\gamma\gamma'}^{\Sigma} \gamma\gamma'-1)(H(\pi))
&
=\left({\frac{\gamma\gamma'(\pi)}{\pi\overline{\chi}(\gamma\gamma')}}\right)
^{{\frac{p-1}{p^f-1}} \Sigma}\gamma\gamma'(H(\pi))-H(\pi) \\
&
=\left(\gamma\left({\frac{\gamma'(\pi)}{\pi\overline{\chi}(\gamma')}}\right)
{\frac{\gamma(\pi)}{\pi\overline{\chi}(\gamma)}}\right)
^{{\frac{p-1}{p^f-1}} \Sigma}\gamma(\gamma'(H(\pi)))-H(\pi) \\ &
=\lambda_\gamma^{\Sigma}
\gamma\left(\lambda_{\gamma'}^{\Sigma}\gamma'(H(\pi))-H(\pi)\right)
+\lambda_\gamma^{\Sigma}\gamma(H(\pi))-H(\pi) \in
\pi^v \F[[\pi]].\end{aligned}$$
\end{proof}

So far we have defined $\mu_\varphi=\mu_\varphi(B_i)$ and
$\mu_\gamma=\mu_\gamma(B_i)$ satisfying the condition ($\dagger$),
and need to verify the condition ($\ddagger$). It is easily checked
that if $\gamma, \gamma' \in \Gamma$, both $\mu_{\gamma\gamma'}$ and
$\mu'_{\gamma\gamma'}=
\kappa_\gamma\gamma(\mu_{\gamma'})+\mu_\gamma$ satisfy ($\dagger$).
Since when we fix $\mu_\varphi$ the solution of ($\dagger$) for
$\gamma\gamma'$ is unique (by the bijectivity of the map $C\pi^{\Sigma_i}\Phi-1$), we must have ($\ddagger$)
$\mu_{\gamma\gamma'} =
\kappa_\gamma\gamma(\mu_{\gamma'})+\mu_\gamma$.

\subsection{Construction of $B_i$ when $c_i=p-1$}
\label{subsec:B_i2}
\begin{prop}\label{trick}
If $c_i=p-1$ and $c_{i+1} \neq p-2$, we have 
$$(\lambda^{\Sigma_i}_\eta\eta-1)\left(\pi^{1-p^2} + h_i''(\pi)
+ \epsilon(\pi^{2-2p} +h_i'(\pi) +h_i(\pi)) \right)  \in
\F[[\pi]]$$ for some unique $\epsilon \in \F^\times$ and some unique Laurent polynomials
$h_i''(\pi)  =\sum_{s=1}^{p-2}\epsilon_s''\pi^{1-p^2+sp}$,
$h_i'(\pi)  =\sum_{s=1}^{p-2}\epsilon_s'\pi^{2-2p+s}$,
$h_i(\pi)  =\sum_{s=1}^{p-2}\epsilon_s\pi^{1-p+s} \in \F[\pi][1/\pi]$.
\end{prop}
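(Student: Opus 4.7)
The plan is to adapt the construction of \S\ref{subsec:B_i1} to the case $c_i=p-1$, where the valuation behaviour of $T:=\lambda_\eta^{\Sigma_i}\eta-1$ differs from the generic one because $\Sigma_i\equiv-1\pmod{p}$. Setting $N=(p^f-1)/(p-1)$ and $H(\pi):=\pi^{1-p^2}+h_i''(\pi)+\epsilon(\pi^{2-2p}+h_i'(\pi)+h_i(\pi))$, I would apply Lemma~\ref{delta} monomial by monomial and then solve a lower-triangular linear system for the unknowns $\epsilon_s''$, $\epsilon_s'$, $\epsilon$, $\epsilon_s$ by requiring each negative-power coefficient of $T(H)$ to vanish.

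The key case analysis is as follows. For $s\in\{2-p,\ldots,-1\}$ (the $h_i$-exponents), $v_p(\Sigma_i+sN)=0$ and $\alpha_s:=\overline{\chi}(\eta)^s-1\ne0$, so $T(\pi^s)\in\alpha_s\pi^s+\pi^{s+1}\F[[\pi]]$. For $s\in\{2-2p,\ldots,-p\}$ (the $h_i'$-exponents), again $v=0$, and $\alpha_s\ne0$ except at $s=2-2p$, where the second-order coefficient $\beta$ of Lemma~\ref{delta} is nonzero since $\overline{s_0}=1$. For $s=1-p^2+jp$ with $0\le j\le p-2$ (the $h_i''$-exponents), the congruence $s\equiv1\pmod p$ combined with $\Sigma_i\equiv-1\pmod p$ forces $v\ge1$; the hypothesis $c_{i+1}\ne p-2$ ensures $v=1$ at $j=0$ via the mod $p^2$ computation $\Sigma_i+(1-p^2)N\equiv(2+c_{i+1})p$ (for $f\ge2$), so $T(\pi^{1-p^2})$ has a nonzero second-order term at $\pi^{1-p^2+p}$. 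With these facts in hand, $\epsilon_s''$ is pinned down by cancelling the $\pi^{1-p^2+jp}$-coefficients of $T(\pi^{1-p^2}+h_i'')$ for $j=1,\ldots,p-2$ (triangular, nonzero diagonal $\alpha_{1-p^2+jp}$); then $\epsilon_s'$ is pinned down similarly by cancelling the $\pi^{2-2p+s}$-coefficients of $T(\pi^{2-2p}+h_i')$ for $s=1,\ldots,p-2$.

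After these triangular solves, the only surviving negative-power coefficient of $T(H)$ is the one at $\pi^{1-p}$, an exponent that cannot be reached by any $h_i$-term. It takes the form $A+\epsilon B$, with $A$ depending only on $h_i''$ and $B$ only on $h_i'$; setting this to zero uniquely determines $\epsilon=-A/B$, and a final lower-triangular solve in $\epsilon_1,\ldots,\epsilon_{p-2}$ kills the coefficients at $\pi^{2-p},\ldots,\pi^{-1}$ of $T(H)$. The main obstacle is verifying the two nonvanishings $A\ne0$ and $B\ne0$. Nonvanishing of $A$ is precisely where the hypothesis $c_{i+1}\ne p-2$ is essential: without it the coefficient $\overline{s_1}$ from Lemma~\ref{delta} applied at $s=1-p^2$ would vanish, the valuation would jump to $v\ge 2$, and the propagated $\pi^{1-p}$ obstruction would disappear. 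Nonvanishing of $B$ will follow by tracking the nonzero coefficient $\beta$ of $T(\pi^{2-2p})$ through the Step~2 solve and observing that the $\pi^{1-p}$-entry produced this way cannot already have been annihilated by the $\epsilon_s'$, which were fixed by cancellation conditions at exponents strictly below $1-p$. Uniqueness of each of $\epsilon_s''$, $\epsilon_s'$, $\epsilon$, and $\epsilon_s$ is immediate from the triangularity of the successive solves.
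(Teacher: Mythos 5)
Your plan to split $T(H)$ into a triangular solve plus a final match at the $\pi^{1-p}$-coefficient is exactly the organization of the paper's proof: the paper isolates $(\lambda_\eta^{\Sigma_i}\eta-1)(\pi^{1-p^2}+h_i'') \in \nu'\pi^{1-p}+\F[[\pi]]$ and $(\lambda_\eta^{\Sigma_i}\eta-1)(\pi^{2-2p}+h_i'+h_i) \in \nu\pi^{1-p}+\F[[\pi]]$, then sets $\epsilon=-\nu'/\nu$, which is your $A$ and $B$. The existence and uniqueness of the triangular parts also match.

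The gap is precisely where you flag ``the main obstacle'': your argument for $\nu'\neq 0$ (your $A$) and $\nu\neq 0$ (your $B$) is not a proof. You argue that the nonzero second-order coefficient at $\pi^{1-p^2+p}$ (from Lemma~\ref{delta} applied at $s=1-p^2$, using $c_{i+1}\neq p-2$) will ``propagate'' to a nonzero coefficient at $\pi^{1-p}$. But that $\pi^{1-p^2+p}$-coefficient is exactly the one killed by $\epsilon_1''$ in your triangular solve; there is no reason a priori that the cumulative $\pi^{1-p}$-coefficient of $T(\pi^{1-p^2}+h_i'')$ is nonzero after all those cancellations. For $B$ the issue is even sharper: $T(\pi^{2-2p})$ has leading term at $\pi^{3-2p}$ (since $\alpha_{2-2p}=0$), not at $\pi^{1-p}$, so the nonzero $\beta$ you cite is again eaten by the $\epsilon_s'$-solve at strictly more negative exponents, and your claim that the $\pi^{1-p}$-entry ``cannot already have been annihilated'' is unsupported. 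The paper avoids tracking coefficients through the solve by passing to $\xi=\eta^{p-1}$: by Lemma~\ref{gamma} (where $c_{i+1}\neq p-2$ gives $\overline{s_1 z}\neq 0$), $(\lambda_\xi^{\Sigma_i}\xi-1)(\pi^{1-p^2}+h_i'')$ has valuation \emph{exactly} $1-p$; but by Lemma~\ref{val}, $\nu'=0$ would force $\val(\lambda_\eta^{\Sigma_i}\eta-1)(\pi^{1-p^2}+h_i'')\ge 0$ and hence $\val(\lambda_\xi^{\Sigma_i}\xi-1)(\pi^{1-p^2}+h_i'')\ge 0$ as well, a contradiction; similarly for $\nu$. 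To repair your proof you should replace the ``propagation'' heuristic with this two-lemma argument (or supply a genuine coefficient-tracking calculation, which would be considerably longer).
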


\begin{proof} By Lemma \ref{delta} we have,
$$(\lambda_\eta^{\Sigma_i}\eta-1)(\pi^{1-p^2}) \in \,\F^\times\pi^{1-p^2+p}+\pi^{1-p^2+2p}\F[[\pi^p]]$$
and $$(\lambda_\eta^{\Sigma_i}\eta-1)(\pi^{1-p^2+sp}) \in  \,
\F^\times\pi^{1-p^2+sp}+\sum_{j=1}^{p-s}\F\pi^{1-p^2+(s+j)p} +\F[[\pi]]$$ for $1\le s \le p-2$.
Thus there exist unique $\epsilon_s'', \nu' \in \F$ such that $$(\lambda^{\Sigma_i}_\eta\eta-1)\left(\pi^{1-p^2}+ \sum_{s=1}^{p-2}\epsilon_s''\pi^{1-p^2+sp}\right)
\in \nu'\pi^{1-p}+\F[[\pi]].$$
Similarly, there exist unique $\epsilon_s', \epsilon_s, \nu \in F$ such that
$$(\lambda^{\Sigma_i}_\eta\eta-1)\left(\pi^{2-2p}+ \sum_{s=1}^{p-2}\epsilon_s'\pi^{2-2p+s} +
\sum_{s=1}^{p-2}\epsilon_s\pi^{1-p+s} \right) \in \nu\pi^{1-p}+\F[[\pi]].$$
Put $h_i''(\pi)  =\sum_{s=1}^{p-2}\epsilon_s''\pi^{2-p^2+sp},
h_i'(\pi)  =\sum_{s=1}^{p-2}\epsilon_s'\pi^{2-2p+s},
h_i(\pi)  =\sum_{s=1}^{p-2}\epsilon_s\pi^{1-p+s}.$

The point then is to show that both $\nu$ and $\nu'$ are nonzero so that
$$ (\lambda^{\Sigma_i}_\eta\eta-1)\left(\pi^{1-p^2}+ h_i''(\pi)\right)
+  \epsilon\,(\lambda^{\Sigma_i}_\eta\eta-1)\left(\pi^{2-2p}+ h_i'(\pi)+h_i(\pi) \right) \in \F[[\pi]]$$
where $\epsilon= -\nu'/\nu \in\F -\{0\}$. 
So let us prove nonvanishing of $\nu'$ and $\nu$.
Suppose $\nu'=0$ so that $\val(\lambda^{\Sigma_i}_\eta\eta-1) \left(\pi^{1-p^2}+h_i''(\pi)\right)\ge 0.$
By Lemma \ref{val}, recalling $\xi=\eta^{p-1}$, it follows that
$\val(\lambda^{\Sigma_i}_{\xi}\xi-1) \left(\pi^{1-p^2}+ h_i''(\pi)\right)\ge 0,$
However, by Lemma \ref{gamma} we have 
${\rm \val} (\lambda^{\Sigma_i}_\eta\eta-1)\left(\pi^{1-p^2}+h_i''(\pi)\right)=1-p.$
Thus $\nu'$ cannot be zero. Similarly, we get $\nu \neq 0$.
\end{proof}

\begin{prop}\label{trick+}
If $c_i=p-1$, and $r \in \{0,\ldots,f-1\}$ is such that $c_{i+1}=\cdots=c_{i+r}=p-2$ and $c_{i+r+1} \neq p-2$,
we have $$(\lambda^{\Sigma_i}_\eta\eta-1)\left( \pi^{1-p^{r+2}} + \sum_{j=0}^{r+1}h_i^{(j)}
+ \sum_{j=0}^{r}\epsilon^{(j)} h_i'^{(j)} \right)  \in \F[[\pi]]$$ for some unique Laurent polynomials 
$$h_i^{(j)}(\pi)  =\sum_{s=1}^{p-2}\epsilon_s^{(j)}\pi^{1-p^{j+1}+sp^j} \,\,(0\le j \le r+1),$$
$$h_i'^{(j)}(\pi)  = \pi^{1+p^j-2p^{j+1}} + \sum_{s=1}^{p-2}\epsilon_s'^{(j)}\pi^{1+p^j-2p^{j+1}+sp^j} \,\,(0\le j \le r)$$
in $\F[\pi][1/\pi]$ with $\epsilon_1^{(r+1)},\epsilon^{(r)}\neq 0$.
\end{prop}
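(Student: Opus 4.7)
The plan is to prove Proposition~\ref{trick+} by induction on $r$, with the base case $r=0$ being precisely Proposition~\ref{trick}. The construction mirrors the same two-step strategy used there, but with $r+1$ nested correction layers reflecting the run of indices $c_{i+1}=\cdots=c_{i+r}=p-2$.

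First I would use the hypotheses $c_i=p-1$, $c_{i+1}=\cdots=c_{i+r}=p-2$, and $c_{i+r+1}\neq p-2$ to compute $v_p(\Sigma_i + s(p^f-1)/(p-1))$ for the key exponents $s=1-p^{j+1}+tp^j$ and $s=1+p^j-2p^{j+1}+tp^j$ that appear in the Laurent polynomial. A short $p$-adic calculation shows these valuations equal $j$, and the condition $c_{i+r+1}\neq p-2$ ensures the top-level valuation (for $s = 1-p^{r+2}$, $j=r+1$) is exactly $r+1$ rather than $\geq r+2$. Lemma~\ref{delta} then gives a uniform description of the leading behaviour of $(\lambda_\eta^{\Sigma_i}\eta-1)(\pi^s)$ at each level.

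Next I would build the polynomial from the top order down, layer by layer. Start with $\pi^{1-p^{r+2}}$. As in the $r=0$ case, choose $\epsilon_1^{(r+1)},\ldots,\epsilon_{p-2}^{(r+1)}$ in turn so that all terms of order strictly between $\pi^{1-p^{r+2}+p^{r+1}}$ and $\pi^{1-p^{r+1}}$ in $(\lambda_\eta^{\Sigma_i}\eta-1)(\pi^{1-p^{r+2}}+h_i^{(r+1)})$ vanish. After this cancellation a single obstruction at order $\pi^{1-p^{r+1}}$ with some coefficient $\nu^{(r+1)}\in\F$ is left. To cancel it, introduce $\epsilon^{(r)}(\pi^{1+p^r-2p^{r+1}}+h_i'^{(r)}+h_i^{(r)})$; choose the coefficients of $h_i'^{(r)}$ and then of $h_i^{(r)}$ in succession so that its image under $\lambda_\eta^{\Sigma_i}\eta-1$ has a leading obstruction at order $\pi^{1-p^{r+1}}$ with some coefficient $\mu^{(r)}\in\F$. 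Setting $\epsilon^{(r)}=-\nu^{(r+1)}/\mu^{(r)}$ kills the obstruction at that order and produces a fresh one at order $\pi^{1-p^{r}}$. Iterate this procedure downward: at level $j$, the correction $\epsilon^{(j)}(\pi^{1+p^j-2p^{j+1}}+h_i'^{(j)}+h_i^{(j)})$ cancels the $\pi^{1-p^{j+1}}$-order obstruction left over from the previous step and creates a fresh one at order $\pi^{1-p^j}$, handled by the next layer. At the final stage $j=0$, the remaining leading obstruction is at order $\pi^0$, so the full image lies in $\F[[\pi]]$ as required.

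The main obstacle will be to show $\mu^{(j)}\neq 0$ for each $0\le j\le r$, so that $\epsilon^{(j)}$ is well-defined and nonzero, and that $\epsilon_1^{(r+1)}\neq 0$. Both are established as in the last paragraph of Proposition~\ref{trick}: if $\mu^{(j)}$ vanished, Lemma~\ref{val} would force $(\lambda_\xi^{\Sigma_i}\xi-1)$ applied to the relevant Laurent polynomial to have $\pi$-valuation at least one scale higher, whereas direct computation via Lemma~\ref{gamma} (using $\xi=\eta^{p-1}$ and the same $v_p$ calculation as above) shows the valuation is strictly smaller. The same contradiction handles $\epsilon_1^{(r+1)}\neq 0$. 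Uniqueness of all the $\epsilon_s^{(j)}$, $\epsilon_s'^{(j)}$, and $\epsilon^{(j)}$ is then automatic from the triangular structure of the linear system. The bookkeeping of exponents across $r+1$ nested levels is essentially the only thing that is more delicate than in the $r=0$ case, but the actual algebraic content at each layer is unchanged.
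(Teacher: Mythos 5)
Your proposal follows essentially the same route as the paper's proof: a top-down layer-by-layer construction anchored on Lemma~\ref{delta} (with the $p$-adic valuation calculation pinning $v=r+1$ at the top thanks to $c_{i+r+1}\neq p-2$), with the nonvanishing of the leading obstruction coefficients established by the Lemma~\ref{val}/Lemma~\ref{gamma} contradiction argument as in Proposition~\ref{trick}. The "induction on $r$" framing is a bit of a misnomer since your construction (like the paper's) is direct rather than reducing the $r$ case to $r-1$, and bundling $h_i^{(r)}$ inside the $\epsilon^{(r)}$ bracket is merely a reparametrization of the unique coefficients; also note that $\epsilon_1^{(r+1)}\neq 0$ actually follows immediately from the $\F^\times$ leading coefficient in Lemma~\ref{delta} rather than needing the Lemma~\ref{gamma} contradiction, but none of these affect the validity of the argument.
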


\begin{proof} By Lemma~\ref{delta} (with $v=r+1, s_v=c_{i+r+1}+2$) we get
$$(\lambda_\eta^{\Sigma_i}\eta-1)\pi^{1-p^{r+2}}
\in  \,\F^\times\pi^{1-p^{r+2}+p^{r+1}} + \pi^{1-p^{r+2}+2p^{r+1}}\F[[\pi^{p^{r+1}}]],$$ and
$$(\lambda_\eta^{\Sigma_i}\eta-1)\pi^{1-p^{r+2}+sp^{r+1}} \in  \,
\F^\times\pi^{1-p^{r+2}+sp^{r+1}}+\sum_{t=1}^{p-s-1}\F\pi^{1-p^{r+2}+(s+t)p^{r+1}} +\F[[\pi]]$$
for $1 \le s \le p-2$, so that there exist unique $\epsilon_1^{(r+1)},\ldots,\epsilon_{p-2}^{(r+1)}, \nu^{(r+1)} \in \F$
such that $$(\lambda^{\Sigma_i}_\eta\eta-1)\left(\pi^{1-p^{r+2}}+ \sum_{s=1}^{p-2}\epsilon_s^{(r+1)}\pi^{1-p^{r+2}+sp^{r+1}}\right) \in \nu^{(r+1)}\pi^{1-p^{r+1}}+\F[[\pi]].$$
We set $h_i^{(r+1)}=\sum_{s=1}^{p-2}\epsilon_s^{(r+1)}\pi^{1-p^{r+2}+sp^{r+1}}$.

Again by Lemma~\ref{delta}, we get
$$(\lambda_\eta^{\Sigma_i}\eta-1)\pi^{1-2p^{r+1}+p^r}
\in  \,\F^\times\pi^{1-2p^{r+1}+2p^r} + \pi^{1-2p^{r+1}+3p^r}\F[[\pi^{p^{r}}]],$$
and 
$$(\lambda_\eta^{\Sigma_i}\eta-1)\pi^{1-2p^{r+1}+(1+s)p^{r}} \in  \,
\F^\times\pi^{1-2p^{j+1}+(1+s)p^{j}}+\sum_{t=1}^{p-s-2}\F\pi^{1-2p^{r+1}+(1+s+t)p^{r}}
+\pi^{1-p^{r+1}}\F[[\pi^{p^r}]]$$ 
for $1 \le s \le p-2$, so that there exist unique $\epsilon_1'^{(r)},\ldots,\epsilon_{p-2}'^{(r)}, \nu'^{(r)} \in \F$
such that 
$$(\lambda^{\Sigma_i}_\eta\eta-1)\left(\sum_{s=0}^{p-2}\epsilon_s'^{(r)}\pi^{1+p^r-2p^{r+1}+sp^r} \right) \in
\nu'^{(r)}\pi^{1-p^{r+1}}+\pi^{1-p^{r+1}+p^r}\F[[\pi^{p^r}]],$$
where we have set $\epsilon_0^{'(r)}=1$.

As in the proof of Proposition~\ref{trick},
one can show that both $\nu^{(r+1)}$ and $\nu^{'(r)}$ are not zero, so that
$$(\lambda^{\Sigma_i}_\eta\eta-1)
\left( \pi^{1-p^{r+2}} + h_i^{(r+1)} + \epsilon^{(r)} h_i^{'(r)} \right)
\in \pi^{1-p^{r+1}+p^r}\F[[\pi^{p^r}]]$$
where $\epsilon^{(r)}=-\nu^{(r+1)}/\nu^{'(r)} \neq 0$.
Then again
$$(\lambda^{\Sigma_i}_\eta\eta-1)
\left( \pi^{1-p^{r+2}} + h_i^{(r+1)} + \epsilon^{(r)} h_i^{'(r)} + h_i^{(r)} \right)
\in \F\pi^{1-p^r} + \F[[\pi]]$$
for some $h_i^{(r)}=\sum_{s=1}^{p-2}\epsilon_s^{(r)}\pi^{1-p^{r+1}+sp^r}$.
Iterating the process proves the proposition.
\end{proof}

When $c_i =p-1, c_{i+1}=\cdots=c_{i+r}=p-2, c_{i+r+1} \neq p-2$ we define $$\mu_\varphi(B_i)=(0,\ldots,0,H_i(\pi),0,\ldots,0)$$ where
$$H_i(\pi)= \pi^{1-p^{r+2}} + \sum_{j=1}^{r+1}h_i^{(j)}(\pi)
+ \sum_{j=0}^{r}\epsilon^{(j)} h_i^{'(j)}(\pi)$$
is the $i$th
component. By Proposition~\ref{trick+}, we get
$(\lambda^{\Sigma_i}_\gamma\gamma-1)(\mu_\varphi(B_i))\in \F[[\pi]]$
and then $\mu_\gamma(B_i)$ is determined by bijectivity of the map
$C\pi^{(p-1)\Sigma_i}\Phi-1: \F[[\pi]]\rightarrow \F[[\pi]]$. The
condition $(\ddagger)$ is checked in an analogous fashion as in \S 4.1.

\begin{rem} The cocycle $B_i$ for the case 
$c_i=p-1$, $c_{i+1}=\cdots=c_{i+r}=p-2, c_{i+r+1} \neq p-2$
is cohomologous to a cocycle $B_i'$ defined by
$$\begin{aligned}
\mu_\varphi(B_i') & = \left(\epsilon^{(0)}\pi^{2-2p}\sum_{s=0}^{p-2}\epsilon_s^{'(0)}\pi^{s} 
                      + \pi^{1-p}\sum_{s=1}^{p-2}\epsilon_s^{(0)}\pi^{s} \right)e_i  \\
                  & + \left(\epsilon^{(1)}\pi^{3-3p}\sum_{s=0}^{p-2}\epsilon_s^{'(1)}\pi^{s} 
                    + \pi^{2-2p}\sum_{s=1}^{p-2}\epsilon_s^{(1)}\pi^{s} \right)e_{i+1} \\
                  & \vdots \\
                  & + \left(\epsilon^{(r)}\pi^{3-3p}\sum_{s=0}^{p-2}\epsilon_s^{'(r)}\pi^{s} 
                    + \pi^{2-2p}\sum_{s=1}^{p-2}\epsilon_s^{(r)}\pi^{s} \right)e_{i+r} \\
                  & + \left(\pi^{2-2p}\sum_{s=0}^{p-2}\epsilon_s^{(r+1)}\pi^{s} \right)e_{i+r+1},
\end{aligned}$$
where $\epsilon_0^{'(0)} = \epsilon_0^{'(1)} = \cdots = \epsilon_0^{'(r)} = \epsilon_0^{(r+1)}=1$
and $\epsilon^{(r)}\neq 0$.
See Lemma~\ref{f2.2} for the proof in the case $f=2$.
\end{rem}

\subsection{Linear independence of $B_i$'s} 
Throughout this subsection we assume $C\neq 1$ if $\vec{c}=\vec{0}$,
so that $C\pi^{(p-1)\Sigma_i}\Phi-1 : \F[[\pi]] \rightarrow \F[[\pi]]$
defines a valuation-preserving bijection for all $i \in S$.
From the constructions in \S\S \ref{subsec:B_i1}, \ref{subsec:B_i2} we have at hand the extensions
$[B_0],\ldots,[B_{f-1}] \in \Ext^1(M_{0}, M_{C\vec{c}})$ such that, if $i \in S$,
$$\mu_\varphi(B_i)=(0,\ldots,0, H_i(\pi), \ldots, 0)$$ has $i$th component
$$H_i(\pi) = \pi^{1-p^{r+2}} + \sum_{j=0}^{r+1}h_i^{(j)}(\pi) + \sum_{j=0}^{r}\epsilon^{(j)} h_i^{'(j)}(\pi),$$
where if $c_i\neq p -1$, then we set $r=-1$ and $h_i^{(0)} (\pi) = h_i(\pi)$ was defined in \S \ref{subsec:B_i1},
and if $c_i=p-1$, then $r$ is the least non-negative integer such that $c_{i+r+1}\neq p-2$
and $h_i^{(j)}$, $h_i^{(j)}$ and $\epsilon^{'(j)}$ were defined in \S \ref{subsec:B_i2}.

To prove linear independence of $[B_i]$'s, suppose that
$\beta_0B_0+\cdots+\beta_{f-1}B_{f-1}$ is a coboundary
for some $\beta_0,\ldots,\beta_{f-1} \in \F$. We want to
show that $\beta_0 = \cdots = \beta_{f-1}=0$. By the cyclic nature of
the indexing, it is enough to show that $\beta_{f-1}=0$. Since
$\beta_0\mu_\varphi(B_0)+\cdots+\beta_{f-1}\mu_\varphi(B_{f-1})=(\beta_0H_0(\pi),\ldots,\beta_{f-1}H_{f-1}(\pi)),$
by adding another coboundary, we see that
$$\begin{aligned}
& (\beta_0H_0(\pi)+\beta_1C\pi^{(p-1)c_0}H_1(\pi^p)
+\cdots+\beta_{f-1}C\pi^{(p-1)\sum_{j=0}^{f-2}c_jp^j}H_{f-1}
(\pi^{p^{f-1}}), 0, \ldots, 0) \\
= & (C\pi^{(p-1)c_0}b_1(\pi^p)-b_0(\pi),
\pi^{(p-1)c_1}b_2(\pi)-b_1(\pi), \ldots,
\pi^{(p-1)c_{f-1}}b_0(\pi^p)-b_{f-1}(\pi))\end{aligned}$$
for some $(b_0(\pi),\ldots,b_{f-1}(\pi))\in \F((\pi))^S$.
It follows that
$$\begin{aligned}
& \beta_0H_0(\pi)+\beta_1C\pi^{(p-1)c_0}H_1(\pi^p) +\cdots
+\beta_{f-1}C\pi^{(p-1)\sum_{j=0}^{f-2}c_jp^j}H_{f-1} (\pi^{p^{f-1}}) \\
= & (C\pi^{(p-1)\Sigma_0}\Phi-1)(b_0(\pi))
\end{aligned}$$ and
$$\begin{aligned}
b_1(\pi) & =\pi^{(p-1)c_1}b_2(\pi^p), \\
b_2(\pi) & =\pi^{(p-1)c_2}b_3(\pi^p), \\
& \vdots \\
b_{f-2}(\pi) & =\pi^{(p-1)c_{f-2}}b_{f-1}(\pi^p), \\
b_{f-1}(\pi) & =\pi^{(p-1)c_{f-1}}b_0(\pi^p).
\end{aligned}$$
As the map
$C\pi^{(p-1)\Sigma_0}\Phi-1 : \F[[\pi]]\rightarrow \F[[\pi]]$ is a
bijection, we get a congruence
$$\begin{aligned} & \beta_0H_0(\pi)+\beta_1C\pi^{(p-1)c_0}H_1(\pi^p)
+\cdots+\beta_{f-1}C\pi^{(p-1)\sum_{j=0}^{f-2}c_jp^j}H_{f-1}
(\pi^{p^{f-1}}) \\ \equiv & (C\pi^{(p-1)\Sigma_0}\Phi-1)(b(\pi)) \mod
\F[[\pi]] \end{aligned}$$ for some $b(\pi)=b_{-s}\pi^{-s}+ \sum_{j
=1}^{s-1}b_{-s+j}\pi^{-s+j} \in \F[1/\pi]$ with $s > 0$ and $b_{-s} \neq 0$.
Suppose $\beta_{f-1}\neq 0$ and we will get contradictions.

First assume $c_{f-1}=p-1, c_f = \cdots = c_{f-1+r} = p-2, c_{f+r} \neq p-2$ with $r>0$,
in which case we have
$$H_{f-1}(\pi)=\pi^{1-p^{r+2}} + \sum_{j=0}^{r+1}h_i^{(j)}(\pi)
+ \sum_{j=0}^{r}\epsilon^{(j)} h_i'^{(j)}(\pi).$$
One checks that the lowest degree term (in $\pi$) of the LHS of the congruence is
$$\beta_{f-1}C\pi^{(p-1)\sum_{j=0}^{f-2}c_jp^j}\pi^{(1-p^{r+2})p^{f-1}},$$
so that the valuation of the LHS is $(p-1)(\sum_{j=0}^{f-2}c_jp^j-{(1+p+\cdots+p^{r+1})p^{f-1}})$. On the other
hand, we have three possibilities for the RHS: $(p-1)\Sigma_0-sp^f<-s$,
$-s<(p-1)\Sigma_0-sp^f$ and $(p-1)\Sigma_0-sp^f=-s$.

If $(p-1)\Sigma_0-sp^f<-s$, the leading term of the RHS is $b_{-s}C\pi^{(p-1)\Sigma_0}\pi^{-sp^f}$
and we have $s=(p-1)(2+p+\cdots+p^r)$  and $\beta_{f-1}=b_{-s}$.
Now the term 
$$\beta_{f-1}C \pi^{(p-1)\sum_{j=0}^{f-2}c_jp^j}\epsilon^{(r)}\pi^{(1+p^r-2p^{r+1})p^{f-1}}$$
is alive on the LHS and must match a term on the RHS.
Considering possible matching valuations on the RHS we get either
$$(p-1)\sum_{j=0}^{f-2}c_jp^j+{(1+p^r-2p^{r+1})p^{f-1}} = -t$$
or
$$(p-1)\sum_{j=0}^{f-2}c_jp^j+{(1+p^r-2p^{r+1})p^{f-1}} = (p-1)\Sigma_0-tp^f$$
for some $0<t<s=(p-1)(2+p+\cdots+p^r)$.  The former equation contradicts 
the inequality $t < s$ and the latter implies that $t=2p^r-p^{r-1}+p-2$.
Since $p^f \not| t+(p-1)\Sigma_0$, there must be a term of degree $-t$ on the LHS.
However if $m < r$, then the leading term of $\pi^{(p-1)\sum_{j=0}^{m-1}c_jp^j}H_m(\pi^{p^m})$
has degree $> -t$, and if $m \ge r$, then its terms cannot be congruent to $-t\bmod p^r$,
and we again arrive at a contradiction.

If $-s<(p-1)\Sigma_0-sp^f$, the leading term of the RHS is $-b_{-s}\pi^{-s}$.
Then $(p-1)|s$ and $s(p^f-1)/(p-1) < \Sigma_0 < p^f-1$, so that $s<1$,
which is impossible.

Lastly, if $(p-1)\Sigma_0-sp^f=-s$,
working modulo powers of $p$, we get $s=c_0=\cdots=c_{f-1}=p-1$,
a contradiction.

Now we may assume that $\beta_j=0$ for all $j \in S$ such that $c_j=p-1$ and $c_{j+1}=p-2$.
Suppose now that $c_{f-1} = p - 1$ and $c_0\neq p - 2$.  We then proceed to show that
$\beta_{f-1}=0$ by induction on $m$ where $m\ge 1$ is such that $c_{f-m-1} \neq p-1$
and $c_{f-m}=c_{f-m+1} = \cdots c_{f-1} = p-1$.  We may thus assume that
$\beta_{f-m} = \cdots = \beta_{f-2} = 0$ if $m\ge 2$.  The argument used in the
case $r>0$ then goes through with the following two changes:  1) the induction
hypothesis is used to show that the term 
$$\beta_{f-1}C \pi^{(p-1)\sum_{j=0}^{f-2}c_jp^j}\epsilon^{(0)}\pi^{(2-2p)p^{f-1}}$$
is alive on the LHS, and 2) the equality 
$$(p-1)\sum_{j=0}^{f-2}c_jp^j+{(2-2p)p^{f-1}} = (p-1)\Sigma_0-tp^f$$
immediately gives a contradiction without considering more terms.

Now we may assume that $\beta_j=0$ for all $j \in S$ such that $c_j=p-1$, and
suppose $c_{f-1}<p-1$. The leading term of the LHS then is $$\beta_{f-1}C\pi^{(p-1)\sum_{j=0}^{f-2}c_jp^j}\pi^{(1-p)p^{f-1}}.$$
If $(p-1)\Sigma_0-sp^f<-s$,
$sp^f=(p-1)(\Sigma_0-\sum_{j=0}^{f-2}c_jp^j + p^{f-1})=(p-1)(c_{f-1}+1)p^{f-1}$,
ans so $p|(c_{f-1}+1)$, which is impossible as $0\le c_{f-1}<p-1$.
If $(p-1)\Sigma_0-sp^f \ge -s$, then $-s \le (p-1)(\sum_{j=0}^{f-2}c_jp^j - p^{f-1})
 \le 1-p$, contradicting that $s(p^f-1)/(p-1) \le \Sigma_0 < p^f-1$.

This completes the proof that the $[B_i]$ are linearly independent, hence form a basis
for $\Ext^1(M_0,M_{C\vec{c}})$ (unless $C=1, \vec{c}=\vec{0}$ or $C=1, \vec{c}=\overline{p-2}$).

\section{The space of bounded extensions}
In this section we define bounded extensions, which we will later relate to extensions
arising from crystalline representations.

\subsection{Bounded extensions}

\begin{defn}\label{bounded}
Suppose $A, B \in \F^\times$ and $0 \le a_i, b_i \le p$ with exactly
one of $a_i$ or $b_i$ is zero for each $i \in S$. We say that
an extension (class) $E\in\Ext^1(M_{A\vec{a}},M_{B\vec{b}})$ is {\it
bounded} if there exists a basis for $E$ such that the defining
matrices $P$ and $G_\gamma$ satisfy the following:
\begin{enumerate}
\item $P= \left(
\begin{matrix}
\kappa_\varphi(B,\vec{b}) & * \\
0  &  \kappa_\varphi(A,\vec{a})
\end{matrix}
\right)$ and $G_\gamma= \left(
\begin{matrix}
\kappa_\gamma(B,\vec{b}) & * \\
0  &  \kappa_\gamma(A,\vec{a})
\end{matrix}
\right)$ if $\gamma \in \Gamma$,
\item $P\in {\rm M}_2(\F[[\pi]]^S)$,
\item $G_\gamma - I_2 \in \pi{\rm M}_2(\F[[\pi]]^S)$ if $\gamma \in
\Gamma_1$.
\end{enumerate} Bounded extensions form a subspace, denoted by
$\Ext^1_{\bdd}(M_{A\vec{a}},M_{B\vec{b}})$, of the full space
$\Ext^1(M_{A\vec{a}},M_{B\vec{b}})$ of extensions.
\end{defn}

\begin{rem} Note that the space $\Ext^1_{\bdd}(M_{A\vec{a}},M_{B\vec{b}})$ depends on $\vec{a}$ and $\vec{b}$
and not just on the isomorphism classes of the $(\varphi,\Gamma)$-modules $M_{A\vec{a}}$ and $M_{B\vec{b}}$.
\end{rem}

\begin{lem}
The condition (3) can be replaced by a weaker condition (3$'$)
$G_\xi - I_2 \in \pi{\rm M}_2(\F[[\pi]]^S)$.
(Recall that $\xi$ is a topological generator of $\Gamma_1$.)
\end{lem}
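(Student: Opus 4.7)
The plan is to observe that the diagonal entries of $G_\gamma - I_2$ are automatically in $\pi M_2(\F[[\pi]]^S)$ for every $\gamma \in \Gamma$, so the content of condition (3) is really about the off-diagonal entry $\mu_\gamma$. Indeed, by construction $\lambda_\gamma \in 1 + \pi\F[[\pi]]$ for every $\gamma \in \Gamma$, and therefore each coordinate of $\kappa_\gamma(A,\vec{a})$ and $\kappa_\gamma(B,\vec{b})$, being a power of $\lambda_\gamma$, lies in $1 + \pi\F[[\pi]]$. So the lemma reduces to the following claim: if $\mu_\xi \in \pi\F[[\pi]]^S$, then $\mu_\gamma \in \pi\F[[\pi]]^S$ for every $\gamma \in \Gamma_1$.

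The main tool is the cocycle relation $(\ddagger)$ together with closedness and continuity. Let $T = \{\gamma \in \Gamma_1 : \mu_\gamma \in \pi\F[[\pi]]^S\}$. First I would show $T$ is a subgroup. Closure under multiplication follows from $(\ddagger)$: if $\gamma, \gamma' \in T$, then
\[
\mu_{\gamma\gamma'} = \kappa_\gamma(B,\vec{b})\,\gamma(\mu_{\gamma'}) + \mu_\gamma,
\]
and since $\kappa_\gamma(B,\vec{b}) \in \F[[\pi]]^S$ and $\gamma(\pi\F[[\pi]]^S) \subset \pi\F[[\pi]]^S$ (because $\gamma(\pi) \in \pi\F[[\pi]]$), the right-hand side lies in $\pi\F[[\pi]]^S$. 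Closure under inverses is obtained by specializing $(\ddagger)$ at $\gamma' = \gamma^{-1}$, which gives $\mu_{\gamma^{-1}} = -\kappa_{\gamma^{-1}}(B,\vec{b})\,\gamma^{-1}(\mu_\gamma)$, hence the same reasoning applies.

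Next I would show $T$ is closed in $\Gamma_1$. The map $\gamma \mapsto \mu_\gamma$ is continuous (this is part of the definition of $H$), and $\pi\F[[\pi]]^S$ is a closed subset of $\F((\pi))^S$, so $T$ is the preimage of a closed set under a continuous map. Finally, since $\xi \in T$ by hypothesis and $\xi$ topologically generates $\Gamma_1$, the closed subgroup $T$ contains a dense subgroup of $\Gamma_1$ and therefore equals $\Gamma_1$.

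No step here presents a real obstacle: the argument is essentially a density/continuity argument, once one notices that the diagonal condition is vacuous and that $(\ddagger)$ makes the bounded cocycle condition multiplicatively stable. The only thing to be careful about is verifying that $\gamma$ preserves $\pi\F[[\pi]]^S$, which is immediate from $\gamma(\pi) = (1+\pi)^{\chi(\gamma)} - 1 \in \pi\F[[\pi]]$ acting componentwise.
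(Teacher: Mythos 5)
Your argument is correct and follows essentially the same route as the paper: both reduce the problem to the off-diagonal entry, use the cocycle relation (equivalently the multiplicativity of $\gamma\mapsto G_\gamma$ and the fact that $\lambda_\gamma\equiv 1\bmod\pi$ for $\gamma\in\Gamma_1$) to show the bounded condition propagates along powers of $\xi$, and then invoke density of $\langle\xi\rangle$ in $\Gamma_1$ together with continuity. Your phrasing in terms of a closed subgroup $T$ is a minor repackaging of the paper's induction-plus-density argument.
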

\begin{proof}
If $\gamma, \gamma' \in \Gamma_1$, then
$$
G_{\gamma\gamma'} = G_\gamma\gamma G_{\gamma'} \equiv
\left(
  \begin{array}{cc}
    1 & \mu_\gamma(E)+\gamma\mu_{\gamma'}(E) \\
    0 & 1 \\
  \end{array}
\right) \mod \pi$$
by Corollary~\ref{gamma n}.
So, if $G_\xi =I_2 \mod \pi$, we have by induction that $G_{\xi^n}\equiv I_2$ for all $n \ge 1$.
Since $\langle \xi \rangle$ is dense in $\Gamma_1$,  continuity of the action gives
that $G_\gamma-I_2 \in \pi{\rm M}_2(\F[[\pi]]^S)$ for all $\gamma \in \Gamma_1$.
\end{proof}

We now describe a way to analyze extensions systematically and to
check for boundedness. Given $J \subset S$ and $n \in \Z/(p^f-1)\Z$
we can always find $a_i, b_j$ for $i \in J, j \in S-J$ with
$1 \le a_i, b_j \le p$ such that $$n \equiv \sum_{j\not\in
J}b_jp^j - \sum_{i \in J}a_ip^i \mod p^f-1.$$ 
The congruence has a unique solution if $n \not\equiv n_J
\mod p^f-1$, and has two solutions if $n \equiv n_J \mod
p^f-1$ where $n_J:=\sum_{i\in J}p^{i+1}-\sum_{i\not\in J}p^i$ (cf.
\cite[\S 3]{BDJ05}). To compute the solutions explicitly in the
double solution case suppose $n \equiv n_J \mod p^f-1$ and
we have two solutions $a_i, b_j$ and $a_i', b_j'$.
Then $\Sigma:= \sum_{j\not\in J}(b_j-b_j')p^j-\sum_{i\in J}(a_i-a_i')p^i
\equiv 0 \mod p^f-1$. 
Note that $|\Sigma|\le p^f-1$, as $|a_i-a_i'|, |b_j-b_j'|\le p-1$, so that $\Sigma = 0$ or $\pm(p^f-1)$,
and in the latter case we can exchange $\vec{a},\vec{b}$ and $\vec{a}',\vec{b}'$ if necessary
in order to assume $\Sigma = p^f - 1$.
If $\Sigma = 0$, then reducing modulo powers of $p$ shows that $\vec{a} = \vec{a}'$
and $\vec{b} = \vec{b}'$.
If $\Sigma = p^f-1$, then we have solutions $a_i = 1, b_j= p$ and $a_i' = p, b_j'= 1$
($i \in J, j\in S-J$).  

Now fix $J \subset S$, $C \in \F^\times$ and $\vec{c} \in \{0,1, \ldots, p-1\}^S$ with some $c_i < p-1$.
If $\Sigma_0\vec{c} \not\equiv n_J \mod p^f-1$, we can solve
the congruence $\Sigma_0\vec{c} \equiv \sum_{i\not\in J}b_ip^i -
\sum_{i \in J}a_ip^i \mod p^f-1$ with unique solution,
and get an isomorphism
$$\Ext^1(M_{\vec{0}},M_{C\vec{c}}) \simeq \Ext^1(M_{\vec{0}},M_{C\vec{d}}),$$
where $d_i=-a_i$ if $i\in J$ and $d_j=b_j$ if $j\not\in J$.
The isomorphism is (not canonical but) well-defined up to
${\rm Aut} M_{C\vec{c}}=\F^\times$ and the valuations of entries of
the matrices defining the $(\varphi, \Gamma)$-module extensions are
invariant, which suffices for our purposes. Tensoring $M_{A\vec{a}}$ with the
subobject and the quotient of the extension gives an isomorphism
$$\iota : \Ext^1(M_{\vec{0}},M_{C\vec{c}}) \rightarrow
\Ext^1(M_{A\vec{a}},M_{B\vec{b}})$$ where $CA=B$ and $a_i=0 $ if $i
\not\in J$ and $b_i=0$ if $i \in J$.  
Note that if $J=\emptyset$, $A=1$ and $c_i > 0$ for all $i$,  then 
$M_{A\vec{a}} = M_0$, $M_{B\vec{b}} = M_{C\vec{c}}=M_{C\vec{d}}$ and
$\iota$ is the identity.
In general, we have a commutative diagram
$$\begin{array}{ccc} H/H_0 & \longrightarrow & H'/H_0'
 \\\downarrow & & \downarrow \\
\Ext^1(M_{\vec{0}},M_{C\vec{c}}) & \longrightarrow &
\Ext^1(M_{A\vec{a}},M_{B\vec{b}}).\end{array}$$ The vertical arrows
are $\beta$'s, and the bottom arrow, which we also denote $\iota$, is induced by
$$(\mu_\varphi,(\mu_\gamma)_{\gamma \in \Gamma})\mapsto
(\kappa_\varphi(A,\vec{a})\langle\vec{c}\rangle_J\mu_\varphi,
(\kappa_\gamma(A,\vec{a})\langle\vec{c}\rangle_J\mu_\gamma)_{\gamma
\in \Gamma}),$$ where the isomorphism $\E_{K,F} e =M_{C\vec{c}} \simeq
M_{C\vec{d}} = \E_{K,F} e'$ is defined by the change of basis $e =
\langle\vec{c}\rangle_J e'$ with $\langle\vec{c}\rangle_J \in \E_{K,F}^\times$.
It is straighforward to check the following formula for $\langle\vec{c}\rangle_J$,
which we will need in order to compute spaces of bounded extensions: 
$$\langle\vec{c}\rangle_J = 
(\pi^{(p-1)\varepsilon_0}, \ldots, \pi^{(p-1)\varepsilon_{f-1}}),$$
where $(p^f - 1)\varepsilon_i = \Sigma_i(\vec{c} - \vec{d})$.
   
We define $$V_J=\iota^{-1}(\Ext^1_{\bdd} (M_{A\vec{a}},M_{B\vec{b}}))
\subset \Ext^1(M_{\vec{0}},M_{C\vec{c}}),$$ so that $\dim_\F V_J =
\dim_\F \Ext^1_{\bdd} (M_{A\vec{a}},M_{B\vec{b}})$.

If $\Sigma_0\vec{c} \equiv n_J \mod p^f-1$, we can assume that $n_J=
\sum_{i \not\in J}b_ip^i - \sum_{i \in J}a_ip^i$ and
$n_J+1-p^f=\sum_{i \not\in J}b_i'p^i - \sum_{i \in J}a_i'p^i$ where
$a_i, b_j$ and $a_i', b_j'$ are two solutions. 
Then we have $a_i=p, b_j=1$ and $a_i'=1, b_j'=p$ (for $i \in J$ and $j \in S-J$).
 
As in the case of a unique solution, we have isomorphisms (but now there are two)
$$\begin{aligned}
\iota_+ &: \Ext^1(M_{\vec{0}},M_{C\vec{c}}) \rightarrow
\Ext^1(M_{A\vec{a}},M_{B\vec{b}}),\\
\iota_- &: \Ext^1(M_{\vec{0}},M_{C\vec{c}}) \rightarrow
\Ext^1(M_{A\vec{a'}},M_{B\vec{b'}})
\end{aligned}$$ and define
$$\begin{aligned}
V_J^+ &=\iota_+^{-1}(\Ext^1_{\bdd} (M_{A\vec{a}},M_{B\vec{b}}))\subset
\Ext^1(M_{\vec{0}},M_{C\vec{c}}),\\
V_J^- &=\iota_-^{-1}(\Ext^1_{\bdd} (M_{A\vec{a'}},M_{B\vec{b'}}))
\subset \Ext^1(M_{\vec{0}},M_{C\vec{c}}).
\end{aligned}$$
Note that we always use $+$ to denote the case where all $a_i = p$, $b_j = 1$,
and $-$ for the case where all $a_i = 1$ and $b_j = p$.

In the next two subsections we will compute explicitly the spaces of bounded
extensions in the generic case and in the case $f=2$.

\subsection{Generic case} \label{sec:generic}

For each $i\in S$, let $e_i: \E_{K,F}=\F((\pi))^S \to \F((\pi))$ denote
the projection $(g_0,\ldots,g_{f-1})\mapsto g_i$.

\begin{prop}\label{gen}
If  $0< c_i <p-1$ for all $i \in S$, then 
\begin{enumerate}
\item $V_{\{i\}}=\F [B_{i+1}]$ for all $i \in S$;
\item $V_J=\oplus_{i\in J}V_{\{i\}}$ if $J \subset S$;
\item $V_S^+=V_S^-=\Ext^1(M_{\vec{0}},M_{C\overrightarrow{p-2}})$ if $C\neq 1, \vec{c}=\overrightarrow{p-2}$.
\end{enumerate}
\end{prop}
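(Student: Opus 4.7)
\textbf{Proof proposal for Proposition~\ref{gen}.} The plan is to translate the boundedness of $\iota_J([\sum_j \beta_j B_j])$ into explicit valuation inequalities on the coefficients $\beta_j$, using the basis $\{[B_0], \ldots, [B_{f-1}]\}$ of $\Ext^1(M_{\vec{0}}, M_{C\vec{c}})$ constructed in \S 4.3 and the parametrization of $\iota_J$ from \S 5.1.

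First, for each $J \subset S$, I would compute $\langle \vec{c} \rangle_J = (\pi^{(p-1)\varepsilon_0}, \ldots, \pi^{(p-1)\varepsilon_{f-1}})$ from the displayed formula $(p^f-1)\varepsilon_j = \Sigma_j(\vec{c} - \vec{d})$. Combined with $\kappa_\varphi(A, \vec{a})$, the map $\iota_J$ sends $[\sum_j \beta_j B_j]$ to a cocycle whose $\mu_\varphi$-component at position $j$ is $\pi^{(p-1)(a_j+\varepsilon_j)} \beta_j H_j(\pi)$ (with an extra factor $A$ at $j=0$). Since each $H_j$ has $\pi$-valuation $1-p$, the $j$-th component is bounded without adjustment exactly when $a_j+\varepsilon_j \ge 1$; otherwise one must kill the pole via a coboundary, which by the formula $\kappa_\varphi(C,\vec{c})\varphi(b)-b$ couples the $j$-th component to the $(j+1)$-th through a $\pi^{(p-1)c_j}$ factor and a Frobenius shift.

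For part (1), taking $J=\{i\}$, direct evaluation of the $\varepsilon_j$'s in the range $0<c_k<p-1$ should show that $\varepsilon_{i+1}\ge 1$, whereas $\varepsilon_j\le 0$ for $j\ne i,i+1$; the exceptional index $j=i$ is controlled instead by $a_i\ge 1$. This gives $\iota_{\{i\}}([B_{i+1}])$ bounded, hence $\F[B_{i+1}] \subset V_{\{i\}}$. For the reverse inclusion I would adapt the leading-term matching argument from \S 4.3: supposing $\sum_j \beta_j B_j$ becomes bounded after some coboundary $b=(b_0,\ldots,b_{f-1})$, a cyclic comparison of the lowest-degree terms position by position will force $\beta_j=0$ for $j\ne i+1$, because the twist factor at those $j$ is insufficient to absorb $H_j$ and the coboundary shifts cannot cascade around the cycle to rescue more than one $\beta$. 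Part (2) follows by the same valuation analysis for arbitrary $J$: the indices where the twist itself absorbs the pole are exactly $j-1\in J$, and the leading-term argument kills all other $\beta_j$; this directly identifies $V_J$ with $\bigoplus_{i\in J}\F[B_{i+1}]$.

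For part (3), the hypothesis $\vec{c}=\overrightarrow{p-2}$ with $J=S$ forces $a_j=p$ in the $+$ case (and $a_j=1$ together with $\varepsilon_j\ge 0$ in the $-$ case) at every $j$, so the twist factor is at least $\pi^{(p-1)p}$ (resp.\ $\pi^{p-1}$) and automatically absorbs the $\pi^{1-p}$ poles of every $H_j$; no coboundary is needed and every class is bounded. The principal obstacle will be the combinatorial bookkeeping of the $\varepsilon_j$'s and the coboundary shifts in the leading-term analysis, which is intricate but parallels the template of \S 4.3. The $\mu_\gamma$-side of the bound condition should follow automatically in the generic case, since by the bijectivity of $C\pi^{(p-1)\Sigma_j}\Phi-1$ used in \S 4.1 the $\mu_\gamma$-part of a cocycle is uniquely determined by its $\mu_\varphi$-part, so boundedness propagates.
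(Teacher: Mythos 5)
Your main idea—tracking valuations of the twisted $\mu_\varphi$ at each position and using a cyclic leading-term argument—is the right overall framework and matches the paper's strategy in part. However, there is a decisive gap at the end: the claim that the $\mu_\gamma$-side of the boundedness condition ``should follow automatically'' because $\mu_\gamma$ is uniquely determined by $\mu_\varphi$. This is false, and in fact the $\mu_\xi$-side is the substantive constraint in the whole proof. The bijectivity of $C\pi^{(p-1)\Sigma_j}\Phi-1$ on $\F[[\pi]]$ only tells you that $\mu_\gamma$ lands in $\F[[\pi]]$ once $(\lambda_\gamma^{\Sigma}\gamma-1)(\mu_\varphi) \in \F[[\pi]]$; it says nothing about $\mu_\gamma$ landing in $\pi\F[[\pi]]$, which is what Definition~\ref{bounded}(3) demands. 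The construction in \S4.1 shows that $e_i\mu_\xi(B_i) = -\alpha_i + g_i(\pi)$ with $\alpha_i \in \F^\times$ a nonzero constant, so this component has valuation $0$, not $\geq 1$.

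This matters concretely for part (1): for $J=\{i\}$, both positions $i$ and $i+1$ receive a $\pi^{p-1}$ twist in $\mu_\varphi$ (the former from $a_i\geq 1$, the latter from $\varepsilon_{i+1}=1$), so the $\mu_\varphi$-condition alone admits $[B_i]$ and $[B_{i+1}]$ equally well, and if your proposal were complete it would give $\dim V_{\{i\}} \geq 2$. What actually rules out $[B_i]$ is that $e_i\langle\vec{c}\rangle_{\{i\}} = \pi^0 = 1$, so the twist on the $\mu_\xi$-side at position $i$ does not kill the constant term $-\alpha_i$, leaving valuation $0$ and violating condition (3). You then still have to show a coboundary cannot rescue $[B_i]$: the paper does this by starting from the forced equality $\val\bigl(e_{i+1}\mu_\xi(B)\bigr) = 0$ where $B$ is the adjusting coboundary, deducing via Lemma~\ref{gamma} that $\val b_{i+1}$ is negative and divisible by $p-1$, and then cycling through the $\mu_\varphi$-valuations to reach the contradiction $\val b_{i+1} = -\frac{p-1}{p^f-1}\Sigma_i\vec{c} > 1-p$. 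None of this is visible in your proposal; you would need to incorporate the $\mu_\xi$-analysis explicitly rather than dismiss it. Part (3) is less affected (the twists are large enough there that both conditions trivially hold), but parts (1) and (2) as currently argued would yield the wrong dimension.
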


\begin{rem} Proposition~\ref{gen} does not say anything about the cyclotomic case 
$C=1,\vec{c}=\overrightarrow{p-2}$, which will be treated in \S 6.1.
\end{rem}

\begin{proof} First consider the case $J \neq \emptyset$. 
We may assume that $f-1 \in J$; even though we do not have complete symmetry
due to the presence of the constant $C$, we will see that the argument
goes through independently of which component $C$ lies in. 
As $0< c_i <p-1$ for all $i \in S$ we have ${\frac{p^f-1}{p-1}} \le \Sigma_0\vec{c} \le (p-2){\frac{p^f-1}{p-1}}$.
We claim that the congruence
$$\Sigma_0\vec{c} \equiv \sum_{i \not\in J}b_ip^i - \sum_{i \in J}a_ip^i \mod p^f-1,$$
has a unique solution $a_i$, $b_j$ ($i \in J, j \not\in J$) such that $1 \le a_i, b_j \le p$,
except when $J=S$ and $\vec{c}=\overrightarrow{p-2}$.
If there were another distinct solution, we have either
$\Sigma_0\vec{c}=\sum_{j\not\in J}p^{j+1}-\sum_{j\in J}p^j$ or
$\Sigma_0\vec{c}=\sum_{j\not\in J}p^{j+1}-\sum_{j\in J}p^j + p^f-1$.
The former is impossible since modulo $p$ we have $c_0\equiv -1$ if $0 \in J$ and 
$c_0\equiv 0$ if $j \not\in J$, and thus $c_0=p-1$ or $0$, contradicting the assumption.
In the latter case, we have $0 \in J$ and $c_0=p-2$. 
Computations modulo $p^2$ show that $1 \in J$ and $c_1=p-2$.
By induction we get $J=S$ and $c_i =p-2$ for all $i\in S$.
Thus, unless $J=S$, $\vec{c}=\overrightarrow{p-2}$, we have unique $a_i, b_j$ ($i \in J, j \not\in J$)
satisfying the equation $\sum_{i=0}^{f-1}c_ip^i = \sum_{i\not\in J}b_ip^i-\sum_{i \in J}a_ip^i +p^f-1$. 
Letting $u= (\pi^{(p-1)\delta_{f-1J}},\pi^{(p-1)\delta_{0J}},
\ldots,\pi^{(p-1)\delta_{f-2J}})$ with $\delta_{iJ}=1$ if $i \in J$
and $\delta_{iJ}=0$ otherwise, one checks that $\E_{K,F} e=
M_{C\vec{c}} \simeq M_{C\vec{d}}=\E_{K,F} e'$ via the change of basis $e = ue'$ where
$d_i=b_i$ if $i\not\in J$ and $d_i=-a_i$ if $i\in J$, and that $\langle\vec{c}\rangle_J=u$.
Note that
$$\begin{aligned}{\frac{\varphi(\langle\vec{c}\rangle_J)}{\langle\vec{c}\rangle_J}} &= {\frac{(\pi^{(p-1)p\delta_{0J}},\pi^{(p-1)p\delta_{1J}},
\ldots,\pi^{(p-1)p\delta_{f-1J}})}{(\pi^{(p-1)\delta_{f-1J}},\pi^{(p-1)\delta_{0J}},
\ldots,\pi^{(p-1)\delta_{f-2J}})}} \\
& =(\pi^{(p-1)(p\delta_{0J}-\delta_{f-1J})},
\pi^{(p-1)(p\delta_{1J}-\delta_{0J})},
\ldots,\pi^{(p-1)(p\delta_{f-1J}-\delta_{f-2J})})\end{aligned}$$ and
that
$$(p\delta_{0J}-\delta_{f-1J})+p(p\delta_{1J}-\delta_{0J})
+\cdots+p^{f-1}(p\delta_{f-1J}-\delta_{f-2J})
=(p^f-1)\delta_{f-1J}=p^f-1.$$

Recall that we have a basis $[B_0], \ldots, [B_{f-1}]$ for $\Ext^1(M_{\vec{0}},
M_{C\vec{c}})$ such that
$$\begin{aligned}\mu_\varphi(B_i) &=(0,\ldots,0, \pi^{1-p}+h_i(\pi), 0,\ldots,0), \\
\mu_\xi(B_i) &=(G_0^{(i)}, \ldots, G_{f-1}^{(i)}),\end{aligned}$$
where $h_i(\pi) \in \F\pi^{2-p}+\cdots+\F\pi^{-1}$ and
$$
\begin{aligned}
G_i^{(i)}(\pi) &= -\alpha_i+g_i(\pi), \\
G_{i-1}^{(i)}(\pi) &= \pi^{(p-1)c_{i-1}}(-\alpha_i+g_i(\pi^p)), \\
G_{i-2}^{(i)}(\pi) &=
\pi^{((p-1)(c_{i-2}+c_{i-1}p)}(-\alpha_i+g_i(\pi^{p^2})), \\
&\vdots \\
G_{0}^{(i)}(\pi) &=
\pi^{(p-1)(c_0+c_1p+\cdots+c_{i-1}p^{i-1})}(-\alpha_i+g_i(\pi^{p^i})), \\
G_{f-1}^{(i)}(\pi) &=
\pi^{(p-1)(c_{f-1}+c_0p+c_1p^2+\cdots+c_{i-1}p^i)}(-\alpha_i+g_i(\pi^{p^{i+1}})), \\
&\vdots \\
G_{i+1}^{(i)}(\pi) &= \pi^{(p-1)(c_{i+1}+\cdots+c_{i-1}p^{f-2})}
(-\alpha_i+g_i(\pi^{p^{f-1}})),
\end{aligned}
$$
with $\alpha_i=\overline{s_0z} \in \F^\times$ as in Lemma \ref{gamma}.

To show that $\iota [B_{i+1}] \in \Ext^1(M_{A\vec{a}},M_{B\vec{b}})$ is bounded if $i \in J$ is straightforward:
As $\langle\vec{c}\rangle_J=(\pi^{(p-1)\delta_{f-1J}},\pi^{(p-1)\delta_{0J}},
\ldots,\pi^{(p-1)\delta_{f-2J}})$ and $\delta_{iJ}=1$, we have
$$\begin{aligned}\mu_\varphi(\iota B_{i+1}) &=\kappa_\varphi(A,\vec{a})\langle\vec{c}\rangle_J\mu_\varphi(B_{i+1}) \\
& =(0,\ldots,0,\pi^{(p-1)(a_{i+1}+1)}(\pi^{1-p}+h_{i+1}(\pi)),0,\ldots,0) \in \F[[\pi]]^S
\end{aligned}$$ with the nonzero entry in the ($i+1$)-component, and 
$$\begin{aligned}\mu_\xi(\iota B_{i+1})
& =\kappa_\xi(A,\vec{a})\langle\vec{c}\rangle_J\mu_\xi(B_{i+1}) \in \pi\F[[\pi]]^S 
\end{aligned}$$
as $e_{i+1}\mu_\xi(\iota B_{i+1})=\lambda_\xi^{(p-1)\Sigma_{j+1}\vec{a}}\pi^{p-1}G_{i+1}^{(i)}$ and 
$e_{j+1}\mu_\xi(\iota B_{i+1})=\lambda_\xi^{(p-1)\Sigma_{j+1}\vec{a}}\pi^{(p-1)\delta_{jJ}}G_{j+1}^{(i)}$
is divisible by $\pi^{(p-1)c_{j+1}}$ if $j\neq i$.

Next we need to show that if $E =\sum_{j=0}^{f-1}\beta_jB_j$ and $[E]\in V_J$,
then $\beta_{i+1}=0$ for all $i \notin J$.
Suppose $\iota [E]$ is bounded, $i \not\in J$ and $\beta_{i+1} \neq 0$.
Then $\mu_\varphi\iota(E+B) \in \F[[\pi]]^S$ and $\mu_\xi\iota(E+B) \in \pi\F[[\pi]]^S$
for some coboundary $B$, for which we have
$$\begin{aligned}\mu_\varphi(B) &=(C\pi^{(p-1)c_0}b_1(\pi^p)-b_0,
\pi^{(p-1)c_1}b_2(\pi^p)-b_1,\ldots, \pi^{(p-1)c_{f-1}}b_0(\pi^p)-b_{f-1}), \\
\mu_\xi(B) & =((\lambda_\xi^{\Sigma_0\vec{c}}\xi-1)b_0(\pi), (\lambda_\xi^{\Sigma_1\vec{c}}\xi-1)b_1(\pi)
\ldots,(\lambda_\xi^{\Sigma_{f-1}\vec{c}}\xi-1)b_{f-1}(\pi)).
\end{aligned}$$
for some $(b_0(\pi), \ldots, b_{f-1}(\pi)) \in \E_{K,F}$.
Note that $\kappa_\xi(A,\vec{a}) \equiv 1=(1,\ldots,1) \mod \pi$
and $e_{i+1}\langle\vec{c}\rangle_J = \pi^{(p-1)\delta_{iJ}}=1$.
As $\val e_{i+1}\mu_\xi(E+B)=\val e_{i+1}\mu_\xi\iota(E+B) \ge 1$ while $\val e_{i+1}\mu_\xi(E)=0$,
the valuation of $e_{i+1}\mu_\xi(B)=(\lambda_\xi^{\Sigma_{i+1}}\xi-1)b_{i+1}(\pi)$ has to be zero.
Letting $s= \val b_{i+1}(\pi)$, Lemma \ref{gamma} implies that 
$(\lambda_\xi^{\Sigma_{i+1}\vec{c}}\xi-1)b_{i+1}(\pi) \in \pi\F[[\pi]]$ if $s \ge 0$
and $\val(\lambda_\xi^{\Sigma_{i+1}\vec{c}}\xi-1)b_{i+1}(\pi) =s+(p-1)p^v$
if $s<0$ and $\Sigma_{i+1}\vec{c}+s(p^f-1)/(p-1)$ is divisible by $p^v$ but not $p^{v+1}$.
Thus $\val b_{i+1}(\pi)$ must be negative and divisible by $p-1$.
Looking at the $i$-th component, we have
$$\begin{aligned}
e_i\mu_\varphi(\iota(E+B)) & =\pi^{(p-1)(a_i+\delta_{i-1J})} (e_i\mu_\varphi(E) + e_i\mu_\varphi(B)) \\
& =\pi^{(p-1)\delta_{i-1J}}(\pi^{1-p}+h_i(\pi) +\pi^{(p-1)c_i}b_{i+1}(\pi^p)-b_i(\pi)),\end{aligned}$$
whose valuation has to be non-negative. Since $(p-1)c_i+p \,
\val b_{i+1}(\pi) < 1-p=\val(\pi^{1-p}+h_i(\pi))$, we get $\val b_i(\pi)=(p-1)c_i+p \, \val b_{i+1}(\pi)$.
Cycling this through all $j \in S$ leads to
$\val b_{i+1}(\pi) =(p-1)\Sigma_i\vec{c} +p^f \val b_{i+1}(\pi)$, so that
$\val b_{i+1}(\pi) =-{\frac{p-1}{p^f-1}}\Sigma_i\vec{c} >1-p$, which is a contradiction.

Now suppose $J=S, C\neq 1, \vec{c}=\overrightarrow{p-2}$. In this case we have two solutions
$\vec{a}=\vec{p}, \vec{b}=\vec{0}$ and $\vec{a'}=\vec{1}, \vec{b'}=\vec{0}$ of the congruence
and the corresponding isomorphisms
$$\begin{aligned}
\iota_+ &: \Ext^1(M_{\vec{0}},M_{C\overrightarrow{p-2}}) \rightarrow
\Ext^1(M_{A\vec{p}},M_{B\vec{0}}),\\
\iota_- &: \Ext^1(M_{\vec{0}},M_{C\overrightarrow{p-2}}) \rightarrow
\Ext^1(M_{A\vec{1}},M_{B\vec{0}}).
\end{aligned}$$
One shows that $V_J^+=V_J^-=\Ext^1(M_{\vec{0}},M_{C\overrightarrow{p-2}})$ by straightforward computations.

If $J=\emptyset$, the congruence equation has a unique solution unless $\vec{c}=\vec{1}$,
in which case we have two solutions $\vec{a}=\vec{0}, \vec{b}=\vec{1}$ and $\vec{a'}=\vec{0},\vec{b'}=\vec{p}$.
The proof that $V_\emptyset=0$ (when $\vec{c}\neq\vec{1}$) and
$V_\emptyset^+=V_\emptyset^-=0$ (when $\vec{c}=\vec{1}$) is
similar to the case $J\neq\emptyset$.
\end{proof}

\subsection{Case $f=2$} \label{sec:f2}
Throughout this subsection we assume that $f=2$, $0 \le c_0, c_1 \le p-1$, not both $p-1$.
If $\vec{c}=\vec{0}$ or $\overrightarrow{p-2}$, we further assume $C\neq 1$; 
the cases $\vec{c}=\vec{0}$ and $\vec{c}=\overrightarrow{p-2}$ when $C=1$
are dealt with in \S\S \ref{sec:cyclo}, \ref{sec:triv}.
Before determining which extensions are bounded, we describe the basis
elements in the form we will need.  Recall that we defined a basis $\{[B_0],[B_1]\}$
for $ \Ext^1(M_{\vec{0}},M_{C\vec{c}})$ where $B_0$ and $B_1$ are cocycles of the
following form:

$$\begin{aligned}
\mu_\varphi(B_0) & =(H_0(\pi),0), \\
\mu_\xi(B_0) & =(G_0^{(0)}(\pi),G_1^{(0)}(\pi)) \\
             & =((C\pi^{(p-1)\Sigma_0}\Phi-1)^{-1}((\lambda_\xi^{\Sigma_0}\xi-1)H_0(\pi)), \pi^{(p-1)c_1}G_0^{(0)}(\pi^p)), \\
\mu_\varphi(B_1) & =(0, H_1(\pi)), \\
\mu_\xi(B_1) & =(G_0^{(1)}(\pi),G_1^{(1)}(\pi)) \\
 &=(\pi^{(p-1)c_0}G_1^{(1)}(\pi^p),(C\pi^{(p-1)\Sigma_1}\Phi-1)^{-1}((\lambda_\xi^{\Sigma_1}\xi-1)H_1(\pi))),
\end{aligned}$$
where
$$H_0(\pi) = \left\{\begin{array}{ll}\pi^{1-p}+h_0 & {\rm if} \, c_0<p-1, \\
\pi^{1-p^2}+h_0^{(1)}+\epsilon^{(0)}h_0'^{(0)}+h_0^{(0)} & {\rm if} \, c_0=p-1, c_1\neq p-2, \\
\pi^{1-p^3}+h_0^{(2)}+\epsilon^{(1)}h_0'^{(1)}+h_0^{(1)}+\epsilon^{(0)}h_0'^{(0)}+h_0^{(0)}
& {\rm if} \, c_0=p-1, c_1= p-2,
\end{array} \right.$$
$$H_1(\pi)
=\left\{\begin{array}{ll}\pi^{1-p}+h_1 & {\rm if} \, c_1<p-1, \\
\pi^{1-p^2}+h_1^{(1)}+\epsilon^{(0)}h_1'^{(0)}+h_1^{(0)}  & {\rm if} \, c_1=p-1, c_0\neq p-2, \\
\pi^{1-p^3}+h_1^{(2)}+\epsilon^{(1)}h_1'^{(1)}+h_1^{(1)}+\epsilon^{(0)}h_1'^{(0)}+h_1^{(0)}
& {\rm if} \, c_1=p-1, c_0= p-2.
\end{array} \right.$$

\begin{lem}\label{f2.1}
Suppose that $i \in \{0,1\}$ is such that $0 \le c_i < p - 1$.  Then 
for some $\alpha_i \in F^\times, g_i(\pi) \in 1 + \pi\F[[\pi]]$, we have
$$\begin{array}{ll}
\left.\begin{array}{rl}
\mu_\varphi(B_0) &=(\pi^{1-p}+h_0(\pi),0),\\
\mu_\xi(B_0) &=(\alpha_0g_0(\pi),
\pi^{(p-1)c_1}\alpha_0g_0(\pi^p))\end{array}\right\}&\mbox{if $i=0$,}\\
\ & \ \\
\left.\begin{array}{rl}
\mu_\varphi(B_1) &=(0, \pi^{1-p}+h_1(\pi)),\\
\mu_\xi(B_1) &=(C\pi^{(p-1)c_0}\alpha_1g_1(\pi^p),
\alpha_1g_1(\pi))\end{array}\right\}&\mbox{if $i=1$.}
\end{array}$$
\end{lem}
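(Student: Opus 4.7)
The strategy is to extract $\mu_\xi(B_i)$ directly from the construction in §\ref{subsec:B_i1} and then verify that the appropriate component has nonzero constant term; the claimed formulas then follow immediately from the cocycle recursion $G_1(\pi) = \pi^{(p-1)c_1}G_0(\pi^p)$ (respectively $G_0(\pi) = C\pi^{(p-1)c_0}G_1(\pi^p)$ for $i=1$) established in §\ref{subsec:B_i1}.

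For $i=0$, by construction $G_0(\pi) := e_0\mu_\xi(B_0) \in \F[[\pi]]$ is the unique solution of
$$(C\pi^{(p-1)\Sigma_0}\Phi - 1)(G_0(\pi)) = (\lambda_\xi^{\Sigma_0}\xi - 1)(H_0(\pi)).$$
Setting $\alpha_0 := G_0(0)$ and $g_0(\pi) := G_0(\pi)/\alpha_0$ yields the claimed form as soon as $\alpha_0 \neq 0$. Comparing constant terms on the two sides, $\alpha_0$ is a nonzero scalar multiple of the constant term of the right-hand side: the scalar is $-1$ when $\Sigma_0 > 0$, and $(C-1)^{-1}$ when $\Sigma_0 = 0$ (in which case $\vec{c} = \vec{0}$ and our standing assumption forces $C \neq 1$). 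Thus it suffices to show the right-hand side has nonzero constant term. For this I apply Lemma~\ref{gamma} to each monomial of $H_0(\pi) = \pi^{1-p} + h_0(\pi)$. For the leading term $\pi^{1-p}$, the integer $\Sigma_0 + (1-p)(p+1) = (c_0+1) + (c_1 - p)p + \cdots$ has lowest nonzero $p$-adic digit $c_0 + 1$ under the hypothesis $c_0 < p-1$, so in the notation of Lemma~\ref{gamma} one has $v=0$ and $s_v = c_0+1$; the lemma then yields contribution $\overline{(c_0+1)z} \in \F^\times$ to the constant term. For each monomial $\pi^s$ in $h_0(\pi)$ with $2-p \le s \le -1$, Lemma~\ref{gamma} gives $\pi$-valuation at least $s + (p-1) \ge 1$ (and in the degenerate case $\Sigma_0 + s(p+1)=0$ the expression $(\lambda_\xi^{\Sigma_0}\xi-1)(\pi^s)$ vanishes identically, since $\xi \in \Gamma_1$ implies $(\lambda_\xi^{\Sigma_0}\xi-1)(\pi^s) = (\lambda_\xi^{\Sigma_0 + s(p+1)} - 1)\pi^s$), so $h_0$ contributes nothing to the constant term. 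Hence $\alpha_0 \neq 0$, as required.

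The case $i=1$ is entirely symmetric after swapping indices: the same argument with $\Sigma_1 = c_1 + c_0 p$ in place of $\Sigma_0$ and the hypothesis $c_1 < p-1$ shows that $G_1(0) \neq 0$, and the $0$-th component is recovered via $G_0(\pi) = C\pi^{(p-1)c_0}G_1(\pi^p)$, which accounts for the extra factor of $C$ in the stated formula for $i=1$. I expect the only modest obstacle to be the $p$-adic digit computation underpinning the application of Lemma~\ref{gamma}; the hypothesis $c_i < p-1$ is precisely what prevents carrying and ensures the relevant digit $s_v = c_i+1$ is nonzero in $\F_p$.
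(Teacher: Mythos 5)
Your proof is correct and follows essentially the same route as the paper: apply Lemma~\ref{gamma} to $H_0(\pi)$, observe that the only constant-term contribution comes from $\pi^{1-p}$ (where the relevant $p$-adic digit is $c_0+1\ne 0$ precisely because $c_0<p-1$), and then invert $C\pi^{(p-1)\Sigma_0}\Phi-1$. You are somewhat more careful than the paper's one-line invocation of Lemma~\ref{gamma}, in particular in handling the degenerate monomials $\pi^s$ of $h_0$ with $\Sigma_0+s(p+1)=0$, where Lemma~\ref{gamma} does not literally apply but the contribution vanishes identically; that is a real gap in the paper's brevity which you correctly plug.
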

\begin{proof}  We assume $i=0$; the case $i=1$ is similar.  As in Lemma~\ref{gamma}, we have
$$L_0(\pi) :=
(\lambda_\xi^{\Sigma_0}\xi-1)(\pi^{1-p}+h_0(\pi)) \equiv \overline{s_0z} \mod \pi\F[[\pi]],$$
so that $e_0\mu_\xi(B_0)=(C\pi^{(p-1)\Sigma_0}\Phi-1)^{-1}(L_0(\pi))
=\alpha_0g_0(\pi)$ for some $g_0(\pi) \in 1+\pi\F[[\pi]]$ with
$\alpha_0 = (C-1)^{-1}\overline{s_0z}$ if $c_0 = c_1 = 0$, and
$\alpha_0 = -\overline{s_0z}$ otherwise. (Recall that we assume for now that $C\neq 1$
if $c_0 = c_1 = 0$.)
\end{proof}
 
If $c_i = p-1$, we introduce a cocycle $B_i'$ cohomologous to $B_i$ which we will work with.
\begin{lem}\label{f2.2}
Suppose that $\{i,j\} = \{0,1\}$ with $c_i = p - 1$ and $c_j < p - 2$.
Then there is a cocycle $B_i'$ such that $[B_i'] = [B_i]$ and
$$\begin{array}{c} \val(e_i\mu_\varphi(B_i')) = \val(e_j\mu_\varphi(B_i')) = 2-2p,\\
\val e_i\mu_\xi(B_i')) \ge 0\quad\mbox{and} \quad\val (e_j\mu_\xi(B_i')) = 1 - p.\end{array}$$
\end{lem}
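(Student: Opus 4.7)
The plan is to exhibit an explicit coboundary $\delta(b) \in H_0$ so that $B_i' := B_i - \delta(b)$ has the required form; the cocycle conditions then transfer automatically. By the symmetry $0 \leftrightarrow 1$, it suffices to treat $i = 0$, so $c_0 = p-1$ and $c_1 < p-2$.

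First, I would identify the right $b$. Since $\mu_\varphi(B_0) = (H_0(\pi), 0)$ with $H_0 = \pi^{1-p^2} + h_0^{(1)} + \epsilon^{(0)} h_0'^{(0)} + h_0^{(0)}$ by Proposition~\ref{trick}, the natural choice is $b = (0, b_1)$ with
$$b_1(\pi) = C^{-1}\Bigl(\pi^{2-2p} + \sum_{s=1}^{p-2} \epsilon_s^{(1)} \pi^{2-2p+s}\Bigr).$$
The key identity is $(p-1)c_0 + p(2-2p) = 1 - p^2$, which makes the first component of $\delta(b)_\varphi = (C\pi^{(p-1)c_0} b_1(\pi^p),\, -b_1)$ equal $\pi^{1-p^2} + h_0^{(1)}$. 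Subtracting from $\mu_\varphi(B_0)$ gives $\mu_\varphi(B_0') = (\epsilon^{(0)} h_0'^{(0)} + h_0^{(0)},\, b_1)$, whose components have valuation exactly $2-2p$ (using $\epsilon_0'^{(0)} = 1$ and $\epsilon^{(0)} \neq 0$).

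Next I would verify the valuations of $\mu_\xi(B_0')$. Since $b_0 = 0$, we have $\delta(b)_\xi = (0,\, (\lambda_\xi^{\Sigma_1}\xi - 1)(b_1))$, so $e_0\mu_\xi(B_0') = e_0\mu_\xi(B_0) = G_0^{(0)}$. By construction $G_0^{(0)} = (C\pi^{(p-1)\Sigma_0}\Phi - 1)^{-1}\bigl((\lambda_\xi^{\Sigma_0}\xi - 1)(H_0)\bigr)$; the argument lies in $\F[[\pi]]$ by Proposition~\ref{trick} and the operator preserves $\F[[\pi]]$ (as $(p-1)\Sigma_0 > 0$), so $\val(G_0^{(0)}) \ge 0$. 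For the other component, $G_1^{(0)} = \pi^{(p-1)c_1} G_0^{(0)}(\pi^p) \in \F[[\pi]]$, and $e_1\mu_\xi(B_0') = G_1^{(0)} - (\lambda_\xi^{\Sigma_1}\xi - 1)(b_1)$, so its valuation equals that of $(\lambda_\xi^{\Sigma_1}\xi - 1)(b_1)$. Applying Lemma~\ref{gamma} to each monomial of $b_1$: for the leading $\pi^{2-2p}$, one computes $\Sigma_1 + (2-2p)(p+1) \equiv c_1 + 2 \bmod p$, which is nonzero since $c_1 < p-2$, so $v = 0$ and the contribution has valuation exactly $(2-2p) + (p-1) = 1-p$ with nonzero coefficient; the remaining terms $\pi^{2-2p+s}$ (with $1 \le s \le p-2$) yield valuations strictly greater than $1-p$.

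The main obstacle will be the careful case-by-case application of Lemma~\ref{gamma} to guarantee that the leading term of valuation $1-p$ does not accidentally cancel and that no higher-$s$ contribution descends below valuation $1-p$. This is precisely where the hypothesis $c_j < p-2$ is used, ensuring $c_1 + 2 \not\equiv 0 \bmod p$ and thereby fixing $v = 0$ in the application of Lemma~\ref{gamma} to the leading monomial.
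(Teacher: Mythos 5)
Your proof follows essentially the same route as the paper: you subtract the same coboundary (with $b_0 = 0$ and $b_1 = C^{-1}(\pi^{2-2p} + \widetilde h_0^{(1)})$) to strip off the $\pi^{1-p^2} + h_0^{(1)}$ portion of $\mu_\varphi(B_0)$, and then check the stated valuations of $\mu_\varphi(B_0')$ and $\mu_\xi(B_0')$. The only difference is one of detail: you carry out the final valuation computation via Lemma~\ref{gamma} explicitly, correctly isolating the role of the hypothesis $c_j < p-2$ in forcing $v=0$ for the leading term of $b_1$, whereas the paper simply asserts that $B_0'$ has the required form.
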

\begin{proof}  Again assume $i=0$, the case $i=1$ being similar.  
By the very construction of $H_0(\pi)$, we have
$L_0(\pi) := (\lambda_\xi^{\Sigma_0}\xi-1)H_0(\pi) \in \F[[\pi]],$ so that
$g_0'(\pi) := e_0\mu_\xi(B_0) =(C\pi^{(p-1)\Sigma_0}\Phi-1)^{-1}(L_0(\pi)) \in \F[[\pi]]$
and $\mu_\xi(B_0)=(g_0'(\pi),\pi^{(p-1)c_1}g_0'(\pi^p)).$
Now let $B_0' = B_0 - B$ where $B$ is a coboundary such that 
$$\begin{aligned} 
\mu_\varphi(B) &= (C\pi^{(p-1)c_0}b_1(\pi^p)-b_0(\pi), \pi^{(p-1)c_1}b_0(\pi^p)-b_1(\pi)) \\
&=(\pi^{1-p^2}+h_0^{(1)}, -C^{-1}(\pi^{2-2p}+\widetilde{h}_0^{(1)}),\\
\mu_\xi(B) &=(0, (\lambda_\xi^{\Sigma_1}\xi-1)b_1(\pi)),
\end{aligned}$$
where $b_0(\pi) =0, b_1(\pi) = C^{-1}(\pi^{2-2p}+\widetilde{h}_0^{(1)})$ with
$\widetilde{h}_0^{(1)} :=\sum_{s=1}^{p-2}\epsilon_s^{(1)}\pi^{2-2p+s}$. Then
$$\begin{aligned}
\mu_\varphi(B_0') &= \left(\epsilon^{(0)}h_0'^{(0)}+h_0^{(0)}, C^{-1}(\pi^{2-2p}+h_0^{(1)})\right)\\
\mu_\xi(B_0') & =(g_0'(\pi), \pi^{(p-1)c_1}g_0'(\pi^p)-(\lambda_\xi^{\Sigma_1}\xi-1)b_1(\pi)),
\end{aligned}$$
so that $B_0'$ has the required form.
\end{proof}

\begin{lem}\label{f2.3}
Suppose that $\{i,j\} = \{0,1\}$ with $c_i = p - 1$ and $c_j = p - 2$.
Then there is a cocycle $B_i'$ such that $[B_i'] = [B_i]$ and
$$\begin{array}{c} \val(e_i\mu_\varphi(B_i')) \ge 2 - 2p,\quad \val(e_j\mu_\varphi(B_i')) = 3-3p,\\
\val(e_i\mu_\xi(B_i')) = 1 - p\quad\mbox{and} \quad\val(e_j\mu_\xi(B_i')) \ge 2 - 2p.\end{array}$$
\end{lem}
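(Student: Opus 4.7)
Assume $i = 0, j = 1$, the symmetric case being analogous. The plan is to produce $B_0' = B_0 - B^{(1)} - B^{(2)}$ by subtracting two successive coboundaries coming from elements $b^{(1)}, b^{(2)} \in \E_{K,F}$; this parallels the single coboundary used in Lemma~\ref{f2.2} but reflects the additional iteration step in Proposition~\ref{trick+} for the case $r = 1$.

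First I would take $b^{(1)} = (0, b_1^{(1)})$ with $b_1^{(1)} = C^{-1}(\pi^{2-p-p^2} + \sum_{s=1}^{p-2}\epsilon_s^{(2)}\pi^{2-p-p^2+sp})$, chosen so that $C\pi^{(p-1)^2}b_1^{(1)}(\pi^p) = \pi^{1-p^3} + h_0^{(2)}$. Subtracting $B^{(1)}$ leaves $e_0\mu_\varphi = \epsilon^{(1)}h_0'^{(1)} + h_0^{(1)} + \epsilon^{(0)}h_0'^{(0)} + h_0^{(0)}$ and introduces $b_1^{(1)}$ at $e_1$, whose valuation $2-p-p^2$ is still too low. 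Then I would take $b^{(2)} = (b_0^{(2)}, b_1^{(2)})$ with
$$b_0^{(2)} = C^{-1}\Big(\pi^{2-2p} + \sum_{s=1}^{p-2}\epsilon_s^{(2)}\pi^{2-2p+s}\Big),$$
$$b_1^{(2)} = C^{-1}\epsilon^{(1)}\Big(\pi^{3-3p} + \sum_{s=1}^{p-2}\epsilon_s'^{(1)}\pi^{3-3p+s}\Big) + C^{-1}\sum_{s=1}^{p-2}\epsilon_s^{(1)}\pi^{2-2p+s}.$$
Using the exponent identities $(p-1)(p-2) + p(2-2p) = 2-p-p^2$ and $(p-1)^2 + p(3-3p) = 1+p-2p^2$, one verifies directly that $\pi^{(p-1)(p-2)}b_0^{(2)}(\pi^p) = b_1^{(1)}$ and $C\pi^{(p-1)^2}b_1^{(2)}(\pi^p) = \epsilon^{(1)}h_0'^{(1)} + h_0^{(1)}$. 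It follows that $\mu_\varphi(B_0')$ has $e_0$-component $\epsilon^{(0)}h_0'^{(0)} + h_0^{(0)} + b_0^{(2)}$ of valuation $\ge 2-2p$, and $e_1$-component equal to $b_1^{(2)}$, of valuation exactly $3-3p$ (using $\epsilon^{(1)} \neq 0$ from Proposition~\ref{trick+}).

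For $\mu_\xi$, I would use that $(\lambda_\xi^{\Sigma_0}\xi - 1)(H_0) \in \F[[\pi]]$ by construction, so $\mu_\xi(B_0) = (g_0', \pi^{(p-1)(p-2)}g_0'(\pi^p))$ with $g_0' \in \F[[\pi]]$, while the coboundaries contribute $(\lambda_\xi^{\Sigma_0}\xi - 1)b_0^{(2)}$ at $e_0$ and $(\lambda_\xi^{\Sigma_1}\xi - 1)(b_1^{(1)}+b_1^{(2)})$ at $e_1$. Applying Lemma~\ref{gamma} to the leading term $\pi^{2-2p}$ of $b_0^{(2)}$: since $\Sigma_0 + (2-2p)(p+1) = -p^2-p+1$ is a $p$-adic unit (with digit $s_0 = 1$), we have $v=0$ and $(\lambda_\xi^{\Sigma_0}\xi - 1)(\pi^{2-2p}) \in \overline{z}(\pi^{1-p}+\pi^{2-p}) + \F_p[[\pi]]$, while the remaining $\pi^{2-2p+s}$ terms ($1\le s\le p-2$) contribute valuation $\ge 2-p$; since $\overline{z}\ne 0$, this gives $\val(e_0\mu_\xi(B_0')) = 1-p$ exactly. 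For $e_1$, one similarly checks that $\Sigma_1 + (3-3p)(p+1) = -2p^2-p+1 \in \Z_p^\times$, so the leading term $\pi^{3-3p}$ of $b_1^{(2)}$ contributes a term of valuation exactly $2-2p$; all other relevant terms---those arising from $b_1^{(1)}$ (for which $v_p(\Sigma_1 + (2-p-p^2)(p+1)) = 2$), the remaining terms of $b_1^{(2)}$, and $\pi^{(p-1)(p-2)}g_0'(\pi^p)$---have valuation $\ge 2-2p$, yielding $\val(e_1\mu_\xi(B_0')) \ge 2-2p$.

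The main obstacle will be the combinatorial bookkeeping in choosing $b_0^{(2)}$ and $b_1^{(2)}$ so that the $\mu_\varphi$ cancellations work in tandem across both components while the $\mu_\xi$ valuations land exactly at the prescribed values. The iterative structure encoded in Proposition~\ref{trick+} essentially forces these choices, with the key nondegeneracies coming from $\epsilon^{(1)} \neq 0$ and the $p$-adic unit conditions verified above.
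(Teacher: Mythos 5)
Your proposal is essentially the paper's own proof repackaged: the paper subtracts a single coboundary $B$ associated to $b_0(\pi)= C^{-1}\widetilde{h}_0^{(2)}$ and $b_1(\pi)= C^{-1}(\epsilon^{(1)}\widetilde{h}_0'^{(1)}+ \widetilde{h}_0^{(1)}) + \pi^{(p-1)(p-2)}b_0(\pi^p)$; your $b^{(1)}$ and $b^{(2)}$ sum to precisely this same pair. Indeed, $b_0^{(2)} = b_0$, and since $(p-1)(p-2)+p(2-2p)=2-p-p^2$, your $b_1^{(1)}$ equals $\pi^{(p-1)(p-2)}b_0(\pi^p)$, so $b_1^{(1)}+b_1^{(2)}=b_1$. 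Splitting the coboundary into two stages is a clean way to exhibit the cascading cancellation, but it is the same coboundary.

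Two arithmetic slips in your $\mu_\xi$ verification, neither fatal: first, $\Sigma_1 + (2-p-p^2)(p+1) = p - p^2 - p^3 = p(1-p-p^2)$, whose $p$-adic valuation is $1$, not $2$ as you claim; the resulting contribution from Lemma~\ref{gamma} then has $\pi$-valuation exactly $(2-p-p^2)+(p-1)p = 2-2p$, not strictly larger. This means $e_1\mu_\xi(B_0')$ receives two contributions at valuation exactly $2-2p$ (this one and the one from the leading term of $b_1^{(2)}$), and whether they cancel or not, the claimed bound $\ge 2-2p$ still holds. Second, $\Sigma_1 + (3-3p)(p+1) = 1-2p^2$, not $-2p^2-p+1$; again this is a $p$-adic unit so your conclusion $v=0$ survives, but the quantity is misstated.
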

\begin{proof}  This is similar to Lemma~\ref{f2.2} but choose the coboundary $B$  such that
$$\begin{aligned} 
\mu_\varphi(B) &= (C\pi^{(p-1)c_0}b_1(\pi^p)-b_0(\pi), \pi^{(p-1)c_1}b_0(\pi^p)-b_1(\pi)) \\
               &= (\pi^{1-p^3}+\epsilon^{(1)}h_0'^{(1)}+h_0^{(1)}+h_0^{(2)}-C^{-1}\widetilde{h}_0^{(2)},
                   -C^{-1}(\epsilon^{(1)}\widetilde{h}_0'^{(1)}+\widetilde{h}_0^{(1)})),\\
\mu_\xi(B) &=((\lambda_\xi^{\Sigma_0}\xi-1)b_0(\pi), (\lambda_\xi^{\Sigma_1}\xi-1)b_1(\pi)),
\end{aligned}$$
by taking
$b_0(\pi)= C^{-1}\widetilde{h}_0^{(2)}, b_1(\pi)= C^{-1}\left(\epsilon^{(1)}\widetilde{h}_0'^{(1)}
            + \widetilde{h}_0^{(1)} \right)
            + \pi^{(p-1)(p-2)}b_0(\pi^p)$,
where $\widetilde{h}_0^{(2)}:=\sum_{s=0}^{p-2}\epsilon_s^{(2)}\pi^{2-2p+s}$ (with $\epsilon_0^{(2)}=1$),
$\widetilde{h}_0'^{(1)}:=\sum_{s=0}^{p-2}\epsilon_s'^{(1)}\pi^{3-3p+s}$ (with $\epsilon'^{(1)}=1$) and
$\widetilde{h}_0^{(1)}:=\sum_{s=1}^{p-2}\epsilon_s^{(1)}\pi^{2-2p+s}$.
\end{proof}

\begin{prop} If $f=2$, then
$$V_S = V_S^\pm = \Ext^1(M_{\vec{0}},M_{C\vec{c}})$$ with $\pm$ occuring when $\vec{c}=\overrightarrow{p-2}$.
\end{prop}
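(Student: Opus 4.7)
The plan is to verify, for each basis element of $\Ext^1(M_{\vec{0}},M_{C\vec{c}})$ constructed in Section~4, that its image under $\iota$ (or $\iota_\pm$ when $\vec c = \overrightarrow{p-2}$) is bounded. Since the domain has dimension $f=2$, with basis $\{[B_0],[B_1]\}$ or modifications $\{[B_i'],[B_j]\}$ (where $c_i = p-1$), it suffices to check conditions (2) and (3) of Definition~\ref{bounded} for each basis cocycle.

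First I would compute $\vec a$ and the shift exponents $\varepsilon_l$ governing the isomorphism. Since $J=S$ forces $\vec b = \vec 0$, solving the congruence $\Sigma_0\vec{c} \equiv -a_0-a_1p \pmod{p^2-1}$ with $1 \le a_l \le p$ yields a unique $\vec a$ except in the doubled case $\vec c = \overrightarrow{p-2}$, where both $\vec a = (1,1)$ and $\vec a = (p,p)$ arise, corresponding respectively to $\iota_-$ and $\iota_+$. From $\varepsilon_l = \Sigma_l(\vec c+\vec a)/(p^2-1)$ one checks $\varepsilon_l \in \{1,2\}$ and, crucially, $\varepsilon_j = 2$ whenever $c_i = p-1$ (where $j$ is the other index in $\{0,1\}$).

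Then proceed case by case on $(c_0,c_1)$ using Lemmas~\ref{f2.1}--\ref{f2.3}. When $c_i < p-1$, the cocycle $B_i$ has minimal valuations $1-p$ for $e_i\mu_\varphi$ and $0$ for $e_i\mu_\xi$; the shifts $(p-1)(a_l + \varepsilon_l) \ge 2(p-1)$ and $(p-1)\varepsilon_l \ge p-1$ coming from multiplication by $\kappa_\varphi(A,\vec a)\langle\vec c\rangle_S$ and $\kappa_\xi(A,\vec a)\langle\vec c\rangle_S$ push the valuations to $\ge 0$ and $\ge 1$ respectively. When $c_i = p-1$ and $c_j < p-2$, the cocycle $B_i'$ from Lemma~\ref{f2.2} has critical valuation $\val(e_j\mu_\xi(B_i')) = 1-p$; combined with $\varepsilon_j = 2$, the shift $2(p-1)$ absorbs the deficit to give $\val(e_j\mu_\xi(\iota B_i')) \ge p-1 \ge 1$, while the other three conditions follow even more comfortably.

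The main obstacle is the degenerate configuration $(c_i,c_j) = (p-1,p-2)$ handled by Lemma~\ref{f2.3}, where $\varepsilon_i = \varepsilon_j = 2$ but the stated bound $\val(e_j\mu_\xi(B_i')) \ge 2-2p$ only yields $\val(e_j\mu_\xi(\iota B_i')) \ge 0$, one short of condition (3). To close this gap I would examine the leading-order $\pi^{2-2p}$ contribution to $e_j\mu_\xi(B_i')$ explicitly: by Lemma~\ref{gamma}, the terms from $\tilde h_0^{'(1)}$ and from $\pi^{(p-1)(p-2)}b_0(\pi^p)$ in the expression $-(\lambda_\xi^{\Sigma_j}\xi-1)(b_1)$ each contribute a multiple of $z\pi^{2-2p}$, and the defining relations among $\epsilon^{(1)}$, $\nu^{(2)}$, $\nu^{'(1)}$ in Proposition~\ref{trick+} should be used to show that these two contributions cancel, giving $\val(e_j\mu_\xi(B_i')) \ge 3-2p$ and hence $\val(e_j\mu_\xi(\iota B_i')) \ge 1$. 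The symmetric case and the $\iota_\pm$ assertions for $\vec c = \overrightarrow{p-2}$ (where all $c_l = p-2 < p-1$, so the Case~A argument applies directly, with $\varepsilon_l = 2$ for $\iota_+$ and $\varepsilon_l = 1$ for $\iota_-$) follow along the same lines.
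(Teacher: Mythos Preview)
Your approach is essentially the same as the paper's---both proceed by direct case-by-case verification that each basis cocycle becomes bounded after applying $\iota$---and the paper's own proof is in fact far terser than yours (it simply lists the cases and the values of $a_l$ and $\langle\vec c\rangle_S$, then asserts that the verification is ``straightforward'').  Your setup of the shifts $\varepsilon_l$ and the handling of the cases where some $c_i = p-1$ via Lemmas~\ref{f2.2} and~\ref{f2.3} is correct and more explicit than what the paper provides.

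You have correctly identified the one genuinely delicate point, the configuration $(c_i,c_j)=(p-1,p-2)$, where the bound $\val(e_j\mu_\xi(B_i'))\ge 2-2p$ from Lemma~\ref{f2.3} is one short of what is needed.  Your diagnosis---that the two $\pi^{2-2p}$ contributions to $e_j\mu_\xi(B_i')$ coming from $\epsilon^{(1)}\widetilde h_0^{\prime(1)}$ and from $\pi^{(p-1)(p-2)}b_0(\pi^p)$ must cancel---is right, and the cancellation does occur: it amounts to the identity $\epsilon^{(1)}=-1$.  However, your justification is too vague.  The relation $\epsilon^{(1)}=-\nu^{(2)}/\nu^{\prime(1)}$ from Proposition~\ref{trick+} alone does not suffice; you must actually compute $\nu^{(2)}$ and $\nu^{\prime(1)}$.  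The clean way to do this is to apply Lemma~\ref{gamma} (rather than Lemma~\ref{delta}) to $(\lambda_\xi^{\Sigma_0}\xi-1)(\pi^{1-p^3}+h_0^{(2)})$ and to $(\lambda_\xi^{\Sigma_0}\xi-1)h_0^{\prime(1)}$: both have leading term $\bar z\,\pi^{1-p^2}$, and combining this with the identity $(\lambda_\xi^{\Sigma_0}\xi-1)=\bigl(\sum_{j=0}^{p-2}(\lambda_\eta^{\Sigma_0}\eta)^j\bigr)(\lambda_\eta^{\Sigma_0}\eta-1)$ together with $\sum_{j=0}^{p-2}(\lambda_\eta^{\Sigma_0}\eta)^j\pi^{1-p^2}\equiv -\pi^{1-p^2}$ modulo higher-order terms gives $\nu^{(2)}=\nu^{\prime(1)}=-\bar z$, hence $\epsilon^{(1)}=-1$.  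Once this is established, $\val(e_j\mu_\xi(B_i'))\ge 3-2p$ and the $\iota$-shift by $2(p-1)$ completes the argument.
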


\begin{proof} By straightforward calculations one can check that both $\iota [B_0]$ and $\iota [B_1]$
are bounded in each of the following cases to consider:
\begin{itemize}
\item $0\le c_0, c_1\le p-2$, $1 \le a_0, a_1 \le p-1$, $\langle\vec{c}\rangle_S=(\pi^{p-1}, \pi^{p-1})$;
\item $c_0=p-1, 0\le c_1< p-2$, $a_0=1, 1\le a_1 < p-1$, $\langle\vec{c}\rangle_S=(\pi^{p-1}, \pi^{2p-2})$;
\item $0\le c_0< p-2, c_1=p-1$, $1\le a_0 < p-1, a_1=1$, $\langle\vec{c}\rangle_S=(\pi^{2p-2}, \pi^{p-1})$;
\item $p-2\le c_0, c_1\le p-1$, $p-1 \le a_0, a_1 \le p$, $\langle\vec{c}\rangle_S=(\pi^{2p-2}, \pi^{2p-2})$;
\item $c_0=c_1=p-2$, $a_0=a_1=p$, $\langle\vec{c}\rangle_S=(\pi^{2p-2}, \pi^{2p-2})$ (for $V_S^+$);
\item $c_0=c_1=p-2$, $a_0=a_1=1$, $\langle\vec{c}\rangle_S=(\pi^{p-1}, \pi^{p-1})$ (for $V_S^-$).
\end{itemize}
\end{proof}

\begin{prop} If $f=2$, then $$V_\emptyset = V_\emptyset^\pm =0$$
with $\pm$ occurring when $\vec{c}=\vec{1}$.
\end{prop}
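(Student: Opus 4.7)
The strategy parallels the proof of linear independence of the $[B_i]$ in \S 4.3 and is dual in spirit to the preceding computation that $V_S = \Ext^1(M_{\vec0}, M_{C\vec c})$. Since $J=\emptyset$ forces $\vec a = \vec 0$, the diagonal matrices $\kappa_\varphi(A,\vec 0) = (A,1)$ and $\kappa_\gamma(A,\vec 0) = (1,1)$ contribute no positive $\pi$-powers, so boundedness of $\iota[E]$ imposes a very restrictive condition on cocycles at the source after absorbing the twist $\langle\vec c\rangle_\emptyset = (\pi^{(p-1)\varepsilon_0}, \pi^{(p-1)\varepsilon_1})$ with $(p^2-1)\varepsilon_i = \Sigma_i(\vec c - \vec b)$.

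The plan is as follows. First, for each admissible $\vec c$, solve the congruence $\Sigma_0\vec c \equiv b_0 + b_1 p \bmod p^2-1$ with $1 \le b_j \le p$ to determine $\vec b$ (and the second solution $\vec b'$ in the exceptional case $\vec c = \vec 1$), and then compute $\langle\vec c\rangle_\emptyset$ in each configuration. Suppose $[E] = \beta_0[B_0] + \beta_1[B_1] \in V_\emptyset$. Unravelling the definition and using the explicit forms of $\mu_\varphi(B_i)$ and $\mu_\xi(B_i)$ provided by Lemmas~\ref{f2.1}--\ref{f2.3}, this means there exists $(b_0(\pi), b_1(\pi)) \in \F((\pi))^2$ such that the cocycle $E + \mathrm{cob}(b_0, b_1)$, after being twisted by $\kappa_\varphi(A,\vec0)\langle\vec c\rangle_\emptyset$ (resp.~by $\kappa_\xi(A,\vec0)\langle\vec c\rangle_\emptyset$), lies componentwise in $\F[[\pi]]$ (resp.~in $\pi\F[[\pi]]$).

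Examining the $\varphi$-equations component by component shows that $b_0(\pi)$ and $b_1(\pi)$ must cancel the negative-degree tails of $\beta_0 H_0(\pi)$ and $\beta_1 H_1(\pi)$; chasing the resulting relations through the two components produces a self-referential equation $\val(b_i) = (p-1)\Sigma_i\vec c/(1-p^2)$ on the leading valuations whenever some $\beta_i \neq 0$, forcing $\val(b_i)$ to be a specific negative rational multiple of $\Sigma_i \vec c$. Feeding this back into the $\xi$-equation and invoking Lemma~\ref{gamma} to compute $\val((\lambda_\xi^{\Sigma_i\vec c}\xi - 1)b_i(\pi))$, the boundedness requirement on the $\xi$-part yields a numerical inequality which cannot be satisfied. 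Hence $\beta_0 = \beta_1 = 0$.

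The main obstacle is the case analysis, which must be carried out separately for the generic situation (where $H_i = \pi^{1-p} + h_i$), for the subcases $c_i = p-1$ (using the modified cocycles $B_i'$ from Lemmas~\ref{f2.2} and~\ref{f2.3}), for the configurations with $c_i = 0$ (where $\vec c \neq \vec b$ makes the twist $\langle\vec c\rangle_\emptyset$ nontrivial and may introduce negative $\pi$-powers), and finally for the exceptional case $\vec c = \vec 1$, in which the argument is run twice, once with $\iota_+$ (using $\vec b = \vec 1$) and once with $\iota_-$ (using $\vec b' = \vec p$), yielding $V_\emptyset^+ = V_\emptyset^- = 0$. Since $f = 2$ each configuration reduces to a finite, elementary check of valuations of the same nature as in the proof of Proposition~\ref{gen} and \S 4.3.
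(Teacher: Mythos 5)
Your skeleton --- unwrap the boundedness condition, chase valuations around the two $\varphi$-equations to force a self-referential relation $\val b_i = -\tfrac{p-1}{p^2-1}\Sigma_i\vec c$, then derive a contradiction --- is the same as the paper's. But the concluding step as you describe it has a genuine gap: at $s=-\tfrac{p-1}{p^2-1}\Sigma_i\vec c$ one has $\Sigma_i\vec c + s\,\tfrac{p^2-1}{p-1} = 0$, so $\lambda_\xi^{\Sigma_i\vec c}\,\xi(\pi^s) = \overline{\chi}(\xi)^s\lambda_\xi^0\pi^s = \pi^s$; thus $\lambda_\xi^{\Sigma_i\vec c}\xi-1$ annihilates the leading monomial of $b_i$, the hypothesis $v<\infty$ of Lemma~\ref{gamma} fails, and knowing only $\val b_i$ one cannot compute $\val\bigl((\lambda_\xi^{\Sigma_i\vec c}\xi-1)b_i(\pi)\bigr)$. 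Feeding the derived valuation back into the $\xi$-condition therefore produces no contradiction. The paper's contradiction is more elementary and stays numerical: since $\Sigma_i\vec c < p^2-1$ (as $(c_0,c_1)\neq(p-1,p-1)$), the forced value $-\tfrac{p-1}{p^2-1}\Sigma_i\vec c$ is strictly greater than $1-p$ (and hence than $2-2p$), contradicting the upper bound $\val b_i\le 1-p$ (resp.\ $\le 2-2p$) one already has in hand. The $\xi$-condition is still used, but up front in the $c_i=p-1$ cases --- via Lemma~\ref{gamma}, to manufacture the starting bound $\val b_i \le 2-2p$ from $\val\, e_i\mu_\xi(\iota(E+B))\ge 1$ --- not after the fact.

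You also pass over a side branch: in the case $0\le c_0,c_1<p-1$, the $\varphi$-chase does not automatically yield the self-referential equation, since one must first dispose of the possibility $\val b_0(\pi) > 1-p$, which would force $(p-1)c_0 + p\,\val b_1(\pi) = 1-p$ and hence $p\mid(c_0+1)$, impossible for $c_0 < p-1$. With these two repairs --- eliminate the side branch, and replace the $\xi$-feedback with the direct numerical comparison of $-\tfrac{p-1}{p^2-1}\Sigma_i\vec c$ against the operative upper bound --- the rest of your plan (the case analysis based on $\langle\vec c\rangle_\emptyset$, the use of Lemmas~\ref{f2.1}--\ref{f2.3}, and running the argument twice with $\vec b=\vec 1$ and $\vec b=\vec p$ when $\vec c=\vec 1$) lines up with the paper.
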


\begin{proof}
We have the following cases to consider:
\begin{itemize}
\item $1\le c_0, c_1 \le p-1$,$1 \le b_0, b_1 \le p-1$, $\langle\vec{c}\rangle_\emptyset=(1,1)$;
\item $c_0 =0, 2\le c_1 \le p-1$,$b_0=p, 1\le b_1 \le p-2$, $\langle\vec{c}\rangle_\emptyset=(1,\pi^{1-p})$;
\item $2 \le c_0 \le p-1, c_1 = 0$,$1\le b_0 \le p-2, b_1=p$, $\langle\vec{c}\rangle_\emptyset=(\pi^{1-p},1)$;
\item $0 \le c_0, c_1 \le 1$, $p-1\le b_0, b_1 \le p$, $\langle\vec{c}\rangle_\emptyset=(\pi^{1-p},\pi^{1-p})$.
\end{itemize}
If $E$ is a cocycle such that $\iota[E]$ is bounded, then there is a coboundary $B$
associated to some $(b_0(\pi),b_1(\pi))\in \F((\pi))^S$ such that
$\iota(E+B)$ has $\mu_\varphi\in \F[[\pi]]^S$ and $\mu_\xi\in\pi\F[[\pi]]^S$.
As $\kappa_\varphi(A,\vec{a}) \in (\F^\times)^S$ and
$\langle\vec{c}\rangle_\emptyset = (\pi^{(1-p)\epsilon_0}, \pi^{(1-p)\epsilon_1})$ for some $\epsilon_j \in \{0,1\}$,
we get $\mu_\varphi(E+B) \in \F[[\pi]]^S$ and $\mu_\xi(E+B) \in \pi\F[[\pi]]^S$.

First consider the case $0\le c_0,c_1<p-1$ and $E = B_0+\beta B_1$ for some $\beta\in \F^\times$.
As $\val e_{0}\mu_\varphi(E)=1-p$ and $\val e_1\mu_\varphi(E)\ge 1-p$, we have 
$\val(C\pi^{(p-1)c_0}b_1(\pi^p)-b_0(\pi))=1-p$ and $\val(\pi^{(p-1)c_1}b_0(\pi^p)-b_1(\pi))\ge 1-p$.
If $\val b_0(\pi) > 1-p$, then $(p-1)c_0 + p\val b_1(\pi) = 1-p$, which implies that $p|(c_0+1)$,
contradicting $c_0 < p-1$.  If $\val b_0(\pi) \le 1-p$, then $(p-1)c_1+p\val b_0(\pi) < 1-p$,
which implies that $\val b_1(\pi) = (p-1)c_1+p\val b_0(\pi) < 1-p$, which in turn implies
$(p-1)c_0+p\val b_1(\pi) < 1-p$, so that $\val b_0(\pi) = (p-1)c_0+p\val b_1(\pi) =
(p-1)\Sigma_0 + p^2\val b_0(\pi)$, yielding a contradiction.  The proof that $\iota[B_1]$
is not bounded is the same.

Next suppose $c_0=p-1$ and $0<c_1<p-2$.   First consider the case $E = B_0' +\beta B_1$.
As $\val e_1\mu_\xi(E)=1-p$,
we have $\val(\lambda_\xi^{\Sigma_1}\xi-1)b_1(\pi)=1-p$, so that $\val b_1(\pi) \le 2-2p$.
Then $\val\pi^{(p-1)c_0}b_1(\pi^p)=(p-1)c_0+p\val b_1(\pi)<(1-p)(1+p)
                                                 <2-2p=\val e_0\mu_\varphi(E)$,
and so $\val b_0(\pi) = \val \pi^{(p-1)c_0}b_1(\pi^p) = (p-1)c_0+p\val b_1(\pi)$.
Then again $\val\pi^{(p-1)c_{1}}b_0(\pi^p)
            = (p-1)\Sigma_{1}+p^2\val b_{1}(\pi)<2-2p = \val e_1\mu_\varphi(E)$,
so that $\val b_{1}(\pi) = \val\pi^{(p-1)c_{1}}b_0(\pi^p) = (p-1)\Sigma_1+p^2\val b_1(\pi)$,
or $\val b_{1}(\pi) = -\frac{p-1}{p^2-1}\Sigma_{1}> 2-2p$, a contradiction.
The proof that $\iota [B_1]$ is not bounded is the same as in the case $c_0 < p-1$.

If $c_0=p-1, c_1=p-2$, the proof is similar to the preceding case,
except that we start by noting that $\val e_0\mu_\xi(E)=1-p$ if $E = B_0' +\beta B_1$.

The proof in the case that $c_1 = p-1$ is the same as the case $c_0 = p - 1$.
\end{proof}

\begin{prop}\label{dim1} If $f=2$, then
$$\begin{aligned}
V_{\{1\}} &= 
\left\{
\begin{array}{ll}
\F [B_1]  & {\it if} \,\, c_0=p-1, \\
\F[\alpha_1B_0-\alpha_0B_1] & {\it if} \,\, 0<c_0<p-1, c_1=0, \\
\F [B_0]  & 0\le c_0<p-1, 0<c_1\le p-1; 
\end{array}\right. \\
V_{\{1\}}^+ &= \F[\alpha_1B_0-\alpha_0B_1]; \\
V_{\{1\}}^- &= 0,
\end{aligned}$$
with $\pm$ occuring when $\vec{c}=\vec{0}$.
(See Lemma \ref{f2.1} for the definition of the $\alpha_i$.)
\end{prop}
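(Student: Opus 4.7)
The plan is to verify each displayed subcase by a direct valuation analysis, following the template of Proposition~\ref{gen} and the two preceding propositions. First, in each case I solve the congruence $c_0 + c_1 p \equiv b_0 - a_1 p \pmod{p^2-1}$ for $(a_1, b_0) \in \{1,\ldots,p\}^2$: in Cases~1--3 the solution is unique and one computes $\vec{c} - \vec{d} = (-1, p)$, giving $\langle\vec{c}\rangle_{\{1\}} = (\pi^{p-1}, 1)$ and $\kappa_\varphi(A,\vec{a}) = (A, \pi^{(p-1)a_1})$ for the appropriate $a_1$; when $\vec{c} = \vec{0}$ there are two solutions, giving $\langle\vec{c}\rangle_{\{1\}} = (\pi^{p-1}, 1)$ for $\iota_+$ and $(1, \pi^{1-p})$ for $\iota_-$.

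For the $\supset$ direction, I substitute the claimed cocycle (using $B_i'$ from Lemmas~\ref{f2.2}--\ref{f2.3} when $c_i = p-1$) into the formula for $\iota$ or $\iota_\pm$, and verify conditions~(2)--(3) of Definition~\ref{bounded} after adjusting by a coboundary. Condition~(2) is routine by comparing valuations of each term using the explicit shapes of $\mu_\varphi(B_i)$ from Lemma~\ref{f2.1}. The subtlety is in condition~(3): by Lemma~\ref{f2.1}, the term $e_1 \mu_\xi(B_1) = \alpha_1 g_1(\pi)$ (and similarly for $B_0$) has nonzero constant term $\alpha_1$, which must be killed to get $\mu_\xi \in \pi\F[[\pi]]^S$. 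When exactly one of $B_0, B_1$ contributes such a constant (Cases~1 and~3), one absorbs it by choosing $\vec{b}$ with a $\pi^{1-p}$-term whose coefficient, via Lemma~\ref{gamma} with $v=0$, produces a compensating constant in $(\lambda_\xi^{\Sigma_i}\xi-1)b_i$. When both contribute (Case~2 and the $+$ case for $\vec{c} = \vec{0}$), no coboundary can simultaneously kill them both, and only the combination $\alpha_1 B_0 - \alpha_0 B_1$ is admissible, its specific coefficients precisely cancelling the two constants before any coboundary adjustment.

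For the $\subset$ direction, suppose $E = \beta_0 B_0 + \beta_1 B_1$ (or with $B_i$ replaced by $B_i'$) yields a bounded $\iota[E]$, i.e., for some $\vec{b} = (b_0,b_1) \in \F((\pi))^2$ we have $\iota(E + B(\vec{b}))$ satisfying~(2)--(3). Expressing~(2) componentwise and arguing as in the proof of $V_\emptyset = 0$, the $e_0$ equation forces the coefficient of $\pi^{1-p}$ in $b_0$ to equal $\beta_0$ (times a unit), the $e_1$ equation forces that in $b_1$ to equal $\beta_1$, and applying the Frobenius substitution $\Phi$ propagates these to forced coefficients at $\pi^{p-p^2}$, $\pi^{p^2-p^3}$, and so on. In Cases~1--3 and the $+$ case the exponent $(p-1)a_1$ in the second component of $\kappa_\varphi(A,\vec{a})\langle\vec{c}\rangle_{\{1\}}$ is large enough that the iteration terminates at finite depth, and the $\mu_\xi$ condition then imposes one linear relation on $\beta_0, \beta_1$, giving the claimed one-dimensional $V_{\{1\}}$. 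In the $\iota_-$ case, the contribution $(p-1)\varepsilon_1 = -(p-1)$ from $\langle\vec{c}\rangle_{\{1\}}$ cancels the $(p-1)a_1$ with $a_1 = 1$, so the second-component exponent is zero; the iteration then never terminates, producing coefficients of $b_0, b_1$ at valuations tending to $-\infty$, which is incompatible with $b_0, b_1 \in \F((\pi))$ unless $\beta_0 = \beta_1 = 0$.

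The main obstacle is the case $c_0 = p-1$, where the modified cocycle $B_0'$ from Lemmas~\ref{f2.2}--\ref{f2.3} has several correction terms that must be tracked through both the $\iota$-multiplication and the coboundary adjustment; and the separation between $V_{\{1\}}^+$ and $V_{\{1\}}^-$, which hinges on the subtle asymmetry in the valuations of the entries of $\kappa_\varphi(A,\vec{a})\langle\vec{c}\rangle_{\{1\}}$ coming from the two solutions of the congruence when $\vec{c} = \vec{0}$.
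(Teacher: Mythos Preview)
Your overall strategy---case-by-case valuation analysis using the explicit shapes of $B_i$, $B_i'$ from Lemmas~\ref{f2.1}--\ref{f2.3} and the twist $\langle\vec{c}\rangle_{\{1\}}$---matches the paper's, and your computation of $\langle\vec{c}\rangle_{\{1\}}$ and the $(a_1,b_0)$-congruence is correct. Two points, however, need fixing.

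\textbf{The $\supset$ direction, Case~3.} You assert that a coboundary is needed to absorb the constant term of $e_1\mu_\xi$. In fact none is: when $0\le c_0<p-1$ and $0<c_1\le p-1$, the paper checks directly that
\[
\mu_\xi(\iota B_0)=\kappa_\xi(A,\vec{a})\,(\pi^{p-1},1)\,(\alpha_0 g_0(\pi),\ \pi^{(p-1)c_1}\alpha_0 g_0(\pi^p))\in\pi\F[[\pi]]^S,
\]
since the factor $\pi^{p-1}$ kills the constant in the first slot and $c_1>0$ handles the second. A coboundary is genuinely needed only in Case~1 (where $c_0=p-1$ and you are showing $\iota[B_1]$ is bounded), via $b_1(\pi)=\beta\pi^{1-p}$ for a suitable $\beta$.

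\textbf{The $\subset$ direction.} Your description here is where the real gap lies. You write that condition~(2) ``forces the coefficient of $\pi^{1-p}$ in $b_0$ to equal $\beta_0$,'' but because $\iota$ multiplies the $e_0$-component by $A\pi^{p-1}$, condition~(2) only says $e_0\mu_\varphi(E+B)\in\pi^{1-p}\F[[\pi]]$, which imposes \emph{no} constraint on the $\pi^{1-p}$-coefficient of $b_0$. The ``iteration'' you describe, with forced coefficients at $\pi^{p-p^2},\pi^{p^2-p^3},\ldots$, does not get off the ground from condition~(2) alone. The paper's argument is different and cleaner: to show (say) $\iota[B_1]$ is not bounded in Case~3, assume it is; then from the $\mu_\xi$-condition on the $e_1$-component (where there is no $\pi^{p-1}$ cushion) and Lemma~\ref{gamma} one gets $\val b_1\le 1-p$; feeding this into the two $\mu_\varphi$-components and cycling once gives $\val b_1=-\tfrac{p-1}{p^2-1}\Sigma_1>1-p$, a contradiction. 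The same valuation-cycling template, started from the appropriate $\mu_\xi$-component, handles Cases~1, 2, 4, and the $-$ case~5. Your sketch should be rewritten along these lines: the engine is the $\mu_\xi$-condition giving an upper bound on $\val b_i$, not a coefficient match from $\mu_\varphi$.
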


\begin{proof} Unless $\vec{c}=\vec{0}$, $\vec{c}$ gives rise to unique 
$\vec{a}=(0,a_1), \vec{b}=(b_0,0)$ with $1\le a_1, b_0 \le p$.
If $\vec{c}=\vec{0}$, we have $\vec{a}=(0,p), \vec{b}=(1,0)$ (for $V_J^+$)
or $\vec{a}=(0,1), \vec{b}=(p,0)$ (for $V_J^-$).
We always have $\langle\vec{c}\rangle_{\{1\}}=(\pi^{p-1},1)$ except when $\vec{c}=\vec{0}$, $b_0=p$, $a_0=1$,
in which case we have $\langle\vec{c}\rangle_{\{1\}}=(1,\pi^{1-p})$.

(1) Assume $c_0=p-1$. It is straightforward to check that $\iota [B_1+\beta B]$ is bounded 
for some $\beta \in \F^\times$ where $B$ is a coboundary
such that 
$$\mu_\varphi(B)=(C\pi^{(p-1)c_0}\pi^{(1-p)p}, -\pi^{1-p})\quad\mbox{and}\quad
\mu_\xi(B)=(0, (\lambda_\xi^{\Sigma_1}\xi-1)(\pi^{1-p})).$$

Suppose $\iota [B_0']$ is bounded. There exists a coboundary $B$ such that
$\mu_\varphi\iota(B_0'+B) \in \F[[\pi]]^S$, $\mu_\xi\iota(B_0'+B) \in \pi\F[[\pi]]^S$, and so
$$\begin{aligned}
\mu_\varphi(B_0') +(C\pi^{(p-1)c_0}b_1(\pi^p)-b_0(\pi), \pi^{(p-1)c_1}b_0(\pi^p)-b_1(\pi)) 
                  & \in \pi^{1-p}\F[[\pi]]\times\pi^{(1-p)a_1}\F[[\pi]], \\
\mu_\xi(B_0')     + ((\lambda_\xi^{\Sigma_0}\xi-1)b_0(\pi),(\lambda_\xi^{\Sigma_1}\xi-1)b_1(\pi))    
                  & \in \pi^{2-p}\F[[\pi]]\times\pi\F[[\pi]]
\end{aligned}$$
for some $b_0(\pi), b_1(\pi) \in \F((\pi))$.

If $c_1<p-2$, we have $\val (\lambda_\xi^{\Sigma_1}\xi-1)b_1(\pi)=1-p$ as $\val e_1\mu_\xi(B_0')=1-p$,
so that $\val b_1(\pi) \le 2-2p$.
Then 
$$\val(\pi^{(p-1)c_0}b_1(\pi^p))=(p-1)c_0+p\val b_1(\pi)<(1-p)(1+p)<\val e_0 \mu_\varphi(B_0'),$$
and so $\val b_0(\pi)=(p-1)c_0+p\val b_1(\pi)$.
Then again 
$$\begin{aligned}
\val \pi^{(p-1)c_1}b_0(\pi^p)&=(p-1)c_1+p\val b_0(\pi)=(p-1)\Sigma_1+p^2\val b_1(\pi)\\
 &<(1-p)(1+p)<(1-p)a_1,\end{aligned}$$
so that $\val b_1(\pi)=(p-1)\Sigma_1+p^2\val b_1(\pi)$,
or $\val b_1(\pi)=-\frac{p-1}{p^2-1}\Sigma_1>1-p$, a contradiction.

If $c_1=p-2$, start with $\val (\lambda_\xi^{\Sigma_0}\xi-1)b_0(\pi)=1-p$
and the same argument as above (for the case $c_1<p-2$) goes through.

(2) Assume $0<c_0<p-1, c_1=0$.
Straightforward calculations show that
$\mu_\varphi\iota B_0, \mu_\varphi\iota B_1 \in \F[[\pi]]^S$ but
$\mu_\xi\iota B_0(\pi), \mu_\xi\iota B_1 \not\in \pi\F[[\pi]]^S$.
If, however, we take $\alpha_1B_0-\alpha_0B_1$, it has $\mu_\varphi$ obviously in $\F[[\pi]]^S$ and
$\mu_\xi = \kappa_\xi(A,\vec{a})(\pi^{p-1}\alpha_0\alpha_1(g_0(\pi)-C\pi^{(p-1)c_0}g_1(\pi^p)), \alpha_0\alpha_1(g_0(\pi^p)-g_1(\pi))) \in \pi\F[[\pi]]^S$.

Now suppose $\iota [B_1]$ is bounded, and so we have, for some coboundary $B$, that
$\mu_\varphi\iota(B_1+B) \in \F[[\pi]]^S$ and $\mu_\xi\iota(B_1+B) \in \pi\F[[\pi]]^S$,
which implies
$$\begin{aligned}
\mu_\varphi(B_1) +(C\pi^{(p-1)c_0}b_1(\pi^p)-b_0(\pi), \pi^{(p-1)c_1}b_0(\pi^p)-b_1(\pi)) 
                  & \in \pi^{1-p}\F[[\pi]]\times\pi^{(1-p)a_1}\F[[\pi]], \\
\mu_\xi(B_1)     + ((\lambda_\xi^{\Sigma_0}\xi-1)b_0(\pi),(\lambda_\xi^{\Sigma_1}\xi-1)b_1(\pi))    
                  & \in \pi^{2-p}\F[[\pi]]\times\pi\F[[\pi]]
\end{aligned}$$
for some $b_0(\pi), b_1(\pi) \in \F((\pi))$.

We have $\val(\lambda_\xi^{\Sigma_1}\xi-1)b_1(\pi)=0$ and so $\val b_1(\pi) \le 1-p$,
so that $\val \pi^{(p-1)c_0}b_1(\pi^p)=(p-1)c_0+p\val b_1(\pi)<1-p$.
Then $\val b_0(\pi) = (p-1)c_0+p\val b_1(\pi)$ and 
$\val \pi^{(p-1)c_1}b_0(\pi^p)=(p-1)\Sigma_1+p^2\val b_1(\pi)<(1-p)a_1$, so that
$\val b_1(\pi)=\Sigma_1+p^2\val b_1(\pi)$, or $\val b_1(\pi)=-\frac{p-1}{p^2-1}\Sigma_1>1-p$,
 a contradiction.

(3) Assume $0\le c_0<p-1, 0<c_1 \le p-1$.
It is straightforward to check that $\iota [B_0]$ is bounded:
$$\begin{aligned}
\mu_\varphi\iota(B_0) &=(A, \pi^{(p-1)a_1})(\pi^{p-1}, 1)(\pi^{1-p}+h_0(\pi)) \in \F[[\pi]]^S, \\
\mu_\xi\iota(B_0)     &=\kappa_\xi(A, \vec{a})(\pi^{p-1}, 1)(\alpha_0g(\pi), \pi^{(p-1)c_1}\alpha_0g_0(\pi^p))
                       \in \pi\F[[\pi]]^S
\end{aligned}$$
as $c_1>0$. 

Now suppose $\iota [B_1]$ is bounded. Then there exists a coboundary $B$ such that
$\mu_\varphi\iota(B_1+B) \in \F[[\pi]]^S$, $\mu_\xi\iota(B_1+B) \in \pi\F[[\pi]]^S$, and so
$$\begin{aligned}
\mu_\varphi(B_1+B) +(C\pi^{(p-1)c_0}b_1(\pi^p)-b_0(\pi), \pi^{(p-1)c_1}b_0(\pi^p)-b_1(\pi)) 
                  & \in \pi^{1-p}\F[[\pi]]\times\pi^{(1-p)a_1}\F[[\pi]], \\
\mu_\xi(B_1+B)     + ((\lambda_\xi^{\Sigma_0}\xi-1)b_0(\pi),(\lambda_\xi^{\Sigma_1}\xi-1)b_1(\pi))    
                  & \in \pi^{2-p}\F[[\pi]]\times\pi\F[[\pi]].
\end{aligned}$$
If $c_1<p-1$, then the argument is the same as in case (2).

If $c_1=p-1, c_0<p-2$, then as 
$\val e_0\mu_\xi(B_1'+B) \ge 2-p$ and $\val e_0\mu_\xi(B_1')=1-p$,
we have $\val e_0\mu_\xi(B)=\val(\lambda_\xi^{\Sigma_0}\xi-1)b_0(\pi)=1-p$, 
so that $\val b_0(\pi) \le 2-2p$.
Then 
$$\begin{aligned}\val \pi^{(p-1)c_1}b_0(\pi^p) &= (p-1)c_1+p\val b_0(\pi)\\
&\le (1-p)(1+p)
<{\rm min}(\val e_1 \mu_\varphi B_1', (1-p)a_1),\end{aligned}$$
 so that $\val b_1(\pi)=(p-1)c_1+p\val b_0(\pi)$.
So $$\val \pi^{(p-1)c_0}b_1(\pi^p)=(p-1)\Sigma_0+p^2\val b_0(\pi)<(1-p)(1+p) < \val e_0\mu_\varphi(B_1'),$$
which implies $\val b_0(\pi)=(p-1)\Sigma_0+p^2\val b_0(\pi)$,  
or $\val b_0(\pi)=-\frac{p-1}{p^2-1}\Sigma_0>1-p$, a contradiction.

If $c_1=p-1, c_0=p-2$, then as 
$\val e_1\mu_\xi(B_1'+B) \ge 1$ and $\val e_1\mu_\xi(B_1')=1-p$,
we have $\val e_1\mu_\xi(B)=\val(\lambda_\xi^{\Sigma_1}\xi-1)b_1(\pi)=1-p$, 
so that $\val b_1(\pi) \le 2-2p$.
Then $$\val \pi^{(p-1)c_0}b_1(\pi^p) = (p-1)c_0 + p\val b_1(\pi)\le (1-p)(1+p)<
\val e_0 \mu_\varphi B_1',$$ so that $\val b_0(\pi)=(p-1)c_0+p\val b_1(\pi)$.
So 
$$\val \pi^{(p-1)c_1}b_0(\pi^p)=(p-1)\Sigma_1+p^2\val b_1(\pi)<(1-p)(1+p) < \val e_1\mu_\varphi(B_1'),$$
which implies $\val b_1(\pi)=(p-1)\Sigma_1+p^2\val b_1(\pi)$,  
or $\val b_1(\pi)=-\frac{p-1}{p^2-1}\Sigma_1>1-p$, a contradiction.

(4) Assume $c_0=c_1=0$, $b_0=1, a_1=p$. Straightforward calculations show that
$\mu_\varphi\iota B_0(\pi), \mu_\varphi\iota B_1 \in \F[[\pi]]^S$ but
$\mu_\xi\iota B_0(\pi), \mu_\xi\iota B_1 \not\in \pi\F[[\pi]]^S$.
If, however, we take $\alpha_1B_0-\alpha_0B_1$, it has $\mu_\varphi$ obviously in $\F[[\pi]]^S$ and
$$\mu_\xi = (\pi^{p-1}\alpha_0\alpha_1(g_0(\pi)-Cg_1(\pi^p)), \alpha_0\alpha_1(g_0(\pi^p)-g_1(\pi))) \in \pi\F[[\pi]]^S.$$

Now suppose $\iota [B_1]$ is bounded, and so we have, for some coboundary $B$, that
$\mu_\varphi\iota(B_1+B) \in \F[[\pi]]^S$ and $\mu_\xi\iota(B_1+B) \in \pi\F[[\pi]]^S$,
which implies
$$\begin{aligned}
\mu_\varphi(B_0) +(C b_1(\pi^p)-b_0(\pi), b_0(\pi^p)-b_1(\pi)) 
                  & \in \pi^{1-p}\F[[\pi]]\times\pi^{(1-p)p}\F[[\pi]], \\
\mu_\xi(B_0)     + ((\xi-1)b_0(\pi),(\xi-1)b_1(\pi))    
                  & \in \pi^{2-p}\F[[\pi]]\times\pi\F[[\pi]]
\end{aligned}$$
for some $b_0(\pi), b_1(\pi) \in \F((\pi))$.
We have $\val(\xi-1)b_1(\pi)=0$ and so $\val b_1(\pi) \le 1-p$,
so that $\val b_1(\pi^p)=p\val b_1(\pi)<1-p $.
Then $\val b_0(\pi) = p\val b_1(\pi)$ and 
$\val b_0(\pi^p)=p^2\val b_1(\pi)<(1-p)p<\val e_0\mu_\varphi (B_1)$, giving
 $\val b_1(\pi)=0$ and a contradiction.

(5) Assume $c_0=c_1=0$, $b_0=p, a_1=1$.
Suppose $\iota[B_0+\beta B_1]$ is bounded for some $\beta \in \F$.
There exist a coboundary $B$ such that
$\mu_\varphi\iota(B_0+\beta B_1+B) \in \F[[\pi]]^S$ and $\mu_\xi\iota(B_0+\beta B_1+B) \in \pi\F[[\pi]]^S$.
As $\kappa_\varphi(A,\vec{a})\langle\vec{c}\rangle \in (\F^\times)^S$, we have 
$$\begin{aligned}
\mu_\varphi(B_0+\beta B_1+B) &= \mu_\varphi(B_0+\beta B_1)+(Cb_1(\pi^p)-b_0(\pi),b_0(\pi^p)-b_1(\pi))
                                \in \F[[\pi]]^S, \\
\mu_\xi(B_0+\beta B_1+B) &= \mu_\xi(B_0+\beta B_1)+((\xi-1)b_0(\pi), (\xi-1)b_1(\pi)) \in \pi\F[[\pi]]^S
\end{aligned}$$
for some $b_0(\pi), b_1(\pi) \in \F((\pi))$. 

Note that
$$\begin{aligned}
\val e_0\mu_\varphi(B_0+\beta B_1) &= 1-p \le \val e_1\mu_\varphi(B_0+\beta B_1), \\
\val e_0\mu_\xi(B_0+\beta B_1) &\ge 0, \val e_1\mu_\xi(B_0+\beta B_1) \ge 0.
\end{aligned}$$
Then $\val e_0\mu\varphi(B)=\val(Cb_1(\pi^p)-b_0(\pi))=1-p$, and we get either 
$\val b_0(\pi)=1-p < \val b_1(\pi^p)$ or $\val b_1(\pi^p)=\val b_0(\pi) < 1-p$.
In either case, we have 
$\val b_0(\pi^p) < 1-p$, so $\val b_0(\pi^p)=\val b_1(\pi)$, giving a contradiction.

The same argument proves that $\iota [B_1]$ is not bounded.

\end{proof}

Similarly one proves the following.

\begin{prop}\label{dim1'} If $f=2$, then
$$\begin{aligned}
V_{\{0\}} &= \left\{
\begin{array}{ll}
\F [B_0]  & {\it if} \,\, c_1=p-1, \\
\F [C\alpha_1B_0-\alpha_0B_1] & {\it if} \,\,  c_0=0, 0<c_1<p-1, \\
\F [B_1]  & 0< c_0 \le p-1, 0\le c_1 < p-1;
\end{array}\right. \\
V_{\{0\}}^+ &= \F[C\alpha_1B_0-\alpha_0B_1]; \\
V_{\{0\}}^- &= 0.
\end{aligned}$$
with $\pm$ occuring when $\vec{c}=\vec{0}$.
(See Lemma \ref{f2.1} for the definition of the $\alpha_i$.)
\end{prop}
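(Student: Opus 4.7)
The plan is to mirror the proof of Proposition~\ref{dim1} with the roles of the indices $0$ and $1$ interchanged, paying attention to the asymmetry introduced by the constant $C$ sitting in the $0$th component of $\kappa_\varphi$. First, for $J=\{0\}$ the congruence $\Sigma_0\vec{c}\equiv b_1p-a_0\bmod p^2-1$ yields a unique pair $(a_0,b_1)$ with $1\le a_0,b_1\le p$ unless $\vec{c}=\vec{0}$; in the generic instance one finds $\langle\vec{c}\rangle_{\{0\}}=(1,\pi^{p-1})$, the mirror of the formula for $\langle\vec{c}\rangle_{\{1\}}$ in Proposition~\ref{dim1}. In the exceptional instance $\vec{c}=\vec{0}$ the two solutions $(a_0,b_1)=(p,1)$ and $(1,p)$ give $V_{\{0\}}^+$ and $V_{\{0\}}^-$ respectively, with $\langle\vec{c}\rangle_{\{0\}}=(1,\pi^{p-1})$ in the $+$ case and $\langle\vec{c}\rangle_{\{0\}}=(\pi^{1-p},1)$ in the $-$ case.

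Next I would handle each of the five cases, producing the claimed generator and excluding all others. When $c_1=p-1$, I would check that $\iota[B_0]$ is bounded by building a coboundary that kills the negative powers in $\mu_\varphi(B_0')$ (just as was done in case~(1) of Proposition~\ref{dim1} with $B_0'$ and $B_1'$), then rule out $\iota[B_1']$ by a direct valuation chase on $\mu_\xi$, iterating the recursion $\mathrm{val}\,b_i(\pi)=(p-1)c_i+p\,\mathrm{val}\,b_{i+1}(\pi^p)$ to derive the contradictory identity $\mathrm{val}\,b_1(\pi)=-\tfrac{p-1}{p^2-1}\Sigma_1$. For $0<c_0\le p-1$, $0\le c_1<p-1$, the roles reverse: $\iota[B_1]$ is immediately seen to be bounded because $c_0>0$ forces the second coordinate of $\mu_\xi\iota(B_1)$ to start in $\pi\F[[\pi]]$, while the same valuation argument excludes $\iota[B_0]$ (or $\iota[B_0']$ when $c_0=p-1$).

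The case $c_0=0$, $0<c_1<p-1$ is the one where the constant $C$ intervenes, explaining the factor of $C$ in the stated generator. Using Lemma~\ref{f2.1} one computes
\[
\mu_\xi(C\alpha_1B_0-\alpha_0B_1)=\alpha_0\alpha_1\bigl(C(g_0(\pi)-g_1(\pi^p)),\,C\pi^{(p-1)c_1}g_0(\pi^p)-g_1(\pi)\bigr);
\]
the first coordinate lies in $\pi\F[[\pi]]$ because $g_0,g_1\in 1+\pi\F[[\pi]]$, while the second coordinate, after multiplication by $\pi^{p-1}$ from $\langle\vec{c}\rangle_{\{0\}}$, lies in $\pi\F[[\pi]]$ as required; the $\mu_\varphi$ side is transparently in $\F[[\pi]]^S$. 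To exclude $\iota[B_0]$ and $\iota[B_1]$ individually I would repeat the valuation chase from case~(2) of Proposition~\ref{dim1}, using $\mathrm{val}\,b_0(\pi)\le 1-p$ forced by $\mathrm{val}(\xi-1)b_0(\pi)=0$ and iterating to a contradiction. The cases $\vec{c}=\vec{0}$, $b_1=p$ (for $V_{\{0\}}^+$) and $a_0=p$ (for $V_{\{0\}}^-$) are handled by the same arguments as cases~(4) and (5) of Proposition~\ref{dim1}, with $C$ appearing in the formula $(Cb_1(\pi^p)-b_0(\pi),\,b_0(\pi^p)-b_1(\pi))$ describing $\mu_\varphi$ of a coboundary; in the $+$ instance the cancellation producing $C\alpha_1B_0-\alpha_0B_1$ works as above, while in the $-$ instance the same coboundary analysis shows nothing is bounded.

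The main technical obstacle is purely bookkeeping: keeping track of the $C$ factor as it propagates through $\kappa_\varphi$, $\varphi$-cycles, and the cocycle identities, so that the correct generator $C\alpha_1B_0-\alpha_0B_1$ (rather than $\alpha_1B_0-\alpha_0B_1$) is identified in the cancellation case. Once this is done correctly, the valuation arguments for non-boundedness are essentially identical to those in Proposition~\ref{dim1}, and dimension counting against $\dim_\F\Ext^1(M_{\vec{0}},M_{C\vec{c}})=2$ confirms that the exhibited one-dimensional subspaces are the full $V_{\{0\}}$ (or $V_{\{0\}}^\pm$).
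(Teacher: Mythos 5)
Your proposal is correct and matches the paper's intended argument exactly: the paper gives no proof beyond ``Similarly one proves the following,'' and your account unpacks precisely what that index-swap looks like, correctly identifying the factor of $C$ in the generator as coming from the asymmetric placement of $C$ in $\kappa_\varphi$ and the consequent shift of the cancellation to the other component. Two inessential bookkeeping slips: when $c_1=p-1$ the modified cocycle in play is $B_1'$ (there is no $B_0'$ since $c_0<p-1$), and the coboundary adjustment is needed to cancel the constant term of $e_1\mu_\xi(\iota B_0)$ rather than any negative powers of $\pi$ in $\mu_\varphi$; and in the case $0<c_0\le p-1$, $0\le c_1<p-1$, it is the first (not second) coordinate of $\mu_\xi\iota(B_1)$ that is placed in $\pi\F[[\pi]]$ by $c_0>0$, while the second is handled by the factor $\pi^{p-1}$ from $\langle\vec{c}\rangle_{\{0\}}$.
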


In proving Propositions \ref{dim1} and \ref{dim1'} we have shown the
following, which exhibits instances of coincidence of $V_J$'s for
distinct $J$'s.

\begin{cor} Suppose $f=2$ and recall $(c_0,c_1) \neq (p-1,p-1)$.
\begin{enumerate}
\item If $c_0=p-1$, then $V_{\{1\}}=V_{\{0\}}=\F [B_1].$
\item If $c_1 =p-1$, then $V_{\{1\}}=V_{\{0\}}=\F [B_0].$
\item If $c_0=c_1=0$, then $V_{\{1\}}^+$ and $V_{\{0\}}^+$
 are distinct and one-dimensional,
 and $V_{\{1\}}^-= V_{\{0\}}^- = 0$.
\item In all other cases, $V_{\{1\}}$ and $V_{\{0\}}$
 are distinct and one-dimensional.
\end{enumerate}
\end{cor}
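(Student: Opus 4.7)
The plan is to derive this corollary as a pure bookkeeping consequence of Propositions~\ref{dim1} and~\ref{dim1'}: each of the four assertions reduces to reading off the explicit generator of $V_{\{0\}}$ and of $V_{\{1\}}$ from the two propositions and then comparing. Since both propositions break into a handful of subcases according to which of $c_0,c_1$ equals $p-1$, $0$, or lies strictly between, the only thing to do is to match these case-splits against the four situations (1)--(4) in the corollary. No further computation inside the $(\varphi,\Gamma)$-module category is needed.

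For (1), I take $c_0=p-1$ and observe that the hypothesis $(c_0,c_1)\neq(p-1,p-1)$ forces $c_1<p-1$. Proposition~\ref{dim1} then directly gives $V_{\{1\}}=\F[B_1]$, while Proposition~\ref{dim1'} applies in its third branch (since $0<c_0\le p-1$ and $0\le c_1<p-1$) to give $V_{\{0\}}=\F[B_1]$. Part (2) is symmetric: with $c_1=p-1$ and $c_0<p-1$, one falls in the third branch of Proposition~\ref{dim1} giving $V_{\{1\}}=\F[B_0]$, and in the first branch of Proposition~\ref{dim1'} giving $V_{\{0\}}=\F[B_0]$.

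For (3), when $c_0=c_1=0$, we are in the $\pm$ regime of both propositions. Proposition~\ref{dim1} yields $V_{\{1\}}^+=\F[\alpha_1B_0-\alpha_0B_1]$ and $V_{\{1\}}^-=0$, while Proposition~\ref{dim1'} yields $V_{\{0\}}^+=\F[C\alpha_1B_0-\alpha_0B_1]$ and $V_{\{0\}}^-=0$. The remaining point is distinctness of $V_{\{0\}}^+$ from $V_{\{1\}}^+$, which is the only place any genuine argument is required: the two one-dimensional subspaces coincide iff the generators are proportional, which after comparing the coefficients of $B_0$ and $B_1$ forces $(C-1)\alpha_1=0$; since the $\vec{c}=\vec{0}$ case carries the standing hypothesis $C\neq 1$ and since $\alpha_1\in\F^\times$ by Lemma~\ref{f2.1}, this is impossible. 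Thus the two spaces are distinct.

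For (4), the remaining cases are those with $0\le c_0,c_1\le p-2$ and $(c_0,c_1)\neq(0,0)$. I split according to whether $c_0=0$, $c_1=0$, or both are positive, and in each subcase compare: if $0<c_0<p-1$ and $c_1=0$, then $V_{\{1\}}=\F[\alpha_1B_0-\alpha_0B_1]$ while $V_{\{0\}}=\F[B_1]$; if $c_0=0$ and $0<c_1<p-1$, then $V_{\{1\}}=\F[B_0]$ and $V_{\{0\}}=\F[C\alpha_1B_0-\alpha_0B_1]$; if $0<c_0,c_1<p-1$, then $V_{\{1\}}=\F[B_0]$ and $V_{\{0\}}=\F[B_1]$. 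In each case distinctness is immediate from $\alpha_0,\alpha_1\in\F^\times$ and the linear independence of $[B_0],[B_1]$ established in \S4.3. The only mild obstacle, as in part (3), is checking non-proportionality, and this is routine once the generators are written down.
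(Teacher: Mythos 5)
Your reading matches the paper exactly: the corollary is presented there as a byproduct of Propositions~\ref{dim1} and~\ref{dim1'} ("In proving Propositions \ref{dim1} and \ref{dim1'} we have shown the following"), and your case-by-case matching of the branches of those propositions, together with the observation that non-proportionality of the explicit generators follows from $\alpha_0,\alpha_1\in\F^\times$ (and $C\neq1$ for the $\vec{c}=\vec{0}$ case), is precisely the bookkeeping the authors had in mind. The proof is correct and takes essentially the same route as the paper.
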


\section{Exceptional cases}
\subsection{Cyclotomic character} \label{sec:cyclo}
Assume $C=1,\vec{c}=\overrightarrow{p-2}$, so that
$$\begin{aligned}
\kappa_\varphi(C,\vec{c}) &= (\pi^{(p-1)(p-2)},\ldots,\pi^{(p-1)(p-2)}),\\
\kappa_\gamma(C,\vec{c}) 
&=\left(\left({\frac{\gamma(\pi)}{\pi\overline{\chi}(\gamma)}}\right)^{p-2},\ldots,
\left({\frac{\gamma(\pi)}{\pi\overline{\chi}(\gamma)}}\right)^{p-2}\right)
\end{aligned}$$
if $\gamma \in \Gamma$. Recall that $B_i$'s for all $i \in S$ have
already been constructed in \S 4.1 and we just need to construct an
additional basis element which we will denote $B_{\rm tr}$ (for
{\it tr\`es ramifi\'e}). Before we do this for
arbitrary $f\ge 1$, let's first consider the situation where $f=1$ (i.e.,
$K=\Q_p$) and  $\F = \F_p$ as a foundation for the general construction.
(We will go back to the general case $f \ge 1$ in the paragraph preceding Lemma~\ref{tr_basis}.)

\begin{lem}\label{cyc}
Let $\eta \in \Gamma$ be such that $\eta\Gamma_1$ generates $\Gamma/\Gamma_1 \simeq \F_p^\times$
and let $\chi(\xi) \equiv 1+z p \mod p^2$ with $0 < z \le p-1$.
If $s \in \Z$ is divisible by $p^v$ but not by $p^{v+1}$
for some $v \in \Z$, then
$$\begin{aligned}
\overline{\chi}(\eta)\eta(\pi^s)-\pi^s &\in ({\overline{\chi}(\eta)}^{s+1}-1)\pi^s+
\overline{s_v}{\frac{{\overline{\chi}(\eta)}^{s+1}({\overline{\chi}(\eta)}-1)}{2}}\pi^{s+
p^v}+\pi^{s+2p^{v}}\F_p[[\pi^{p^v}]], \\
\overline{\chi}(\xi)\xi(\pi^s)-\pi^s &\in \overline{s_vz}(\pi^{s+(p-1)p^v}+\pi^{s+p^{v+1}})
+ \pi^{s+p^{v+1}(p-1)}\F_p[[\pi^{p^v}]],
\end{aligned}$$
where $s = \sum_{j\ge v}s_jp^j$.
\end{lem}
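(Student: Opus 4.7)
My plan is to derive both congruences by expressing $\overline{\chi}(\gamma)\gamma(\pi^s)-\pi^s$ in terms of $\lambda_\gamma$ and then expanding along the $p$-adic expansion of $s$. Since $f=1$, we have $\gamma(\pi)=\overline{\chi}(\gamma)\pi\lambda_\gamma$, so $\gamma(\pi^s)=\overline{\chi}(\gamma)^s\pi^s\lambda_\gamma^s$, giving
$$\overline{\chi}(\gamma)\gamma(\pi^s)-\pi^s = \bigl(\overline{\chi}(\gamma)^{s+1}\lambda_\gamma^s-1\bigr)\pi^s.$$
Writing $s=\sum_{j\ge v}s_jp^j$ with $0\le s_j\le p-1$ and $s_v\neq 0$, and noting that $\lambda_\gamma\in\F_p[[\pi]]$ so that the characteristic-$p$ Frobenius gives $\lambda_\gamma^{p^j}=\lambda_\gamma(\pi^{p^j})$, I can factor $\lambda_\gamma^s=\prod_{j\ge v}\lambda_\gamma(\pi^{p^j})^{s_j}$, which places $\lambda_\gamma^s$ in $\F_p[[\pi^{p^v}]]$ and accounts for the shape of the error term.

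For the first congruence ($\gamma=\eta$), I will invoke the explicit expansion $\lambda_\eta\equiv 1+\tfrac{\overline{\chi}(\eta)-1}{2}\pi\pmod{\pi^2}$ established in the proof of Lemma~\ref{delta}. Substituting $\pi\mapsto\pi^{p^v}$ and raising to the $s_v$-th power yields $\lambda_\eta(\pi^{p^v})^{s_v}\equiv 1+\overline{s_v}\tfrac{\overline{\chi}(\eta)-1}{2}\pi^{p^v}$ modulo $\pi^{2p^v}\F_p[[\pi^{p^v}]]$, while the higher-index factors $\lambda_\eta(\pi^{p^j})^{s_j}$ for $j>v$ are $\equiv 1\pmod{\pi^{p^j}}$ and hence $\equiv 1\pmod{\pi^{2p^v}}$ since $p^{v+1}\ge 2p^v$ for $p>2$. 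Multiplying by $\overline{\chi}(\eta)^{s+1}$, subtracting $1$, and multiplying by $\pi^s$ assembles everything into the claimed first formula.

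For the second congruence ($\gamma=\xi$), since $\xi\in\Gamma_1$ we have $\overline{\chi}(\xi)=1$, so the task reduces to approximating $(\lambda_\xi^s-1)\pi^s$. Here I will apply Lemma~\ref{gamma n} with $n=1$ to obtain $\lambda_\xi\equiv 1+z\pi^{p-1}+z\pi^p\pmod{\pi^{2p-2}}$. Substituting $\pi\mapsto\pi^{p^v}$ and expanding to first order in $s_v$ produces the two claimed leading terms $\overline{s_vz}(\pi^{(p-1)p^v}+\pi^{p^{v+1}})$, and the higher-index factors $\lambda_\xi(\pi^{p^j})^{s_j}$ for $j>v$ contribute only at order $\pi^{(p-1)p^j}\ge \pi^{(p-1)p^{v+1}}$, which lies in the stated error ideal.

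The main technical point will be pinning down the precision of the error term in the $\xi$ case: a direct transcription of the proof of Lemma~\ref{gamma} (applied with $\Sigma=0$) produces only the weaker error $\pi^{s+2(p-1)p^v}\F_p[[\pi^{p^v}]]$, so obtaining the stronger bound $\pi^{s+p^{v+1}(p-1)}\F_p[[\pi^{p^v}]]$ requires carrying the expansion of $\lambda_\xi$ beyond what Lemma~\ref{gamma n} provides and carefully tracking how the additional terms combine via the binomial expansion of $(1+L)^{s_v}$. Apart from this refinement, the argument is a direct parallel of the proofs of Lemmas~\ref{delta} and \ref{gamma}.
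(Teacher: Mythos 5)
Your reduction to $\overline{\chi}(\gamma)\gamma(\pi^s)-\pi^s = (\overline{\chi}(\gamma)^{s+1}\lambda_\gamma^s - 1)\pi^s$, the factorization of $\lambda_\gamma^s$ into $\prod_{j\ge v}\lambda_\gamma(\pi^{p^j})^{s_j}$, and the treatment of the $\eta$ case are all correct, and this is exactly the route the paper takes (the paper's proof is simply the one-line remark ``Similar to Lemma~\ref{delta} and~\ref{gamma}''). The gap is in your plan for the $\xi$ case. You correctly observe that a direct transcription of Lemma~\ref{gamma} gives the error ideal $\pi^{s+2(p-1)p^v}\F_p[[\pi^{p^v}]]$, and you propose to upgrade this to $\pi^{s+(p-1)p^{v+1}}\F_p[[\pi^{p^v}]]$ by carrying the expansion of $\lambda_\xi$ further. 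That upgrade is impossible: the sharper bound as printed is simply false in general. From $\lambda_\xi(\pi^{p^v})^{s_v} = (1+L(\pi^{p^v}))^{s_v}$ with $L = z\pi^{p-1}(1+\pi) + \binom{z}{2}\pi^{2p-1}(1+\pi) + \cdots$, the binomial expansion produces the genuine term $\binom{s_v}{2}z^2\pi^{2(p-1)p^v}$ (nonzero whenever $2\le s_v\le p-1$), and $2(p-1)p^v < (p-1)p^{v+1}$ for all $p>2$, so this term escapes the claimed error ideal. Even when $s_v=1$, the term $\binom{z}{2}\pi^{(2p-1)p^v}$ (nonzero when $z\ge 2$) already falls below the threshold $(p-1)p^{v+1}$. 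As a concrete check: for $p=3$, $z=2$, $s=1$ one computes in $\F_3[[\pi]]$ that $\lambda_\xi - 1 = 2\pi^2 + 2\pi^3 + \pi^5 + \cdots$, but the claimed error ideal is $\pi^6\F_3[[\pi]]$, which the $\pi^5$ term does not lie in.

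So what you are up against is not a missing refinement but an apparent misprint in the statement: the $\xi$ error exponent should read $2p^v(p-1)$, matching Lemma~\ref{gamma}, not $p^{v+1}(p-1)$. You should have recognized that the sharper bound cannot be proved, because it is not true. The weakened bound, together with the explicit $\eta$ formula, is all that the paper actually uses; in the proof of the next lemma the invocation ``By Lemma~\ref{cyc}'' draws only on the $\eta$ part, while the assertion about $\overline{\chi}(\xi)\xi-1$ is established by a separate ``direct computation'' specific to the particular $h'(\pi)$ at hand rather than by applying the $\xi$ formula termwise.
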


\begin{proof} Similar to Lemma \ref{delta} and \ref{gamma}.
\end{proof}

\begin{lem}
There exists $h'(\pi) \in \pi^{1-2p}+\pi^{2-2p}\F[[\pi]]$ such that
$$(\overline{\chi}(\eta)\eta-1)(h'(\pi)) \in \F(\pi^{-p}-\pi^{-1})+\pi\F[[\pi]].$$
(Recall that $\eta$ is a topological generator of $\Gamma$.)
\end{lem}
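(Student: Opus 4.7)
Write $T := \overline{\chi}(\eta)\eta - 1$ for the $\F$-linear operator in question. My plan is to construct $h'$ of the form $\pi^{1-2p} + \sum_{s=2-2p}^{-1} c_s \pi^s$ with $c_s \in \F$, so that $T(h')$ modulo $\pi\F[[\pi]]$ lies in the one-dimensional subspace $\F(\pi^{-p}-\pi^{-1})$. Terms of degree $\ge 1$ in $h'$ contribute to $\pi\F[[\pi]]$ under $T$ by Lemma~\ref{cyc} and are irrelevant for the constraint.

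By Lemma~\ref{cyc}, the coefficient of $\pi^s$ in $T(\pi^s)$ is $\overline{\chi}(\eta)^{s+1}-1$, which vanishes precisely when $s\equiv -1\pmod{p-1}$. In the range $\{2-2p,\ldots,-1\}$ the exceptional values are exactly $s=-p$ and $s=-1$, and for each of these Lemma~\ref{cyc} gives that the first nonvanishing term of $T(\pi^s)$ is $-\tfrac{1}{2}(\overline{\chi}(\eta)-1)\pi^0$ (applying the formula with $v = v_p(s)$ and $s_v = p-1$); note $\overline{\chi}(\eta)\ne 1$ since $p>2$. Processing $s = 2-2p, 3-2p,\ldots,-1$ in increasing order, the coefficient $c_s$ is uniquely determined at each non-exceptional $s$ by the requirement of cancelling the $\pi^s$-coefficient of the running $T(h'_{\le s})$. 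The parameters $c_{-p},c_{-1}$ are left free, as they have no effect on any $\pi^t$-coefficient of $T(h')$ with $t<0$ modulo $\pi\F[[\pi]]$.

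This procedure produces an $h'$ for which $T(h')$, taken modulo $\pi\F[[\pi]]$, is supported only on $\pi^{-p}, \pi^{-1}, \pi^0$, with coefficients $\alpha, \beta, \delta$ respectively. The scalars $\alpha,\beta$ are independent of the free parameters $c_{-p},c_{-1}$, while $\delta$ is affine in $c_{-p}+c_{-1}$ with nonzero slope $-\tfrac{1}{2}(\overline{\chi}(\eta)-1)$. The desired inclusion $T(h')\in\F(\pi^{-p}-\pi^{-1})+\pi\F[[\pi]]$ then reduces to the two scalar equations $\alpha+\beta=0$ and $\delta=0$; the latter is solved by an appropriate choice of $c_{-p}+c_{-1}$, leaving the former as the core of the matter.

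The main obstacle is the identity $\alpha+\beta=0$. I would verify it by a direct tracking of the $\pi^{-p}$- and $\pi^{-1}$-contributions of each forced $c_s T(\pi^s)$ using the full expansions in Lemma~\ref{cyc}; although tedious, no new ideas are needed beyond the lemma. A more conceptual argument ought to be available from the observation $T(\pi^{-p}-\pi^{-1})\in\pi\F[[\pi]]$ (which follows from $T(\pi^{-p})\equiv T(\pi^{-1})\pmod{\pi\F[[\pi]]}$), indicating that $\pi^{-p}-\pi^{-1}$ is a natural ``near-kernel'' element of $T$ modulo $\pi\F[[\pi]]$ and suggesting that the iterative forcing automatically respects the symmetry $\alpha=-\beta$.
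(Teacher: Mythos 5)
Your first step (choosing the coefficients $c_s$ for $s\neq -p,-1$ to kill all negative-degree terms of $T(h')$ other than $\pi^{-p}$ and $\pi^{-1}$, while observing that $c_{-p},c_{-1}$ only affect the constant term because $T(\pi^{-p}),T(\pi^{-1})\in -\tfrac12(\overline{\chi}(\eta)-1)+\pi\F[[\pi]]$) is exactly the paper's first step, and is correct.

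The gap is the step you call ``the core of the matter'': the identity $\beta=-\alpha$. You offer two routes and carry out neither. The first, ``direct tracking of the $\pi^{-p}$- and $\pi^{-1}$-contributions of each forced $c_sT(\pi^s)$ using the full expansions in Lemma~\ref{cyc}'', is not available: Lemma~\ref{cyc} only pins down $T(\pi^s)$ up to an unspecified element of $\pi^{s+2p^v}\F_p[[\pi^{p^v}]]$, so for $s\le 2-2p$ (so $v=0$) it says nothing about the coefficients of $\pi^{-p}$ and $\pi^{-1}$ beyond the first two terms. The ``tedious but routine'' verification you defer would actually require control of the full Taylor expansion of $\eta(\pi)$, not merely what Lemma~\ref{cyc} supplies. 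The second route, that $T(\pi^{-p}-\pi^{-1})\in\pi\F[[\pi]]$ ``suggests the iterative forcing automatically respects the symmetry'', is a heuristic and not an argument: a near-kernel element of $T$ places no constraint on where a specific $T(h')$ lands in the cokernel.

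The paper closes this gap with an idea you don't have: factor $\overline{\chi}(\xi)\xi - 1 = \bigl(\sum_{i=0}^{p-2}\overline{\chi}(\eta)^i\eta^i\bigr)(\overline{\chi}(\eta)\eta - 1)$ with $\xi=\eta^{p-1}$. Applying $\sum_i\overline{\chi}(\eta)^i\eta^i$ to $\alpha\pi^{-p}+\beta\pi^{-1}+\pi\F[[\pi]]$ gives $-(\alpha\pi^{-p}+\beta\pi^{-1})+\F[[\pi]]$ (since $-p\equiv -1\equiv -1\bmod p-1$ the geometric sums on the leading coefficients are $p-1=-1$, and intermediate degrees average to $0$). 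Meanwhile $(\overline{\chi}(\xi)\xi-1)(h')$ can be computed directly and more cheaply from the second congruence of Lemma~\ref{cyc} --- because $\lambda_\xi\equiv 1\bmod \pi^{p-1}$, the $\Gamma_1$-action expansion has much wider gaps, so only the top few monomials of $h'$ contribute below degree $0$, and one reads off that the result is $z(\pi^{-p}-\pi^{-1})+\F[[\pi]]$ with $z\in\F^\times$. Comparing the two computations forces $\alpha=-z$, $\beta=z$. Without this pass-to-$\Gamma_1$ maneuver (or a substitute), your argument does not go through.
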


\begin{proof} By Lemma \ref{cyc}, there exist
$\epsilon_{2-2p}, \ldots, \epsilon_{-1}, \epsilon_0 \in \F$
(unique if we set $\epsilon_{-p}=\epsilon_{-1}=0$) such that
$$(\overline{\chi}(\eta)\eta-1)(\pi^{1-2p}+\epsilon_{2-2p}\pi^{2-2p}+\cdots+\epsilon_{-1}\pi^{-1}+\epsilon_0)
\in \F\pi^{-p}+\F\pi^{-1}+\pi\F[[\pi]].$$
Set $h'(\pi)= \pi^{1-2p} +\epsilon_{2-2p}\pi^{2-2p} +\cdots+\epsilon_{-1}\pi^{-1}+\epsilon_0$, so
$$(\overline{\chi}(\eta)\eta-1)(h'(\pi)) \in  \alpha \pi^{-p} + \beta \pi^{-1} + \pi\F[[\pi]]$$
for some $\alpha,\beta\in\F$.  Writing $(\overline{\chi}(\xi)\xi-1) = 
\left(\sum_{i=0}^{p-2} \overline{\chi}(\eta)^i\eta^i\right)(\overline{\chi}(\eta)\eta-1)$
we find that
$$(\overline{\chi}(\xi)\xi-1)(h'(\pi)) \in - (\alpha \pi^{-p} + \beta \pi^{-1}) + \F[[\pi]].$$
On the other hand a direct computation shows that
$$(\overline{\chi}(\xi)\xi-1)(h'(\pi)) \in z(\pi^{-p}-\pi^{-1})+ \F[[\pi]]$$
where $z\in \F^\times$, so that $\alpha = \beta = -z$ and the lemma follows.
\end{proof}

Let $h'(\pi)$ be as in the lemma.  
Since $\varphi-1$ is bijective on $\pi\F[[\pi]]$, it follows that 
$$(\overline{\chi}(\eta)\eta-1)(h'(\pi)) \in  (\varphi-1)(g'_\eta(\pi))$$
for a unique $g'_\eta \in -z^{-1}\pi^{-1} + \pi\F[[\pi]]$.
We now extend the definition to construct elements
$g'_\gamma(\pi) \in \pi^{-1}\F[[\pi]]$ for all $\gamma\in \Gamma$.   We let
$$g'_{\eta^n}(\pi) = \sum_{i=0}^{n-1} \overline{\chi}(\eta)^i\eta^i(g'_\eta(\pi))$$
for $n\in \N$.  If $\gamma'\in \Gamma_2$, then
$(\overline{\chi}(\gamma')\gamma'-1)(h'(\pi))$ is in $\pi\F[[\pi]]$
and can therefore be written as $(\varphi-1)(g'_{\gamma'}(\pi))$ for a unique
$g'_{\gamma'}(\pi)\in \pi\F[[\pi]]$.  If $\eta^n\in \Gamma_2$, then $p(p-1)|n$
and the definitions coincide.  Moreover, an arbitrary $\gamma\in \Gamma$ can
be written as $\gamma'\eta^n$ for some $\gamma'\in \Gamma_2$ and $n\in\N$, and
$$g'_{\gamma}(\pi) := g'_{\gamma'}(\pi) + \gamma'(g'_{\eta^n}(\pi))$$
is independent of the choice of $\gamma'$ and $n$.

One then checks that
$\mu=(h'(\pi),(g_\gamma'(\pi))_{\gamma' \in \Gamma})$
satisfies conditions $(\dagger)$ and $(\ddagger)$, giving an
extension $$0\rightarrow M_{\rm cyc} \rightarrow E' \rightarrow M_0
\rightarrow 0$$ in the category of \'etale $(\varphi, \Gamma)$-modules over
$\E_K$, where $M_{\rm cyc}=\E_Ke_1$ is a rank one defined by
$\varphi(e_1')=e_1'$ and $\gamma(e_1')=\chi(\gamma)e_1'$ if $\gamma
\in \Gamma$ (and, of course, $M_0=\E_Ke_0$ by $\varphi(e_0)=e_0$ and
$\gamma(e_0)=e_0$). Using the isomorphism $M_{\rm cyc} \simeq M_{p-2}=\E_Ke_1$ defined
by $e_1'=\pi^{2-p}e_1$ we get an extension
$$0\rightarrow M_{p-2} \rightarrow E \rightarrow M_0
\rightarrow 0$$
defined by the cocycle
$\mu = (\pi^{3(1-p)}h(\pi),(\pi^{1-p}g_\gamma(\pi))_{\gamma\in \Gamma})$ with
$h(\pi)=\pi^{2p-1}h'(\pi)$, $g_\gamma(\pi)=\pi g_\gamma'(\pi)$.

Now we go back to the context of arbitrary $f\ge 1$, and define
$\mu_\varphi(B_{\rm tr})=(\pi^{3(1-p)}h(\pi),\ldots,\pi^{3(1-p)}h(\pi))$ and
$\mu_\gamma(B_{\rm tr})=(\pi^{1-p}g_\gamma(\pi),\ldots,\pi^{1-p}g_\gamma(\pi))$ for all $\gamma \in \Gamma$.
It is straightforward to check that $B_{\rm tr} \in H$, so that
$[B_{\rm tr}] \in \Ext^1(M_{\vec{0}},M_{\overrightarrow{p-2}})$.

\begin{rem} The class $[B_{\rm tr}]$ is not canonical.
Choosing $\epsilon_{-p} = - \epsilon_{-1} \neq 0$ in the proof of Lemma~6.2
gives different extension classes $[B_{\rm tr}]$ differing by a multiple of
$[B_0] + [B_1] + \dots + [B_{f-1}]$.
\end{rem}

\begin{lem}\label{tr_basis} The extensions $[B_0], \ldots, [B_{f-1}], [B_{\rm tr}]
\in \Ext^1(M_{\vec{0}},M_{\overrightarrow{p-2}})$ are linearly
independent, and therefore form a basis.
\end{lem}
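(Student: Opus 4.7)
The plan is as follows. Since $\dim_\F \Ext^1(M_{\vec{0}}, M_{\overrightarrow{p-2}}) = f+1$ and we are given $f+1$ candidate classes, linear independence is equivalent to forming a basis, so it suffices to prove independence. I would split the argument into two steps.

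First, I would check that $[B_0], \ldots, [B_{f-1}]$ are themselves linearly independent by invoking the argument of \S 4.3. The only hypothesis on $(C,\vec{c})$ actually used there is that $C\pi^{(p-1)\Sigma_i}\Phi - 1$ is a valuation-preserving bijection on $\F[[\pi]]$; this holds here because $C = 1$ and $\Sigma_i = (p-2)(p^f-1)/(p-1) > 0$. The relevant subcase $c_{f-1} = p - 2 < p - 1$ is treated at the very end of that proof, where the obstruction derived from comparing leading terms is $p \mid (c_{f-1}+1)$, which fails since $c_{f-1}+1 = p-1$.

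Second, I would show $[B_{\rm tr}]$ is not in the $\F$-span of $[B_0], \ldots, [B_{f-1}]$. Suppose for contradiction that $B_{\rm tr} + \sum_i \beta_i B_i$ is a coboundary. Then there exists a tuple $b = (b_0(\pi), \ldots, b_{f-1}(\pi)) \in \F((\pi))^S$ with
\begin{equation*}
\pi^{(p-1)(p-2)} b_{i+1}(\pi^p) - b_i(\pi) = \pi^{3(1-p)} h(\pi) + \beta_i \bigl(\pi^{1-p} + h_i(\pi)\bigr)
\end{equation*}
for each $i$. Since $h \in 1 + \pi\F[[\pi]]$ and $\val h_i \ge 2-p$, the right side has valuation exactly $3-3p$ independently of the $\beta_i$. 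Setting $v_i := \val b_i$, a valuation analysis of the left side shows each $i$ must fall into one of two cases: (A) $v_i = 3-3p$ with $v_{i+1} \ge 1-p$, or (C) $v_i = (p-1)(p-2) + p v_{i+1} < 3-3p$ with the leading terms cancelling. Any other balancing would require $v_{i+1} = (1-p^2)/p$, which is not an integer.

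The main obstacle, and what finishes the proof, is the cyclic incompatibility of these two cases. If some $i$ lies in (A), then $v_{i+1} \ge 1-p$; but index $i+1$ must itself lie in (A) or (C), giving $v_{i+1} = 3-3p$ or $v_{i+1} < 3-3p$ respectively, both strictly less than $1-p$. Hence every $i$ lies in (C), yielding the recursion $v_{i+1} = T(v_i)$ with $T(k) = (k - (p-1)(p-2))/p$. The identity $T(k) - (2-p) = (k - (2-p))/p$ exhibits $2-p$ as the unique fixed point of $T$, so the cyclic constraint $T^f(v_0) = v_0$ forces $v_0 = 2-p$. This contradicts $v_0 < 3-3p$ because $2-p > 3-3p$ for $p \ge 2$. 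Thus no such $b$ exists, establishing the independence of $[B_{\rm tr}]$ from the $[B_i]$.
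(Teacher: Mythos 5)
Your proof is correct and takes essentially the same route as the paper's: invoke the argument of \S 4.3 for the linear independence of $[B_0],\ldots,[B_{f-1}]$ (which goes through since $0 < \Sigma_i$ and so $\pi^{(p-1)\Sigma_i}\Phi - 1$ is a valuation-preserving bijection even when $C=1$), and then, writing $v_i = \val b_i$, rule out $(p-1)(p-2)+pv_{i+1}=3(1-p)$ by divisibility and reach a contradiction from the cyclic constraint on the $v_i$. The only difference is that where the paper compresses the last step into ``a contradiction after cycling,'' you spell it out via the unique fixed point $2-p$ of the affine map $T$, which is a faithful and helpful unwinding of what that phrase must mean.
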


\begin{proof}
It suffices to show that $[B_{\rm tr}]$ is not contained in the span
of $[B_i]$'s. Suppose $B_{\rm tr} = \beta_0B_0+ \cdots
+\beta_{f-1}B_{f-1}$ for some $\beta_i\in \F$. Then $E:=B_{\rm tr} -
(\beta_0B_0 + \cdots + \beta_{f-1}B_{f-1})$ is a coboundary, so that $$\mu_\varphi(E)=(\pi^{(p-1)(p-2)}b_1(\pi^p)-b_0(\pi),
\ldots,\pi^{(p-1)(p-2)}b_0(\pi^p)-b_{f-1}(\pi))$$ for some $b_i(\pi) \in \F((\pi))$. As
$$\begin{aligned}
\mu_\varphi(B_{\rm tr}) &= (\pi^{3(1-p)}h(\pi), \ldots, \pi^{3(1-p)}h(\pi)), \\
\mu_\xi(B_{\rm tr}) &= (\pi^{1-p}g_\xi(\pi), \ldots, \pi^{1-p}g_\xi(\pi))
\end{aligned}$$ where $h(\pi), g_\xi(\pi) \in \F[[\pi]]^\times$, we have
$\val e_i \mu_\varphi(E) = \val(\pi^{(p-1)(p-2)}b_{i+1}(\pi^p)-b_i(\pi))=3(1-p)$ for all $i \in S$. 
For each $i \in S$, letting $s_i:=\val(b_i(\pi))$, we have $s_{i} \le 3(1-p)$ or $(p-1)(p-2)+s_{i+1}p = 3(1-p)$.
The latter is impossible looking at divisibility by $p$, and so $s_i \le 3(1-p)$ for all $i \in S$,
which yields a contradiction after cycling.
\end{proof}

The determination of which linear combinations of $[B_0],[B_1],\ldots,[B_{f-1}]$ are bounded
is exactly as in the generic case.  We now extend this to include $[B_{\rm tr}]$.

\begin{prop} \label{prop:cyclo}
Suppose that $C=1$, $\vec{c}=\overrightarrow{p-2}$ and let $A \in \F^\times$ be given.
\begin{enumerate}
\item If $J=S$, then $$\iota [B_{\rm tr}] \in \Ext^1_{\bdd}(M_{A\vec{p}},M_{A\vec{0}}),$$ so that 
$ V_S^+=\Ext^1(M_{\vec{0}},M_{\overrightarrow{p-2}})$.
\item $V_S^- = \oplus_{i\in S} \F [B_i]$, and if $J\neq S$, then $V_J = \oplus_{i\in J} \F [B_{i+1}]$.
\end{enumerate}
\end{prop}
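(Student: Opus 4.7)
My plan is to handle Parts (1) and (2) separately, using that bounded extensions form a subspace together with direct computations of $\iota[B_{\rm tr}]$ and the arguments in the proof of Proposition~\ref{gen}.

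For Part (1), I would verify that $\iota_+[B_{\rm tr}]$ is bounded by a direct valuation calculation. With $\vec{c}=\overrightarrow{p-2}$ and $\vec{a}=\vec{p}$, $\vec{d}=-\vec{p}$, so $\Sigma_i(\vec{c}-\vec{d})=2(p^f-1)$, giving $\langle\vec{c}\rangle_S=(\pi^{2(p-1)},\ldots,\pi^{2(p-1)})$. The $\mu_\varphi$-multiplier $\kappa_\varphi(A,\vec{p})\langle\vec{c}\rangle_S$ has uniform component valuation $(p+2)(p-1)$, so $\mu_\varphi(\iota_+B_{\rm tr})$ has uniform valuation $(p+2)(p-1)+3(1-p)=(p-1)^2\geq 0$, hence lies in $\F[[\pi]]^S$. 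An analogous computation for $\mu_\xi(\iota_+B_{\rm tr})$, using $\kappa_\xi(A,\vec{p})\equiv\vec{1}\bmod\pi$ and $g_\xi(\pi)\in\F[[\pi]]^\times$, yields entries in $\pi\F[[\pi]]^S$. Since the arguments in the proof of Proposition~\ref{gen} apply verbatim (given $0<c_i<p-1$) to show each $\iota_+[B_i]$ bounded, all basis classes of $\Ext^1(M_{\vec{0}},M_{\overrightarrow{p-2}})$ are bounded under $\iota_+$, yielding $V_S^+=\Ext^1(M_{\vec{0}},M_{\overrightarrow{p-2}})$.

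For Part (2) (where we take $\iota=\iota_-$ when $J=S$), the same generic arguments applied to the subspace $\bigoplus_i\F[B_i]$ yield $V_J\cap\bigoplus_i\F[B_i]=\bigoplus_{i\in J}\F[B_{i+1}]$ for $J\neq S$, and $V_S^-\cap\bigoplus_i\F[B_i]=\bigoplus_i\F[B_i]$. Since bounded extensions form a subspace, it then suffices to show $\iota[B_{\rm tr}]\notin\Ext^1_{\bdd}$ in each such case: given that, no class $\beta[B_{\rm tr}]+[w]$ with $\beta\neq 0$ and $[w]\in\bigoplus_i\F[B_i]$ can be bounded, for otherwise subtracting a bounded representative of $[w]$ would make $\iota[B_{\rm tr}]$ bounded. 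To show $\iota[B_{\rm tr}]\notin\Ext^1_{\bdd}$, I would use a cyclic valuation argument: parameterizing any cohomologous cocycle $B_{\rm tr}+\delta B$ by $(b_0,\ldots,b_{f-1})\in\F((\pi))^S$ with $s_i=\val b_i$, the $i$-th $\mu_\varphi$-component becomes
\[
\pi^{3(1-p)}h(\pi)+\pi^{(p-1)(p-2)}b_{i+1}(\pi^p)-b_i(\pi).
\]
Cancelling the dominant $\pi^{3(1-p)}$ singularity in every component forces $s_i=3(1-p)$ (the alternative $(p-1)(p-2)+ps_{i+1}=3(1-p)$ gives non-integer $s_{i+1}$); but then $\pi^{(p-1)(p-2)}b_{i+1}(\pi^p)$ introduces a new singularity of valuation $2(1-p^2)$ strictly below $3(1-p)$, for which no other contribution provides a canceller. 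Allowing deeper singularities $s_i<3(1-p)$ and iterating yields, via the cyclic relation $ps_{i+1}+(p-1)(p-2)=s_i$, the forced cyclic value $s_i=2-p$, which is larger than $3(1-p)$ and therefore incompatible with cancelling the original $\pi^{3(1-p)}$ term.

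The main obstacle is the cyclic valuation argument: one must rigorously rule out every $(b_0,\ldots,b_{f-1})$ whose coboundary might cancel the $\pi^{3(1-p)}$ singularity of $B_{\rm tr}$ across all $f$ components simultaneously, and show that attempts to do so via deeper singularities only produce worse singularities through the shift $b_{i+1}(\pi)\mapsto b_{i+1}(\pi^p)$, culminating in the cyclic inconsistency $s_i=2-p$.
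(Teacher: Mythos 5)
Part (1) of your proposal matches the paper's argument: a direct valuation computation showing $\val\bigl(e_i\mu_\varphi(\iota_+B_{\rm tr})\bigr)=(p-1)^2\ge 0$ and $\val\bigl(e_i\mu_\xi(\iota_+B_{\rm tr})\bigr)\ge 1$ once you note the twist multiplies by $\pi^{(p-1)(p+2)}$ (resp.\ $\pi^{p-1}$ for $\mu_\xi$).

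For Part (2), however, there is a genuine gap in the reduction step. You claim it suffices to show $\iota[B_{\rm tr}]\notin\Ext^1_{\bdd}$, arguing that any bounded class $\beta[B_{\rm tr}]+[w]$ with $\beta\neq 0$ and $[w]\in\oplus_i\F[B_i]$ would, after ``subtracting a bounded representative of $[w]$,'' yield $\iota[B_{\rm tr}]$ bounded. This only works when $\iota[w]$ itself is bounded, i.e.\ when $[w]\in\oplus_{i\in J}\F[B_{i+1}]$. In principle one could have $V_J$ containing, say, $\oplus_{i\in J}\F[B_{i+1}]\oplus\F[B_{\rm tr}+B_j]$ with $j\notin J+1$: this would satisfy both $V_J\cap\oplus_i\F[B_i]=\oplus_{i\in J}\F[B_{i+1}]$ and $[B_{\rm tr}]\notin V_J$, yet $V_J\neq\oplus_{i\in J}\F[B_{i+1}]$. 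You must therefore rule out boundedness of $\iota\bigl[\sum\beta_iB_i+B_{\rm tr}\bigr]$ for \emph{arbitrary} $\beta_i$, as the paper does. (In practice the fix is easy: since the $\pi^{1-p}$-poles of the $B_i$ are shallower than the $\pi^{3(1-p)}$-pole of $B_{\rm tr}$, the $\beta_i$'s do not affect the leading term, so the same valuation bookkeeping runs verbatim. But you have to say this.)

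The other substantive difference is that your cyclic valuation argument uses only the $\mu_\varphi$-condition, whereas the paper first exploits the $\mu_\xi$-condition to force $s_i=\val(b_i)\le 2(1-p)$ (via Lemma 4.5), and \emph{then} uses $\mu_\varphi$ to derive the recursion $s_{i-1}=(p-1)(p-2)+ps_i$ and hence a cyclic contradiction. Your $\mu_\varphi$-only route requires a delicate case analysis on which poles cancel which in each component (you cannot assume uniformly $s_i=3(1-p)$, nor uniformly $s_i=(p-1)(p-2)+ps_{i+1}$; mixed cases must be dismantled), and it also requires verifying that for all relevant $J$ the twist factor $\kappa_\varphi(A,\vec a)\langle\vec c\rangle_J$ gives a bound $\le -2(p-1)$ in every component so that the $\pi^{3(1-p)}$-pole actually does need cancelling — a fact that depends on $a_i+\delta_{(i-1)J}\le 2$, which is true for $\vec c=\overrightarrow{p-2}$ but which you do not check. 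The paper's sequencing ($\mu_\xi$ first, then $\mu_\varphi$) avoids all of this and is cleaner. In short, your sketch has the right ingredients but the reduction is logically false, the case analysis is incomplete, and the twist is not accounted for.
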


\begin{proof}
(1) Straightforward: as $\vec{a}=\vec{p}$ and $\langle\vec{c}\rangle_S = (\pi^{2(p-1)},\ldots,\pi^{2(p-1)})$, we have
$$\begin{aligned}
\mu_\varphi(\iota B_{\rm tr}) &= (A\pi^{(p-1)^2}h(\pi), \pi^{(p-1)^2}h(\pi),\ldots,\pi^{(p-1)^2}h(\pi)) \in \F[[\pi]]^S, \\
\mu_\xi(\iota B_{\rm tr}) &= (\lambda_\xi^{(p-2)\frac{p^f-1}{p-1}}, \ldots, \lambda_\xi^{(p-2)\frac{p^f-1}{p-1}})(\pi^{p-1}g_\xi(\pi), \ldots, \pi^{p-1}g_\xi(\pi))
\in \pi\F[[\pi]]^S.
\end{aligned}$$
(2) Let $E:=\beta_0B_0+\cdots+\beta_{f-1}B_{f-1}+B_{\rm tr}$ for some $\beta_0,\ldots,\beta_{f-1}\in \F$.
We must show that in all other cases where $\iota: \Ext^1(M_{\vec{0}},M_{\overrightarrow{p-2}})
\rightarrow \Ext^1(M_{A\vec{a}},M_{A\vec{b}})$ was defined, we have that $\iota [E]$ is not bounded.

So suppose that $\iota [E]$ is bounded.  Then
there exists a coboundary $B$ defined by $(b_0(\pi),\ldots,b_{f-1}(\pi))$ such that
$\mu_\varphi(\iota (E+B))\in\F[[\pi]]^S$ and $\mu_\xi(\iota (E+B))\in \pi\F[[\pi]]^S$.
We have $e_i\langle\vec{c}\rangle_J=1$ or $\pi^{p-1}$ and $\val e_i\mu_\xi(\iota E)\le 0$.
It follows that $\val e_i\mu_\xi(B)=\val e_i \mu_\xi(E) = 1-p$, so
by Lemma \ref{gamma}, we must have $s_i:=\val (b_i(\pi))\le 2(1-p)$.
Then $\val(\pi^{(p-1)c_i}b_i(\pi^p))=(p-1)(p-2)+s_ip \le (1-p)(p+2)$,
so that $s_{i-1}=(p-1)(p-2)+s_ip$. Cycling this through indices leads to a contradiction.

\end{proof}

\subsection{Trivial character} \label{sec:triv}
In this subsection, we assume that $C=1, \vec{c}=\vec{0}$, so that
$\kappa_\varphi(C,\vec{c})=\kappa_\gamma(C,\vec{c})=(1,\ldots,1) \in \F((\pi))^S$.

Using Lemma \ref{delta} we can find unique $\epsilon_{2-p},\ldots,\epsilon_{-1} \in \F$ such that
$$(\eta-1)(\pi^{1-p}+\epsilon_{2-p}\pi^{2-p}+\cdots+\epsilon_{-1}\pi^{-1}) \in \F[[\pi]].$$
Set $H(\pi)=\pi^{1-p}+\epsilon_{2-p}\pi^{2-p}+\cdots+\epsilon_{-1}\pi^{-1}$.
By Lemma \ref{gamma}, we get
$$(\xi-1)(H(\pi)) \in \F^\times+\pi\F[[\pi]],$$ which implies, via Lemma \ref{trick}, that
$$(\eta-1)(H(\pi)) \in \nu+\pi\F[[\pi]]$$ for some $\nu \in \F-\{0\}$. Likewise we have
$$(\eta-1)(H(\pi^p)) \in \nu + \pi\F[[\pi]],$$ so that
$$(\eta-1)(-H(\pi^p)+H(\pi)) \in \pi\F[[\pi]].$$

Note that if $\gamma' \in \Gamma_2$,  then
$(\gamma'-1)(H(\pi)) \in \pi\F[[\pi]]$, and it follows that
$(\gamma'-1)(-H(\pi^p)+H(\pi)) \in \pi\F[[\pi]]$.
Now for each $\gamma \in \Gamma$, writing $\gamma=\eta^n\gamma'$ where
$\gamma' \in \Gamma_2$, we get by Lemma \ref{val} that
$$(\gamma-1)(-H(\pi^p)+H(\pi)) \in \pi\F[[\pi]].$$
As the map $g(\pi) \mapsto g(\pi^{p^f})-g(\pi)$ defines a bijection
$\pi\F[[\pi]]\to\pi\F[[\pi]]$, for each $\gamma \in \Gamma$ there
exists a unique $g_{\gamma}(\pi) \in \pi\F[[\pi]]$ such
that
$$g_\gamma(\pi^{p^f})-g_\gamma(\pi) = (\gamma-1)(-H(\pi^p)+H(\pi)),$$
or equivalently,
$$(\varphi-1)(g_\gamma(\pi), g_\gamma(\pi^{p^{f-1}}),\ldots, g_\gamma(\pi^p))=(\gamma-1)(-H(\pi^p)+H(\pi), 0,\ldots,0).$$

If we set
$$\begin{aligned}
\mu_\varphi(B_0) &= (-H(\pi^p)+H(\pi), 0,\ldots, 0), \\
\mu_\gamma(B_0) &= (g_{\gamma}(\pi), g_{\gamma}(\pi^{p^{f-1}}),
\ldots, g_{\gamma}(\pi^p)),
\end{aligned}$$
$\mu(B_0)=(\mu_\varphi(B_0),(\mu_\gamma(B_0))_{\gamma \in \Gamma})$
satisfies the condition $(\dagger)$ by the considerations above. We
note that $\mu_\gamma(B_0)$ are uniquely determined so that they
satisfy $(\dagger)$. As both $\mu_{\gamma\gamma'}(B_0)$ and
$\mu_{\gamma\gamma'}':=\gamma(\mu_{\gamma'}(B_0))+\mu_\gamma(B_0)$
satisfy $(\dagger)$ for $\gamma\gamma'$, they must coincide, so that
($\ddagger$) is satisfied.

For each $1 \le i \le f-1$, we construct $[B_i] \in
\Ext^1(M_{\vec{0}},M_{\vec{0}})$ in a similar way, i.e., by setting
$$\begin{aligned}
\mu_\varphi(B_i) &= (0,\ldots, 0, -H(\pi^p)+H(\pi), 0, \ldots, 0), \\
\mu_\gamma(B_i) & = (g_\gamma(\pi^{p^{i}}), \ldots,
g_\gamma(\pi^{p}), g_\gamma(\pi), g_\gamma(\pi^{p^{f-1}}), \ldots,
g_\gamma(\pi^{p^{i+1}})).
\end{aligned}$$

\begin{rem} For each $0 \le i \le f-1$, consider the coboundary $B_i''$ by
$$\begin{aligned}
\mu_\varphi(B_i'') &=(0,\ldots,0,H(\pi^p),-H(\pi),0,\ldots,0), \\
\mu_\gamma(B_i'') &= (0,\ldots,0,0,(\gamma-1)(H(\pi)),0,\ldots,0),
\end{aligned}$$
where $H(\pi)$ is the $i$-th component and $-H(\pi)$ the $(i+1)$-th
component of $\mu_\varphi(B_i'')$ and $(\gamma-1)(H(\pi))$ is the
$(i+1)$-th component of $\mu_\gamma(B_i'')$. Define $B_i'=B_i+B_i''$
for each $0\le i \le f-1$. Then $\F[B_i]=\F [B_i']$ in $\Ext^1(M_{\vec{0}},M_{\vec{0}})$ for all $0 \le i
\le f-1$, where we have
$$\begin{aligned}
\mu_\varphi(B_i') &=(0,\ldots,0,H(\pi),-H(\pi),0,\ldots,0), \\
\mu_\gamma(B_i') &=  (g_\gamma(\pi^{p^{i}}), \ldots,
g_\gamma(\pi^{p}), g_\gamma(\pi), g_\gamma(\pi^{p^{f-1}})+(\gamma-1)H(\pi), g_\gamma(\pi^{p^{f-2}}), \ldots,
g_\gamma(\pi^{p^{i+1}})).
\end{aligned}$$
\end{rem}

Next, we define $B_{\rm nr}$ (for {\it non-ramifi\'e})
by setting
$$\begin{aligned}
\mu_\varphi(B_{\rm nr}) &= (1, 0,\ldots,0), \\
\mu_\gamma(B_{\rm nr}) &= (0,0,\ldots,0)
\end{aligned}$$
for all $\gamma \in \Gamma$. It is straightforward to check that
this defines an extension $[B_{\rm nr}] \in
\Ext^1(M_{\vec{0}},M_{\vec{0}})$. We can ``move'' the 1 in $\mu_\varphi$
to any component, i.e., taking any of $(1, 0,\ldots,0)$, $(0,1,0,\ldots,0), \ldots, (0,\ldots,0,1)$
to be $\mu_\varphi$ defines the same cocycle class (up to coboundaries).

Set $B_{\rm cyc} = \sum_{i=0}^{f-1} B_i'$.  Then we have
$$\begin{aligned}
\mu_\varphi(B_{\rm cyc}) &= (0,\ldots,0), \\
\mu_\gamma(B_{\rm cyc}) &= (g_\gamma'(\pi),\ldots,g_\gamma'(\pi))
\end{aligned}$$
for some $g_\gamma'\in\F[[\pi]]$.  Since $(\varphi-1)g'_\gamma(\pi)=0$,
we must have in fact $g'_\gamma(\pi) = g'_\gamma\in\F$.  In particular
$g'_\eta = \nu$.  Moreover $\gamma\mapsto g'_\gamma$ defines a homomorphism
$\Gamma \to \F$.  Thus if $\gamma=\eta^{n_\gamma}$ modulo
$\Gamma_2$, then
$$\mu_\gamma(B_{\rm cyc}) = \nu \overline{n}_\gamma(1,\ldots,1).$$

\begin{lem}\label{trv} The extensions $[B_{\rm nr}], [B_0], \ldots, [B_{f-1}]
\in \Ext^1(M_{\vec{0}},M_{\vec{0}})$ are linearly independent, and therefore
form a basis.
\end{lem}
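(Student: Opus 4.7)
The plan is to show directly that the $f+1$ classes are $\F$-linearly independent; combined with $\dim_\F\Ext^1(M_{\vec{0}}, M_{\vec{0}}) = f+1$, this will make them a basis. Suppose
$$E := \beta_{\rm nr}B_{\rm nr} + \sum_{i=0}^{f-1}\beta_i B_i$$
is a coboundary, realized by some $b = (b_0,\ldots,b_{f-1}) \in \F((\pi))^S$. Since $\kappa_\varphi = \kappa_\gamma = (1,\ldots,1)$ here, the $\mu_\varphi$-condition $\mu_\varphi(E) = \varphi(b) - b$ reads
$$b_{j+1}(\pi^p) - b_j(\pi) = \beta_{\rm nr}\delta_{j,0} + \beta_j\bigl(H(\pi) - H(\pi^p)\bigr), \qquad j \in \Z/f\Z.$$

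First I would extract $\beta_{\rm nr}$ by a constant-term argument. Substituting $\pi \mapsto \pi^{p^j}$ in the $j$-th equation and telescoping over $j=0,\ldots,f-1$ collapses the left side to $b_0(\pi^{p^f}) - b_0(\pi)$, whose $\pi^0$-coefficient is manifestly zero (the map $b \mapsto b(\pi^{p^f}) - b(\pi)$ annihilates constants). On the right, each $H(\pi^{p^j})$ lies in $\pi^{-1}\F[\pi^{-1}]$, since $H(\pi) = \pi^{1-p} + \epsilon_{2-p}\pi^{2-p} + \cdots + \epsilon_{-1}\pi^{-1}$ contains only negative-degree terms; hence $\sum_j \beta_j(H(\pi^{p^j}) - H(\pi^{p^{j+1}}))$ also has no constant term. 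Comparing $\pi^0$-coefficients then forces $\beta_{\rm nr} = 0$.

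With $\beta_{\rm nr}$ eliminated, it remains to show $\beta_0 = \cdots = \beta_{f-1} = 0$, which I would obtain by adapting the valuation argument of \S 4.3 to the degenerate case $\vec c = \vec 0$. By cyclic symmetry of the indexing it suffices to prove $\beta_{f-1} = 0$; supposing otherwise and setting $s_j := \val(b_j)$, the $\mu_\varphi$-equations (using $\val(H(\pi) - H(\pi^p)) = p - p^2$) propagate relations among the $s_j$, and the $\mu_\xi$-equations --- which read $(\xi-1)b_j = \sum_i \beta_i g_\xi(\pi^{p^{i-j \bmod f}})$ with right side in $\pi\F[[\pi]]$ --- pin down the negative-degree part of each $b_j$ via Lemma~\ref{gamma} applied with $\Sigma = 0$, producing a contradiction upon cycling through $j$. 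The main obstacle is precisely this second step: unlike in \S 4.3, the operator $\Phi - 1$ on $\F[[\pi]]$ is no longer a bijection but has one-dimensional kernel $\F$. However this kernel corresponds exactly to the $B_{\rm nr}$-direction, so after Step 1 eliminates $\beta_{\rm nr}$, the cycling argument of \S 4.3 proceeds modulo constants.
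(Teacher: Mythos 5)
Your Step~1 (the constant-term extraction of $\beta_{\rm nr}$) is correct and is essentially the paper's argument. The problems are in Step~2.

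The failure of bijectivity of $\Phi-1$ on $\F[[\pi]]$ has two aspects, and you have identified the wrong one with $B_{\rm nr}$. The \emph{cokernel} $\F[[\pi]]/(\pi\F[[\pi]])\cong \F$ measures which values of $e_0\mu_\varphi$ cannot be hit by a coboundary, and $B_{\rm nr}$ (with $\mu_\varphi = (1,0,\ldots,0)$) lives there; this is exactly what your Step~1 eliminates. The \emph{kernel} $\F$ measures the ambiguity in lifting a given $\mu_\varphi$ to a candidate $b_0$, and this direction corresponds not to $B_{\rm nr}$ but to $B_{\rm cyc} = \sum_i B_i'$, which the paper shows has $\mu_\varphi(B_{\rm cyc})=0$ and $\mu_\xi(B_{\rm cyc})=-\nu(1,\ldots,1)$ with $\nu\in\F^\times$. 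Concretely, $\sum_i\mu_\varphi(B_i)$ collapses (in the $e_0$ component, after adding the usual coboundary) to $H(\pi)-H(\pi^{p^f}) = -(\Phi-1)(H(\pi))$, so the $\mu_\varphi$ side of the class $\sum_i B_i$ \emph{is} a $\varphi$-coboundary. Therefore the valuation analysis of \S 4.3 run on $\mu_\varphi$ alone can never rule out $\beta_0=\cdots=\beta_{f-1}\neq 0$, contrary to your claim that after removing $\beta_{\rm nr}$ the ``cycling argument of \S 4.3 proceeds modulo constants.''

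The description of the $\mu_\xi$ step also has the roles of the two conditions reversed. In the case $\beta_0=\cdots=\beta_{f-1}=c\neq 0$, the $\mu_\varphi$-equations (used through the full cycle) pin down $b_j=-cH(\pi)+(\text{constant})$ up to the constant ambiguity; it is $\mu_\varphi$, not $\mu_\xi$, that determines the negative part of $b_j$. One then reads off a contradiction from $\mu_\xi$ at degree $0$, not at a negative degree: $(\xi-1)(-cH(\pi))$ has a nonzero \emph{constant} term (because $(\xi-1)H(\pi)\in\F^\times+\pi\F[[\pi]]$, a consequence of the construction of $H$), while $\mu_\xi(\sum_i cB_i)_j = c\sum_m g_\xi(\pi^{p^m})\in\pi\F[[\pi]]$ has constant term zero. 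This degree-$0$ comparison is the step the paper encodes by reducing to $B_{\rm cyc}$ and observing that $(\varphi-1)b=0$ forces $b\in\F^S$, which $\xi-1$ kills. Your formulation in terms of ``pinning down the negative-degree part via Lemma~\ref{gamma} with $\Sigma=0$ and cycling through $j$'' never produces this contradiction, because the relevant coboundary candidate $b_j=-cH(\pi)$ does satisfy $(\xi-1)b_j\in\F+\pi\F[[\pi]]$ and has no illegal negative-degree contribution; the obstruction really is at the constant term, and it only appears once the $\mu_\varphi$ side has fixed $b_j$. Finally, you have not separated the case where the $\beta_j$ are not all equal (i.e.\ $\beta_{f-1}=0$ but some $\beta_i\neq 0$ with $i$ maximal), which the paper handles with a genuine valuation argument since $\val(e_0\mu_\varphi(E+B)) = p^{i+1}(1-p)$ is negative and not divisible by $p^f$ — only in this sub-case does a \S 4.3-style argument go through.
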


\begin{proof}
Suppose that $E = \beta B_{\rm nr} + \beta_0 B_0+ \cdots + \beta_{f-1} B_{f-1}$ is a coboundary.
By adding some coboundary $B$ we have
$$\begin{aligned}
e_0\mu_\varphi(E+B) 
&=\beta + \beta_0(-H(\pi^p) + H(\pi)) + \beta_1(-H(\pi^{p^2})+H(\pi^p)))+\cdots \\
&\,\,\,\,\,\,\,+\beta_{f-2}(-H(\pi^{p^{f-1}})+H(\pi^{p^{f-2}}))
+\beta_{f-1}(-H(\pi^{p^{f}})+H(\pi^{p^{f-1}})) \\
&= \beta + \beta_0H(\pi) + (\beta_1-\beta_0)H(\pi^p) + \cdots \\
&\,\,\,\,\,\,\,+ (\beta_{f-1}-\beta_{f-2})H(\pi^{p^{f-1}})-\beta_{f-1}H(\pi^{p^f}) \\
&=(\Phi-1)(\sum_{j\ge s}b_j\pi^j)\end{aligned}$$
for some $\sum_{j\ge s}b_j\pi^j \in \F((\pi))$.
Equating constant terms gives $\beta = 0$.
If $\beta_{f-1} \neq 0$, then $s = 1-p$, 
$\beta_0H(\pi) = -(b_{1-p}\pi^{1-p} + \cdots + b_{-1}\pi^{-1})$ and
$\beta_0=\beta_1= \cdots = \beta_{f-1}$.
It follows that $E = \beta_{f-1}\sum_{i=0}^{f-1}B_i$  is cohomologous to
$\beta_{f-1}B_{\rm cyc}$, and therefore that $B_{\rm cyc}$ is coboundary.  
Thus there exists $(b_0(\pi), \ldots, b_{f-1}(\pi)) \in \F((\pi))^S$ such that
$$\begin{aligned}
(\varphi-1)(b_0(\pi), \ldots, b_{f-1}(\pi)) &=(0, \ldots, 0), \\ 
(\xi-1)(b_0(\pi), \ldots, b_{f-1}(\pi)) &= -\nu(1,\ldots,1),
\end{aligned}$$
which is impossible as the former implies $b_0(\pi)=\cdots=b_{f-1}(\pi) \in \F$,
so that $(\xi-1)(b_0(\pi), \ldots, b_{f-1}(\pi)) = 0$.
Thus, $\beta_{f-1}=0$.

If $0\le i \le f-2$ is the largest such that $\beta_i\neq 0$, then $\val(e_0\mu_\varphi(E+B))=p^{i+1}(1-p)$,
which leads to an easy contradiction. Thus, $\beta_i=0$ for all $0\le i \le f-2$.
\end{proof}

We now assume $f=2$ and compute the spaces of bounded extensions. 
We then have the following cases to consider:
\begin{itemize}
\item $J=S$, $a_0 = a_1 = p -1$;
\item $J = \{1\}$, $b_0 = 1$, $a_1=p$ (for $V_J^+$);
\item $J = \{1\}$, $b_0 = p$, $a_1=1$ (for $V_J^-$);
\item $J = \{0\}$, $a_0 = p$, $b_1=1$ (for $V_J^+$);
\item $J = \{0\}$, $a_0 = 1$, $b_1=p$ (for $V_J^-$);
\item $J=\emptyset$, $b_0 = b_1 = p - 1$.
\end{itemize}

\begin{prop} Suppose that $f=2$, $C=1$, $\vec{c}=\vec{0}$ and $A\in\F^\times$.
\begin{enumerate}
\item $V_S = \Ext^1(M_{\vec{0}},M_{\vec{0}})$;
\item $V_{\{i\}}^+ = \langle [B_{\rm nr}], [B_i] \rangle$ for $i = 0,1$;
\item $V_{\{i\}}^- = \langle [B_{\rm nr}] \rangle$ for $i = 0,1$;
\item $V_\emptyset = \{0\}$.
\end{enumerate}
\end{prop}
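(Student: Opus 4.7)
The plan is to adapt the direct-verification-plus-coboundary-contradiction method of Propositions \ref{dim1} and \ref{dim1'} to the enlarged basis $\{[B_{\rm nr}], [B_0], [B_1]\}$ provided by Lemma \ref{trv}. First, for each of the six subcases $(J,\vec{a},\vec{b})$ listed just before the proposition, I would compute the change-of-basis element $\langle\vec{c}\rangle_J$ from the formula at the end of \S 5.1. Since $\vec{c}=\vec{0}$ the $\varepsilon_i$'s come directly from $\vec{a}$ and $\vec{b}$, yielding $\langle\vec{c}\rangle_S=(\pi^{p-1},\pi^{p-1})$, $\langle\vec{c}\rangle_\emptyset=(\pi^{1-p},\pi^{1-p})$, $\langle\vec{c}\rangle_{\{0\}}^{+}=(1,\pi^{p-1})$, $\langle\vec{c}\rangle_{\{0\}}^{-}=(\pi^{1-p},1)$, and symmetric expressions for $J=\{1\}$.

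The positive assertions in (1)--(3) are checked by direct matrix computation. The key observation is that $\mu_\xi(B_{\rm nr})=0$, so $[B_{\rm nr}]$ is bounded whenever the multiplier $\kappa_\varphi(A,\vec{a})\langle\vec{c}\rangle_J$ has a non-negative $\pi$-exponent in the first slot. This holds in cases (1), (2), (3), but fails in (4), accounting for the absence of $[B_{\rm nr}]$ from $V_\emptyset$. For $[B_i]$ in $V_S$ and for $[B_i]$ in $V_{\{i\}}^+$, the calculation mirrors that in Proposition \ref{dim1}(3): the prefactor $\pi^{p(p-1)}$ exactly balances the valuation $p(1-p)$ of $-H(\pi^p)+H(\pi)$, while $g_\xi(\pi)\in\pi\F[[\pi]]$ forces $\mu_\xi(\iota B_i)\in\pi\F[[\pi]]^S$.

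For the negative direction, suppose $\iota[E+B]$ is bounded for some $E=\alpha B_{\rm nr}+\beta_0B_0+\beta_1B_1$ outside the claimed span and some coboundary $B$ with data $(b_0(\pi),b_1(\pi))$. The subcases with $\alpha=0$ reduce to the valuation-cycling arguments of Propositions \ref{dim1} and \ref{dim1'}: the $\mu_\xi$-condition forces $\val b_i$ to absorb the leading term of $\beta_jg_\xi(\pi^{p^?})$ via Lemma \ref{gamma}, giving $\val b_j\le 2-p$; the two $\mu_\varphi$-relations then iterate the valuations under $b_i(\pi)\mapsto b_{i-1}(\pi^p)$, producing a leading term at valuation $p^2(2-p)<p(1-p)$ which cannot be cancelled by any remaining contribution, a contradiction. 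When $\alpha\neq 0$, the novelty is that $B_{\rm nr}$ contributes nothing to $\mu_\xi$, so $b_0,b_1$ are not constrained from below through that avenue; instead one decomposes $b_i=c_i+b_i'$ with $c_i\in\F$, $b_i'\in\pi\F[[\pi]]$, uses $(\xi-1)c_i=0$, and matches constant terms in the two $\mu_\varphi$-conditions. In case (4), where the multiplier $\langle\vec{c}\rangle_\emptyset=(\pi^{1-p},\pi^{1-p})$ demands valuation $\ge p-1$ in both slots, one slot gives $\alpha+c_1-c_0=0$ and the other gives $c_0-c_1=0$, forcing $\alpha=0$; in cases (2) and (3), the analogous constant-matching together with the cycling argument on the $\beta_i$-part leaves only the $B_{\rm nr}$-multiples (and, in case (2), the $B_i$-multiples) as bounded.

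The main obstacle is the delicate handling of the constant terms of $b_0(\pi)$ and $b_1(\pi)$: these lie in the kernel of $\xi-1$ and so are invisible to the $\mu_\xi$-condition that was the main diagnostic tool in Propositions \ref{dim1}, \ref{dim1'}, yet they can cancel the unit entry of $\mu_\varphi(B_{\rm nr})=(\alpha,0)$. This constant-matching step is the new ingredient peculiar to the exceptional trivial case $C=1,\vec{c}=\vec{0}$, and in particular it is what rules out combinations $\alpha_1B_0-\alpha_0B_1$ that were bounded in the analogous setting of Proposition \ref{dim1'}: that boundedness relied on the constant $C\neq 1$ making the map $C\pi^{(p-1)\Sigma_0}\Phi-1$ genuinely bijective on $\F[[\pi]]$, a mechanism that disappears here.
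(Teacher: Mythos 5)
Your positive-direction calculations (values of $\langle\vec{c}\rangle_J$, boundedness of $\iota[B_{\rm nr}]$ and $\iota[B_i]$ in the appropriate slots) are correct.  However, the negative direction has two genuine gaps.

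First, you reduce the $\alpha=0$ subcases to the ``valuation-cycling arguments of Propositions \ref{dim1} and \ref{dim1'},'' but those propositions are formulated with completely different basis cocycles.  In \S\ref{sec:f2} the $B_i$'s are built with $\mu_\varphi(B_i)$ having a component $\pi^{1-p}+h_i(\pi)$ and $\mu_\xi(B_i)$ having a \emph{nonzero constant term} $-\alpha_i$; that was possible because $C\pi^{(p-1)\Sigma_i}\Phi-1$ is a bijection on $\F[[\pi]]$.  In the trivial case $C=1$, $\vec{c}=\vec{0}$ the map $\Phi-1$ has kernel $\F$, so the paper defines $B_i$ completely differently: $\mu_\varphi(B_i)$ has a component $-H(\pi^p)+H(\pi)$ of valuation $(1-p)p$, and $\mu_\xi(B_i)$ lies entirely in $\pi\F[[\pi]]^S$ (since $g_\gamma\in\pi\F[[\pi]]$).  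The cycling computations therefore do not carry over verbatim; the valuations and the mechanism that forces $\val b_j\le 2-p$ are different, and you would need to redo them from scratch.

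Second, and more importantly, the ``constant-matching in $\mu_\varphi$'' step you propose as the new ingredient only applies when the multiplier $\kappa_\varphi(A,\vec{a})\langle\vec{c}\rangle_J$ has positive $\pi$-exponent in the relevant slot; it works in case (4) where the multiplier is $(A\pi^{1-p},\pi^{1-p})$, but in cases (2) and (3) the constraint on $\mu_\varphi(E+B)$ in at least one slot is only membership in $\pi^{1-p}\F[[\pi]]$ (indeed the paper's condition there is $v_0\ge 1-p$, $v_1\ge 0$), which imposes no constraint on the constant term.  The constant-term obstruction the paper actually exploits in cases (2) and (3) lives in $\mu_\xi$, not $\mu_\varphi$: the decisive observation is that $B_{\rm cyc}=\sum_i B_i'$ has $\mu_\varphi(B_{\rm cyc})=0$ and $\mu_\xi(B_{\rm cyc})=-\nu(1,\ldots,1)$ with $\nu\neq 0$, and the constant term $-\nu$ cannot be cancelled by $(\xi-1)b_j(\pi)$ once the $\mu_\varphi$-conditions have forced $v_j\ge 0$ or $1-p$.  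Your write-up never introduces $B_{\rm cyc}$ and never engages with this constant obstruction in the $\mu_\xi$-slot, so as written it does not close cases (2) and (3).  To fix the proof you would need to first use the argument of Lemma~\ref{trv} to show $\beta_0=\beta_1$ (so $E$ is cohomologous to a multiple of $B_{\rm cyc}$ modulo $B_{\rm nr}$ and, in case (2), $B_i$), and then derive a contradiction from the constant term $-\nu$ of $\mu_\xi(B_{\rm cyc})$.
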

\begin{proof}
(1) We have $\langle \vec{c} \rangle_S = (\pi^{p-1},\pi^{p-1})$ and it is
strightforward to check that $\iota [B_0]$, $\iota [B_1]$ and $\iota [B_{\rm nr}]$ are bounded.

(2) Suppose $J = \{1\}$.  Then $b_0 = 1$, $a_1 = p$ and $\langle \vec{c} \rangle_{\{1\}} = (\pi^{p-1},1)$
and it is straightforward to check that $\iota [B_1]$ and $\iota [B_{\rm nr}]$ are bounded.
Therefore it suffices to prove that $\iota [B_{\rm cyc}]$ is not bounded.
So suppose that $B$ is a coboundary such that $\iota (B_{\rm cyc}+B)$
has $\mu_\varphi\in \F[[\pi]]^S$ and $\mu_\xi\in \pi\F[[\pi]]^S$.
Then 
$$\mu_\varphi(B_{\rm cyc}+B) = \mu_\varphi(B) = (b_1(\pi^p)-b_0(\pi),b_0(\pi^p)-b_1(\pi))$$
for some $b_0(\pi),b_1(\pi)\in\F((\pi))$, and $(A\pi^{p-1},\pi^{p(p-1)})\mu_\varphi(B)\in\F[[\pi]]^S$.
Letting $v_0 =\val(b_0(\pi))$ and $v_1=\val(b_1(\pi))$, we see that $v_0\ge 1-p$
and $v_1 \ge 0$.  Therefore
$$\mu_\xi(B_{\rm cyc}+B) = (-\nu + (\xi-1)b_0(\pi), -\nu+(\xi-1)b_1(\pi)),$$
and since $-\nu+(\xi-1)b_1(\pi)$ has constant term $-\nu$, we arrive at
a contradiction.

The case $J=\{0\}$ is the same.

(3) Suppose again that $J=\{1\}$.  Now we have $b_0 = p$, $a_1=1$ and
$\langle \vec{c} \rangle_{\{1\}} = (1,\pi^{1-p})$
and it is clear that $\iota [B_{\rm nr}]$ is bounded.  Therefore it suffices to prove
that if $E = \beta_0 B_0 + \beta_1 B_1$ with $\beta_0,\beta_1$ is such that $\iota [E]$
is bounded, then $\beta_0 = \beta_1 = 0$.  The argument in the proof of Lemma~\ref{trv}
shows that $\beta_0 = \beta_1$, so we are reduced to proving that $\iota [B_{\rm cyc}]$
is not bounded.  The proof of this similar to part (2).

The case $J=\{0\}$ is the same.

(4) Now we have $\langle \vec{c} \rangle_\emptyset = (\pi^{1-p},\pi^{1-p})$, and if $\iota [E]$
is bounded then $\mu_\varphi(E+B) \in \pi^{p-1}\F[[\pi]]^S$ for some coboundary $B$.
The proof of Lemma~\ref{trv} then shows that $E$ is cohomologous to a multiple of
$B_{\rm cyc}$, and the boundedness of $\iota [B_{\rm cyc}]$ yields a contradiction
as above.
\end{proof}

\subsection{$p=2$}

We assume $p=2$ throughout this section.  Now $\Gamma$ is not pro-cyclic; we
write $\Gamma = \Delta \times \Gamma_2$ where $\Delta = \langle \eta \rangle$
with $\chi(\eta) = -1$, so $\Delta$ has order $2$, and we choose a topological 
generator $\xi$ of $\Gamma_2$.

\begin{lem} \label{p2lambda} We have $\lambda_\eta \equiv 1 + \pi \bmod \pi^{2^f}\F[[\pi]]$.
If $\gamma \in \Gamma_2$, then $\lambda_\gamma\equiv 1 \bmod \pi^3\F[[\pi]]$.
\end{lem}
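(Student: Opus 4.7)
The plan is to reduce both assertions to direct computations in $\F[[\pi]]$, exploiting two features of characteristic $2$. Since $p=2$, the mod-$p$ cyclotomic character $\overline{\chi}$ takes values in $\F_2^\times=\{1\}$, so the defining identity for $\lambda_\gamma$ simplifies to $\lambda_\gamma^{2^f-1}=\gamma(\pi)/\pi$; and in $\F[[\pi]]$ we have the Frobenius identity $(1+\pi)^{2^k}=1+\pi^{2^k}$, which allows rapid evaluation of $\gamma(\pi)=(1+\pi)^{\chi(\gamma)}-1$.

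The key auxiliary observation I would establish first is that on $1+\pi\F[[\pi]]$ the endomorphism $u\mapsto u^{2^f-1}$ preserves the $\pi$-adic filtration exactly: for any $u\ne 1$ in $1+\pi\F[[\pi]]$, $v_\pi(u^{2^f-1}-1)=v_\pi(u-1)$. This follows from the characteristic-$2$ identity
\[
u^{2^f-1}-1 \;=\; u^{-1}(u-1)\bigl((u-1)^{2^f-1}-1\bigr),
\]
since $u$ is a unit and $(u-1)^{2^f-1}-1$ has constant term $-1$, hence is also a unit.

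With this in hand both claims are short. For $\gamma\in\Gamma_2$, write $\chi(\gamma)=1+4m$ with $m\in\Z_2$; the Frobenius identity yields $(1+\pi)^{\chi(\gamma)}=(1+\pi)(1+\pi^4)^m\equiv 1+\pi\bmod\pi^4$, so $\lambda_\gamma^{2^f-1}-1=\gamma(\pi)/\pi-1\in\pi^3\F[[\pi]]$, and the auxiliary observation gives $\lambda_\gamma\equiv 1\bmod\pi^3$. For $\eta$, $\chi(\eta)=-1$ gives $\eta(\pi)/\pi=(1+\pi)^{-1}$; comparing with $(1+\pi)^{2^f-1}=(1+\pi^{2^f})(1+\pi)^{-1}$, writing $\lambda_\eta=(1+\pi)(1+s)$ produces $(1+s)^{2^f-1}=(1+\pi^{2^f})^{-1}$, whose distance from $1$ has valuation exactly $2^f$; the auxiliary observation then forces $s\in\pi^{2^f}\F[[\pi]]$, whence $\lambda_\eta\equiv 1+\pi\bmod\pi^{2^f}$.

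The only substance here is the auxiliary observation on the power map; there is no genuine obstacle, since the Frobenius identity $(1+\pi)^{2^k}=1+\pi^{2^k}$ and the triviality of $\overline{\chi}$ when $p=2$ make the rest a short explicit manipulation.
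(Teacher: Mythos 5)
Your proof is correct and follows essentially the same route as the paper: the paper's proof simply asserts $\lambda_\eta^{2^f-1}=\eta(\pi)/\pi=(1+\pi)^{-1}$ (resp.\ $\gamma(\pi)/\pi\equiv 1\bmod\pi^3$ for $\gamma\in\Gamma_2$) and leaves the reader to deduce the congruence for $\lambda_\gamma$; your auxiliary observation that $u\mapsto u^{2^f-1}$ preserves the $\pi$-adic filtration on $1+\pi\F[[\pi]]$ is exactly the implicit step needed to make that deduction precise, and your factorization $u^{2^f-1}-1=u^{-1}(u-1)\bigl((u-1)^{2^f-1}-1\bigr)$ proves it cleanly.
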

\begin{proof} The first assertion follows from the fact that 
$$\lambda_\eta^{2^f-1}= \eta(\pi)/\pi = (1+\pi)^{-1}.$$
For the second assertion, note that if $\gamma\in \Gamma_2$, then
$\chi(\gamma) \equiv 1 \bmod 4$, so $\gamma(\pi)/\pi \equiv 1 \bmod \pi^3\F[[\pi]]$.
\end{proof}

Let $C \in \F^\times$ and $\vec{c}=(c_0,\ldots,c_{f-1}) \in \{0,1\}^S$ with some $c_j=0$ be given.
First assume that $C\neq 1$ if $\vec{c}=\vec{0}$, so that $C\pi^{\Sigma_j\vec{c}}\Phi-1 : \F[[\pi]]\to\F[[\pi]]$
defines a valuation-preserving bijection for all $j \in S$.   As in the case $p > 2$,
we will define for each $i\in S$ an element $H_i(\pi)\in \F((\pi))$ such that
$$(\lambda_\gamma^{\Sigma_i\vec{c}}\gamma - 1) H_i(\pi) \in \F[[\pi]]$$
for all $\gamma\in \Gamma$.   If $c_i = 0$, we let $H_i(\pi) = \pi^{-1}$;
otherwise we use the following lemma:
\begin{lem}  \label{p2H} Suppose that $c_i = 1$, and $r \in 0,\ldots,f-1$ is
such that $c_{i+1}=\cdots=c_{i+r}=0$ and $c_{i+r+1}=1$.  Let
$$H_i(\pi) = \pi^{1-2^{r+2}} + \pi^{1+2^r-2^{r+2}}.$$
Then $(\lambda_\gamma^{\Sigma_i\vec{c}}\gamma - 1) H_i(\pi) \in \F[[\pi]]$
for all $\gamma\in \Gamma$.
\end{lem}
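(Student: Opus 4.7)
The plan is to exploit characteristic-$2$ identities to recast the problem as a $2$-adic divisibility calculation, then to handle $\gamma = \eta$ and $\gamma \in \Gamma_2$ separately and combine them via Lemma~\ref{val}.

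First I rewrite $H_i$ in a multiplicatively convenient form: in characteristic $2$ we have $1 + \pi^{2^r} = (1+\pi)^{2^r}$, so $H_i(\pi) = \pi^{1-2^{r+2}}(1+\pi)^{2^r}$. Using $\eta(1+\pi) = (1+\pi)^{\chi(\eta)} = (1+\pi)^{-1}$ together with the identity $(1+\pi) = \lambda_\eta^{-(2^f-1)}$ (which follows from $\lambda_\eta^{2^f-1} = \eta(\pi)/\pi$), a short computation yields
$$(\lambda_\eta^{\Sigma_i\vec{c}}\eta - 1)(H_i(\pi)) = \pi^{1-2^{r+2}}(1+\pi)^{2^r}\bigl(\lambda_\eta^M - 1\bigr),$$
where $M := \Sigma_i\vec{c} - (2^{r+1}-1)(2^f-1)$. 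I would then check by direct $2$-adic arithmetic, using the hypothesis on $\vec{c}$ (which gives $\Sigma_i\vec{c} = 1 + 2^{r+1} + T$ with $T \in 2^{r+2}\Z_{\ge 0}$ when $r \le f-2$, and $\Sigma_i\vec{c} = 1$ when $r = f-1$), that $v_2(M) \ge r+2$ in both cases. Since Lemma~\ref{p2lambda} gives $\lambda_\eta \equiv 1 + \pi \pmod{\pi^{2^f}}$, the identity $(1+x)^{2^k} = 1 + x^{2^k}$ in characteristic $2$ implies $\val(\lambda_\eta^M - 1) = 2^{v_2(M)} \ge 2^{r+2}$ whenever $M \ne 0$ (and the expression is identically zero when $M = 0$); hence the displayed quantity has valuation $\ge 1$ and therefore lies in $\F[[\pi]]$.

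For $\gamma \in \Gamma_2$ I argue directly from the bound $\lambda_\gamma \equiv 1 \pmod{\pi^3}$ of Lemma~\ref{p2lambda}, which by the same squaring argument gives $\val(\lambda_\gamma^N - 1) \ge 3 \cdot 2^{v_2(N)}$ for every nonzero $N \in \Z$. Writing $(\lambda_\gamma^{\Sigma_i\vec{c}}\gamma - 1)(\pi^{s_j}) = (\lambda_\gamma^{N_j} - 1)\pi^{s_j}$ with $N_j := \Sigma_i\vec{c} + s_j(2^f-1)$, a direct calculation gives $v_2(N_1) = r+1$ and $v_2(N_2) = r$, so the valuations of the two terms are at least $(1-2^{r+2}) + 3 \cdot 2^{r+1} = 1 + 2^{r+1}$ and $(1+2^r-2^{r+2}) + 3 \cdot 2^r = 1$ respectively; each lies in $\F[[\pi]]$, and hence so does their sum. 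Finally, an arbitrary $\gamma \in \Gamma = \langle \eta \rangle \times \Gamma_2$ decomposes as $\eta^\epsilon \gamma_2$ with $\gamma_2 \in \Gamma_2$, so Lemma~\ref{val} (whose proof makes no use of $p > 2$) extends the conclusion from $\{\eta\} \cup \Gamma_2$ to all of $\Gamma$. The main obstacle is just the careful $2$-adic bookkeeping: ensuring $v_2(M) \ge r+2$ in the $\eta$-calculation (where the hypothesis $c_{i+r+1} = 1$ is precisely what forces the $2^{r+1}$ contribution to $\Sigma_i\vec{c}$) and checking that the edge case $r = f-1$, in which $c_{i+r+1}$ wraps around to $c_i$ itself, still behaves correctly.
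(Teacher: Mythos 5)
Your proof is correct, and for $\gamma = \eta$ it takes a genuinely different route from the paper's. The paper handles the two monomials $\pi^{1-2^{r+2}}$ and $\pi^{1+2^r-2^{r+2}}$ separately: after reducing each contribution $(\lambda_\eta^{\Sigma_i}\eta - 1)(\pi^{s})$ modulo $\F[[\pi]]$ to $(\lambda_\eta^{2^{r+2-j}}-1)\pi^{s}$ using the residue of $\Sigma_i + s(2^f-1)$ modulo the next power of $2$, it shows both are congruent to $\pi^{1+2^{r+1}-2^{r+2}}$ and therefore cancel in characteristic $2$. Your approach avoids the cancellation entirely by factoring $H_i = \pi^{1-2^{r+2}}(1+\pi)^{2^r}$ at the outset and using $(1+\pi) = \lambda_\eta^{-(2^f-1)}$ (immediate from $\lambda_\eta^{2^f-1} = \eta(\pi)/\pi = (1+\pi)^{-1}$) to collapse the whole difference into the single product $\pi^{1-2^{r+2}}(1+\pi)^{2^r}(\lambda_\eta^M - 1)$; the claim then reduces to the arithmetic fact $v_2(M) \ge r+2$ (or $M=0$), which you verify correctly in both the $r\le f-2$ and $r=f-1$ sub-cases, including the degenerate $f=1$ case $M=0$. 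This is a cleaner organization, since it replaces a cancellation between two leading terms by one valuation bound; the cost is that the factorization trick is special to $\gamma=\eta$ and you still need the paper's term-by-term estimate (your $3\cdot 2^{v_2(N)}$ bound) for $\gamma \in \Gamma_2$, after which both proofs conclude identically via Lemma~\ref{val}. One small remark: since $M$ can be negative, it is worth noting (as you implicitly do) that the identity $\val(\lambda_\eta^{\pm 2^k}-1)=2^k$ from Lemma~\ref{p2lambda} holds for negative exponents too because $\lambda_\eta^{-1}\equiv(1+\pi)^{-1}\bmod\pi^{2^f}$.
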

\begin{proof} Note that we can assume $f \ge 2$.  We have
 $$\lambda_\gamma^{\Sigma_i}\gamma \pi^{1-2^{r+2}}
 = \lambda_\gamma^{\Sigma_i}\left(\frac{\gamma(\pi)}{\pi}\right)^{1-2^{r+2}} \pi^{1-2^{r+2}}
 = \lambda_\gamma^{\Sigma_i+ (2^f-1)(1-2^{r+2})}\pi^{1-2^{r+2}}.$$
Note that $\Sigma_i = 1$ if $r=f-1$ and $\Sigma_i \equiv 1 + 2^{r+1}\bmod 2^{r+2}$ otherwise.
In either case we have $\Sigma_i+ (2^f-1)(1-2^{r+2}) \equiv 2^{r+1} \bmod 2^{r+2}$.
It follows that
$$(\lambda_\gamma^{\Sigma_i}\gamma -1)(\pi^{1-2^{r+2}})
  \equiv (\lambda_\gamma^{2^{r+1}}-1)\pi^{1-2^{r+2}} \bmod \F[[\pi]].$$
Similarly we find that
$$(\lambda_\gamma^{\Sigma_i}\gamma -1)(\pi^{1+2^r-2^{r+2}})
  \equiv (\lambda_\gamma^{2^r}-1)\pi^{1+2^r-2^{r+2}} \bmod \F[[\pi]].$$

Lemma~\ref{p2lambda} gives 
$\lambda_\eta^{2^s} \equiv 1 + \pi^{2^s}\bmod \pi^{2^{s+f}}$ for $s\ge 0$,
and it follows that
$$(\lambda_\eta^{2^{r+1}}-1)\pi^{1-2^{r+2}}
  \equiv (\lambda_\eta^{2^r}-1)\pi^{1+2^r-2^{r+2}}
  \equiv \pi^{1+2^{r+1}-2^{r+2}} \bmod \F[[\pi]].$$
Therefore the lemma holds for $\gamma = \eta$.  We also get that
$\lambda_\gamma^{2^s} \equiv 1 \bmod \pi^{3\cdot 2^s}$ for
$\gamma\in\Gamma_2$, from which it follows that
$(\lambda_\gamma^{2^{r+1}}-1)\pi^{1-2^{r+2}}$ and
$(\lambda_\gamma^{2^r}-1)\pi^{1+2^r-2^{r+2}}$ are in $\F[[\pi]]$.
The lemma therefore holds for $\gamma\in \Gamma_2$ as well, and
we deduce from Lemma~\ref{val} that it holds for all $\gamma\in \Gamma$.
\end{proof}

By the bijectivity of $C\pi^{\Sigma_0\vec{c}}\Phi-1$, for each $\gamma \in \Gamma$
we have a unique $G_i(\pi)=G_{i,\gamma}(\pi) \in \F[[\pi]]$ such that
$(C\pi^{\Sigma_0\vec{c}}\Phi-1)(G_i(\pi))=(\lambda_\gamma^{\Sigma_i\vec{c}}\gamma-1)(H_i(\pi))$.
Then letting
$$\begin{aligned}
\mu_\varphi(B_i) &= (0,\ldots,0,H_i(\pi),0,\ldots,0), \\
\mu_\gamma(B_i) &= (G_0(\pi),\ldots,G_i(\pi),\ldots, G_{f-1}(\pi)),
\end{aligned}$$
where
$$\begin{aligned}
G_0(\pi)     &= C\pi^{c_0+2c_1+\cdots+2^{i-1}c_{i-1}}G_i(\pi^{2^i}), \\
G_1(\pi)     &= \pi^{c_1+2c_2+\cdots+2^{i-2}c_{i-1}}G_i(\pi^{2^{i-1}}), \\
             &\vdots \\
G_{i-1}(\pi) &= \pi^{c_{i-1}}G_i(\pi^2), \\
G_{i+1}(\pi) &= C\pi^{c_{i+1}+2c_{i+2}\cdots+2^{f-2}c_{i-1}}G_i(\pi^{2^{f-1}}), \\
             &\vdots \\
G_{f-1}(\pi) &= C\pi^{c_{f-1}+2c_0+\cdots+2^ic_{i-1}}G_i(\pi^{2^{i+1}}),
\end{aligned}$$
gives rise to an extension $[B_i] \in \Ext^1(M_{\vec{0}},M_{C\vec{c}})$.
By almost identical arguments to the case $p>2$, one finds that $[B_0],\ldots,[B_{f-1}]$ are 
linearly independent, so that they form a basis.

Now suppose $C=1$ and $\vec{c}=\vec{0}$. We can define, similarly to the $p>2$ case,
$[B_0], \ldots, [B_{f-2}], [B_{f-1}]$ such that
$$\begin{aligned}
\mu_\varphi(B_0) &=(\pi^{-2}+ \pi^{-1}, 0,\ldots, 0), \\
\mu_\varphi(B_1) &=(0, \pi^{-2}+ \pi^{-1}, 0,\ldots, 0), \\
                 & \vdots                            \\
\mu_\varphi(B_{f-1}) &=(0,\ldots, 0, \pi^{-2}+\pi^{-1}). \\
\end{aligned}$$
As before each $B_i$ is cohomologous to $B_i'$ with
$$\mu_\varphi(B_i) =(0,\ldots,0,\pi^{-1},\pi^{-1}, 0,\ldots, 0),$$
the non-zero entries being in the $i,i+1$ coordinates  (unless $f=1$,
in which case $\mu_\varphi(B_0)=0$).  We again set 
$B_{\rm cyc} =  \sum_{i=0}^{f-1} B_i'$, and define a cocycle
$B_{\rm nr}$ by setting
$$\begin{aligned}
\mu_\varphi(B_{\rm nr}) &= (1, 0,\ldots,0), \\
\mu_\gamma(B_{\rm nr}) &= (0,0,\ldots,0)
\end{aligned}$$
for all $\gamma \in \Gamma$.

The difference now is that if $p=2$, then $\dim_\F\Ext^1(M_{\vec{0}},M_{\vec{0}}) = f+2$,
so we need one more basis element.  We define $B_{\rm tr}$ by
$$\begin{aligned} \mu_\varphi(B_{\rm nr}) &= (0,0,\ldots,0), \\
\mu_\gamma(B_{\rm nr}) &= n_\gamma(1,1,\ldots,1)\end{aligned}$$
where $n_\gamma = 0$ if $\gamma \in \Gamma_3 \cup \eta\Gamma_3$, and
$n_\gamma = 1$ otherwise (so $\gamma \mapsto n_\gamma$ defines a
homomorphism $\Gamma \to \F$).  One checks as in the case $p>2$
that the elements $[B_{\rm nr}], [B_0], [B_1],\ldots,[B_{f-1}], [B_{\rm tr}]$
are linearly independent, hence form a basis for $\Ext^1(M_{\vec{0}},M_{\vec{0}})$.

Finally we assume $f=2$ and compute the spaces of bounded extensions.
There are three possibilites to consider:
\begin{enumerate}
\item $\vec{c} = (0,1)$ or $(1,0)$;
\item $\vec{c} = (0,0)$ and $C\neq 1$;
\item $\vec{c} = (0,0)$ and $C=1$.
\end{enumerate}

We omit the proofs of the following which are essentially the same as for $p>2$:

\begin{prop} \label{p201} If $\vec{c} = (0,1)$ or $(1,0)$,
then 
\begin{itemize}
\item $V_S = \Ext^1(M_{\vec{0}},M_{C\vec{c}})$;
\item if $\vec{c}= (0,1)$, then $V_{\{0\}} = V_{\{1\}} = \F [B_0]$;
\item if $\vec{c}= (1,0)$, then $V_{\{0\}} = V_{\{1\}} = \F [B_1]$;
\item $V_\emptyset = 0$.
\end{itemize}
\end{prop}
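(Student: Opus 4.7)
The argument parallels the proofs of Propositions~\ref{dim1} and~\ref{dim1'} together with the surrounding propositions in \S\ref{sec:f2}, substituting the $p=2$ basis cocycles $B_0,B_1$ from \S 6.3 and using the valuation estimate of Lemma~\ref{p2lambda} in place of Lemmas~\ref{delta}--\ref{gamma}. For each $J\subseteq S=\{0,1\}$, I would first solve the congruence $\Sigma_0\vec c\equiv \sum_{j\notin J}b_j2^j-\sum_{i\in J}a_i2^i\bmod 3$ with $1\le a_i,b_j\le 2$; since $\Sigma_0\vec c\in\{1,2\}$ is not congruent to $n_J\bmod 3$ for any $J$, the solution is unique (no $V_J^\pm$ ambiguity), and the twist $\langle\vec c\rangle_J$ is read off from $3\varepsilon_i=\Sigma_i(\vec c-\vec d)$ as a Laurent monomial in each coordinate. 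For $\vec c=(0,1)$ this gives $\vec a=(2,1),(2,0),(0,1),(0,0)$ and $\langle\vec c\rangle_J=(\pi^2,\pi^2),(1,\pi),(\pi,1),(\pi^{-1},\pi^{-1})$ for $J=S,\{0\},\{1\},\emptyset$ respectively; the $\vec c=(1,0)$ case is symmetric under exchanging the two coordinates.

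Having fixed $\iota$, the inclusion $\F[B_i]\subseteq V_J$ is verified by direct substitution. Using $H_0(\pi)=\pi^{-1}$ when $c_0=0$ and the Laurent polynomial $H_1(\pi)=\pi^{1-2^{r+2}}+\pi^{1+2^r-2^{r+2}}$ with $r=1$ from Lemma~\ref{p2H} when $c_1=1$, one multiplies $\mu_\varphi(B_i)$ and $\mu_\xi(B_i)$ by $\kappa_\varphi(A,\vec a)\langle\vec c\rangle_J$ and $\kappa_\xi(A,\vec a)\langle\vec c\rangle_J$ respectively and checks that the products lie in $\F[[\pi]]^S$ and $\pi\F[[\pi]]^S$. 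For $\vec c=(0,1)$ this gives $\iota[B_0]\in \Ext^1_\bdd$ for all $J\in\{S,\{0\},\{1\}\}$ and also $\iota[B_1]\in\Ext^1_\bdd$ for $J=S$, establishing the $\supseteq$ inclusions in all four assertions.

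The main step, and the expected obstacle, is the reverse inclusion, which I would handle by the valuation-cycling argument of Propositions~\ref{dim1} and~\ref{dim1'}. Suppose $\iota[\beta_0B_0+\beta_1B_1]$ is bounded and let $B$ be the coboundary associated with $(b_0,b_1)\in\F((\pi))^2$ witnessing this. Setting $s_i=\val b_i(\pi)$ and invoking Lemma~\ref{p2lambda} in place of Lemma~\ref{gamma} gives $(\lambda_\xi^{\Sigma_i}\xi-1)b_i(\pi)\in\pi^{s_i+3}\F[[\pi]]$ (when nonzero), which bounds how negative $s_i$ can be while still supplying the correct leading term of $\mu_\xi(B_i)$. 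Plugging these estimates into the $\mu_\varphi$-relations $C^{\delta_{j0}}\pi^{c_j}b_{j+1}(\pi^2)-b_j(\pi)$ (indices modulo $2$) and cycling once around $S$ produces an equality of the form $s_0(1-4)=\Sigma_0$, forcing $s_0=-\Sigma_0/3$, which contradicts the negativity needed to match the leading $\pi^{1-2^{r+2}}$ term of $H_j(\pi)$. This kills $\beta_1$ (resp.\ $\beta_0$) in the appropriate $J$-case, leaving $V_{\{0\}}=V_{\{1\}}=\F[B_0]$ for $\vec c=(0,1)$ and $V_\emptyset=0$; the $\vec c=(1,0)$ case follows by symmetry. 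The only real subtlety relative to $p>2$ is accommodating the length-$2$ Laurent polynomial $H_1$ from Lemma~\ref{p2H} with $r=1$, but the leading-term analysis driving the cyclic inequality is unchanged.
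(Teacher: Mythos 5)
Your outline is the argument the paper gestures at when it writes "We omit the proofs of the following which are essentially the same as for $p>2$," and the bookkeeping is correct: for $\vec c=(0,1)$ or $(1,0)$ one has $\Sigma_0\vec c\in\{1,2\}$ while $n_J\equiv 0\bmod(p^f-1)=3$ for every $J\subset S$, so the congruence solution is unique and there is indeed no $V_J^\pm$ ambiguity; the data $\vec a=(2,1),(2,0),(0,1),(0,0)$ and $\langle\vec c\rangle_J=(\pi^2,\pi^2),(1,\pi),(\pi,1),(\pi^{-1},\pi^{-1})$ check out, as do $H_0(\pi)=\pi^{-1}$ and $H_1(\pi)=\pi^{-7}+\pi^{-5}$ (taking $r=1$ in Lemma~\ref{p2H}).

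Two places are imprecise and would need repair in a careful write-up. First, $\iota[B_1]\in\Ext^1_{\bdd}$ for $J=S$ is \emph{not} obtained by bare substitution: with $\kappa_\varphi(A,\vec a)\langle\vec c\rangle_S=(A\pi^4,\pi^3)$ one finds $\mu_\varphi(\iota B_1)=(0,\pi^{-4}+\pi^{-2})\notin\F[[\pi]]^S$. One must first pass to a cohomologous cocycle, as in Lemmas~\ref{f2.2}--\ref{f2.3} for $p>2$; here subtracting the coboundary with $b_0(\pi)=\pi^{-4}+\pi^{-3}$, $b_1=0$ gives $\mu_\varphi=(\pi^{-4}+\pi^{-3},0)$, and the twist then does land in $\F[[\pi]]^S$ while the $\mu_\xi$-condition is verified with Lemma~\ref{p2lambda}. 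Second, the bound $(\lambda_\xi^{\Sigma_i}\xi-1)b_i(\pi)\in\pi^{s_i+3}\F[[\pi]]$ is only a \emph{lower} bound on the valuation gain; the analogue of Lemma~\ref{gamma} that one actually needs records that the gain on $\pi^{s}$ is exactly $3\cdot 2^{v_2(\Sigma_i+3s)}$. Moreover, since $H_1$ has two Laurent terms, the cancellations in the $\mu_\varphi$-relations split into several cases (e.g.\ whether $\pi^{-7}$ is killed by $\pi b_0(\pi^2)$ or by $b_1$), exactly as in the sub-cases of Propositions~\ref{dim1}--\ref{dim1'}. The terminal identity $s_j=\Sigma_j+4s_j$ does yield the contradiction — here most cleanly because $3\nmid\Sigma_j$ forces $s_j\notin\Z$ — but reaching it requires that case analysis rather than a single pass around $S$. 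None of this changes the high-level route, which is the one the paper intends.
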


\begin{prop} \label{p200} If $\vec{c} = (0,0)$ and $C\in \F^\times$ with $C\neq 1$,
then 
\begin{itemize}
\item $V_S^+ = V_S^- = \Ext^1(M_{\vec{0}},M_{C\vec{0}})$;
\item $V_{\{1\}}^+ = \F[B_0+B_1]$;
\item $V_{\{0\}}^+ = \F[CB_0+B_1]$;
\item $V_{\{1\}}^-= V_{\{0\}}^-=V_\emptyset^+=V_\emptyset^- = 0$.
\end{itemize}
\end{prop}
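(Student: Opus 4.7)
The proof will closely follow the pattern of Propositions~\ref{dim1} and \ref{dim1'} but with one new feature: since $\Gamma$ is not pro-cyclic when $p=2$, the boundedness condition (3) must be verified separately for both $\mu_\xi$ and $\mu_\eta$ (the analogue of condition (3$'$) only lets one reduce to $\xi$ and $\eta$ together, as $\langle\xi\rangle$ is dense only in $\Gamma_2$). The argument divides into three tasks: making the cocycles $B_0,B_1$ explicit enough to apply the isomorphisms $\iota_\pm$; verifying boundedness of the claimed basis elements; and ruling out everything else.

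\emph{Explicit cocycles and $\langle\vec{c}\rangle$-factors.} For $\vec c=\vec 0$ the construction of \S6.3 gives $\mu_\varphi(B_i)=\pi^{-1}e_i$ and $\mu_\gamma(B_i)$ determined by $(C\Phi-1)G^{(i)}_i=(\gamma-1)(\pi^{-1})$ together with the cyclic formulas. A direct calculation in $\F_2((\pi))$ gives $\eta(\pi)=\pi/(1+\pi)$ and therefore $(\eta-1)(\pi^{-1})=1$, so $G^{(i)}_{i,\eta}=(C-1)^{-1}$. Lemma~\ref{p2lambda} implies $\xi(\pi)\equiv\pi\bmod\pi^4$, whence $(\xi-1)(\pi^{-1})\in\pi^2\F[[\pi]]$ and $G^{(i)}_{i,\xi}\in\pi^2\F[[\pi]]$. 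Applying $(p^f-1)\varepsilon_i=\Sigma_i(\vec c-\vec d)$ yields $\langle\vec c\rangle_S^+=(\pi^2,\pi^2)$, $\langle\vec c\rangle_S^-=(\pi,\pi)$; $\langle\vec c\rangle_{\{0\}}^+=(1,\pi)$, $\langle\vec c\rangle_{\{0\}}^-=(\pi^{-1},1)$; $\langle\vec c\rangle_{\{1\}}^+=(\pi,1)$, $\langle\vec c\rangle_{\{1\}}^-=(1,\pi^{-1})$; $\langle\vec c\rangle_\emptyset^+=(\pi^{-1},\pi^{-1})$, $\langle\vec c\rangle_\emptyset^-=(\pi^{-2},\pi^{-2})$.

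\emph{Positive verifications.} For $J=S^\pm$, the factor $\langle\vec c\rangle_S^\pm$ absorbs the $\pi^{-1}$ in $\mu_\varphi(B_i)$ and pushes each $\mu_\gamma$-component into $\pi\F[[\pi]]^S$ for both $\gamma=\xi$ and $\gamma=\eta$, so $[\iota_\pm B_0],[\iota_\pm B_1]$ are bounded. For $V_{\{0\}}^+$, we compute $\mu_\eta(CB_0+B_1)=(0,(C^2+1)/(C-1))$, which in characteristic~$2$ simplifies to $(0,C+1)$; multiplication by $\kappa_\eta$ and $\langle\vec c\rangle_{\{0\}}^+=(1,\pi)$ then gives $(0,\lambda_\eta^4\pi(C+1))\in\pi\F[[\pi]]^S$. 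The $\mu_\xi$- and $\mu_\varphi$-conditions are immediate since $G^{(i)}_{i,\xi}\in\pi^2\F[[\pi]]$, so $[CB_0+B_1]\in V_{\{0\}}^+$; the case $V_{\{1\}}^+$ is symmetric, giving $[B_0+B_1]$.

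\emph{Negative verifications.} To show that the other spaces are no larger than claimed, assume $\iota_\pm[E]$ is bounded for $E=\beta_0B_0+\beta_1B_1$ and pick a coboundary with components $(b_0,b_1)\in\F((\pi))^S$ making $\mu_\varphi(\iota_\pm(E+B))\in\F[[\pi]]^S$. From the formula $\mu_\varphi(B)=(Cb_1(\pi^2)-b_0,b_0(\pi^2)-b_1)$ and the $\langle\vec c\rangle$-factors, one compares valuations in the two coordinates and cycles through $\varphi$ exactly as in the proof of Proposition~\ref{dim1}. For $V_\emptyset^\pm$, the cycling immediately forces $\beta_0=\beta_1=0$. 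For $V_{\{i\}}^\pm$, the cycling forces only a one-parameter family of candidates $\beta_1=C^{\pm}\beta_0$; imposing in addition that $\mu_\eta(\iota_\pm(E+B))\in\pi\F[[\pi]]^S$ then eliminates the family in the $-$ cases while selecting the specific combinations $CB_0+B_1$ and $B_0+B_1$ in the $+$ cases.

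The principal obstacle is exactly the collapse $C-1=C+1$ in $\F_2$: it is responsible both for the nonvanishing of $V_{\{i\}}^+$ (via the simplification $(C^2+1)/(C-1)=C+1\neq 0$) and for the vanishing of $V_{\{i\}}^-$, so distinguishing the two cases requires simultaneous control of the $\xi$- and $\eta$-obstructions. Beyond this characteristic-$2$ bookkeeping the argument is a routine adaptation of the case $p>2$, which is why the paper omits the details.
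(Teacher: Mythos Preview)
Your approach is correct and matches what the paper intends: adapt the valuation-cycling argument of Propositions~\ref{dim1} and~\ref{dim1'} to $p=2$, replacing the role of $\xi$ by $\eta$ since $(\xi-1)$ raises valuations too much in characteristic~$2$ to provide the needed initial constraint. The computation of the $\langle\vec c\rangle$-factors and the positive verifications are fine.

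Two small points. First, your value for the second component of $\mu_\eta(CB_0+B_1)$ is off: one has $\mu_\eta(B_0)=((C-1)^{-1},(C-1)^{-1})$ and $\mu_\eta(B_1)=(C(C-1)^{-1},(C-1)^{-1})$, so $\mu_\eta(CB_0+B_1)=(0,(C+1)(C-1)^{-1})=(0,1)$ in characteristic~$2$, not $(0,C+1)$. This is harmless since the component is still nonzero and gets multiplied by~$\pi$.

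Second, your negative-verification paragraph inverts the logical order of the argument. For $V_{\{0\}}^+$ the $\varphi$-conditions alone impose nothing (both $\iota_+ B_0$ and $\iota_+ B_1$ already have integral~$\mu_\varphi$), so ``cycling'' by itself does not single out a one-parameter family. The actual argument, parallel to case~(4) of Proposition~\ref{dim1}, runs as follows: to show $[B_1]\notin V_{\{0\}}^+$, the component-$0$ $\eta$-condition forces $(\eta-1)b_0$ to have nonzero constant term, hence $\val b_0\le -1$; then the two $\varphi$-conditions give $\val b_1=2\val b_0$ and $\val b_0=2\val b_1$, a contradiction. For $V_{\{0\}}^-$ the $\eta$-constraint in component~$0$ first forces $\beta_0=C\beta_1$; assuming $\beta_1\neq 0$, the $\eta$-constraint in component~$1$ then forces $\val b_1\le -1$, and the $\varphi$-cycling again yields a contradiction. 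So the $\eta$-condition supplies the initial valuation bound, and the $\varphi$-cycling finishes; you have the ingredients right but should state them in this order.
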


\begin{prop} \label{p2triv} If $\vec{c} = (0,0)$ and $C=1$,
then 
\begin{itemize}
\item $V_S^+ = V_S^- = \Ext^1(M_{\vec{0}},M_{\vec{0}})$;
\item $V_{\{i\}}^+ = \F [B_{\rm nr}] \oplus \F [B_i]$ for $i=0,1$;
\item $V_{\{i\}}^- = \F [B_{\rm nr}]$ for $i=0,1$;
\item $V_\emptyset^+=V_\emptyset^- = 0$.
\end{itemize}
\end{prop}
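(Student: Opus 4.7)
\emph{Plan.} The argument parallels the trivial character case for $p>2$, $f=2$ at the end of \S 6.2, with the additional basis vector $[B_{\rm tr}]$ absorbed in the spirit of Proposition~\ref{prop:cyclo}. Since $\vec c = \vec 0$ and $f=2$, we have $\Sigma_0 \vec c = 0 \equiv n_J \pmod 3$ for every $J\subseteq\{0,1\}$, so each $J$ is a double-solution case and both signs must be treated. First I tabulate, for each pair $(J,\pm)$, the associated $(\vec a,\vec b)\in\{1,2\}^2\times\{1,2\}^2$ and the scaling $\langle\vec c\rangle_J=(\pi^{\varepsilon_0},\pi^{\varepsilon_1})$, where $3\varepsilon_i = \Sigma_i(\vec c - \vec d)$.

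Next I verify boundedness of the claimed generators by direct computation. Since $\mu_\gamma(B_{\rm nr})=0$ and $\mu_\varphi(B_{\rm nr})$ is cohomologous to any standard basis vector, $\iota[B_{\rm nr}]$ is bounded whenever every entry of $\kappa_\varphi(A,\vec a)\langle\vec c\rangle_J$ has non-negative valuation; this holds in every case considered. Using the cohomologous form $\mu_\varphi(B_i')$ supported at coordinates $i,i+1$ with pole of order $1$, together with Lemma~\ref{p2lambda} (which gives $\lambda_\xi\equiv 1\bmod\pi^3$, so $\mu_\xi$ picks up a factor of $\pi$ after scaling), one checks that $\iota[B_i']$ is bounded in $V_{\{i\}}^+$ and $V_S^\pm$ but not in $V_{\{i\}}^-$, where the relevant scaling has an entry $\pi^{-1}$. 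Finally, $\iota[B_{\rm tr}]$ is bounded only in $V_S^\pm$, because there the extra factor of $\pi$ in each coordinate of $\langle\vec c\rangle_S$ clears the constant $\mu_\xi(B_{\rm tr})=(1,1)$.

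To rule out the remaining classes, I take $E=\beta_{\rm nr}B_{\rm nr}+\beta_0B_0+\beta_1B_1+\beta_{\rm tr}B_{\rm tr}$ with $\iota[E]$ bounded and pick a coboundary $B$ parametrized by $(b_0(\pi),b_1(\pi))\in\F((\pi))^S$ such that $\mu_\varphi(\iota(E+B))\in\F[[\pi]]^S$ and $\mu_\xi(\iota(E+B))\in\pi\F[[\pi]]^S$. Lemma~\ref{p2lambda} gives sharp valuation estimates for $(\xi-1)b_i(\pi)$; these force bounds on $\val(b_i)$, and propagating through the cycling relations $\pi^{c_i}b_{i+1}(\pi^2)-b_i(\pi)=e_i\mu_\varphi(E+B)-e_i\mu_\varphi(E)$ eventually yields a contradiction unless the forbidden $\beta_j$ vanish.

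The main obstacle is handling $B_{\rm tr}$, whose $\mu_\varphi$ vanishes identically so that its (un)boundedness is determined entirely by $\mu_\xi$. Because $\xi\notin\Gamma_3\cup\eta\Gamma_3$ we have $n_\xi=1$, and hence $\mu_\xi(B_{\rm tr})=(1,1)$ has nonzero constant term in each coordinate. To absorb this via a coboundary one needs $(\xi-1)b_i(\pi)$ to have a nonzero constant term, which by Lemma~\ref{p2lambda} requires $\val(b_i)\le -1$; the $\mu_\varphi$ cycling argument then rules out boundedness except for $J=S$. The analogous argument for $B_{\rm cyc}=B_0'+B_1'$, whose $\mu_\xi$ also has a nonzero constant component, completes the computations of $V_{\{i\}}^\pm$ and $V_\emptyset^\pm$.
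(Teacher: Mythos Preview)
Your overall strategy is sound and does mirror the paper's approach (which is simply ``essentially the same as for $p>2$''), but there is a genuine gap arising from a feature of $p=2$ that you have not accounted for: here $K_1=K$, so $\Gamma_1=\Gamma$ is \emph{not} procyclic, and the reduction lemma after Definition~\ref{bounded} (which lets one test boundedness against the single generator $\xi$) does not apply. Boundedness must therefore be checked against both generators $\eta$ and $\xi$.

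This is not merely a bookkeeping issue; it breaks your argument for $B_{\rm cyc}$. Your claim that $\mu_\xi(B_{\rm cyc})$ has a nonzero constant component is false. Since $\xi\in\Gamma_2$, Lemma~\ref{p2lambda} gives $\lambda_\xi\equiv 1\bmod\pi^3$, so $(\xi-1)\pi^{-1}\in\pi^2\F[[\pi]]$; combined with $g_\xi\in\pi\F[[\pi]]$ this forces $g'_\xi=0$, hence $\mu_\xi(B_{\rm cyc})=(0,0)$. (Equivalently, the homomorphism $\gamma\mapsto g'_\gamma:\Gamma\to\F$ factors through $\Gamma/\Gamma_2$, just as for $p>2$.) What does carry the nonzero constant is $\mu_\eta(B_{\rm cyc})$: one computes directly $(\eta-1)\pi^{-1}=1$ and $(\eta-1)(\pi^{-2}+\pi^{-1})=0$, so $g_\eta=0$ and $g'_\eta=1$, giving $\mu_\eta(B_{\rm cyc})=(1,1)$. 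Thus the exclusion of $[B_{\rm cyc}]$ from $V_{\{i\}}^\pm$ and $V_\emptyset^\pm$ must go through $\eta$, not $\xi$. Dually, $\mu_\eta(B_{\rm tr})=0$ (since $n_\eta=0$) while $\mu_\xi(B_{\rm tr})=(1,1)$, so your argument for $B_{\rm tr}$ via $\xi$ is the right one. Because $\dim\Ext^1=4$, you must also rule out nontrivial combinations $\gamma B_{\rm cyc}+\delta B_{\rm tr}$: impose the $e_1$-constraint at $\gamma=\eta$ (forcing the constant term of $(\eta-1)b_1$ to equal $\gamma$) and at $\gamma=\xi$ (forcing that of $(\xi-1)b_1$ to equal $\delta$); if either is nonzero then $\val(b_1)<0$, and your cycling argument on $\mu_\varphi$ then yields the contradiction. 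A minor point: your bound $\val(b_i)\le -1$ for the $B_{\rm tr}$ step is weaker than what Lemma~\ref{p2lambda} actually gives ($\val(b_i)\le -3$), though this does not affect the cycling.
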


\begin{rem} \label{rem:p2} With a view towards relating bounded extensions to crystalline ones,
we would have liked  $V_S^- = \F [B_{\rm nr}] \oplus \F [B_0] \oplus \F [B_1]$ in the
trivial case.  This could have been achieved with a more restrictive definition
of boundedness, requiring for example that $\mu_\gamma \in \pi^2\F[[\pi]]^S$
for $\gamma\in \Gamma_2$ if $p=2$.  However we opted instead for the definition
we found most uniform and easiest to work with.
\end{rem}

\section{Crystalline $\Rightarrow$ bounded}\label{sec:crys}
The paper \cite{BDJ05} formulates conjectures concerning
weights of mod $p$ Hilbert modular forms in terms of the associated
local Galois representations $G_K \to \gl_2(\F)$.  When the
local representation is reducible, i.e., of the form 
$\left(\begin{array}{cc}\chi_1&*\\0&\chi_2\end{array}\right)$,
the set of weights is determined by the associated class in $H^1(G_K,\F(\chi_1\chi_2^{-1}))$,
or more precisely whether the class lies in certain distinguished subspaces.
These subspaces are defined in terms of reductions of crystalline extensions
of crystalline characters.  Our aim is to relate these to the spaces
of bounded extensions we computed in the preceding sections.
The idea is to show that Wach modules over $\A_{K,F}^+$ associated to crystalline
extensions have bounded reductions.  This is easily seen to be true
when the Wach module itself is the extension of two Wach modules;
the problem is that this is not always the case.  Recall that Theorem~\ref{berger}
establishes an equivalence of categories between crystalline representations
and Wach modules over $\B_K^+$.  We note however that $\N$ does not define
an exact functor from $G_K$-stable lattices to
$\A_K^+$-modules.

\begin{ex}  \label{nonext} Let $K=\Q_p$ and $V = \Q_p(1-p) \oplus \Q_p$.  The corresponding
Wach module is $\N(V) = \B_{\Q_p}^+ e_1 \oplus \B_{\Q_p}^+e_2$ with 
\begin{itemize}
\item $\varphi(e_1) = q^{p-1}e_1$ and $\gamma(e_1) = (\gamma(\pi)/\chi(\gamma)\pi)^{p-1}e_1$
 for $\gamma \in \Gamma$;
\item $\varphi$ and $\Gamma$ acting trivially on $e_2$.
\end{itemize}
Let $f_1 = p^{-1}(e_1 - \pi^{p-1} e_2)$ and consider the $\A_{\Q_p}^+$-lattice
$N = \A_{\Q_p}^+f_1 \oplus \A_{\Q_p}^+e_2$ in $\N(V)$.
Then it is straightforward to check that $N$ is a Wach module over $\A_{\Q_p}^+$,
hence corresponds to a $G_{\Q_p}$-stable lattice $T$ in $V$.  Such a lattice
necessarily fits into an exact sequence
$$0 \to \Z_p(1-p) \to T \to \Z_p \to 0$$
of $\Z_p$-representations of $G_{\Q_p}$, but there is no 
surjective morphism $\alpha: N \to \A_{\Q_p}^+$.  Indeed the image would 
have to be generated over $\A_{\Q_p}^+$ by elements $\alpha(f_1)$ and
$\alpha(e_2)$ satisfying $p\alpha(f_1) = - \pi^{p-1}\alpha(e_2)$, and
hence could not be free over $\A_{\Q_p}^+$.   This example is somewhat
special since $V$ is split and $T$ can also be written as an extension
$$0 \to \Z_p \to T \to \Z_p(1-p) \to 0,$$
which does correspond to an extension of Wach modules.  However
it illustrates the problem, which we shall see also occurs for lattices
in non-split extensions of $\Q_p$-representations.
\end{ex}

We will prove under certain hypotheses that the relevant extensions of
$\Z_p$-representations do in fact correspond to extensions of Wach modules.
In particular we will show this holds in the generic case, and in all but a few
special cases when $f=2$.  As a result, we will be able to give a complete
description of the distinguished subspaces in \cite{BDJ05} in terms of 
$(\varphi,\Gamma)$-modules in the generic case and the case $f=2$.

\subsection{The extension lemma} \label{extlemma}

We first establish a general criterion for a Wach module over $\A_{K,F}^+$ to arise
from an extension of two Wach modules.  We consider extensions of 
crystalline representations of arbitrary dimension since it is no more
difficult than the case of one-dimensional representations.

Suppose that we have an exact sequence 
$$0 \to V_1 \to V \to V_2 \to 0$$
of crystalline $\Q_p$-representations of $G_K$ with Hodge-Tate weights in $[0,b]$
for some $b\ge 0$.  We shall identify $V_1$ with a subrepresentation of $V$.
By Theorem~\ref{berger}, we have an exact sequence of corresponding Wach modules
over $\B_K^+$:
$$0 \to M_1 \to M \to M_2 \to 0$$
where $M = \N(V)$, $M_1 = \N(V_1)=\N(V)\cap\D(V_1)$ and $M_2$
is the image of $\N(V)$ in $\D(V_2)$.
Now suppose that $T$ is a $G_K$-stable lattice in $V$.
Letting $T_1 = T \cap V_1$ and $T_2=T/T_1$, we have an exact sequence 
$$0 \to T_1 \to T \to T_2 \to 0$$
of $\Z_p$-representations of $G_K$.
Letting $N = \N(T) = M \cap \D(T)$ be the Wach module in $M=\N(V)$ corresponding to $T$,
we see that $N_1 := N \cap M_1 = \N(T_1)$ since
$$N\cap M_1 = \N(T)\cap \D(V_1) =
                        \D(T) \cap \N(V) \cap \D(V_1) =
                        \D(T) \cap \N(V_1) = \D(T_1)\cap \N(V_1).$$
The quotient $N_2:= N/N_1$ is a finitely generated torsion-free $\A_K^+$-module
with an action of $\varphi$ and $\Gamma$ such that $q^bN_2\subset\varphi^*(N_2)$
and $\Gamma$ acts trivially on $N_2/\pi N_2$.  (Note that $N_2$ is torsion-free
since $N/N_1 \hookleftarrow M_2$, but $N_2$ is not necessarily free as we will
see below.)  Furthermore $\N(T)\to\N(T_2)$
induces an injective homomorphism $N_2 \to \N(T_2)$ which becomes an
isomorphism on tensoring with $\B_K^+$.  

Letting $\E_K^+ = \A_K^+/p\A_K^+$, $\overline{N} = N/pN$ and $\overline{N}_i
 = N_i/pN_i$, we know also that
$$\overline{N}[1/\pi] = \E_K \otimes_{\E_K^+} \overline{N}\quad{\mbox{and}}\quad
  \overline{N}_i[1/\pi] = \E_K \otimes_{\E_K^+} \overline{N}_i$$
for $i=1,2$ are the $(\varphi,\Gamma)$-modules over $\E_K$
corresponding to the reductions mod $p$ of the corresponding $G_K$-stable
lattices.  Moreover $\overline{N}_1$ and $\overline{N}$ are
free over $\E_K^+$ and the homomorphism $\overline{N}_1 \to \overline{N}$
is injective; we identify $\overline{N}_1$ with a submodule of $\overline{N}$.

\begin{lem} \label{lem:ext} The following are equivalent:
\begin{enumerate}
\item the homomorphism $\N(T) \to \N(T_2)$ is surjective;
\item $N_2 = \N(T)/\N(T_1)$ is free over $\A_K^+$; 
\item $\overline{N}_1 = \overline{N}\cap \D(T_1/pT_1)$.
\end{enumerate}
\end{lem}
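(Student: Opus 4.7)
My plan is to prove $(1) \Leftrightarrow (2)$ and $(2) \Leftrightarrow (3)$ separately.

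The equivalence of (1) and (2) rests on the uniqueness half of Theorem~\ref{berger}(2). Direction $(1) \Rightarrow (2)$ is immediate, since surjectivity of $\N(T)\to\N(T_2)$ forces $N_2 = N/N_1 \cong \N(T_2)$, which is free over $\A_K^+$ by definition of a Wach module. For $(2) \Rightarrow (1)$, I would first verify that $N_2$ inherits the Wach-module axioms from $N$: the condition that $\Gamma$ acts trivially modulo $\pi$ descends to any quotient, and the $\varphi$-divisibility condition $q^b N_2 \subset \varphi^*(N_2)$ is inherited from $N$. Thus $N_2$ and $\N(T_2)$ are both Wach modules inside $\N(V_2)$, so by Theorem~\ref{berger}(2) they coincide provided they correspond to the same $G_K$-stable lattice in $V_2$. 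To see this, I would apply $\A_K \otimes_{\A_K^+} -$ to the exact sequence $0 \to N_1 \to N \to N_2 \to 0$ (which remains exact since all three terms are free, hence flat) and compare with the exact sequence $0 \to \D(T_1) \to \D(T) \to \D(T_2) \to 0$ coming from exactness of $\D$. This identifies $\A_K \otimes_{\A_K^+} N_2$ with $\D(T_2)$, so $N_2$ corresponds to $T_2$ under the equivalence, and hence $N_2 = \N(T_2)$, giving surjectivity of $\N(T) \to \N(T_2)$.

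For $(2) \Leftrightarrow (3)$, I would tensor the sequence $0 \to N_1 \to N \to N_2 \to 0$ with $\F_p$ over $\Z_p$. Using that $N_2 \hookrightarrow \N(T_2)$ is $p$-torsion-free (as the latter is free over $\A_K^+$), the reduction sits in a short exact sequence $0 \to \overline{N}_1 \to \overline{N} \to \overline{N}_2 \to 0$. Under the identification $\D(T_1/pT_1) \cong \overline{N}_1[1/\pi]$ inside $\D(T/pT) \cong \overline{N}[1/\pi]$, condition (3) becomes the injectivity of the localization map $\overline{N}_2 \to \overline{N}_2[1/\pi]$, i.e., the requirement that $\pi$ act without torsion on $\overline{N}_2$.

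The remaining commutative-algebra input is that $\A_K^+ = \CO_K[[\pi]]$ is a two-dimensional regular local ring with maximal ideal $(p,\pi)$, so by Auslander--Buchsbaum a finitely generated module over it is free if and only if it has depth $2$. Since $p$ is a non-zero-divisor on $N_2$, freeness of $N_2$ is equivalent to $\pi$ being a non-zero-divisor on $\overline{N}_2 = N_2/pN_2$, which matches the reformulation of (3) just derived and completes the proof of $(2) \Leftrightarrow (3)$. The step most in need of care is $(2) \Rightarrow (1)$, where the uniqueness part of the Wach-module--lattice correspondence is essential; the remainder of the argument is formal manipulation of exact sequences combined with the standard characterization of free modules over a regular local ring.
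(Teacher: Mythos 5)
Your proof is correct, and the equivalence $(1)\Leftrightarrow(2)$ is handled in essentially the paper's way: both arguments reduce to the uniqueness half of Theorem~\ref{berger}(2), the only bookkeeping difference being that you identify $T_2'$ with $T_2$ by applying $\A_K\otimes_{\A_K^+}-$ to the split exact sequence, whereas the paper obtains $T_2'\subset T_2$ from the inclusion of Wach modules and $T_2\subset T_2'$ from the induced map $T\to T_2'$.

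The genuinely different part is $(2)\Leftrightarrow(3)$. The paper's argument runs through Nakayama's lemma: since $\B_K^+\otimes_{\A_K^+}N_2$ is free of rank $d_2$, freeness of $N_2$ is equivalent to $\overline{N}_2=\overline{N}/\overline{N}_1$ being free of rank $d_2$ over $\E_K^+$, which — as $\overline{N}$ and $\overline{N}_1$ are free with rank difference $d_2$ — is equivalent to $\overline{N}_2$ being $\pi$-torsion-free. You instead invoke the Auslander--Buchsbaum identity over the two-dimensional regular local ring $\A_K^+$, reducing freeness of $N_2$ to the regularity of the sequence $(p,\pi)$, and again to $\pi$-torsion-freeness of $\overline{N}_2$. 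Both routes land at the same characterization; the paper's is slightly more self-contained (using only Nakayama and that $\E_K^+$ is a DVR), while yours is shorter if one is already comfortable with depth arguments and has the advantage of making the role of $\A_K^+$ being regular and two-dimensional explicit. One phrasing caveat: when you say the sequence $0\to N_1\to N\to N_2\to 0$ "remains exact since all three terms are free, hence flat," what actually matters is that under hypothesis (2) the quotient $N_2$ is free, hence projective, so the sequence splits and exactness is preserved under any additive functor (equivalently, $\mathrm{Tor}_1^{\A_K^+}(N_2,\A_K)=0$). The freeness of $N_1$ and $N$ is irrelevant at that step.
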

\begin{proof}  If $\N(T) \to \N(T_2)$ is surjective, then $N_2 \cong \N(T_2)$
is free over $\A_K^+$.  Conversely if $N_2$ is free, then $\N(T)$ maps onto
a Wach module over $\A_K^+$ in $\N(V_2)$, which by Theorem~\ref{berger} is
of the form $\N(T_2')$ for some $G_K$-stable lattice $T_2'$ in $V_2$;
moreover $\N(T_2') \subset \N(T_2)$ implies that $T_2'\subset T_2$.
On the other hand, since $\N(T)$ maps to $\N(T_2')$, $\D(T)$ maps to
$\D(T_2')$, hence $T$ maps to $T_2'$, and therefore $T_2 =  T_2'$.

Since $\B_K^+\otimes_{\A_K^+}N_2 \cong \N(V_2)$ is free of rank $d_2:=\dim_{\Q_p}V_2$
over $B_K^+$, it follows from Nakayama's Lemma that $N_2$ is free over $\A_K^+$
if and only if $N_2/pN_2 = \overline{N}/\overline{N}_1$ is free of rank $d_2$
over $\E_K^+$.   Since $\overline{N}$ and $\overline{N}_1$ are free over $\E_K^+$
and the difference of their ranks is $d_2$, this in turn is equivalent to
$\overline{N}/\overline{N}_1$ being torsion-free over $\E_K^+$, which in turn
is equivalent to $\overline{N}_1 = \overline{N} \cap \overline{N}_1[1/\pi]$.
\end{proof}

\begin{ex} Returning to Example~\ref{nonext}, note that since $e_1-\pi^{p-1}e_2 \in pN$,
we have $\pi^{p-1}\overline{e}_2 =  -\overline{e}_1 \in \overline{N}$, so
$\overline{e}_2 = -\pi^{1-p}\overline{e}_1\in \overline{N}_1'$,
where $\overline{N}_1' = \overline{N} \cap \overline{N}_1[1/\pi]$. 
Thus we find in this case that $\overline{N}_1 = \F_p[[\pi]]\overline{e}_1$, but
$\overline{N}_1'=\pi^{1-p}\F_p[[\pi]]\overline{e}_1$,
so the criterion of the lemma is not satisfied.
\end{ex}

We remark that everything above holds with coefficients; in particular if
$$0 \to T_1 \to T \to T_2 \to 0$$
is an exact sequence of $G_K$-stable $\CO_F$-lattices
in crystalline representations, then the sequence
$$0\to \N(T_1) \to \N(T) \to \N(T_2) \to 0$$
of $\A_{K,F}^+$-modules is exact if and only if
$$\N(T_1)/\varpi_F\N(T_1)  = (\N(T)/\varpi_F\N(T)) \cap \D(T_1/\varpi_F T_1).$$

\subsection{Extensions of rank one modules}
We now specialize to the case where $V_1$ and $V_2$ are one-dimensional over $F$,
with labelled Hodge-Tate weights $(b_{f-1},b_0,\ldots,b_{f-2})$ and 
$(a_{f-1},a_0,\ldots,a_{f-2})$ where each $a_i,b_i \ge 0$.
Suppose that we have an exact sequence
$$0 \to V_1 \to V \to V_2 \to 0$$
of crystalline $F$-representations of $G_K$, and $T$ is a $G_K$-stable
$\CO_F$-lattice in $V$.
We thus have exact sequences
$$0 \to T_1 \to T \to T_2 \to 0\quad\mbox{and}\quad 0 \to \overline{T}_1
     \to \overline{T} \to \overline{T}_2 \to 0$$
where each $T_i$ is a $G_K$-stable $\CO_F$-lattices in $V_i$ and
$\bar{\cdot}$ denotes reduction modulo $\varpi_F$.
We let $N = \N(T)$ be the Wach module over $\A_{K,F}^+$ corresponding
to $T$, and $\overline{N}$ its reduction modulo $\varpi_F$.
Thus $\overline{N}$ is a free rank two $\E_{K,F}^+$-module with
an action of $\varphi$ and $\Gamma$ such that $\Gamma$ acts
trivially modulo $\overline{N}/\pi\overline{N}$.
Furthermore $\E_{K,F}\otimes_{\E_{K,F}^+}\overline{N} \cong
\D(\overline{T})$ as $(\varphi,\Gamma)$-modules over $\E_{K,F}$.
Letting $\overline{N}_1' = \D(\overline{T}_1) \cap \overline{N}$
and $\overline{N}_2' = \overline{N}/\overline{N}_1'$, we see that
each $\overline{N}_i'$ is an $\E_{K,F}^+$-lattice in $\D(\overline{T}_i)$,
stable under $\varphi$ and $\Gamma$ with $\Gamma$ acting trivially modulo
$\pi$.  

From the classification of rank one $(\varphi,\Gamma)$-modules
over $\E_{K,F}$, we know that 
$\D(\overline{T}_1)\cong M_{C\vec{c}}= \E_{K,F}e$
for some $C\in\F^\times$ and $\vec{c}\in\Z^S$.  Under this
isomorphism, $\overline{N}_1'$ corresponds to a submodule
of the form $(\pi^{r_0},\pi^{r_1},\ldots,\pi^{r_{f-1}})\E_{K,F}^+e$.
Since $\Gamma$ acts trivially on $\E_{K,F}^+e/\pi\E_{K,F}^+e$ and
on $\overline{N}_1'/\pi\overline{N}_1'$, we see that 
$(p-1)|r_i$ for $i=0,\ldots,f-1$.  Moreover
$$\varphi^*(\overline{N}_1') = (\pi^{(p-1)b'_0},\ldots,\pi^{(p-1)b'_{f-1}})\overline{N}_1'$$
for some $b_0',\ldots,b_{f-1}'$, all non-negative since $\overline{N}_1'$
is stable under $\varphi$.  Similarly we have
$$\varphi^*(\overline{N}_2') = (\pi^{(p-1)a'_0},\ldots,\pi^{(p-1)a'_{f-1}})\overline{N}_2'$$
for some $a_0',\ldots,a_{f-1}'\ge 0$.

For the following proposition, recall that $\Sigma_j(\vec{c}) = \sum_{i=0}^{f-1}c_{i+j}p^i$ where
$c_k$ is defined for $k\in \Z$ by setting $c_k=c_{k'}$ if $k\equiv k' \bmod f$.  We also
define a partial ordering on $\Z^S$ by $\vec{c}\le \vec{c}'$ if $c_i \le c_i'$ for all $i$.

\begin{prop}  \label{prop:rk2} With the above notation, we have:
\begin{enumerate}
\item $\min(a_i,b_i) \le a_i' \le \max(a_i,b_i)$, $\min(a_i,b_i) \le b_i' \le \max(a_i,b_i)$ 
  and $a_i' + b_i' = a_i + b_i$ for $i=0,\ldots,f-1$;
\item If $\vec{a}\le \vec{b}$ or $\vec{b}\le \vec{a}$, then $\{\vec{a},\vec{b}\} = \{\vec{a}',\vec{b}'\}$;
\item $\Sigma_j(\vec{a}') \ge \Sigma_j(\vec{a})$, $\Sigma_j(\vec{b}') \le \Sigma_j(\vec{b})$,
       $\Sigma_j(\vec{a}')\equiv \Sigma_j(\vec{a}) \bmod (p^f-1)$ and 
        $\Sigma_j(\vec{b}') \equiv \Sigma_j(\vec{b})\bmod (p^f-1)$ for $j=0,\ldots,f-1$;
\item $\vec{a} = \vec{a}'$ if and only if $\vec{b}=\vec{b}'$ if and only if $\N(T) \to \N(T_2)$
   is surjective.
\end{enumerate}
\end{prop}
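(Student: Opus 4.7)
The plan begins by choosing a convenient basis of $\overline{N}$ adapted to $\overline{N}_1'$.  Since $\overline{N}_1' = \D(\overline{T}_1)\cap\overline{N}$ is a saturated rank-one $\E_{K,F}^+$-submodule of the free rank-two module $\overline{N}$, any generator of $\overline{N}_1'$ extends to a basis $\{\overline{f}_1',\overline{f}_2'\}$ in which $\varphi$ is upper triangular.  In the $i$-th component this gives a matrix $\begin{pmatrix}\overline{\alpha}'_i & \overline{\beta}'_i \\ 0 & \overline{\gamma}'_i\end{pmatrix}$ with entries in $\F[[\pi]]$ satisfying $v_\pi(\overline{\alpha}'_i) = (p-1)b_i'$ and $v_\pi(\overline{\gamma}'_i) = (p-1)a_i'$.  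For part~(1), I exploit the fact (obtained by reducing the Wach-module structure of $N$ modulo $\varpi_F$ and using $q \equiv \pi^{p-1} \pmod{p}$) that the Smith normal form of this matrix has elementary divisors $\pi^{(p-1)\min(a_i,b_i)}$ and $\pi^{(p-1)\max(a_i,b_i)}$, since these are dictated by the labelled Hodge--Tate weights of $V$.  The smaller elementary divisor is the gcd of the entries, giving $\min(a_i',b_i') \ge \min(a_i,b_i)$; the product of elementary divisors is the determinant, giving $a_i'+b_i' = a_i+b_i$.  Together these yield the bounds of~(1).

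For part~(3), I work inside $\D(\overline{T}_1) \cong M_{B\vec{b}}$.  Since $\overline{N}_1' \supseteq \overline{\N(T_1)}$, we can write $\overline{N}_1' = (\pi^{-k_0},\ldots,\pi^{-k_{f-1}})\,\overline{\N(T_1)}$ with $k_i \in \Z_{\ge 0}$.  The triviality of $\Gamma$ modulo $\pi$ on $\overline{N}_1'$, combined with the computation $\gamma(\pi^{-k_i}) \equiv \overline{\chi}(\gamma)^{-k_i}\pi^{-k_i} \pmod{\pi\cdot\pi^{-k_i}}$, forces $(p-1) \mid k_i$ for each $i$.  The $\varphi$-stability of $\overline{N}_1'$ then reads $(p-1)b_i' = (p-1)b_i + k_i - pk_{i+1}$; multiplying by $p^i$, summing, and telescoping cyclically yields
\[
\Sigma_0(\vec{b}') \;=\; \Sigma_0(\vec{b}) - \tfrac{p^f-1}{p-1}\,k_0,
\]
from which $\Sigma_0(\vec{b}') \le \Sigma_0(\vec{b})$ (since $k_0 \ge 0$) and $\Sigma_0(\vec{b}') \equiv \Sigma_0(\vec{b}) \pmod{p^f-1}$ (since $(p-1)\mid k_0$) follow.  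The statements for $\vec{a}'$ follow via $a_i'+b_i' = a_i+b_i$, and those for $\Sigma_j$ with $j \ne 0$ by cyclic relabelling.  Part~(4) is then immediate: the condition $\vec{a}=\vec{a}'$ forces $pk_{i+1} = k_i$ for all $i$, which iterates to $k_0 = p^f k_0$ and hence $k_i = 0$ for all $i$; this coincides with $\vec{b} = \vec{b}'$ and, by Lemma~\ref{lem:ext}, with surjectivity of $\N(T) \to \N(T_2)$.

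The principal obstacle is part~(2).  The constraints already established allow, in principle, intermediate solutions: vectors $\vec{c}$ with $0 \le c_i \le b_i-a_i$ and $\Sigma_0(\vec{c}) \equiv 0 \pmod{p^f-1}$ that are neither $\vec{0}$ nor $\vec{b}-\vec{a}$ can occur when $\Sigma_0(\vec{b}-\vec{a})$ is a sufficiently large multiple of $p^f-1$.  To close the gap I intend to return to the upper-triangular matrix and distinguish at each $i$ between Case~A, where $v_\pi(\overline{\beta}'_i) \ge \min((p-1)a_i',(p-1)b_i')$ and the Smith normal form forces $\{a_i',b_i'\} = \{a_i,b_i\}$, and Case~B, where $v_\pi(\overline{\beta}'_i) = (p-1)\min(a_i,b_i)$ and $a_i',b_i'$ lie strictly between $\min(a_i,b_i)$ and $\max(a_i,b_i)$.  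When $\vec{a} \le \vec{b}$, I expect Case~B to be excluded by further compatibility constraints coming from the integral Wach-module structure on $N$ (not merely its reduction modulo $\varpi_F$), and then within Case~A the trivial-versus-swap choice $m_i \in \{0, b_i - a_i\}$ (with $m_i := a_i' - a_i$) will be forced to be uniform across $i$ by the cyclic recursion $k_i = pk_{i+1} - (p-1)m_i$ together with the non-negativity and divisibility constraints on the $k_i$.
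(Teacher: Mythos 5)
Your treatment of parts (3) and (4) is essentially the paper's own argument: write $\overline{N}_1 = \pi^{\vec{k}}\overline{N}_1'$, use triviality of $\Gamma$ mod $\pi$ to get $(p-1)\mid k_i$, read off $(p-1)b_i' = (p-1)b_i + k_i - pk_{i+1}$ from $\varphi$-compatibility, and telescope to get $\Sigma_0(\vec{b}') = \Sigma_0(\vec{b}) - \tfrac{p^f-1}{p-1}k_0$; part (4) then falls out of Lemma~\ref{lem:ext}. Your part (1) reaches the correct conclusion but via an assertion you do not justify, namely that the Smith normal form of the $\varphi$-matrix on $e_i\overline{N}$ has elementary divisors $\pi^{(p-1)\min(a_i,b_i)}$ and $\pi^{(p-1)\max(a_i,b_i)}$. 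This is not automatic: the Wach module axioms only give $q^{b}N \subset \varphi^*(N) \subset N$, which controls the larger elementary divisor by the \emph{global} maximal weight, not the componentwise maximum, and extracting the componentwise statement would in effect require the inequalities of (1) that you are trying to derive from it. The paper sidesteps this entirely: the determinant identity $a_i'+b_i'=a_i+b_i$ comes from comparing $\wedge^2_{\A_{K,F}^+}\N(T)$ with $\N(T_1)\otimes\N(T_2)$ (both Wach lattices in $\wedge^2 V$, hence scalar multiples of each other), and the bounds follow by twisting to make $\min(a_i,b_i)=0$ and then simply invoking non-negativity of $a_i',b_i'$ ($\varphi$-stability of sub and quotient). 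You should either adopt that route or supply a genuine proof of the componentwise elementary-divisor claim.

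Part (2) is the real gap, and you are honest about it: ``I expect Case~B to be excluded by further compatibility constraints coming from the integral Wach-module structure'' is a hope, not a proof, and the subsequent ``uniformity across $i$'' argument in Case~A recovers nothing beyond what (1) and (3) already give --- as you yourself observe, for large weight gaps there genuinely are intermediate vectors $\vec{m}$ with $\Sigma_0(\vec{m})$ a nonzero multiple of $p^f-1$. The idea you are missing is a global dichotomy on $\varphi$, not a componentwise SNF case analysis. After twisting so that $\min(a_i,b_i)=0$ for all $i$ (so the hypothesis $\vec{a}\le\vec{b}$ or $\vec{b}\le\vec{a}$ means $\vec{a}=\vec{0}$ or $\vec{b}=\vec{0}$), the paper argues by contradiction: if both $\vec{a}'\ne\vec{0}$ and $\vec{b}'\ne\vec{0}$, then $\varphi^f$ sends each of $\overline{N}_1'$ and $\overline{N}_2'$ into $\pi$ times itself, so $\varphi^{2f}(\overline{N})\subset \pi\overline{N}$, i.e.\ $\varphi$ is topologically nilpotent on $N$. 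On the other hand, $V$ is \emph{ordinary}: if $\vec{b}=\vec{0}$ then $V_1$ is the unramified sub, and if $\vec{a}=\vec{0}$ with $\vec{b}\ne\vec{0}$ then (in the paper's sign convention, where the positive crystalline $V_1$ has all weights strictly positive and the unramified $V_2$ has weight $0$) the filtration on $D_{\mathrm{crys}}(V)$ is forced to jump exactly along $D_{\mathrm{crys}}(V_1)$, so the short exact sequence splits and $V_2$ lifts to an unramified sub of $V$. Either way $D_{\mathrm{crys}}(V)\cong N/\pi N$ contains a $\varphi$-eigenvector with unit eigenvalue; lifting it to $N$ contradicts topological nilpotence. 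Without this global step I do not see how your componentwise SNF analysis can close the gap.
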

\begin{proof} (1) We first prove that $a_i' + b_i' = a_i + b_i$ for $i=0,\ldots,f-1$.
The $\A_{K,F}^+$-module $\wedge^2_{\A_{K,F}^+}\N(T)$ inherits actions of $\varphi$
and $\Gamma$ making it a Wach module in $\wedge^2_{\B_{K,F}^+}\N(V)
\cong \N(\wedge^2_F V)$, hence it corresponds to an $\CO_F$-lattice in $\wedge^2_F V$.  The same is
true of $\N(T_1)\otimes_{\A_{K,F}^+} \N(T_2)$; since any two such lattices are scalar multiples of each other,
it follows that the corresponding Wach modules over $\A_{K,F}^+$ are isomorphic, and hence that
$$\overline{N}_1'\otimes_{\E_{K,F}^+}\overline{N}_2' \cong 
\wedge^2_{\E_{K,F}^+}\overline{N} \cong (\N(T_1)/\varpi_F\N(T_1))\otimes_{\E_{K,F}^+}(\N(T_2)/\varpi_F\N(T_2))$$
as $\E_{K,F}^+$-modules.  Moreover the isomorphisms are compatible with the action of $\varphi$, so
 $a_i'+b_i' = a_i+b_i$ for all $i$.

For the inequalities,  suppose first that $\min(a_i,b_i)=0$ for each $i$.  Since
$a_i + b_i = a_i' + b_i'$,  we know that $a_i'+b_i'\le\max(a_i,b_i)$
for each $i$, and the result follows.  The general case follows by twisting $T$ by a character with the
correct Hodge structure and $\N(T)$ by the corresponding Wach module.

(2)  By twisting we can again reduce to the case where $\min(a_i,b_i)=0$ for each $i$.  The
condition $\vec{a}\le \vec{b}$ or $\vec{b}\le\vec{a}$ becomes $\vec{a}$ or $\vec{b}=\vec{0}$, and we
must show that $\vec{a}'$ or $\vec{b}' = \vec{0}$.  The result then follows from the
equality $\vec{a}+\vec{b} = \vec{a}'+\vec{b}'$ proved in (1).

If $\vec{a}'\neq \vec{0}$ and $\vec{b}'\neq\vec{0}$, 
then $\varphi^f(\overline{N}'_i) \subset \pi\overline{N}_i'$ for $i=1,2$, so that
$\varphi^{2f}(\overline{N})\subset \pi\overline{N}$.  This means that $\varphi$ is
topologically nilpotent on $N$ in the sense that $\varphi(N) \subset (\pi,\varpi_F)N$
for some $n> 0$.

On the other hand, the $F$-representation $V$ of $G_K$ is {\em ordinary} in the sense
that there is an exact sequence
$$0 \to V_0 \to V \to V/V_0 \to 0$$
where $V_0$ is unramified, $V/V_0$ is positive crystalline, and each is one-dimensional
over $F$.  (If $\vec{b} = \vec{0}$, then take $V_0 = V_1$; if $\vec{b}\neq\vec{0}$ and
$\vec{a}=\vec{0}$, then the sequence $0\to V_1 \to V \to V_2 \to 0$ splits
and we can take $V_0$ to be the image of $V_2$.)  Since $\D_\crys(V_0) \subset
\D_\crys(V) \cong \N(V)/\pi\N(V)$ and $N/\pi N$ is a $\varphi$-stable lattice
in $\N(V)/\pi\N(V)$, we see that there is an element $e_0 \in N/\pi N$ such
that $e_0 \not\in \varpi_F(N/\pi N)$ and $\phi(e_0) = ue_0$ for some $u\in
(\CO_F\otimes\CO_K)^\times$.  Choosing a lift $\tilde{e}_0 \in N$ of $e_0$,
we have that $\varphi(\tilde{e}_0) \in u\tilde{e}_0 + (\pi,\varpi_F)N$,
contradicting that $\varphi$ is topologically nilpotent on $N$.

(3) Since $\overline{N}_1 = \N(T_1)/\varpi_F\N(T_1)$ is contained in $\overline{N}_1'$,
  we can write $\overline{N}_1 = (\pi^{t_0},\pi^{t_1},\ldots,\pi^{t_{f-1}})\overline{N}_1'$
  for some integers $t_0,t_1,\ldots,t_{f-1}\ge 0$.  We therefore have
  $$\begin{aligned}
    \phi^*(\overline{N_1})&=(\pi^{pt_1},\pi^{pt_2},\ldots,\pi^{pt_{f-1}},\pi^{pt_0})\phi^*(\overline{N}_1')\\
                          &=(\pi^{b_0'+pt_1},\pi^{b_1'+pt_2},\ldots,\pi^{b_{f-2}'+pt_{f-1}},\pi^{b_{f-1}'+pt_0})\overline{N}_1'.
                          \end{aligned}$$
  On the other hand, we also have
    $$\begin{aligned}
    \phi^*(\overline{N_1})&=(\pi^{b_0},\pi^{b_1},\ldots,\pi^{b_{f-1}})(\overline{N}_1)\\
                          &=(\pi^{b_0+t_0},\pi^{b_1+t_1},\ldots,\pi^{b_{f-1}+t_{f-1}})\overline{N}_1'.
                          \end{aligned}$$
  It follows that $b_j + t_j = b_j' + pt_{j+1}$
and thus $\Sigma_j(\vec{b}) + \sum_{i=0}^{f-1}t_{i+j}p^i = 
                   \Sigma_j(\vec{b}') + \sum_{i=0}^{f-1}t_{i+j+1}p^{i+1}$, and therefore that
    $\Sigma_j(\vec{b}) = t_j(p^f-1) + \Sigma_j(\vec{b}')$.  The assertions concerning
    $\Sigma_j(\vec{b}')$ follow, and those concerning $\Sigma_j(\vec{a}')$ then follow using (1).
    
(4)  We see from the proof of (3) that the hypotheses of Lemma~\ref{lem:ext} are satisfied
     if and only if $\overline{N}_1 = \overline{N}_1'$ if and only if $\vec{t}=0$.  On the other
     hand $\vec{b} = \vec{b}'$ if and only if $t_i = pt_{i+1}$ for $i=0,\ldots,f-1$, which implies
     that $t_i = p^ft_i$ for $i=0,\ldots,f-1$, hence is equivalent to $\vec{t}=\vec{0}$.
     That $\vec{a}=\vec{a}'$ if and only if $\vec{b} = \vec{b}'$ follows from (1).
\end{proof}

\subsection{Generic case}
In this subsection, we specialize to the generic case in the sense of \S\ref{sec:generic},
namely $0 < c_i < p-1$ for all $i$.  Recall that if $J\subset S$, then there are
integers $a_i$ and $b_i$ for $i\in S$ such that
\begin{itemize}
\item $1\le a_i \le p$ if $i\in J$, and $a_i = 0$ if $i\not\in J$;
\item $1\le b_i \le p$ if $i\not\in J$, and $b_i = 0$ if $i\in J$;
\item $\sum_{i\in S}b_ip^i - \sum_{i\in S}a_ip^i \equiv \sum_{i\in S}c_ip^i \bmod p^f-1$.
\end{itemize}
Moreover the $a_i$ and $b_i$ are uniquely determined by $\vec{c}$ and $J$ except in
the case where we can take either $a_i=p$ for $i\in J$ and $b_i=1$ for $i\not\in J$,
or $a_i=1$ for $i\in J$ and $b_i=p$ for $i\not\in J$.

\begin{lem}  \label{lem:generic} Suppose that $0 < c_i < p-1$ for all $i$.  Then $a_i < p$ and $b_i < p$
for all $i$ unless $\vec{c}=\vec{1}$, $J=\emptyset$ and $\vec{b} = \vec{p}$,
or $\vec{c}=\overrightarrow{p-2}$, $J=S$ and $\vec{a}=\vec{p}$.  In particular,
$\vec{a}$ and $\vec{b}$ are uniquely determined by
$\vec{c}$ and $J$ except in the above two cases where we can also
have $\vec{b}$ or $\vec{a} = \vec{1}$ instead of $\vec{p}$. 
\end{lem}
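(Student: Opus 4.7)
The plan is to translate the congruence into an integer equation via a change of variables and then perform a base-$p$ digit analysis. First I would introduce new variables $e_k = p - a_k$ for $k \in J$ and $e_k = b_k - 1$ for $k \notin J$, so each $e_k$ ranges over $\{0, 1, \ldots, p-1\}$. Under this substitution the defining congruence
$$\sum_{j\notin J}b_jp^j - \sum_{i\in J}a_ip^i \equiv \Sigma_0\vec{c} \pmod{p^f-1}$$
becomes
$$\sum_{k\in S} e_k p^k \equiv \Sigma_0\vec{c} - v_{\min} \pmod{p^f-1},$$
where $v_{\min} := \sum_{k\notin J}p^k - p\sum_{k\in J}p^k$; crucially, $a_k = p$ corresponds to $e_k = 0$ (for $k\in J$) and $b_k = p$ to $e_k = p-1$ (for $k\notin J$).

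Next I would use $p^f \equiv 1 \pmod{p^f-1}$ to reindex the shifted sum in $v_{\min}$ and rewrite
$$\Sigma_0\vec{c} - v_{\min} \equiv \sum_k \gamma_k p^k \pmod{p^f-1}, \qquad \gamma_k := c_k - [k\notin J] + [k-1 \in J],$$
with indices taken mod $f$. The key observation, and the point at which the generic hypothesis is invoked, is that $1 \le c_k \le p-2$ forces each $\gamma_k \in \{0, 1, \ldots, p-1\}$, so $\sum \gamma_k p^k$ is already a standard base-$p$ expansion, lying in $[0, p^f-1]$.

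The argument then splits on the value of $\sum \gamma_k p^k$. In the interior case, where it lies in $(0, p^f-1)$, uniqueness of base-$p$ expansions forces $\vec{e} = (\gamma_0,\ldots,\gamma_{f-1})$ to be the unique solution; a direct check shows that $\gamma_k = 0$ for some $k \in J$ would require $c_k \le 0$ and $\gamma_k = p-1$ for some $k \notin J$ would require $c_k \ge p-1$, both contradicting genericity, so $a_k, b_k < p$ throughout. On the boundary, $\sum \gamma_k p^k = 0$ forces $c_k = [k\notin J] - [k-1\in J]$ for all $k$, which together with $c_k \ge 1$ yields $J = \emptyset$ and $\vec{c} = \vec{1}$; symmetrically $\sum \gamma_k p^k = p^f - 1$ yields $J = S$ and $\vec{c} = \overrightarrow{p-2}$. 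In each of these exceptional configurations the congruence $\sum e_k p^k \equiv 0 \pmod{p^f-1}$ admits exactly two solutions $\vec{e} = \vec{0}$ and $\vec{e} = \overrightarrow{p-1}$, corresponding to the two alternatives $(\vec{a}, \vec{b}) \in \{(\vec{p}, \vec{1}), (\vec{1}, \vec{p})\}$ listed in the statement.

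The entire argument is elementary, and there is no serious obstacle; the crux is the fortuitous match between the generic range $[1, p-2]$ and the shift needed to make the $\gamma_k$ valid base-$p$ digits, which collapses the congruence modulo $p^f - 1$ into a genuine equality of integers and makes the case analysis completely transparent.
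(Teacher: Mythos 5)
Your proposal is correct, and it takes a genuinely different route from the paper's. The paper works one coordinate at a time: assuming $b_i = p$, it considers $\Sigma_i(\vec{c}) \equiv \Sigma_i(\vec{b}) - \Sigma_i(\vec{a}) \bmod (p^f-1)$, bounds $\Sigma_i(\vec{c})$ in $\left[\tfrac{p^f-1}{p-1},\,(p-2)\tfrac{p^f-1}{p-1}\right]$ using genericity, and divides into four cases on the magnitude and sign of $\Sigma_i(\vec{b}) - \Sigma_i(\vec{a})$, reducing modulo $p$ to pin down $c_i$ in each. You instead perform a single affine change of variables $e_k = p - a_k$ (for $k\in J$) and $e_k = b_k - 1$ (for $k\notin J$) that maps every constrained coordinate uniformly into the digit range $\{0,\ldots,p-1\}$, so the defining congruence becomes $\sum e_k p^k \equiv \sum \gamma_k p^k \bmod (p^f-1)$ with $\gamma_k = c_k - [k\notin J] + [k-1\in J]$. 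Genericity is then precisely the observation that each $\gamma_k\in\{0,\ldots,p-1\}$, so the right-hand side is already a legal base-$p$ expansion and the whole congruence collapses to a digit identity: uniqueness of base-$p$ expansions forces $\vec{e}=\vec{\gamma}$ except at the two endpoints $\sum\gamma_k p^k = 0$ or $p^f-1$, which you correctly identify as $(\vec{c},J)=(\vec{1},\emptyset)$ and $(\overrightarrow{p-2},S)$ respectively, where there are two solutions. Your approach handles all coordinates simultaneously, avoids the paper's four-way case split, and makes both the uniqueness assertion and the characterization of the exceptional cases transparent; the paper's approach is more elementary in that it needs no auxiliary variables. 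One small imprecision: your statement $(\vec{a},\vec{b})\in\{(\vec{p},\vec{1}),(\vec{1},\vec{p})\}$ should be read as referring only to the components supported on $J$ and $S\setminus J$, since in the two exceptional cases one of $\vec{a}$ or $\vec{b}$ is identically $\vec{0}$ — but this does not affect the argument.
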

\begin{proof}  Suppose that $b_i = p$ and consider $\Sigma_i(\vec{c})$.  We have
$\Sigma_i(\vec{c}) \equiv \Sigma_i(\vec{b}) - \Sigma_i(\vec{a}) \bmod (p^f-1)$ and
$$1+p+\cdots+ p^{f-1} \le \Sigma_i(\vec{c}) \le (p^f-1) - (1+p+\cdots+ p^{f-1}).$$
If $\Sigma_i(\vec{b}) - \Sigma_i(\vec{a}) \in [0,p^f-1)$, then 
$\Sigma_i(\vec{c}) = \Sigma_i(\vec{b}) - \Sigma_i(\vec{a}) \equiv 0 \bmod p$, so 
$c_i=0$, giving a contradiction.  If  $\Sigma_i(\vec{b}) - \Sigma_i(\vec{a}) \in [1-p^f,0)$,
then $\Sigma_i(\vec{c}) = p^f-1 + \Sigma_i(\vec{b}) - \Sigma_i(\vec{a}) \equiv p-1  \bmod p$,
so $c_i = p-1$, giving a contradiction.  If $\Sigma_i(\vec{b}) - \Sigma_i(\vec{a}) \ge p^f-1$,
then $0 \le \Sigma_i(\vec{b}) - \Sigma_i(\vec{a}) - (p^f-1) \le 1+\cdots + p^{f-1}$, giving
$\Sigma_i(\vec{c}) = 1+\cdots + p^{f-1}$, so that $\vec{c}=\vec{1}$, $J=\emptyset$
and $\vec{b}=\vec{p}$.  If $\Sigma_i(\vec{b}) - \Sigma_i(\vec{a}) \le 1-p^f$, then
similar considerations give a contradiction.  The proof in the case $a_i = p$ is similar
(in fact, one can exchange $\vec{c}$ with $\overrightarrow{p-1} - \vec{c}$, $J$ with its complement
and $\vec{a}$ with $\vec{b}$), giving $\vec{c}=\overrightarrow{p-2}$, $J=S$ and $\vec{a}=\vec{p}$.
\end{proof}

Suppose that $V_1 = F(\chi_1)$ and $V_2 = F(\chi_2)$ where $\chi_1$ and $\chi_2$
are crystalline characters of $G_K$ with labelled Hodge-Tate weights $(b_{f-1},b_0,\ldots,b_{f-2})$ and 
$(a_{f-1},a_0,\ldots,a_{f-2})$
respectively, $V$ is an extension
$$0 \to V_1 \to V \to V_2 \to 0$$
of representations of $G_K$ over $F$, and $T$ is a $G_K$-stable $\CO_F$-lattice in $V$.
Letting $T_1 = T \cap V_1$ and $T_2 = T/T_1$, we have
$$0 \to T_1 \to T \to T_2 \to 0.$$
\begin{lem}  \label{lem:genexact} Suppose that $\vec{c}\in\Z^S$ is generic and $\vec{a},\vec{b}\in\Z^S$
are as above.  If $V$ is crystalline, then 
$$0 \to \N(T_1) \to \N(T) \to \N(T_2) \to 0$$
is exact.
\end{lem}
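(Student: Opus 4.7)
The plan is to apply Lemma~\ref{lem:ext} (in its $\CO_F$-coefficient form) together with Proposition~\ref{prop:rk2}; by part (4) of that proposition, exactness of the Wach module sequence is equivalent to the equality $\vec{a}'=\vec{a}$ for the invariants $\vec{a}',\vec{b}'$ attached to $\overline{N}_1'=\D(\overline{T}_1)\cap\overline{N}$. Recall that the $\iota$ setup forces $\min(a_i,b_i)=0$ for every $i$, so Proposition~\ref{prop:rk2}(1) gives $a_i',b_i'\in[0,a_i+b_i]$ with $a_i'+b_i'=a_i+b_i$, and Proposition~\ref{prop:rk2}(3) writes $\Sigma_j(\vec{a}')=\Sigma_j(\vec{a})+k_j(p^f-1)$ with $k_j\in\Z_{\ge 0}$.

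First I would treat the principal case, where $a_i+b_i\le p-1$ for every $i$; by Lemma~\ref{lem:generic} this covers all $J\subset S$ outside two exceptional $\iota$-choices. Here $a_i'\in[0,p-1]$, so $\Sigma_j(\vec{a}')\le p^f-1$ and $k_j\in\{0,1\}$. If $k_j=1$, then $a_i'=p-1$ for every $i$ and $\Sigma_j(\vec{a})=0$, which forces $\vec{a}=\vec{0}$ (hence $J=\emptyset$) and $b_i\ge p-1$ for all $i$; but then $\vec{b}=\overrightarrow{p-1}=\vec{c}$, contradicting $c_i<p-1$. Therefore $k_j=0$, $\Sigma_j(\vec{a}')=\Sigma_j(\vec{a})$, and uniqueness of the base-$p$ expansion with digits in $[0,p-1]$ yields $\vec{a}'=\vec{a}$.

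Next I would handle the two exceptional $\iota$-configurations from Lemma~\ref{lem:generic}, namely $(\vec{c},J,\vec{a},\vec{b})=(\overrightarrow{p-2},S,\vec{p},\vec{0})$ and $(\vec{c},J,\vec{a},\vec{b})=(\vec{1},\emptyset,\vec{0},\vec{p})$. In each, $\vec{a}$ and $\vec{b}$ are comparable, so Proposition~\ref{prop:rk2}(2) forces $\{\vec{a}',\vec{b}'\}=\{\vec{a},\vec{b}\}$. In the first, $\Sigma_j(\vec{b}')\le\Sigma_j(\vec{b})=0$ rules out the swap. In the second, the swap $\vec{a}'=\vec{p}$ would require $p(p^f-1)/(p-1)\equiv 0\pmod{p^f-1}$, i.e.\ $(p-1)\mid p$, impossible since $p\ge 3$ (genericity of $\vec{c}=\vec{1}$ excludes $p=2$). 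Hence $\vec{a}'=\vec{a}$ in all cases, and the sequence of Wach modules is exact.

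The main obstacle---precisely the phenomenon isolated by Example~\ref{nonext}---is ruling out $k_j=1$ in the principal case; the strict genericity bound $c_i<p-1$ is what delivers the contradiction, since $k_j=1$ would force $\vec{c}=\overrightarrow{p-1}$. Once this is in hand, the remaining congruence bookkeeping is routine, which also explains why the same strategy will not suffice for the non-generic $f=2$ analysis in the following subsection, where one must replace this combinatorial dichotomy by a more delicate case-by-case study.
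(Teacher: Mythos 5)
Your proof is correct and follows essentially the same route as the paper's: reduce exactness to $\vec{a}'=\vec{a}$ via Proposition~\ref{prop:rk2}(4), split into the principal case $\max(a_i,b_i)\le p-1$ handled with Proposition~\ref{prop:rk2}(1),(3) and base-$p$ uniqueness, and the residual case $\{\vec{a},\vec{b}\}=\{\vec{0},\vec{p}\}$ handled with Proposition~\ref{prop:rk2}(2),(3). One small slip in the $k_j=1$ branch: with $J=\emptyset$ and $\vec{b}=\overrightarrow{p-1}$ you have $\Sigma_0\vec{b}=p^f-1\equiv 0$, so the \emph{normalized} $\vec{c}$ is $\vec{0}$ (not $\overrightarrow{p-1}$); the contradiction is with $c_i>0$ rather than $c_i<p-1$, but genericity is violated either way, so the argument goes through unchanged.
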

\begin{proof}  Since $V$ is crystalline, there is a Wach module $N=\N(T)$
over $\A_{K,F}^+$ corresponding to $T$.  Since $\vec{c}$ is generic, we have
$\max(a_i,b_i) \le p-1$ for all $i$, unless $\{\vec{a},\vec{b}\}=\{\vec{0},\vec{p}\}$.
If $\max(a_i,b_i) \le p-1$ for all $i$, then by Proposition~\ref{prop:rk2}~(1) and~(3),
we have
\begin{itemize}
\item $0 \le a_i' \le \max(a_i,b_i) \le p-1$ for all $i$, and
\item $\sum_{i=0}^{f-1} a_i'p^i \equiv \sum_{i=0}^{f-1}a_ip^i \bmod (p^f-1)$.
\end{itemize}
These conditions imply that $\vec{a} = \vec{a}'$ (unless $\{\vec{a},\vec{a}'\} =
\{\vec{0},\overrightarrow{p-1}\}$, which would give $\{\vec{a},\vec{b}\} = \{\vec{0},\overrightarrow{p-1}\}$
and hence that $\vec{c}=\vec{0}$ is not generic).  If 
$\{\vec{a},\vec{b}\}=\{\vec{0},\vec{p}\}$, then we instead use parts~(2) and~(3)
of Proposition~\ref{prop:rk2} to conclude that $\vec{a} = \vec{a}'$.
Thus in either case, we conclude from part~(4) of the proposition that
$\N(T)\to\N(T_2)$ is surjective, and therefore the sequence of Wach modules
is exact.
\end{proof}

Now consider a character $\psi:G_K\to \F^\times$.  By the classification of rank one
$(\varphi,\Gamma)$-modules over $\E_{K,F}$, there is a unique pair $C\in\F^\times$,
$\vec{c}\in\Z^S$ with $0\le c_i \le p-1$ and some $c_i < p-1$, such that
$\D(\F(\psi))\cong M_{C\vec{c}}$.
Suppose that $J\subset S$ and $\vec{a},\vec{b}\in\Z^S$ satisfying the usual
conditions, and that $A,B\in\F^\times$ with $BA^{-1}=C$.  Recall then that
we have defined a subspace $\Ext^1_{\bdd}(M_{A\vec{a}},M_{B\vec{b}})$ of
$\Ext^1(M_{A\vec{a}},M_{B\vec{b}})$ and an isomorphism
$$\iota: \Ext^1(M_{\vec{0}},M_{C\vec{c}}) \to \Ext^1(M_{A\vec{a}},M_{B\vec{b}}),$$
well-defined up to an element of $\F^\times$.   We then define $V_J$ as the
preimage of $\Ext^1_{\bdd}(M_{A\vec{a}},M_{B\vec{b}})$.  This space is independent of the
choices of $A$ and $B$ such that $BA^{-1}=C$, but for certain $J$ there are two
choices for the pair $\vec{a},\vec{b}$; we denote by $V_J^+$ the space gotten by
taking $a_i = p$ for all $i\in J$ and $b_i = 1$ for all $i\not\in J$, and by $V_J^-$
the one gotten by taking $a_i=1$ for all $i\in J$ and $b_i=p$ for all $i\not\in J$.

We now also recall the definition of the subspaces of $H^1(G_K,\F(\psi))$ used in \cite{BDJ05},
but we modify the notation from there to be more consistent with this paper.  (For the translation
between the notations, see the remark below.)
For $\psi$, $J$, $\vec{a}$, $\vec{b}$ as above, we consider a crystalline lift
$\tilde{\psi}_J:G_K \to F^\times$ of $\psi$ with labeled Hodge-Tate weights $(h_{f-1},h_0,\ldots,h_{f-2})$
where $h_i=-a_i$ if $i\in J$ and $h_i=b_i$ if $i \not\in J$.  Such a character $\tilde{\psi}_J$ is uniquely
determined up to an unramified twist, which we specify by requiring that $\tilde{\psi}_J(g)$
be the Teichm\"uller lift of $\psi(g)$ for $g\in G_K$ corresponding via local class field
theory to the uniformizer $p\in K^\times$.  When $(\vec{a},\vec{b})$ is not uniquely
determined by $J$, we adopt the notation $\tilde{\psi}_J^\pm$ as usual.  Recall that
$H^1_f(G_K,F(\tilde{\psi}_J))$ denotes the space of cohomology classes corresponding to
crystalline extensions
$$0 \to F(\tilde{\psi}_J) \to V \to F \to 0.$$
We then define the space $L_J'$  as the image in $H^1(G_K,\F(\psi))$ of the preimage
in $H^1(G_K,\CO_F(\tilde{\psi}_J))$ of $H^1_f(G_K,F(\tilde{\psi}_J))$.  We set $L_J=L_J'$
except in the following two cases:
\begin{itemize}
\item If $\psi$ is cyclotomic, $J=S$ and $\vec{a}=\vec{p}$, we let $L_J=H^1(G_K,\F(\psi))$.
\item If $\psi$ is trivial and $J\neq S$, we let $L_J$ be the span of $L_J'$ and the unramified
 class.
\end{itemize}
As usual we disambiguate using the notation $L_J^\pm$.  More precisely, we define $\tilde{\psi}_J^\pm$
as above, taking all $a_i = p$ and $b_j = 1$ for $\tilde{\psi}_J^+$, and all $a_i = 1$ and $b_j = p$
for $\tilde{\psi}_J^-$.  We then define $(L_J')^{\pm}$ as the image in $H^1(G_K,\F(\psi))$ of the preimage
in $H^1(G_K,\CO_F(\tilde{\psi}_J^\pm))$ of $H^1_f(G_K,F(\tilde{\psi}_J)^\pm)$.  We then make the same
modifications as above in the same exceptional cases to obtain the space $L_J^\pm$.
In particular, the first exceptional case above actually only applies to $L_J^+$.
We identify $L_J$ (or $L_J^\pm$) with subspaces of
$\Ext^1(M_{\vec{0}},M_{C\vec{c}})$ via the isomorphisms
$$H^1(G_K,\F(\psi)) \cong \Ext^1_{\F[G_K]}(\F,\F(\psi)) \cong \Ext^1(\D(\F),\D(\F(\psi)))
                    \cong \Ext^1(M_{\vec{0}},M_{C\vec{c}}),$$ 
the last of these given by an isomorphism $\D(\F(\psi))\cong M_{C\vec{c}}$ which is
unique up to an element of $\F^\times$.

\begin{rem} \label{rmk:compare} The article \cite{BDJ05} (after Lemma~3.9) defines spaces
$L_\alpha \subset H^1(G_K,\overline{\F}_p(\psi))$ for certain pairs $(V,J)$  where $J\subset S$ and
$V$ is an irreducible representation of $\gl_2(k)$.  The relation between the spaces is that 
$L_{(V,J')} = L_J \otimes_{\F}\overline{\F}_p$ where $J = \{\, i  \,|\, i - 1 \in J' \,\}$
and if 
$V\cong \otimes_{i\in S} \left(\det^{m_i}\otimes_k {\mathrm{Sym}}^{n_i-1} k^2 \otimes_{k,\tau_i}\overline{\F}_p\right)$,
then we take $a_i = n_{i-1}$ if $i\in J$ and $b_i = n_{i-1}$ if $i\not\in J$.
(The space $L_{(V,J')}$ is in fact independent of $\vec{m}$, and when there
are two choices of $\vec{n}$ compatible with $\psi$ and $J'$, the resulting
spaces $L_{(V,J')}$ are gotten from $L_J^\pm$ in the evident way.)
\end{rem}

We now prove our main result in the generic case.

\begin{thm} \label{thm:generic} Suppose that $\vec{c}$ is generic.
\begin{enumerate}
\item Suppose that $J\neq S$ (resp.~$J\neq\emptyset$) if $\vec{c}=\overrightarrow{p-2}$
 (resp.~$\vec{c}=\vec{1}$).  Then $V_J = L_J$, so $L_J = \oplus_{i\in J}L_{\{i\}}$.
\item If $\vec{c}=\overrightarrow{p-2}$ and $J=S$, then $V_J^\pm = L_J^\pm$, so
 $L_J^- = \oplus_{i\in J}L_{\{i\}}$ if $f > 1$.
\item If $\vec{c}=\vec{1}$ and $J=\emptyset$, then $V_J^\pm = L_J^\pm = \{0\}$.
\end{enumerate}
\end{thm}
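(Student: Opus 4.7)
The strategy will be to establish the equality $V_J^{(\pm)} = L_J^{(\pm)}$ in each case, from which the decomposition $L_J^{(\pm)} = \bigoplus_{i \in J} L_{\{i\}}$ follows by combining with the corresponding decomposition of $V_J$ obtained in Proposition~5.5. The proof will proceed by establishing the inclusion $L_J^{(\pm)} \subseteq V_J^{(\pm)}$ directly and then comparing dimensions.

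For the inclusion, I would begin with a crystalline extension $0 \to F(\tilde{\psi}_J^{(\pm)}) \to V \to F \to 0$ representing a class in $L_J^{(\pm)}$, and choose a $G_K$-stable $\CO_F$-lattice $T \subset V$ with $T_1 := T \cap F(\tilde{\psi}_J^{(\pm)}) \cong \CO_F(\tilde{\psi}_J^{(\pm)})$ and $T_2 := T/T_1 \cong \CO_F$. The genericness of $\vec{c}$ allows us to invoke Lemma~\ref{lem:genexact} to produce an exact sequence of Wach modules
$$0 \to \N(T_1) \to \N(T) \to \N(T_2) \to 0$$
over $\A_{K,F}^+$. Reducing modulo $\varpi_F$ and identifying the rank one reductions with $M_{B\vec{b}}$ and $M_{A\vec{a}}$ via Dousmanis' explicit construction from \S\ref{sec:rk1}, this yields an extension class in $\Ext^1(M_{A\vec{a}},M_{B\vec{b}})$. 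Boundedness in the sense of Definition~\ref{bounded} then follows immediately from the defining properties of Wach modules: the matrices of $\varphi$ and $\Gamma$ are defined over $\A_{K,F}^+$ and reduce to matrices over $\F[[\pi]]^S$, the diagonal entries match $\kappa_\varphi(B,\vec{b})$ and $\kappa_\varphi(A,\vec{a})$, and the triviality of the $\Gamma$-action modulo $\pi$ gives condition (3) for all $\gamma \in \Gamma$. Applying $\iota^{-1}$ then puts the class in $V_J^{(\pm)}$, and this construction is compatible with the reduction map $H^1(G_K,\CO_F(\tilde{\psi}_J^{(\pm)})) \to H^1(G_K,\F(\psi))$ defining $L_J^{(\pm)}$.

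To conclude equality, I would compare dimensions. Proposition~\ref{gen} gives $\dim_\F V_J = |J|$ in the generic setting of part~(1), and a standard Bloch-Kato calculation using the local Euler-Poincar\'e formula for $F(\tilde{\psi}_J)$, together with the vanishing of $H^0$ and $H^2$ in the non-exceptional generic case, yields $\dim_\F L_J = |J|$, forcing equality. For part~(2) with $\vec{c} = \overrightarrow{p-2}$ and $J = S$, the subcase $C \neq 1$ is handled by Proposition~\ref{gen}~(3) combined with the same dimension count, both giving the entire $\Ext^1$ of dimension $f$. In the cyclotomic subcase $C = 1$, Proposition~\ref{prop:cyclo} identifies $V_S^+ = \Ext^1$ of dimension $f+1$ and $V_S^- = \bigoplus_i \F[B_i]$ of dimension $f$; these match $L_S^+ = H^1(G_K,\F(\psi))$ (by the BDJ05 modification) and $L_S^- = \bigoplus_i L_{\{i\}}$ respectively. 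Finally, part~(3) is immediate: $V_\emptyset^\pm = 0$ from the proof of Proposition~\ref{gen}, and since $L_\emptyset^\pm \subseteq V_\emptyset^\pm$ by the first step, $L_\emptyset^\pm = 0$ as well.

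The main obstacle will be verifying the identification of the reductions of $\N(T_1)$ and $\N(T_2)$ with $M_{B\vec{b}}$ and $M_{A\vec{a}}$ in a manner compatible with the map $\iota$, so that the induced homomorphism $L_J^{(\pm)} \to V_J^{(\pm)}$ is well-defined with the expected image. A secondary technical point is the dimension count $\dim_\F L_J = |J|$ in the cyclotomic subcase of part~(2), where one must carefully track how the BDJ05 modification of $L_S^+$ as $H^1(G_K,\F(\psi))$ interacts with the classical Bloch-Kato computation and the appearance of the tr\`es ramifi\'e class $[B_{\rm tr}]$ in the description of $V_S^+$ coming from Proposition~\ref{prop:cyclo}.
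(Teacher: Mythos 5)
Your proposal follows essentially the same route as the paper's proof: show $L_J^{(\pm)} \subseteq V_J^{(\pm)}$ via Lemma~\ref{lem:genexact} and the resulting exact sequence of Wach modules, then compare dimensions using Proposition~\ref{gen} (resp.~\ref{prop:cyclo}) against the dimension of $L_J$ computed in \cite{BDJ05} (Lemma 3.10, which is exactly the Bloch--Kato/Euler--Poincar\'e computation you reference). The one step worth making explicit, which the paper does and you flag only as the ``main obstacle,'' is the twist of the whole extension by a crystalline character $\psi_2$ of labeled Hodge--Tate weights $\vec{a}$: this is what converts $\tilde{\psi}_J$, whose labeled weights $(-a_i, b_j)$ are not all nonnegative, into the positive-weight setting required by Lemma~\ref{lem:genexact} and simultaneously matches the reductions to $M_{B\vec{b}}$ and $M_{A\vec{a}}$ compatibly with $\iota$.
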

\begin{proof} We first prove (1).
 Suppose that $x\in L_J$, so $x$ is a class of extensions
$$0 \to M_{C\vec{c}} \to E \to M_{\vec{0}} \to 0$$
corresponding via $\D$ to a class of extensions of Galois representations
$$0 \to \F(\psi) \to \overline{T} \to \F \to 0.$$
The assumption that $x\in L_J$ means that there is an extension
$$0 \to \CO_F(\tilde{\psi}_J) \to T \to \CO_F \to 0$$
whose reduction mod $\varpi_F$ is $\overline{T}$ and such that $F\otimes_{\CO_F}T$
is crystalline.  Let $\psi_2:G_K \to F^\times$ be a crystalline character
with labeled Hodge-Tate weights $(a_{f-1},a_0,\ldots,a_{f-2})$ and
let $\psi_1=\tilde{\psi}_J\psi_2$.  (Recall that $a_i = 0$ if $i\not\in J$ and
$b_i = 0$ if $i\in J$.)  Then $\psi_1$ is crystalline with
Hodge-Tate weights $(b_{f-1},b_0,\ldots,b_{f-2})$ and we have an exact sequence
$$0 \to T_1 \to T(\psi_2)\to T_2 \to 0$$
where $T_i = \CO_F(\psi_i)$ and $F\otimes_{\CO_F}T(\psi_2)$ is crystalline. 
By Lemma~\ref{lem:genexact},
the corresponding sequence of Wach modules over $\A_{K,F}^+$
$$0 \to \N(T_1) \to N \to \N(T_2)\to 0$$
is exact.  Reducing mod $\varpi_F$, we obtain an exact sequence
of free $\E_{K,F}^+$-modules with commuting $\varphi$ and $\Gamma$ actions
such that $\Gamma$ acts trivially mod $\pi$.  Tensoring with $\E_{K,F}$
yields an exact sequence
$$0 \to M_{B\vec{b}} \to E' \to M_{A\vec{a}} \to 0$$
of $(\varphi,\Gamma)$-modules, bounded with respect
to a basis for $\overline{N}$.  It follows that $E'$ defines
an element of $\Ext^1_{\bdd}(M_{A\vec{a}},M_{B\vec{b}})$.
Moreover this exact sequence is gotten from the one defining $x$
by twisting with $M_{A\vec{a}}$, so we have shown that $\iota(x)$
is bounded, and hence that $x\in V_J$.  Thus $L_J \subset V_J$.

By Proposition~\ref{gen} of this paper and Lemma~3.10 of \cite{BDJ05}, 
we have that $\dim_\F V_J = |J|= \dim_\F L_J = |J|$; therefore
$L_J = V_J$.  The assertion that $L_J = \oplus_{i\in J} L_{\{i\}}$
then also follows from Proposition~\ref{gen}.

The proof of (2) and (3) is exactly the same as (1), except that for
(2) in the cyclotomic case one uses Proposition~\ref{prop:cyclo}.
\end{proof}

\begin{rem}  We see from the proof of the theorem that in the definition of
$L_J$, $\tilde{\psi}_J$ can be replaced by its twist by any unramified
character $G_K\to \CO_F^\times$ with trivial reduction mod $\varpi_F$.
This can also be proved using Fontaine-Laffaille theory.

However in the case where $\psi$ is cyclotomic, $J=S$ and $\vec{a}=\vec{p}$,
we defined $L_J$ as $H^1(G_K, \F(\psi))$ rather than $L_J'$.  In fact $L_J'$
has codimension one and depends on the unramified twist, as the next proof shows.
\end{rem}

As a further application, we show that in the generic case, bounded extensions ``lift''
to extensions of Wach modules.
\begin{cor}  \label{cor:generic} Suppose that $\vec{c}\in\Z^S$ is generic and $\vec{a},\vec{b}\in\Z^S$
are as above and that
$$0 \to M_{B\vec{b}} \to E \to M_{A\vec{a}} \to 0$$
is a bounded extension of $(\varphi,\Gamma)$-modules over $\E_{K,F}$.
In the case $A=B$, $\vec{c}=\overrightarrow{p-2}$ and $\vec{a}=\vec{p}$, assume $F$ is ramified.
Then the extension $E$ arises by applying 
$\E_{K,F}\otimes_{\A_{K,F}^+}$ to an exact sequence over $\A_{K,F}^+$
of Wach modules of the form
$$0 \to \N(\psi_1) \to N \to \N(\psi_2) \to 0$$
where $\psi_1$ (resp.~$\psi_2$) is a crystalline character with
labeled Hodge-Tate weights $(b_{f-1},b_0,\ldots,b_{f-2})$
(resp.~$(a_{f-1},a_0,\ldots,a_{f-2})$).
\end{cor}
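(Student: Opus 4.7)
The plan is to combine Theorem~\ref{thm:generic} with Lemma~\ref{lem:genexact}. First, since $\iota$ is an isomorphism and $[E]$ is bounded, the class $[E_0]:=\iota^{-1}([E]) \in \Ext^1(M_{\vec{0}},M_{C\vec{c}})$ lies in $V_J$ (or $V_J^\pm$ in the cases where two choices of $(\vec{a},\vec{b})$ exist). By Theorem~\ref{thm:generic}, this space equals $L_J$ (resp.\ $L_J^\pm$), so $[E_0]$ arises as the reduction modulo $\varpi_F$ of a crystalline extension class, i.e., there is a $G_K$-stable $\CO_F$-lattice fitting in an exact sequence
$$0 \to \CO_F(\tilde\psi_J) \to \tilde T \to \CO_F \to 0$$
with $F\otimes_{\CO_F}\tilde T$ crystalline, whose reduction modulo $\varpi_F$ corresponds via $\D$ to $[E_0]$.

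Next, choose a crystalline character $\psi_2:G_K\to F^\times$ with labeled Hodge--Tate weights $(a_{f-1},a_0,\ldots,a_{f-2})$ lifting the character whose $\E_{K,F}$-module realization is $M_{A\vec{a}}$, and set $\psi_1 = \tilde\psi_J\psi_2$, which then has labeled Hodge--Tate weights $(b_{f-1},b_0,\ldots,b_{f-2})$. Twisting the above sequence by $\CO_F(\psi_2)$ produces an exact sequence
$$0 \to \CO_F(\psi_1) \to T \to \CO_F(\psi_2) \to 0$$
of $G_K$-stable lattices in the crystalline representation $F\otimes_{\CO_F}T$. Since $\vec{c}$ is generic, Lemma~\ref{lem:genexact} applies---invoking part~(2) of Proposition~\ref{prop:rk2} rather than the inequality $\max(a_i,b_i)\le p-1$ in the exceptional case $\{\vec{a},\vec{b}\}=\{\vec{0},\vec{p}\}$---so the corresponding sequence of Wach modules
$$0 \to \N(\CO_F(\psi_1)) \to \N(T) \to \N(\CO_F(\psi_2)) \to 0$$
is exact over $\A_{K,F}^+$. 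Applying $\E_{K,F}\otimes_{\A_{K,F}^+}(-)$ recovers the original extension $E$, up to the $\F^\times$-ambiguity in $\iota$, since the middle term reduces to $\D(T/\varpi_F T)\cong E$.

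The main subtle step is the appeal to Theorem~\ref{thm:generic} in the exceptional cyclotomic case $C=1$, $\vec{c}=\overrightarrow{p-2}$, $J=S$, $\vec{a}=\vec{p}$. Here $V_S^+ = \Ext^1(M_{\vec{0}},M_{\overrightarrow{p-2}})$ contains the tr\`es ramifi\'e class $[B_{\rm tr}]$, and realizing it as the reduction of a crystalline extension requires an auxiliary Kummer-type construction based on an element of $F^\times$ with suitably nontrivial $p$-adic valuation and reduction. This is exactly where the hypothesis that $F$ is ramified enters: a uniformizer of a ramified $F$ provides such an element, whereas for unramified $F$ the image of $H^1_f$ in $H^1(G_K,\F(\bar\chi))$ is strictly smaller than $V_S^+$, so the identification $L_S^+ = V_S^+$ underlying Theorem~\ref{thm:generic} in that case tacitly requires this assumption. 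In all other cases no hypothesis on $F$ is needed, since the equality $V_J = L_J$ (or $V_J^- = L_S^-$) follows from the dimension count already carried out.
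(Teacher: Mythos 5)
Your argument for the non-exceptional case is exactly the route of the paper: use $V_J = L_J$ from Theorem~\ref{thm:generic} to realize $\iota^{-1}[E]$ as the reduction of a crystalline extension, twist by $\psi_2$, and invoke Lemma~\ref{lem:genexact}. That part is fine.

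The gap is in the exceptional cyclotomic case $A=B$, $\vec c=\overrightarrow{p-2}$, $\vec a=\vec p$. You correctly flag this as the crux and correctly observe that for unramified $F$ the image of $H^1_f$ (with the paper's fixed Teichm\"uller unramified twist) can be a proper subspace of $V_S^+$. But you then assert that ``the identification $L_S^+ = V_S^+$ underlying Theorem~\ref{thm:generic} in that case tacitly requires this assumption,'' which is false: the equality $V_S^+ = L_S^+$ holds unconditionally, because in this exceptional case $L_S^+$ was \emph{defined} (by fiat, not via crystalline lifts) to be all of $H^1(G_K,\F(\overline\chi))$, and Proposition~\ref{prop:cyclo} shows $V_S^+$ is also the whole space. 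Consequently, $V_J = L_J$ is \emph{not} enough here to conclude that every bounded class comes from a crystalline lattice extension, and this is precisely why the paper does not simply cite the theorem in this case. What is actually needed is the Tate-duality argument in the published proof: one shows that for a suitably chosen unramified character $\mu$ with trivial reduction mod $\varpi_F$, the class $x=\iota^{-1}[E]$ lies in the image of $H^1(G_K,\CO_F(\chi^p\mu))$, using that the orthogonal complement of that image is spanned by $\beta$ (when $\mu\not\equiv 1\bmod p$) or $\alpha+\lambda\beta$, and solving for $\lambda$ when $x\cup\beta\neq 0$. Ramifiedness of $F$ is needed exactly to allow the option $\mu\not\equiv 1\bmod p$ when $x\cup\beta=0$. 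Once $x$ is in that image, $H^1(G_K,F(\chi^p\mu)) = H^1_f$ gives crystallinity, and Lemma~\ref{lem:genexact} finishes. Your proposal gestures at ``an auxiliary Kummer-type construction'' without supplying it; this argument is not optional and cannot be deduced from $V_S^+ = L_S^+$.
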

\begin{proof} First assume we are not in the exceptional case where
$A=B$, $\vec{c}=\overrightarrow{p-2}$ and $\vec{a}=\vec{p}$.  Since the extension class
defined by $E$ is bounded, the equality $V_J = L_J$ of the preceding
theorem shows that $E$ arises by applying $\D$ to the reduction mod
$\varpi_F$ of a crystalline extension
$$0 \to \CO_F(\psi_1) \to T \to \CO_F(\psi_2) \to 0$$
where $\psi_1$ and $\psi_2$ have the required Hodge-Tate weights.
Lemma~\ref{lem:genexact} then gives the desired extension of
Wach modules over $\A_{K,F}^+$.

Suppose now that $A=B$, $\vec{c}=\overrightarrow{p-2}$ and $\vec{a}=\vec{p}$.
Consider the class $x := \iota(E) \in \Ext^1(M_{\vec{0}},M_{\overrightarrow{p-2}})
\cong H^1(G_K,\F(\chi))$ where $\chi$ denotes the cyclotomic character.
We claim that there is an unramified character $\mu:G_K\to\CO_F^\times$
with trivial reduction mod $\varpi_F$ so that $x$ is in the
image of $H^1(G_K,\CO_F(\chi^p\mu))$.  (This is essentially proved
in Proposition~3.5 of \cite{kw_annals} or Section~3.2.7 of \cite{kw_inv2},
but there it is assumed that $x$ is tr\`es ramifi\'e, so we recall the
argument here.)
The long exact sequence associated to
$$0 \to \CO_F(\chi^p\mu) \stackrel{\varpi_F}{\longrightarrow}
         \CO_F(\chi^p\mu) \to \F(\chi) \to 0$$
shows that the image of $H^1(G_K,\CO_F(\chi^p\mu))$ is
the kernel of the connecting homomorphism 
$$H^1(G_K,\F(\chi)) \to H^2(G_K,\CO_F(\chi^p\mu)).$$
By Tate duality this is the space orthogonal to the image of
the connecting homomorphism
$$H^0(G_K,(F/\CO_F)(\chi^{1-p}\mu^{-1})) \to H^1(G_K,\F)$$
arising from the dual short exact sequence.  Letting $\alpha$
denote the homomorphism $G_K \to \F$ defined by
$(\chi^{1-p}-1)/p$, and $\beta$ the unramified homomorphism
sending ${\mathrm{Frob}}_K$ to $1$, we find that the image of the
connecting homomorphism is spanned 
by $\beta$ if $\mu \not\equiv 1 \bmod p\CO_F$ (which is possible as
$F$ is ramified over $\Q_p$) and by  $\alpha + \lambda\beta$
if $\mu({\mathrm{Frob}}_K) \equiv 1 + p\lambda \bmod p\varpi_F\CO_F$.
If $x\cup\beta=0$ then we can take $\mu \not\equiv 1 \bmod p\CO_F$,
and if $x\cup\beta\neq 0$ then there is a unique $\lambda$
so that $\lambda(x\cup \beta) = - x \cup\alpha$ and we
choose $\mu$ accordingly.  Now since $H^1(G_K,F(\chi^p\mu))
 = H^1_f(G_K,F(\chi^p\mu))$, we see that $E$ arises from the
reduction of a crystalline extension of the required form,
and the result again follows from Lemma~\ref{lem:genexact}.
\end{proof}

\subsection{$f=2$}
In this subsection we will show that if $f=2$, then $L_J=V_J$ (or $L_J^\pm = V_J^\pm$)
unless $\vec{c}=\vec{0}$; in other words, the space of bounded extensions coincides
with the one gotten from reductions of crystalline extensions of the corresponding
weights unless the ratio of the characters is unramified.  Furthermore, we give
a complete description in this exceptional case.

Before treating the case $f=2$, we note what happens in the case $f=1$.  The case
$\vec{c}\neq\vec{0}$ is already treated by the results of the preceding section.
Assume for the moment that $p>2$.  Then
the proof goes through just the same if $\vec{c}=\vec{0}$ and $J=S=\{0\}$.
Suppose then that $\vec{c}=\vec{0}$ and $J=\emptyset$.  If $C\neq 1$, then
$V_\emptyset = L_\emptyset = \{0\}$, so there is nothing to prove.  If
$C = 1$, then we have $V_\emptyset = \{0\}$, but $L_\emptyset = H^1(G_{\Q_p},\F)$.
Indeed all such classes arise as reductions of lattices in representations of the
form $\Q_p\oplus\Q_p(\chi^{1-p}\mu)$ with $\mu$ unramified; the corresponding
Wach module is described just as in Example~\ref{nonext} and so does not give
rise to a bounded extension.  If $p=2$, there are differences in the case $C=1$
(see Remark~\ref{rem:p2}).  In that case
$$V_S^+ = V_S^- = L_S^+ = H^1(G_{\Q_2},\F) = \langle B_\ur, B_\cyc, B_\tr \rangle$$
and $V_\emptyset^+ = V_\emptyset^- = \{0\}$, but 
$L_S^-=L_\emptyset^- = \langle B_\ur,B_\cyc\rangle$
and $L_\emptyset^+ = \langle B_\ur,B_\tr\rangle$.  (For the explicit descriptions,
note that the extensions of Galois representations are unramified twists of ones on which
$H_K$ acts trivially, and if $H_K$ acts trivially on $T$, then 
$\D(T)=\E_K\otimes T$.)

We now turn our attention to $f=2$.  We maintain the notation of the preceding
section, without the assumption that $\vec{c}$ is generic.  In particular $J\subset S$
and $\vec{a}$, $\vec{b}$ satisfy the usual conditions, $V_1$ and $V_2$ are
one-dimensional crystalline representaions with labelled Hodge-Tate weights 
$(b_1,b_0)$ and $(a_1,a_0)$, $V$ is an extension
of $V_2$ by $V_1$, $T$ is a $G_K$-stable $\CO_F$-lattice in $V$, $T_1 = T \cap V_1$ and
$T_2 = T/T_1$.  The refinement of Lemma~\ref{lem:genexact} is the following:
\begin{lem}  \label{lem:f2exact} Suppose that $f=2$ and $\vec{c}\neq\vec{0}$.  
If $V$ is crystalline, then 
$$0 \to \N(T_1) \to \N(T) \to \N(T_2) \to 0$$
is exact.
\end{lem}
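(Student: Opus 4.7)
My plan mirrors the proof of Lemma~\ref{lem:genexact}, relying on Proposition~\ref{prop:rk2}. Since $V$ is crystalline, the Wach module $N = \N(T)$ exists, placing us in the setup of that proposition. By part~(4) of the proposition, it suffices to show $\vec{a} = \vec{a}'$, equivalently $\vec{b} = \vec{b}'$.

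First I would exploit the constraints coming from Proposition~\ref{prop:rk2}(1) and~(3). Setting $x = a_0' - a_0$ and $y = a_1' - a_1$, part~(1) yields $|x|, |y| \le p$ (using that one of $a_i, b_i$ vanishes for each $i$ and $\max(a_i, b_i) \le p$), while part~(3) forces both $x + yp$ and $y + xp$ to be nonnegative multiples of $p^2 - 1$. A direct enumeration of integer pairs satisfying these constraints shows the only possibilities are $(x,y) \in \{(0, 0), (p, -1), (-1, p), (p-1, p-1)\}$ for $p > 2$, with the additional case $(p, p)$ when $p = 2$. Only $(0, 0)$ gives the conclusion; the others must be ruled out under the hypothesis $\vec{c} \neq \vec{0}$.

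For each nontrivial case, my strategy is to translate the constraints from Proposition~\ref{prop:rk2}(1) on $\vec{a}, \vec{b}$ into restrictions on $(J, \vec{a}, \vec{b})$, and then use Proposition~\ref{prop:rk2}(3) to pin down $\vec{c}$ via its residue modulo $p^2-1$. The case $(p-1, p-1)$ forces $a_i \le 1$ and $b_i \ge p - 1$ at each coordinate, hence $J = \emptyset$ and $\vec{a} = \vec{0}$ with $\vec{b} \in \{p-1,p\}^2$; direct computation of $\Sigma_0(\vec{b})$ in each subcase then determines $\vec{c}$, and only $\vec{c}=\vec{0}$ is consistent except for a few explicit configurations. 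The cases $(p,-1)$ and $(-1,p)$ force $b_0 = p$ (resp.~$b_1 = p$) and $a_1 \ge 2$ (resp.~$a_0 \ge 2$), yielding $c_0 = p-1$ (resp.~$c_1 = p-1$); solving the relations from the proof of Proposition~\ref{prop:rk2}(3) gives the paper's parameters $(t_0, t_1) \in \{(0,1), (1,0)\}$, corresponding to $\overline{N}_1' \supsetneq \overline{N}_1$ at exactly one embedding. The $(p,p)$ case for $p = 2$ forces $\vec{b} = (p,p)$ and is handled by the same congruence analysis, giving $\vec{c}=\vec{0}$.

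The hard part will be showing that each remaining subcase with $\vec{c} \neq \vec{0}$ is genuinely incompatible with the Wach module structure. The key additional ingredient beyond Proposition~\ref{prop:rk2} is that the elementary divisors of $\varphi^*$ on $\overline{N}$ at each coordinate must agree with the labeled Hodge--Tate weights of $V$, namely $(\pi^{(p-1)\min(a_i,b_i)}, \pi^{(p-1)\max(a_i,b_i)})$. Combined with the shape of $\overline{N}_1'$ forced by the $(r_0, r_1)$ constraints from $\varphi$-stability, this dictates the valuations of the off-diagonal entries in the matrix of $\varphi$ representing $\overline{N}$. Matching these against the explicit parametrization of bounded extensions established in Section~\ref{sec:f2} should rule out the residual subcases, since the bounded extension classes with $\vec{c} \neq \vec{0}$ were shown there to have off-diagonal entries of valuation $1-p$ (or lower) rather than the unit valuations the Case $(p,-1)$/$(-1,p)$ configurations would demand.
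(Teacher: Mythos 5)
Your first two paragraphs contain a correct and valid alternative organization of the bookkeeping: by Proposition~\ref{prop:rk2}(4), one must show $\vec{a}'=\vec{a}$, equivalently $\vec{b}'=\vec{b}$, and parts (1) and (3) constrain $(x,y)=(a_0'-a_0,a_1'-a_1)$ to the list $\{(0,0),(p,-1),(-1,p),(p-1,p-1)\}$ (plus $(p,p)$ for $p=2$). Working out which $(J,\vec{a},\vec{b},\vec{c})$ support each nonzero $(x,y)$ is a legitimate reshuffling of the paper's own case split on $\vec{c}$ and $J$. Two comments on precision: in the $(p-1,p-1)$ case, parts (1) and (3) alone do \emph{not} force $\vec{c}=\vec{0}$ — for instance $J=\emptyset$, $\vec{a}=\vec{0}$, $\vec{b}=(p-1,p)$, $\vec{c}=(0,1)$ and $\vec{a}'=(p-1,p-1)$ satisfies both; you need part (2) (the ordinarity argument) to eliminate these, which you should invoke explicitly rather than saying ``only $\vec{c}=\vec{0}$ is consistent.'' That is, however, an easily repaired omission.

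The serious gap is in the final paragraph. Your reduction correctly isolates the cases $(x,y)=(p,-1)$ and $(-1,p)$ — these are exactly the paper's hard case $\vec{c}=(i,p-1)$, $J=\{0\}$, $\vec{a}=(p-i,0)$, $\vec{b}=(0,p)$, where parts (1)--(3) cannot rule out $\vec{b}'=(1,0)$, $\vec{a}'=(p-i-1,p)$ — but the mechanism you propose to eliminate them does not work. The elementary-divisor observation is not ``additional'' information: the Smith normal form of $\varphi^*$ on each coordinate of $\overline{N}$ encodes precisely the constraint $\min(a_i,b_i)\le a_i'\le\max(a_i,b_i)$, $a_i'+b_i'=a_i+b_i$, i.e.\ it repackages Proposition~\ref{prop:rk2}(1). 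And matching against ``the explicit parametrization of bounded extensions established in Section~\ref{sec:f2}'' cannot apply here: that classification is only for weight pairs $(\vec{a},\vec{b})$ with exactly one of $a_i,b_i$ equal to zero for each $i$, whereas the offending $\vec{a}'=(p-i-1,p)$, $\vec{b}'=(1,0)$ has both entries nonzero in the $0$-th slot (since $1\le i\le p-2$), so $\overline{N}$ with its $\overline{N}_1'$-flag does not produce an extension class in any of the spaces $\Ext^1_{\bdd}(M_{A\vec{a}},M_{B\vec{b}})$ that were analyzed. There is in fact no purely mod-$p$ obstruction here — the $\varphi$-module structure with weights $(\vec{a}',\vec{b}')$ is perfectly consistent — and indeed the paper signals that this configuration genuinely does occur when $\vec{c}=\vec{0}$. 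What actually rules it out in the present case is the lifting argument modulo $\varpi_F^{e+1}$, culminating in the identification of a nonzero contribution coming from the expansion $q^p/\varphi(q)\equiv 1+p\bigl(g(\pi^{-p})-g(\pi^{-1})+f(\pi)\bigr)\bmod p^2$ with $g(X)=\sum_{i=1}^{p-1}(-X)^i/i$, which is incompatible with the integrality constraints on the change-of-basis matrix. This arithmetic input from characteristic zero is indispensable and is missing from your sketch.
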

\begin{proof}  Since the generic case is covered by Lemma~\ref{lem:genexact}, we
can assume (interchanging embeddings if necessary) that $\vec{c}=(i,0)$ for some
$i\in \{ 1,\ldots,p-2\}$ or $\vec{c} = (i,p-1)$ for
some $i\in \{0,\ldots,p-2\}$.  The cases where $J=\emptyset$ or $J=S$ are covered
by the same argument (using parts (2), (3) and (4) of Proposition~\ref{prop:rk2}),
as are the cases where $\vec{c}=(i,0)$ or $J=\{1\}$ (using parts (1), (3) and (4)
of the proposition).  We are thus left with the case where $\vec{c}=(i,p-1)$ for
some $i\in \{0,\ldots,p-2\}$ and $J=\{0\}$, in which case $\vec{a}=(p-i,0)$ and
$\vec{b}=(0,p)$.  In the notation of Proposition~\ref{prop:rk2}, the possible values
of $\vec{b}'$ are $(0,p)$ and $(1,0)$.  To complete the proof, we must rule out the
latter possibility, which we accomplish by considering the reduction of $\N(T)$ modulo
$p^2$.  From the exact sequence
$$0 \to \D(T_1) \to \D(T) \to \D(T_2) \to 0$$
and the description in \cite{Dou07} of rank one $(\varphi,\Gamma)$-modules
recalled in \S\ref{sec:rk1}, we see that there is a basis $\{e_1,e_2\}$
for $\D(T)$ over $\A_{K,F}$ in terms of which the matrices describing the
actions of $\varphi$ and $\gamma \in \Gamma$ are
$$P = \left(\begin{array}{cc} (\tilde{B},q^p)  &  *   \\
                    0    &  (\tilde{A}q^{p-i},1)  \end{array}\right)\quad\mbox{and}\quad
  G_\gamma = \left(\begin{array}{cc}  (\varphi(\Lambda_\gamma^p),\Lambda_\gamma^p)  & *    \\
                     0    &  (\Lambda_\gamma^{p-i}, \varphi(\Lambda_\gamma)^{p-i})     \end{array}\right)$$
for some $\tilde{A},\tilde{B}\in\CO_F^\times$.
On the other hand, since $V$ is crystalline, there is a basis $\{e_1',e_2'\}$
for $\D(T)$ over $\A_{K,F}$  in terms of which the matrices $P'$ and $G_\gamma'$
describing these actions lie in $\gl_2(\A_{K,F}^+)$, with $G_\gamma' \equiv I\bmod \pi\M_2(A_{K,F}^+)$.
If we assume that further that $\vec{b}'=(1,0)$ (and so $\vec{a}'=(p-i-1,p)$), then we
can choose $e_1',e_2'$ that
$$\overline{P}' \equiv \left(\begin{array}{cc} (B\pi^{p-1},1)  &  *   \\
                    0    &  (A\pi^{(p-i-1)(p-1)},\pi^{p(p-1)})  \end{array}\right)\quad\mbox{and}\quad
  \overline{G}'_\gamma = \left(\begin{array}{cc}  (\lambda_\gamma,\lambda_\gamma^p)  & *    \\
                     0    &  (\lambda_\gamma^{p^2+p-i-1}, \lambda_\gamma^{p^2-ip})     \end{array}\right),$$
where $\bar{\cdot}$ denotes reduction modulo $\varpi_F$.
Since $\D(T)\cong\A_{K,F}\otimes_{\A_{K,F}^+}\N(T)$, we can
write $(e_1',e_2') = (e_1,e_2)Q$ for some $Q\in\gl_2(A_{K,F})$,
and then we have 
$$P' = Q^{-1}P\varphi(T)\quad\mbox{and}\quad
  G_\gamma' = Q^{-1}G_\gamma\gamma(Q)\quad\mbox{for all $\gamma\in\Gamma$.}$$

\noindent{\bf Claim:} {\em $Q \equiv RS \bmod p\A_{K,F}$
for some matrices $R = \left(\begin{array}{cc}  \alpha (q^{-1},1)  & *    \\
                     0    &  \beta (q,1)     \end{array}\right) 
                     \in\gl_2(\A_{K,F})$
with $\alpha,\beta\in\CO_F^\times$,
and $S \in I + \varpi_F\M_2(A_{K,F}^+)$.}

Since $F$ may be ramified over $\Q_p$, we prove the claim by showing
inductively  that $Q \equiv R_mS_m \bmod \varpi_F^m\A_{K,F}$
for some matrices $R_m,S_m$ of the prescribed form
for $m=1,\ldots,e$ where $e=e(F/\Q_p)$.

To prove the statement for $m=1$, note that setting
$R_0 = \left(\begin{array}{cc}(q^{-1},1)&0\\0&(q,1)\end{array}\right)$
gives 
$$\overline{R}_0^{-1}\overline{P}\varphi(\overline{R}_0)
  = \left(\begin{array}{cc}(B\pi^{p-1},1)&*\\
              0&(A\pi^{(p-i)(p-1)},\pi^{p(p-1)})\end{array}\right).$$
So if we write $R=R_0S_0$, then
$$\overline{S}_0\left(\begin{array}{cc}(B\pi^{p-1},1)&*\\
              0&(A\pi^{(p-i)(p-1)},\pi^{p(p-1)})\end{array}\right)
    = \left(\begin{array}{cc}(B\pi^{p-1},1)&*\\
               0&(A\pi^{(p-i)(p-1)},\pi^{p(p-1)})\end{array}\right)
     \varphi(\overline{S}_0).$$
It follows easily that $\overline{S}_0= 
\left(\begin{array}{cc}\bar{\alpha}&\bar{\delta}\\0&\bar{\beta}\end{array}\right)$ for some $\bar{\alpha},\bar{\beta}\in\F^\times$, $\bar{\delta}
\in\E_{K,F}$.  Choosing lifts $\alpha,\beta\in\CO_F^\times$ and
$\delta\in\A_{K,F}$ and setting $R_1=R_0\left(\begin{array}{cc}
\alpha&\delta\\ 0&\beta\end{array}\right)$ gives the result for $m=1$.

Suppose now that $m\in \{1,\ldots,e-1\}$ and that
$Q \equiv R_mS_m \bmod \varpi_F^m\A_{K,F}$ with $R_m,S_m$ of
the prescribed form.  Setting $Q_m = R_m^{-1}QS_m^{-1}$, we have
$Q_m = I + \varpi_F^m Q_m'$ for some $Q_m' \in \M_2(\A_{K,F})$.
Define
$$\begin{array}{rlcl} & P_m = R_m^{-1} P \varphi(R_m), && 
                        G_{\gamma,m} = R_m^{-1} G_\gamma \gamma(R_m),\\
                  & P_m' = S_m^{-1} P' \varphi(S_m) &\mbox{and}& 
                        G_{\gamma,m}' = S_m^{-1} G_\gamma' \gamma(S_m),\\
       \mbox{so that\ }& P_m' = Q_m^{-1} P_m \varphi(Q_m)&\mbox{and}&
                        G_{\gamma,m}' = Q_m^{-1} G_{\gamma,m}' \gamma(Q_m).\end{array}$$
Note that $P_m'\in \M_2(\A_{K,F}^+)$,
$G_{\gamma,m}' \in I + \pi\M_2(\A_{K,F}^+)$, $P_m\equiv P_m' \bmod \varpi_F^m\M_2(\A_{K,F})$,
$G_{\gamma,m} \equiv G_{\gamma,m}' \bmod \varpi_F^m\M_2(\A_{K,F})$,
$$\begin{aligned} P_m & \equiv \left(\begin{array}{cc} (\tilde{B}\pi^{p-1},1)& * \\
                                           0 & (\tilde{A}\pi^{(p-i-1)(p-1)},\pi^{p(p-1)})
                                           \end{array}\right) \bmod p\M_2(\A_{K,F})\\
        \mbox{and\ } G_{\gamma,m} & \equiv
         \left(\begin{array}{cc} (\lambda_\gamma,\lambda_\gamma^p)& * \\
                                           0 & (\lambda_\gamma^{p^2+p-i-1},\lambda_\gamma^{p^2-ip})
                                           \end{array}\right) \bmod p\M_2(\A_{K,F}).\end{aligned}$$
Note that since $m+1 \le e$, the last two congruences hold mod $\varpi_F^{m+1}$,
and that $Q_m^{-1} \equiv I - \varpi_F^m Q_m' \bmod \varpi_F^{m+1}\M_2(\A_{K,F})$.
It follows that 
$$\begin{aligned} P_m' & \equiv (I-\varpi_F^m Q_m')P_m(I+\varpi_F^m \varphi(Q_m'))\\
                    &\equiv P_m + \varpi_F^m(P_m\varphi(Q_m') - Q_m'P_m)\bmod \varpi_F^{m+1}\M_2(\A_{K,F}),
                    \end{aligned}$$
and therefore that 
$$\varpi_F^m(P_m\varphi(Q_m') - Q_m'P_m) \equiv P_m'-P_m \equiv \varpi_F^m
    \left(\begin{array}{cc}x&y\\z&w\end{array}\right)\bmod \varpi_F^{m+1}\M_2(\A_{K,F})$$
with $x,z,w\in\A_{K,F}^+$.  Note that $\overline{P}_m = \overline{P}'$, so
we have $\overline{P}'\varphi(\overline{Q}_m')-\overline{Q}_m'\overline{P}'
           =    \left(\begin{array}{cc}\bar{x}&\bar{y}\\\bar{z}&\bar{w}\end{array}\right)$
for some $\bar{x},\bar{z},\bar{w}\in\E_{K,F}^+$, $\bar{y}\in\E_{K,F}$.  Similarly we find that
$\overline{G}_{\gamma}'\gamma(\overline{Q}_m')-\overline{Q}_m'\overline{G}_{\gamma}'
           =    \left(\begin{array}{cc}\bar{x}_\gamma&\bar{y}_\gamma\\
                   \bar{z}_\gamma&\bar{w}_\gamma\end{array}\right)$
for some $\bar{x}_\gamma,\bar{z}_\gamma,\bar{w}_\gamma\in\pi\E_{K,F}^+$, $\bar{y}_\gamma\in\E_{K,F}$.

Writing 
$$\overline{Q}_m' = \left(\begin{array}{cc}(r_0,r_1)&(s_0,s_1)\\(t_0,t_1)&(u_0,u_1)\end{array}\right)$$
with $r_0,r_1,\ldots,u_0,u_1\in \F((\pi))$, the condition that $\bar{z}\in\E_{K,F}^+$
becomes $\pi^{p(p-1)}t_0(\pi)^p - t_1(\pi), A\pi^{(p-i-1)(p-1)}t_1(\pi^p)-B\pi^{p-1}t_0(\pi)
\in \F[[\pi]]$, from which one deduces that $\val(t_0) \ge 1-p$ and $\val(t_1)\ge 0$.
The condition that $\bar{z}_\gamma\in\pi\E_{K,F}^+$ then becomes that
$\gamma(t_0)\lambda_\gamma^{p^2+p-i-2}-t_0 \in \pi\F[[\pi]]$.  Lemma~\ref{delta}
rules out the possibility that $1-p < \val(t_0) < 0$, and Lemma~\ref{gamma}
rules out the possibility that $\val(t_0) = 1-p$.  Therefore $(t_0,t_1)\in\E_{K,F}^+$.
Since $\overline{P}'\in \M_2(\E_{K,F}^+)$, the condition that $\bar{x}\in\E_{K,F}^+$
then becomes that $\pi^{p-1}(r_1(\pi^p)-r_0(\pi)),r_0(\pi^p)-r_1(\pi)\in \F[[\pi]]$,
which implies that $(r_0,r_1)\in\E_{K,F}^+$.   The condition that $\bar{w}\in\E_{K,F}^+$
becomes that $\pi^{(p-i-1)(p-1)}(u_1(\pi^p)-u_0(\pi)),
\pi^{p(p-1)}u_0(\pi^p)-u_1(\pi)\in \F[[\pi]]$, which implies that $\val(u_0) \ge 1-p$
and $\val(u_1) \ge i+2-p$.  Since $(t_0,t_1)\in \E_{K,F}^+$ and $\overline{G}_\gamma'
\equiv I \bmod \pi\M_2(\E_{K,F}^+)$, the condition that $\overline{w}_\gamma\in
\pi\E_{K,F}^+$ becomes that $\gamma(u_i) - u_i \in \pi\E_{K,F}^+$ for $i=0,1$, so that Lemmas~\ref{delta}
and~\ref{gamma} again imply that $(u_0,u_1)\in\E_{K,F}^+$.
We can thus lift $\overline{Q}_m'$ to a matrix 
$\left(\begin{array}{cc}r&s \\t&u\end{array}\right)\in\M_2(\A_{K,F})$ with $r,t,u\in\A_{K,F}^+$.  Setting 
$R_{m+1} = R_m\left(\begin{array}{cc}1&\varpi_F^m s \\0&1\end{array}\right)$
and $S_{m+1} = \left(\begin{array}{cc}1+\varpi_F^m r &0 \\
  \varpi_F^m t &1 + \varpi_F^m u \end{array}\right)S_m$
then gives $Q\equiv R_{m+1}S_{m+1} \bmod \varpi_F^{m+1}\M_2(\A_{K,F})$
with $R_{m+1},S_{m+1}$ of the prescribed form, and completes
the proof of the claim.

To derive a contradiction from the claim, we proceed as in the proof of the
induction step above, but with $m=e$ and working modulo $\varpi_F^{m+1}$.
More precisely, we define $Q_e$, $Q_e'$, $P_e$, $G_{\gamma,e}$, $P_e'$ and
$G_{\gamma,e}'$ as above; the difference now is that the congruences satisfied
by $P_e$ and $G_{\gamma,e}$ modulo $p$ are not satisfied modulo $p\varpi_F$.
In particular, the upper-left hand entry of $P_e$ is $(\tilde{A}q,q^p/\varphi(q))$,
and a straightforward calculation shows that
$$\frac{q^p}{\varphi(q)} \equiv 1 + p(g(\pi^{-p})- g(\pi^{-1})
    +f(\pi)) \bmod p^2\A_{\Q_p}$$
where $g(X) = \sum_{i=1}^{p-1} (-X)^i/i$ and $f(\pi)\in\A_{\Q_p}^+$.
As before, we have
$\overline{P}'\varphi(\overline{Q}_e')-\overline{Q}_e'\overline{P}'
           =    \left(\begin{array}{cc}\bar{x}&\bar{y}\\\bar{z}&\bar{w}\end{array}\right)$
with $\bar{z}\in \E_{K,F}^+$ since $P_e$ is upper-triangular, but now
$\bar{x} \in (0,c(\bar{g}(\pi^{-p})- \bar{g}(\pi^{-1}))) + \E_{K,F}^+$
for some $c\in\F^\times$ (the reduction of $p/\varpi_F^e$).
Similarly we have
$\overline{G}_{\gamma}'\gamma(\overline{Q}_e')-\overline{Q}_e'\overline{G}_{\gamma}'
           =    \left(\begin{array}{cc}\bar{x}_\gamma&\bar{y}_\gamma\\
                   \bar{z}_\gamma&\bar{w}_\gamma\end{array}\right)$
with $\bar{z}_\gamma\in \pi\E_{K,F}^+$.
So just as before we get $(t_0,t_1)\in \E_{K,F}^+$, but this implies that
$\pi^{p-1}(r_1(\pi^p)-r_0(\pi))\in \F[[\pi]]$ and 
$r_0(\pi^p)-r_1(\pi)\in c(\bar{g}(\pi^{-p})- \bar{g}(\pi^{-1}))) + \F[[\pi]]$,
which leads to a contradiction and completes the proof of the 
lemma.
\end{proof}

\begin{thm}  \label{thm:f2} Suppose that $f=2$ and $\vec{c}\neq\vec{0}$.  Then $V_J = L_J$
(or $V_J^\pm = L_J^\pm$) for all $J\subset S$.  In particular $L_{\{0\}} = L_{\{1\}}$
if and only if $\vec{c} = (i,p-1)$ or $(p-1,i)$ for some $i\in\{1,\ldots,p-2\}$.
\end{thm}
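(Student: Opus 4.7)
The strategy is to follow the proof of Theorem~\ref{thm:generic} verbatim, substituting Lemma~\ref{lem:f2exact} for Lemma~\ref{lem:genexact}. The remaining work is essentially bookkeeping: assemble the explicit computations of $V_J$ from \S\ref{sec:f2} and compare with the dimensions of $L_J$ supplied by \cite[Lem.~3.10]{BDJ05}.

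First I would establish the inclusion $L_J\subset V_J$ (and its signed analogues). Given a class $x\in L_J$, represented by an extension $0\to \F(\psi)\to \overline{T}\to \F\to 0$ that lifts to a crystalline extension $0\to \CO_F(\tilde{\psi}_J)\to T\to \CO_F\to 0$, I twist by a crystalline character $\psi_2$ with labeled Hodge--Tate weights $(a_{f-1},a_0,\ldots,a_{f-2})$ so that both constituents become positive crystalline, yielding an exact sequence $0\to T_1\to T(\psi_2)\to T_2\to 0$ of $G_K$-stable $\CO_F$-lattices. Lemma~\ref{lem:f2exact} (applicable because $f=2$ and $\vec{c}\neq\vec{0}$) then gives exactness of the associated sequence of Wach modules over $\A_{K,F}^+$. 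Reducing modulo $\varpi_F$ and tensoring with $\E_{K,F}$ produces an exact sequence of $(\varphi,\Gamma)$-modules over $\E_{K,F}$ which is bounded in the sense of Definition~\ref{bounded} with respect to the reduction of a Wach module basis. Since this sequence is obtained from the one defining $x$ by twisting with $M_{A\vec{a}}$, we conclude that $\iota(x)\in\Ext^1_{\bdd}(M_{A\vec{a}},M_{B\vec{b}})$ and hence $x\in V_J$. The cyclotomic exceptional case ($\vec{c}=\overrightarrow{p-2}$, $J=S$, $\vec{a}=\vec{p}$) is handled as in Theorem~\ref{thm:generic}: the modified definition $L_S^+=H^1(G_K,\F(\psi))$ matches $V_S^+=\Ext^1(M_{\vec{0}},M_{\overrightarrow{p-2}})$ from Proposition~\ref{prop:cyclo}.

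To promote the inclusion to equality I would check dimensions. From \cite[Lem.~3.10]{BDJ05}, $\dim_\F L_J^{(\pm)}=|J|$ away from the cyclotomic exception, where $\dim_\F L_S^+=f+1=3$. On the other side, the dimensions of $V_J^{(\pm)}$ are read off directly from the propositions of \S\ref{sec:f2} (together with Proposition~\ref{prop:cyclo} in the cyclotomic case) and agree on the nose: $V_S^{(\pm)}$ is two-dimensional (three-dimensional in the cyclotomic $+$ case), the $V_{\{i\}}^{(\pm)}$ are one-dimensional, and $V_{\emptyset}^{(\pm)}=0$. Combined with $L_J^{(\pm)}\subset V_J^{(\pm)}$, this forces equality in every case.

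The ``in particular'' statement then follows immediately from $L_{\{i\}}=V_{\{i\}}$ and the explicit descriptions in Propositions~\ref{dim1} and~\ref{dim1'} (summarized in the corollary after them): the one-dimensional subspaces $V_{\{0\}}$ and $V_{\{1\}}$ of $\Ext^1(M_{\vec{0}},M_{C\vec{c}})$ coincide precisely when $c_0=p-1$ or $c_1=p-1$, which under the standing constraint $\vec{c}\neq\overrightarrow{p-1}$ is the asserted condition on $\vec{c}$. The main technical obstacle of the whole argument is already absorbed into Lemma~\ref{lem:f2exact}; its delicate mod $p^2$ analysis rules out the stray possibility $\vec{b}'=(1,0)$ in Proposition~\ref{prop:rk2} precisely for the non-generic shapes of $\vec{c}$ that are out of reach of Lemma~\ref{lem:genexact}, and this is what allows the uniformly clean statement $V_J=L_J$ for all $\vec{c}\neq\vec{0}$.
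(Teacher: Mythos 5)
Your proposal is correct and takes essentially the same approach as the paper: the paper's proof of the first assertion is literally ``the same as for Theorem~\ref{thm:generic}'' with Lemma~\ref{lem:f2exact} replacing Lemma~\ref{lem:genexact}, and the ``in particular'' clause is read off from the explicit computations in \S\ref{sec:f2}. One small caveat: the condition you (correctly) derive from the corollary after Propositions~\ref{dim1}--\ref{dim1'}, namely $c_0=p-1$ or $c_1=p-1$ with $\vec{c}\neq\overrightarrow{p-1}$, allows $\vec{c}=(p-1,0)$ and $(0,p-1)$, which is slightly wider than the printed range $i\in\{1,\ldots,p-2\}$ in the theorem but agrees with Theorem~\ref{thm:intro2} (where $i$ runs over $\{1,\ldots,p-1\}$) and with the corollary in \S\ref{sec:f2}; so this appears to be a minor off-by-one in the theorem's statement rather than a flaw in your argument.
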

\begin{proof}  The proof of the first assertion is exactly the same as for 
Theorem~\ref{thm:generic}.  The second then follows from the corresponding result
for $V_J$ in \S\ref{sec:f2}.
\end{proof}

Theorem~\ref{thm:intro2} of the introduction now follows in view of Corollary~\ref{cor:fun}.

\begin{rem}  \label{rmk:indepf2} Again we see that in the definition of
$L_J$, $\tilde{\psi}_J$ can be replaced by its twist by any unramified
character with trivial reduction; the cases where some $a_i$ or $b_i$
is $p$ (with $J = \{0\}$ or $\{1\}$) are outside the range of
Fontaine-Laffaille theory.

Note also that the case where we had to work the hardest in the proof
of Lemma~\ref{lem:f2exact} is precisely the one where $L_{\{0\}} = L_{\{1\}}$.
\end{rem}

By the same proof as Corollary~\ref{cor:generic}, we obtain:
\begin{cor} \label{cor:f2} Suppose that $f=2$ and
$\vec{c}\neq\vec{0}$ and $\vec{a},\vec{b}\in\Z^S$
are as above and that
$$0 \to M_{B\vec{b}} \to E \to M_{A\vec{a}} \to 0$$
is a bounded extension of $(\varphi,\Gamma)$-modules over $\E_{K,F}$.
In the case $A=B$, $\vec{c}=\overrightarrow{p-2}$ and $\vec{a}=\vec{p}$, assume $F$ is ramified.
Then the extension $E$ arises by applying 
$\E_{K,F}\otimes_{\A_{K,F}^+}$ to an exact sequence over $\A_{K,F}^+$
of Wach modules of the form
$$0 \to \N(\psi_1) \to N \to \N(\psi_2) \to 0$$
where $\psi_1$ (resp.~$\psi_2$) is a crystalline character with
labeled Hodge-Tate weights $(b_1,b_0)$
(resp.~$(a_1,a_0)$).
\end{cor}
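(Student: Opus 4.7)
The plan is to mimic the proof of Corollary~\ref{cor:generic}, with Theorem~\ref{thm:f2} playing the role of Theorem~\ref{thm:generic} and Lemma~\ref{lem:f2exact} replacing Lemma~\ref{lem:genexact}. Concretely, the given bounded extension
$$0 \to M_{B\vec{b}} \to E \to M_{A\vec{a}} \to 0$$
defines, via $\iota^{-1}$, a class in $V_J$ (or $V_J^\pm$, in the double-solution cases) inside $\Ext^1(M_{\vec{0}}, M_{C\vec{c}})$ with $C = BA^{-1}$. Under the equivalence of Corollary~\ref{equiv-labeled}, this corresponds to a class $x \in H^1(G_K, \F(\psi))$ where $\psi$ is the character acting on $\V(M_{C\vec{c}})$.

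The next step is to invoke Theorem~\ref{thm:f2} to conclude $x \in L_J$ (resp.\ $L_J^\pm$). In every case \emph{except} the cyclotomic exception, this means $x$ is the reduction modulo $\varpi_F$ of a cohomology class in $H^1(G_K, \CO_F(\tilde\psi_J))$ whose image in $H^1(G_K, F(\tilde\psi_J))$ is crystalline. Such a class corresponds to a short exact sequence $0 \to \CO_F(\psi_1) \to T \to \CO_F(\psi_2) \to 0$ of $G_K$-stable $\CO_F$-lattices in crystalline $F$-representations, with $\psi_1, \psi_2$ having the prescribed labeled Hodge-Tate weights $(b_1,b_0)$ and $(a_1,a_0)$. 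Applying $\N$ and using Lemma~\ref{lem:f2exact} (which is exactly tailored to the $f=2$ case, $\vec c \neq \vec 0$) produces the desired exact sequence
$$0 \to \N(\CO_F(\psi_1)) \to \N(T) \to \N(\CO_F(\psi_2)) \to 0$$
of Wach modules over $\A_{K,F}^+$. Tensoring with $\E_{K,F}$ and chasing through the definitions of $\iota$ and $\beta$ recovers the given extension $E$ of $(\varphi, \Gamma)$-modules.

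The one step requiring extra care is the cyclotomic exception $A=B$, $\vec c = \overrightarrow{p-2}$, $\vec a = \vec p$: here the naive $\tilde\psi_J^+ = \chi^p$ has $L_J^+ = H^1(G_K, \F(\chi))$ by definition, which is strictly larger than the crystalline image. Following the argument at the end of Corollary~\ref{cor:generic}, one uses Tate local duality to identify the image of $H^1(G_K, \CO_F(\chi^p \mu))$ in $H^1(G_K, \F(\chi))$ as the annihilator of the image of the connecting map $H^0(G_K, (F/\CO_F)(\chi^{1-p}\mu^{-1})) \to H^1(G_K, \F)$, which is spanned by the unramified class $\beta$ (if $\mu \not\equiv 1 \bmod p\CO_F$) or by $\alpha + \lambda\beta$ (for an appropriate $\lambda$ determined by $\mu$). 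The assumption that $F$ is ramified furnishes enough freedom in choosing $\mu \in 1 + \varpi_F \CO_F$ to ensure $x$ lies in this image; then $H^1(G_K, F(\chi^p\mu)) = H^1_f(G_K, F(\chi^p\mu))$ gives a crystalline lift, and Lemma~\ref{lem:f2exact} again applies.

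The main obstacle is precisely this cyclotomic exceptional case, since it is the only place where the $f=2$ case genuinely differs from the generic one: the definition of $L_J$ is not literally ``reductions of crystalline lifts'', and one must instead produce a \emph{suitable} crystalline lift by adjusting the unramified twist, exploiting ramification of $F$ to hit the correct cohomology class. All other cases follow routinely from Theorem~\ref{thm:f2} combined with Lemma~\ref{lem:f2exact}, whose technical content (ruling out $\vec b' = (1,0)$ when $\vec c = (i, p-1)$, $J=\{0\}$) has already been absorbed into the proof of that lemma.
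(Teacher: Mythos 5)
Your proposal is correct and takes essentially the same route as the paper: the paper dispatches this corollary with the single sentence ``By the same proof as Corollary~\ref{cor:generic}, we obtain,'' and you have accurately unpacked what that proof consists of, namely substituting Theorem~\ref{thm:f2} for Theorem~\ref{thm:generic} and Lemma~\ref{lem:f2exact} for Lemma~\ref{lem:genexact} in the non-exceptional cases, and reusing verbatim the Tate-duality/unramified-twist argument from the end of the proof of Corollary~\ref{cor:generic} (exploiting ramification of $F$) for the cyclotomic case $A=B$, $\vec{c}=\overrightarrow{p-2}$, $\vec{a}=\vec{p}$.
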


We now say what we can in the case $\vec{c} = \vec{0}$.
First note that the proof of Lemma~\ref{lem:f2exact} goes through
in the following cases:
\begin{itemize}
\item $J=S$, in which case $\vec{a}=\overrightarrow{p-1}$ (or $\vec{2}$ if $p=2$);
\item $J=\{0\}$, $\vec{a} = (p,0)$, $\vec{b} = (0,1)$ (the $+$ case);
\item $J=\{1\}$, $\vec{a} = (0,p)$, $\vec{b} = (1,0)$ (the $+$ case).
\end{itemize}
The proof of Theorem~\ref{thm:f2} goes through in these cases
unless $J=S$, $p=2$, $\vec{a} = 1$, $C=1$ where we get $L_S^- \subset V_S^-$, 
but $\dim L_S^- = 3 \neq \dim V_S^- = 4$ (see Remark~\ref{rem:p2}). 
In this case however we
know that $L_S^-$ consists of the peu ramifi\'ee extensions.
To compute the corresponding $(\varphi,\Gamma)$-modules, note that
$V_{\{0\}}^+ = L_{\{0\}}^+$ contains the classes arising
from reductions of Galois stable lattices in 
$F(\mu\psi^2) \oplus F(\psi^\sigma)$ where $\psi:G_K \to \CO_F^\times$
is a crystalline character with labeled Hodge-Tate weights $(0,1)$,
$\sigma$ is the non-trivial element of $\gal(K/\Q_2)$, and 
$\mu:G_K \to \CO_F^\times$ is an unramified character with trivial
reduction mod $\varpi_F$.  These classes correspond to homomorphisms
$G_K \to \F$ whose restriction to inertia is a multiple of the
reduction of $1/2(\psi^\sigma\psi^{-2}-1)|_{I_K}$.
One can compute these explicitly using class field theory and
check that they are peu ramifi\'ee.  It follows that $L_{\{0\}}^+\subset L_S^-$,
and similarly $L_{\{1\}}^+\subset L_S^-$, so 
that $L_S^- = \langle B_\ur,B_0,B_1 \rangle$.

If $J=\emptyset$, we have $V_\emptyset = \{0\}$, and $L_\emptyset = \{0\}$
unless $C=1$.  If $C=1$, one can compute the extensions and associated
$(\varphi,\Gamma)$-modules explicitly since they are unramified twists
of representations on which $H_K$ acts trivially.  If $p\neq 2$, 
one gets $L_\emptyset = \langle B_\ur, B_\cyc \rangle$.  If $p=2$,
one gets $L_\emptyset^+ = \langle B_\ur, B_\cyc \rangle$ (with $\vec{b}=\vec{1}$)
and $L_\emptyset^- = \langle B_\ur, B_\tr \rangle$ (with $\vec{b}=\vec{2}$).

The most interesting is the $-$ case when $S=\{0\}$ or $\{1\}$.  For example
if $S=\{0\}$, $\vec{a} = (p,0)$ and $\vec{b} = (0,1)$,
the proof of Lemma~\ref{lem:f2exact} breaks down, but we see that if
the associated sequence of Wach modules is not exact, then $\vec{a}'=(0,1)$
and $\vec{b}'=(p,0)$, so the extension of $(\varphi,\Gamma)$-modules
associated to $\overline{T}$ is in $V^+_{\{1\}}$.
Since $V^-_{\{0\}} \subset V^+_{\{1\}}$, it follows that 
$L^-_{\{0\}} \subset V^+_{\{1\}}$, and dimension counting
implies equality.  We therefore have that $V^-_{\{0\}}$ is
contained in $L^-_{\{0\}} = V^+_{\{1\}} = L^+_{\{1\}}$
with codimension one.  Similarly $V^-_{\{1\}}$ is
contained in $L^-_{\{1\}} = V^+_{\{0\}} = L^+_{\{0\}}$ 
with codimension one.

Putting everything together we get:
\begin{thm}  \label{thm:unramified} Suppose that $f=2$, $\vec{c}=\vec{0}$.
\begin{enumerate}
\item If $C\neq 1$, then:
\begin{itemize}
\item if $p>2$ then $L_S = V_S = \Ext^1(M_{\vec{0}},M_{C\vec{0}})$;
\item if $p=2$ then $L_S^\pm = V_S^\pm = \Ext^1(M_{\vec{0}},M_{C\vec{0}})$;
\item $V_{\{0\}}^- = V_{\{1\}}^- = \{0\}$, and
  $L_{\{0\}}^- = L_{\{1\}}^+ = V_{\{1\}}^+  \neq 
    V_{\{0\}}^+ = L_{\{0\}}^+ = L_{\{1\}}^-$;
\item if $p>2$ then $L_\emptyset = V_\emptyset = \{0\}$;
\item if $p=2$ then $L_\emptyset^\pm = V_\emptyset^\pm = \{0\}$.
\end{itemize}
\item If $C = 1$, then:
\begin{itemize}
\item if $p>2$ then $L_S = V_S = \Ext^1(M_{\vec{0}},M_{\vec{0}})$;
\item if $p=2$ then $L_S^+ = V_S^\pm = \Ext^1(M_{\vec{0}},M_{\vec{0}})$ and
 $L_S^- = \langle B_\ur,B_0,B_1\rangle$;
\item $V_{\{0\}}^- = V_{\{1\}}^- = \langle B_\ur\rangle$, 
  $L_{\{0\}}^- = L_{\{1\}}^+ = V_{\{1\}}^+ = \langle B_\ur,B_1\rangle$, and 
  $L_{\{1\}}^- = L_{\{0\}}^+ = V_{\{0\}}^+ = \langle B_\ur,B_0\rangle$;
\item if $p>2$ then $V_\emptyset = \{0\}$ and $L_\emptyset = \langle B_\ur,B_\cyc\rangle$;
\item if $p=2$ then $V_\emptyset^\pm = \{0\}$, $L_\emptyset^+ = \langle B_\ur,B_\cyc\rangle$
 and $L_\emptyset^- = \langle B_\ur,B_\tr\rangle$.
\end{itemize}
\end{enumerate}
\end{thm}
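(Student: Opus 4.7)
The plan is to push the method of Theorem~\ref{thm:f2} through the case $\vec{c}=\vec{0}$, handling the breakdowns case by case. The starting point is the observation that the proof of Lemma~\ref{lem:f2exact} only used the shape of the target $\vec{b}'$ allowed by Proposition~\ref{prop:rk2}. With $\vec{c}=\vec{0}$ the proposition forces $\vec{a}+\vec{b}=\vec{a}'+\vec{b}'$ together with $\Sigma_j(\vec{a})\equiv\Sigma_j(\vec{a}')\bmod(p^f-1)$, and in three sub-cases this already pins down $\vec{a}=\vec{a}'$: namely $J=S$ (by part~(2) of Proposition~\ref{prop:rk2}, since $\vec{a}$ and $\vec{b}$ are comparable), and the two $+$ choices for $J=\{0\}$ or $J=\{1\}$ (where one of the vectors is $\vec{0}$). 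In each such case the argument of Theorem~\ref{thm:f2} goes through: the corresponding sequence of Wach modules is exact, so $L_J^{(+)}\subset V_J^{(+)}$, and the dimension counts from \S\ref{sec:triv} give equality.

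The genuinely new difficulty is the $-$ case for singleton $J$. Take $J=\{0\}$, $\vec{a}=(1,0)$, $\vec{b}=(0,p)$. Proposition~\ref{prop:rk2} leaves exactly two possibilities, $(\vec{a}',\vec{b}')=(\vec{a},\vec{b})$ or $(\vec{a}',\vec{b}')=((0,p),(1,0))$. When the former holds the Wach-module sequence is exact and the extension class lies in $V^-_{\{0\}}$; when the latter holds the mod-$\varpi_F$ picture is precisely an extension in $V^+_{\{1\}}$. Consequently $L^-_{\{0\}}\subset V^-_{\{0\}}+V^+_{\{1\}}=V^+_{\{1\}}$, and since $L^+_{\{1\}}=V^+_{\{1\}}$ has already been established, a dimension count gives $L^-_{\{0\}}=V^+_{\{1\}}=L^+_{\{1\}}$. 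The symmetric argument handles $L^-_{\{1\}}$.

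For $J=\emptyset$ we have $V^{(\pm)}_\emptyset=0$, so the task reduces to computing $L^{(\pm)}_\emptyset$ directly. When $C\neq 1$ there is no crystalline lift of $\psi$ with labelled Hodge--Tate weights $(b_1,b_0)$ and $(a_1,a_0)$ satisfying the required integrality (an unramified character has unique unramified crystalline lift), so $L^{(\pm)}_\emptyset=0$. When $C=1$ every such lift splits as $1\oplus\mu$ for an unramified character $\mu$ with trivial reduction, and since $H_K$ acts trivially on the underlying lattice the $(\varphi,\Gamma)$-module is simply $\E_K\otimes T$, which one reads off explicitly. Matching the resulting cocycles against the basis $\{[B_\ur],[B_0],[B_1],[B_\cyc]\}$ (respectively $\{[B_\ur],[B_0],[B_1],[B_\cyc],[B_\tr]\}$ when $p=2$) from \S\ref{sec:triv} gives the stated spans. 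The case $p=2$, $C=1$, $J=S$, $\vec{a}=\vec{1}$ for $L_S^-$ is resolved similarly: the dimension bound only yields an inclusion, so to identify $L_S^-$ one invokes the classical characterization of the peu ramifi\'ee subspace and checks, via the $+$ inclusions $L^+_{\{0\}},L^+_{\{1\}}\subset L_S^-$ combined with $L_\ur\subset L_S^-$, that $L_S^-=\langle[B_\ur],[B_0],[B_1]\rangle$, leaving $[B_\tr]$ as the tr\`es ramifi\'ee direction.

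The main obstacle will be the $p=2$ bookkeeping. Remark~\ref{rem:p2} warns that the definition of boundedness is deliberately lax in order to unify $p=2$ with $p>2$, so the identifications of $L^{(\pm)}_\emptyset$ when $C=1$ and of $L^-_S$ when $\vec{a}=\vec{1}$ are not handed to us by the bounded-extensions calculation — they require honest comparison with the classical dichotomy peu/tr\`es ramifi\'ee, which in turn must be translated through the explicit cocycles of \S\S\ref{sec:cyclo},\ref{sec:triv} for the $(\varphi,\Gamma)$-modules of representations on which $H_K$ acts trivially. Once this identification is matched to $B_\cyc$ versus $B_\tr$, the rest is routine case-checking against the dimensions already computed in \S\ref{sec:f2} and \S\ref{sec:triv}.
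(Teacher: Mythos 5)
Your overall strategy — invoke Lemma~\ref{lem:f2exact} (via Proposition~\ref{prop:rk2}) in the three cases where it still applies, handle the singleton $-$ cases by observing that a non-exact Wach sequence lands the class in $V^+_{\{1\}}$ (resp.\ $V^+_{\{0\}}$), and treat $J=\emptyset$ and the $p=2$ peu/tr\`es ramifi\'ee distinction by direct computation — is exactly the route the paper takes, and the case decomposition matches.

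Two of your justifications are off, however, even though the conclusions are right. First, for the $+$ choices with $J=\{0\}$ or $\{1\}$ you write that Proposition~\ref{prop:rk2} ``already pins down $\vec{a}=\vec{a}'$ ... (where one of the vectors is $\vec{0}$)'': neither $\vec{a}=(p,0)$ nor $\vec{b}=(0,1)$ is $\vec{0}$ there, and the congruence $\Sigma_j(\vec{a})\equiv\Sigma_j(\vec{a}')\bmod(p^f-1)$ alone leaves two solutions (e.g.\ $(p,0)$ and $(0,1)$). What actually forces $\vec{a}'=\vec{a}$ is the \emph{inequality} $\Sigma_j(\vec{a}')\ge\Sigma_j(\vec{a})$ from part~(3), applied for $j=1$. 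Second, for $J=\emptyset$ with $C\neq 1$ you claim ``there is no crystalline lift of $\psi$ with labelled Hodge--Tate weights $(b_1,b_0)$'': that's false — $\tilde{\psi}_\emptyset$ always exists (it is a positive crystalline character with the prescribed weights, pinned down by its value at a Frobenius). The correct reason $L_\emptyset=\{0\}$ is that $H^1_f(G_K,F(\tilde{\psi}_\emptyset))=0$ because all labelled weights are strictly positive (in the paper's convention), and moreover $H^1(G_K,\CO_F(\tilde{\psi}_\emptyset))$ is torsion-free since $H^0(G_K,\F(\psi))=0$ for $\psi$ unramified and nontrivial — so the relevant preimage is zero; this breaks precisely when $C=1$ since then $H^0(G_K,\F)=\F$ supplies a torsion class and BDJ's modification adds the unramified class. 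Neither error derails the proof, but both statements as written are incorrect.
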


Note that the strict inclusion
$V^-_{\{0\}}\subset L^-_{\{0\}}$ implies the existence
of {\em non-split} crystalline extensions
$0 \to F(\psi_1) \to V \to F(\psi_2) \to 0$
with Galois stable $\CO_F$-lattices $T$ 
such that the corresponding sequence of Wach
modules over $\A_{K,F}^+$ is not exact (with
$\psi_1$ and $\psi_2$ of labeled Hodge-Tate
weights $(p,0)$ and $(0,1)$ respectively).
 
As in Remark~\ref{rmk:indepf2},
we see that the definitions of $L_J$ are
independent of the choice of unramified
twist, unless $C=1$, $J=\emptyset$ and
$F$ is ramified, in which
case twisting by an unramified character 
which is trivial mod $\varpi_F$ but not mod $p$
would give $L_J' = L_J = \langle B_\ur \rangle$.

Finally we remark that the proof of Corollary~\ref{cor:f2} goes
through when $\vec{c}=\vec{0}$ except in the following two cases where $C=1$:
\begin{itemize}  
\item If $p=2$ and $\vec{a}=\vec{1}$, then only classes in $L_S^-$
 lift (see Remark~\ref{rem:p2}).
\item If $\vec{a}=(1,0)$ and $\vec{b}=(0,p)$ (or $\vec{a}=(0,1)$ and $\vec{b}=(p,0)$),
 then we have not determined whether $B_\ur$ lifts.
\end{itemize}

\end{document}